\newtheorem{theorem}{Theorem}[section]
\newtheorem{corollary}{Corollary}[section]
\newtheorem{lemma}{Lemma}[section]
\theoremstyle{definition}
\newtheorem{remark}{Comment}[section]
\numberwithin{remark}{section}
\newcommand{\eps}{\varepsilon}
\renewcommand{\(}{\left(}
\renewcommand{\)}{\right)}
\renewcommand{\hat}{\widehat}
\newcommand{\Ep}{{\mathrm{E}}}
\newcommand{\barEp}{\bar \Ep}
\newcommand{\En}{{\mathbb{E}_n}}
\renewcommand{\Pr}{{\mathrm{P}}}
\newcommand{\RR}{\mathbb{R}}
\def\x{{x}}
\renewcommand{\hat}{\widehat}
\renewcommand{\leq}{\leqslant}
\renewcommand{\geq}{\geqslant}
\renewcommand{\qed}{\hfill{\tiny \ensuremath{\blacksquare} }}%
\DeclareMathOperator{\pr}{pr}
\begin{document}

\begin{frontmatter}

\title{Gaussian Approximations and Multiplier Bootstrap for Maxima of Sums of High-Dimensional Random Vectors\thanksref{T1}}
\runtitle{Gaussian Approximations and multiplier bootstrap}
\thankstext{T1}{Date:  June, 2012. Revised June, 2013. V. Chernozhukov and D. Chetverikov  are supported by a National Science Foundation grant. K. Kato is supported by the Grant-in-Aid for Young Scientists (B) (25780152), the Japan Society for the Promotion of Science.}


\begin{aug}
\author{\fnms{Victor} \snm{Chernozhukov}\thanksref{m1}\ead[label=e1]{vchern@mit.edu}},
\author{\fnms{Denis} \snm{Chetverikov}\thanksref{m2}\ead[label=e2]{chetverikov@econ.ucla.edu}}
\and
\author{\fnms{Kengo} \snm{Kato}\thanksref{m3}
\ead[label=e3]{kkato@e.u-tokyo.ac.jp}}

\runauthor{Chernozhukov Chetverikov Kato}

\affiliation{MIT\thanksmark{m1}, UCLA\thanksmark{m2}, and University of Tokyo\thanksmark{m3}}

\address{Department of Economics and\\
Operations Research Center, MIT \\
50 Memorial Drive \\
Cambridge, MA 02142, USA.\\
\printead{e1}}

\address{Department of Economics, UCLA\\
Bunche Hall, 8283 \\
315 Portola Plaza \\
Los Angeles, CA 90095, USA.\\
\printead{e2}}

\address{Graduate School of Economics \\
University of Tokyo \\
7-3-1 Hongo, Bunkyo-ku\\
Tokyo 113-0033, Japan. \\
\printead{e3}}
\end{aug}

\begin{abstract}
{We derive a Gaussian approximation result for the maximum of a sum of high dimensional random vectors.
Specifically, we establish conditions under which   the distribution of the maximum is approximated by that of the maximum of a sum of the Gaussian random vectors with the same covariance matrices as the original vectors. This result  applies  when the dimension of random vectors ($p$) is large compared to the sample size ($n$);  in fact, $p$ can be much larger than $n$, without restricting correlations of the coordinates of these vectors. We also show that the distribution of the maximum of a sum of the  random vectors with unknown covariance matrices can be consistently estimated by the distribution of the maximum of a sum of the conditional Gaussian random vectors obtained by multiplying the original vectors with i.i.d. Gaussian multipliers. This is the Gaussian multiplier (or wild) bootstrap procedure. Here too, $p$ can be large or even much larger than $n$.
These distributional approximations, either Gaussian or conditional Gaussian, yield a high-quality approximation to the distribution of the original maximum,
often with approximation error decreasing polynomially in the sample size, and hence are of interest in many applications.
We demonstrate how our Gaussian approximations and the multiplier bootstrap can be used for modern high dimensional estimation,  multiple hypothesis testing, and adaptive specification testing. All these results contain non-asymptotic bounds on approximation errors.}
\end{abstract}

\begin{keyword}[class=AMS]
\kwd{62E17}
\kwd{62F40}
\end{keyword}

\begin{keyword}
\kwd{Dantzig selector}
\kwd{Slepian}
\kwd{Stein method}
\kwd{maximum of vector sums}
\kwd{high dimensionality}
\kwd{anti-concentration}
\end{keyword}

\end{frontmatter}

\section{Introduction}
Let $x_{1},\dots,x_{n}$ be independent random vectors in $\RR^{p}$, with each $x_i$ having coordinates denoted by $x_{ij}$, that is, $x_{i}= (x_{i1},\dots,x_{ip})'$.
Suppose that each $x_{i}$ is centered, namely $\Ep[x_i]=0$, and has a finite covariance matrix $\Ep[x_i x_i']$.
Consider the rescaled sum:
\begin{equation}\label{eq: define X}
X := (X_{1},\dots,X_{p})' := \frac{1}{\sqrt{n}} \sum_{i=1}^n x_{i}.
\end{equation}
Our goal is to obtain a distributional approximation for the statistic $T_0$  defined as the maximum coordinate of vector $X$:
\begin{equation*}
T_0:= \max_{1\leq j \leq p} X_{j}.
\end{equation*}
The distribution of $T_0$ is of interest in many applications.
When $p$ is fixed, this distribution can be approximated by
the classical Central Limit Theorem (CLT) applied to $X$. However, in modern applications (cf. \cite{BV11}),
$p$ is often comparable or even larger than $n$, and the classical CLT does not apply in such cases.
This paper provides a tractable approximation to the distribution of $T_0$ when $p$ can be large and possibly much larger than $n$.

The \textit{first} main result of the paper is the Gaussian approximation result (GAR), which bounds the Kolmogorov distance between
the distributions of $T_0$  and its Gaussian analog $Z_0$.
Specifically, let $y_{1},\dots,y_{n}$ be independent centered Gaussian random vectors in $\RR^{p}$ such that each $y_{i}$ has the same covariance matrix as $x_{i}$: $y_{i} \sim N(0,\Ep [ x_{i} x_{i}' ])$. Consider the rescaled sum of these vectors:
\begin{equation}\label{eq: define Y}
Y := (Y_{1},\dots,Y_{p})' :=  \frac{1}{\sqrt{n}} \sum_{i=1}^n y_{i}.
 \end{equation}
 Vector $Y$ is the Gaussian analog of $X$ in the sense of sharing the same mean and covariance matrix, namely
$\Ep[X] = \Ep[Y] = 0$ and $\Ep[XX'] = \Ep[YY'] =  n^{-1}\sum_{i=1}^n \Ep[x_i x_i'].$
We then define the Gaussian analog $Z_0$ of $T_0$ as the maximum coordinate of vector $Y$:
\begin{equation}\label{eq: define Z}
Z_{0} := \max_{1 \leq j \leq p} Y_{j}.
\end{equation}
We show that, under suitable moment assumptions,  as $n \to \infty$ and possibly $p=p_{n} \to \infty$,
\begin{equation}
\rho:= \sup_{t \in \RR} \left| \Pr( T_0 \leq t ) - \Pr ( Z_0 \leq t ) \right| \leq C n^{- c} \to 0, \label{eq: main result}
 \end{equation}
where constants $c > 0$  and $C > 0$ are independent of $n$.

Importantly, in  (\ref{eq: main result}), $p$ can be large in comparison to $n$ and be as large as $e^{o(n^{c})}$ for some $c>0$.  For example, if $x_{ij}$ are uniformly bounded  (namely, $|x_{ij}| \leq C_{1}$ for some constant $C_{1} > 0$ for all $i$ and $j$) the Kolmogorov distance $\rho$ converges to zero at a polynomial rate whenever $(\log p)^7/n \to 0$ at a polynomial rate.  We obtain similar results when $x_{ij}$ are sub-exponential and even non-sub-exponential under suitable moment assumptions.  Figure \ref{fig: two deviations} illustrates the result (\ref{eq: main result}) in a non-sub-exponential example, which is motivated by the analysis of the Dantzig selector of \cite{CandesTao2007} in non-Gaussian settings (see Section \ref{sec: Dantzig}).
\begin{figure}
\label{fig: two deviations}
\includegraphics[width=4in]{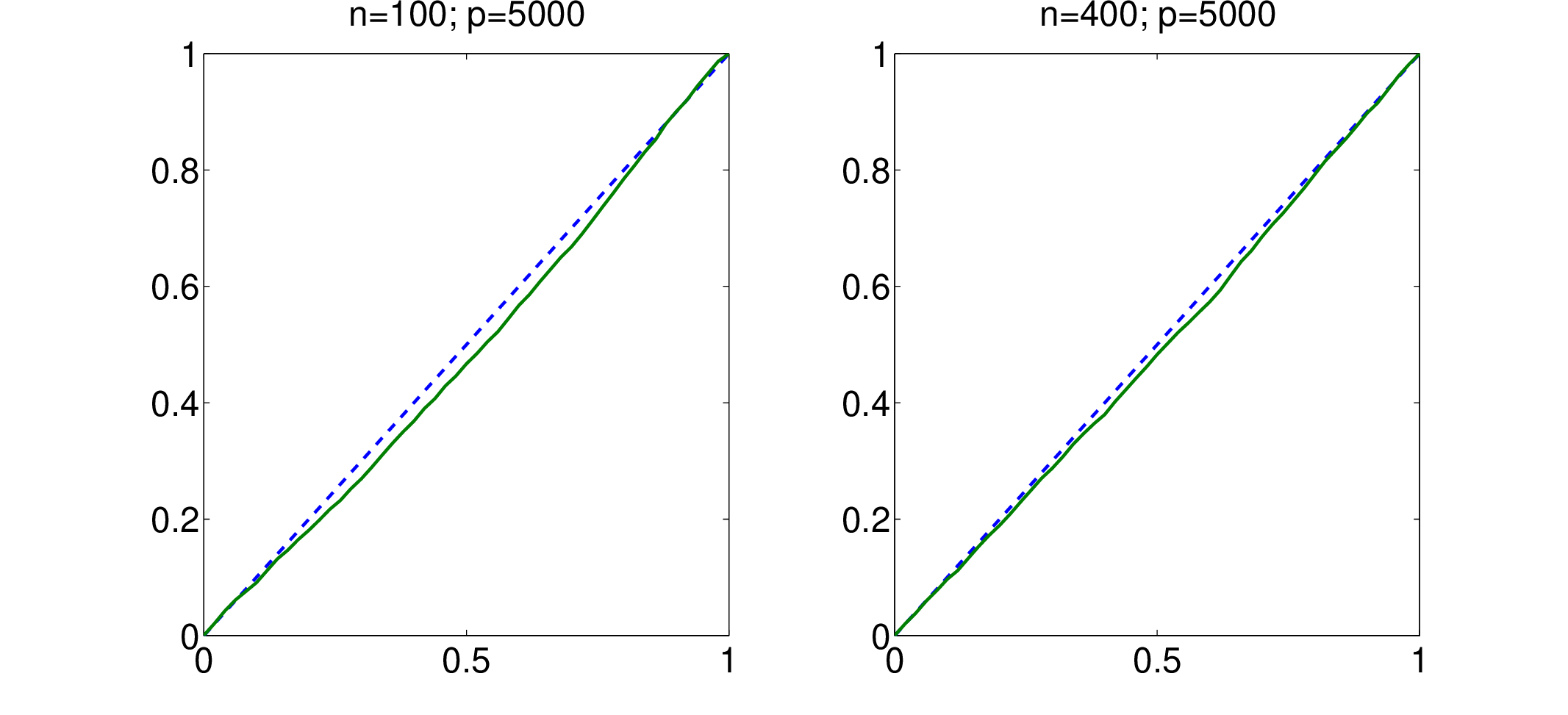}
\caption{\footnotesize
P-P plots comparing distributions of $T_0$ and $Z_0$ in the example motivated by the problem of selecting the penalty level of the Dantzig selector. Here $x_{ij}$ are generated as
$x_{ij}=z_{ij}\eps_{i}$ with $\eps_{i} \sim t(4),$ (a $t$-distribution with four degrees of freedom), and $z_{ij}$ are non-stochastic (simulated once using $U[0,1]$ distribution independently across $i$ and $j$).  The dashed line is 45$^\circ$. The distributions of $T_0$ and $Z_0$ are close, as (qualitatively) predicted by the GAR derived in the paper. The quality of the Gaussian approximation is particularly good
for the tail probabilities, which is most relevant for practical applications.}
\end{figure}

The proof of the Gaussian approximation result (\ref{eq: main result}) builds on a number of technical tools such as Slepian's  smart path interpolation (which is
related to the solution of  Stein's partial differential equation; see Appendix \ref{sec: Slepian-Stein note} of the Supplementary Material (SM; \cite{CCK13})), Stein's leave-one-out method, approximation of maxima by the smooth potentials (related to ``free energy" in spin glasses) and using some fine or subtle properties of such approximation, and exponential inequalities for self-normalized sums. See, for example,
\cite{Slepian1962, Stein1981, Dudley1999, ChenGoldsteinShao2011, Talagrand2003, Chatterjee2005b, Rollin2011, Shao2009, Panchenko2013} for introduction and prior uses of some of these tools.
The proof also critically relies on the anti-concentration and comparison bounds of maxima of Gaussian vectors  derived in \cite{ChernozhukovChetverikovKato2012c} and restated in this paper as Lemmas \ref{lem: anticoncentration} and \ref{lemma: distances Gaussian to Gaussian}.

Our new Gaussian approximation theorem has the following innovative features.
First, we provide a general result that establishes that  maxima of sums of random vectors can be approximated in distribution by the maxima of sums of Gaussian random vectors when $p \gg n$ and especially when $p$ is of order $e^{o(n^{c})}$ for some $c > 0$. The existing techniques can also lead to results of the form (\ref{eq: main result}) when $p=p_{n} \to \infty$, but under much stronger conditions on $p$ requiring $p^{c}/n \to 0$; see Example 17 (Section 10) in \cite{Pollard2002}.   Some high-dimensional cases where $p$ can be of order $e^{o(n^{c})}$ can also be handled via Hungarian couplings, extreme value theory or other methods, though special structure is required (for a detailed review, see Section \ref{sec: literature review} of the SM \cite{CCK13}).   Second, our Gaussian approximation theorem covers cases where $T_{0}$ does not have  a limit distribution as $n \to \infty$ and $p=p_{n} \to \infty$. In some cases, after a suitable normalization, $T_{0}$ could have an extreme value distribution as a limit distribution, but the approximation to an extreme value distribution requires some restrictions on the dependency structure among the coordinates in $x_{i}$. Our result does not limit the dependency structure. We also emphasize that our theorem specifically covers cases where the process $\{\sum_{i=1}^nx_{ij}/\sqrt{n},1\leq j\leq p\}$ is not asymptotically Donsker (i.e., can't be embedded into a path of an empirical process that is Donsker).
 Otherwise, our result would follow from the classical functional central limit theorems for empirical processes, as in \cite{Dudley1999}. Third, the quality of approximation in (\ref{eq: main result}) is of polynomial order in $n$, which is better than the logarithmic in $n$ quality that we could obtain in
some (though not all) applications using the approximation of the distribution of $T_{0}$ by an extreme value distribution (see \cite{Leadbetter1983}).

Note that the result (\ref{eq: main result}) is immediately useful for inference with statistic $T_0$,
even though $\Pr (Z_{0} \leq t )$ needs not converge itself to a well-behaved distribution function.  Indeed,  if the covariance matrix
$n^{-1} \sum_{i=1}^n \Ep[x_{i} x_{i}']$ is known, then $c_{Z_0}(1-\alpha): = (1-\alpha)$-quantile of $Z_0$,  can be computed numerically, and we have
\begin{equation}
\label{eq: inference}
|\Pr ( T_0 \leq c_{Z_0}(1-\alpha) )  - (1-\alpha)|  \leq C n^{-c} \to 0.
 \end{equation}
 %

The \textit{second} main result of the paper establishes
validity of the multiplier (or Wild) bootstrap for estimating
quantiles of $Z_0$ when the covariance matrix $n^{-1} \sum_{i=1}^n \Ep[x_{i} x_{i}']$
 is unknown.   Specifically, we define
the Gaussian-symmetrized version $W_0$ of $T_0$  by multiplying $x_{i}$ with i.i.d. standard Gaussian
random variables $e_{1},\dots,e_{n}$:
\begin{equation}
W_0:=\max_{1\leq j\leq p}\frac{1}{\sqrt{n}}\sum_{i=1}^{n}x_{ij}e_{i}. \label{eq: main quantity}
\end{equation}
We show that the conditional quantiles of $W_0$ given data $( x_{i} )_{i=1}^{n}$ are able to
consistently estimate the quantiles of $Z_0$ and hence those of $T_0$ (where the notion
of consistency used is the one that guarantees asymptotically valid inference).  Here the primary factor driving the bootstrap estimation error is the maximum difference between the empirical and population covariance matrices:
\begin{equation*}
\Delta:=\max_{1 \leq j, k \leq p} \left |  \frac{1}{n} \sum_{i=1}^n  (x_{ij} x_{ik}  - \Ep[x_{ij} x_{ik}] ) \right |,
\end{equation*}
which can converge to zero even when  $p$ is much larger than $n$.  For example, when $x_{ij}$ are uniformly bounded, the multiplier
bootstrap is valid for inference if $(\log p)^7/n \to 0$.  Earlier related results on bootstrap in the ``$p \to \infty$ but $p/n \to 0$'' regime
were obtained in \cite{Mammen1993}; interesting results on  inference on the mean vector of high-dimensional random vectors when $p \gg n$ based on concentration inequalities and symmetrization are obtained in \cite{ArlotBlanchardRoquain2010a,ArlotBlanchardRoquain2010b},
albeit the approach and results are quite different from those given here. In particular, in \cite{ArlotBlanchardRoquain2010a}, either Gaussianity or symmetry in distribution is imposed on the data.

The key motivating example of our analysis is the analysis of construction of one-sided or two-sided uniform
confidence band for high-dimensional means under non-Gaussian assumptions.    This requires estimation of a high quantile of the maximum of sample means.       We give two concrete applications.  One application deals with  high-dimensional sparse regression model. In this model, \cite{CandesTao2007} and \cite{BickelRitovTsybakov2009} assume Gaussian errors to analyze the Dantzig selector, where the high-dimensional means enter the constraint in the problem.  Our results show that Gaussianity is not necessary and the sharp, Gaussian-like, conclusions hold approximately, with just the fourth moment of the
regression errors being bounded.  Moreover, our
approximation allows to take into account correlations
 among the regressors. This leads to  a better choice of the penalty level
 and tighter bounds on performance than those that had been available previously.
In another example we apply our results in the multiple hypothesis testing via
 the step-down method of \cite{RomanoWolf05}.  In the SM \cite{CCK13} we also provide an application to adaptive specification testing. In either case the number of hypotheses to be tested or  the number of moment restrictions to be tested can be much larger than the sample size. Lastly, in a companion work (\cite{ChernozhukovChetverikovKato2012b}), we derive the strong coupling for suprema of general empirical processes based on the methods developed here and maximal inequalities. These results represent a useful complement to the results based on the Hungarian coupling developed by \cite{KomlosMajorTusnady1975,BretagnolleMassart1989, Koltchinskii1994,Rio1994} for the entire empirical process and have applications to inference in nonparametric problems such as construction of uniform confidence bands and testing qualitative hypotheses (see, e.g., \cite{GineNickl2010}, \cite{Spokoiny}, and \cite{Chetverikov2012}).

\subsection{Organization of the paper}In Section \ref{sec: Gaus vs NonGaus}, we give the results on Gaussian approximation, and in Section \ref{sec: multiplier bootstrap} on the multiplier bootstrap. In Sections \ref{sec: Dantzig} and \ref{sub: MHT}, we develop applications to the Dantzig selector and multiple testing.  Appendices \ref{sec: auxiliary lemmas}-\ref{sec: multiplier bootstrap proofs} contain proofs for each of these sections, with Appendix \ref{sec: auxiliary lemmas} stating auxiliary tools and lemmas. Due to the space limitation, we put additional results and proofs into the SM \cite{CCK13}. In particular, Appendix \ref{sub: AST} of the SM provides additional application to adaptive specification testing. Results of Monte Carlo simulations are presented in Appendix \ref{sec: monte carlo} of the SM.


\subsection{Notation}
In what follows, unless otherwise stated, we will assume that $p \geq 3$. In making asymptotic statements, we assume that
$n \to \infty$ with understanding that $p$ depends on $n$ and possibly $p \to \infty$ as $n \to \infty$.
Constants $c,C,c_{1},C_{1},c_{2},C_{2},\dots$ are understood to be independent of $n$.
Throughout the paper, $\En[\cdot]$ denotes the
average over index $1 \leq i \leq n$, that is, it simply abbreviates the notation $n^{-1} \sum_{i=1}^n[\cdot]$. For example, $\En[x_{ij}^{2}]$ $=$ $n^{-1} \sum_{i=1}^{n}x_{ij}^{2}$. In addition, $\barEp[\cdot]=\En[\Ep[\cdot]]$. For example, $\barEp[x_{ij}^2]$ $=$ $n^{-1}\sum_{i=1}^n\Ep[x_{ij}^2]$. For $z\in\RR^p$, $z^\prime$ denotes the transpose of $z$.
For a function $f:\RR \to \RR$, we write $\partial^{k} f (x) = \partial^{k} f(x) / \partial x^{k}$ for nonnegative integer $k$;  for a function $f:\RR^p \to \RR$, we write $\partial_{j} f (x) = \partial f(x)/ \partial x_{j}$ for $j=1,\dots,p$, where $x = (x_{1},\dots,x_{p})'$.
We denote by $C^k(\RR)$ the class of $k$ times continuously differentiable functions from $\RR$ to itself, and denote by $C_{b}^{k}(\RR)$ the class of all functions $f \in C^{k}(\RR)$ such that $\sup_{z \in \RR} | \partial^{j} f(z) | < \infty$ for $j=0,\dots,k$.
We write $a \lesssim b$ if $a$ is smaller than or equal to $b$ up to a universal positive constant. For $a,b \in \RR$, we write $a \vee b = \max \{ a,b \}$.
For two sets $A$ and $B$, $A \ominus B$ denotes their symmetric difference, that is, $A \ominus B = (A \backslash B) \cup (B \backslash A)$.

\section{Gaussian Approximations for Maxima of Non-Gaussian Sums}
\label{sec: Gaus vs NonGaus}

The purpose of this section is to compare and bound the difference
between the expectations and distribution functions of the non-Gaussian to  Gaussian maxima:
\begin{equation*}
T_{0} := \max_{1\leq j \leq p} X_{j}   \  \text{ and } \  Z_{0}:=\max_{1\leq j \leq p} Y_{j},
\end{equation*}
where vector $X$ is defined in equation (\ref{eq: define X}) and
$Y$ in equation (\ref{eq: define Y}).  Here and in what follows, without loss of generality, we will assume that $(x_{i})_{i=1}^{n}$ and $(y_{i})_{i=1}^{n}$ are independent.
In order to derive the main result of this section, we shall employ Slepian interpolation, Stein's leave-one-out method,  a truncation method combined with self-normalization, as well as some fine properties of the smooth max function (such as ``stability").
(The relative complexity of the approach is justified in Comment \ref{comment: warmup} below.)

The following bounds on moments will be used in stating the bounds in Gaussian approximations:
\begin{equation}\label{define M and S}
\quad M_k:=\max_{1\leq j\leq p}(\barEp[|x_{ij}|^k])^{1/k}.
\end{equation}

The problem of comparing distributions of maxima is of intrinsic difficulty since the maximum function $z=(z_{1},\dots,z_{p})' \mapsto \max_{1 \leq j \leq p} z_{j}$ is non-differentiable.
To circumvent the problem, we use a smooth approximation of the maximum function. For $z=(z_{1},\dots,z_{p})' \in \RR^{p}$, consider the function:
\begin{equation*}
F_{\beta}(z):=\beta^{-1}\log\left(\sum_{j=1}^{p}\exp(\beta z_{j})\right),
\end{equation*}
where $\beta > 0$ is the smoothing parameter that controls the level of approximation (we call this function the ``smooth max function'').
An elementary calculation shows that for all $z \in \RR^{p}$,
\begin{equation}\label{eq: smooth max property}
0 \leq  F_{\beta}(z)- \max_{1 \leq j \leq p} z_{j} \leq \beta^{-1} \log p.
\end{equation}
This smooth max function arises in the definition of ``free energy" in spin glasses; see, for example, \cite{Talagrand2003}.
Some important properties of this function, such as stability, are derived in the Appendix.

Given a threshold
level $u>0$,  we define a truncated version of $x_{ij}$ by
\begin{equation}
\tilde x_{ij} = x_{ij} 1 \left\{ |x_{ij}| \leq u (\barEp[x_{ij}^2])^{1/2} \right \} - \Ep\left [x_{ij} 1  \left \{|x_{ij}| \leq u (\barEp[x_{ij}^2])^{1/2} \right \}  \right ]. \label{def: truncated x}
\end{equation}
Let $\varphi_x(u)$ be the infimum, which is attained, over all numbers $\varphi \geq 0$ such that
\begin{equation}
\barEp\left[x_{ij}^21\left\{|x_{ij}|> u(\barEp[x_{ij}^2])^{1/2}\right\}\right] \leq \varphi^{2} \barEp[x_{ij}^{2}].  \label{def: truncation varphi}
\end{equation}
Note that the function $\varphi_{x}(u)$ is right-continuous; it measures  the impact of truncation
on second moments.  Define  $u_x(\gamma)$ as the infimum over all numbers $u \geq 0$ such that
\begin{equation*}
\Pr\left ( |x_{ij}| \leq u (\barEp[x_{ij}^2])^{1/2}, 1 \leq i \leq n, 1 \leq j \leq p \right) \geq 1-\gamma.
\end{equation*}
Also define $\varphi_y(u)$ and $u_y(\gamma)$ by the corresponding quantities
for the analogue Gaussian case, namely with  $(x_i)_{i=1}^n$ replaced by $(y_i)_{i=1}^n$ in the above definitions.
Throughout the paper we use the following quantities:
\begin{equation*}
\varphi(u): = \varphi_x(u) \vee \varphi_y(u), \ \ u(\gamma) := u_x(\gamma) \vee u_y(\gamma).
\end{equation*}
Also, in what follows, for a smooth function $g: \RR \to \RR$, write
\begin{equation*}
G_{k} : = \sup_{z \in \RR} |\partial^{k} g(z)|, \ k \geq 0.
\end{equation*}
The following theorem is the main building block toward deriving a result of the form (\ref{eq: main result}).

\begin{theorem}[Comparison of Gaussian to Non-Gaussian Maxima] \label{theorem:comparison non-Gaussian}
Let $\beta>0, u > 0$ and $\gamma \in (0,1)$  be such that $2\sqrt{2}u M_2 \beta/\sqrt{n} \leq 1$ and $u \geq u(\gamma)$.
Then for every $g \in C_{b}^3(\RR)$,  $|\Ep[g(F_{\beta}(X))- g(F_{\beta}(Y))]| \lesssim D_{n}(g,\beta,u,\gamma)$, so that
\begin{align*}
&|\Ep[g(T_{0})- g(Z_{0})]| \lesssim D_{n}(g,\beta,u,\gamma) + \beta^{-1} G_1 \log p,
\end{align*}
where
\begin{align*}
D_{n}(g,\beta,u,\gamma) &:= n^{-1/2} (G_3  +  G_2 \beta + G_1  \beta^2) M_3^3 + (G_2 +  \beta G_1) M_2^2 \varphi(u)\\
&\quad +  G_1  M_2  \varphi(u) \sqrt{\log (p/\gamma)} +  G_0 \gamma.
\end{align*}
\end{theorem}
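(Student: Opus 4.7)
My plan is to prove the first bound and then derive the second via the smooth-max inequality $0 \leq F_\beta(z) - \max_j z_j \leq \beta^{-1}\log p$. Since $g$ is $G_1$-Lipschitz, this yields $|\Ep[g(T_0)] - \Ep[g(F_\beta(X))]| \leq G_1 \beta^{-1}\log p$ and the analogous bound on the Gaussian side, producing exactly the extra $\beta^{-1} G_1 \log p$ term. For the main comparison $|\Ep[g(F_\beta(X)) - g(F_\beta(Y))]|$, I would combine three ingredients: a truncation/self-normalization step that replaces $X,Y$ by their truncated analogues $\tilde X, \tilde Y$; a Slepian smart-path interpolation between these truncated vectors; and sharp ``stability'' bounds on the partial derivatives of $F_\beta$ that exploit the softmax structure $\sum_j \partial_j F_\beta = 1$, so that $|\partial^2_{jk} F_\beta| \lesssim \beta$ and $|\partial^3_{jkl} F_\beta| \lesssim \beta^2$ when summed appropriately.

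\textbf{Truncation step.} Introduce $\tilde x_{ij}$ as in \eqref{def: truncated x} and the analogous $\tilde y_{ij}$. On the event $A = \{|x_{ij}| \leq u(\barEp[x_{ij}^2])^{1/2} \text{ for all } i,j\}$, which has probability at least $1-\gamma$ by the choice $u \geq u(\gamma)$, the difference $X - \tilde X$ reduces to a deterministic centering drift whose $\ell_\infty$-norm is controlled by $M_2\varphi(u)$. The probability of $A^c$ contributes the $G_0\gamma$ term. Using the Lipschitz property of $F_\beta$ in the sup norm together with a Gaussian/sub-exponential maximal inequality applied to the sum $X - \tilde X$ (or equivalently a concentration argument for the self-normalized sum), the drift produces the $G_1 M_2 \varphi(u)\sqrt{\log(p/\gamma)}$ contribution. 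The same bookkeeping on the Gaussian side gives the parallel terms with $\varphi_y(u)$, which are absorbed into $\varphi(u) = \varphi_x(u) \vee \varphi_y(u)$.

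\textbf{Slepian interpolation and leave-one-out Taylor expansion.} With the truncated vectors in hand, set $S(t) := \sqrt{t}\,\tilde X + \sqrt{1-t}\,\tilde Y$ and write
\begin{equation*}
\Ep[g(F_\beta(\tilde X)) - g(F_\beta(\tilde Y))] = \int_0^1 \frac{d}{dt} \Ep[g(F_\beta(S(t)))]\,dt.
\end{equation*}
Differentiating and applying Stein's leave-one-out trick around $S^{(i)}(t) := S(t) - n^{-1/2}(\sqrt{t}\,\tilde x_i + \sqrt{1-t}\,\tilde y_i)$, I would Taylor-expand $g\circ F_\beta$ to third order in the $i$-th contribution. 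The zeroth-order pieces cancel; first-order pieces vanish because $\Ep[\tilde x_{ij}] = \Ep[\tilde y_{ij}] = 0$; second-order pieces cancel up to the covariance mismatch $|\Ep[\tilde x_{ij}\tilde x_{ik}] - \Ep[\tilde y_{ij}\tilde y_{ik}]|$, which is bounded by $M_2^2\varphi(u)$ and, after summing over $j,k$ against $\partial^2(g\circ F_\beta)$, yields the $(G_2+\beta G_1)M_2^2\varphi(u)$ term; and the third-order remainders produce $n^{-1/2}(G_3 + G_2\beta + G_1\beta^2)M_3^3$ after bounding $\barEp[|\tilde x_{ij}|^3] \lesssim M_3^3$. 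The size constraint $2\sqrt{2}uM_2\beta/\sqrt{n} \leq 1$ keeps the Taylor remainder in a regime where the stability bound on $\partial^3 F_\beta$ is uniformly valid along the whole path.

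\textbf{Main obstacle.} The most delicate point is avoiding a spurious factor of $p$ (or worse) in the Taylor remainders: naive bounds on triple partial derivatives of $F_\beta$ carry a factor of $p^2\beta^2$, which would be ruinous. The crux is the stability/softmax lemma, which shows that the ``summed'' bounds $\sum_{j,k,l} |\partial^3_{jkl} F_\beta(z)| \lesssim \beta^2$, so the Taylor term against $\tilde x_{ij}\tilde x_{ik}\tilde x_{il}$ collapses to $\beta^2$ times a third-moment bound $M_3^3$. Proving this stability and threading it through the truncation, where the cutoff $u$ and the smoothing parameter $\beta$ are coupled, is where the real work lies; the rest is organized book-keeping.
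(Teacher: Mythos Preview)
Your overall architecture matches the paper's proof: truncate to $\tilde X,\tilde Y$, Slepian-interpolate between them, do a leave-one-out Taylor expansion to third order, and use the softmax ``summed derivative'' bounds $\sum_{j,k,l}U_{jkl}\lesssim G_3+G_2\beta+G_1\beta^2$ together with their stability under perturbations of size $\beta^{-1}$ to control the remainder. You also correctly identify that the constraint $2\sqrt{2}uM_2\beta/\sqrt{n}\leq 1$ is precisely what makes the stability lemma applicable along the interpolation path.

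There is, however, a genuine error in your truncation step. You claim that on the event $A$ the difference $X-\tilde X$ ``reduces to a deterministic centering drift whose $\ell_\infty$-norm is controlled by $M_2\varphi(u)$.'' On $A$, each $x_{ij}-\tilde x_{ij}$ does equal the deterministic constant $\Ep[x_{ij}1\{|x_{ij}|\leq u(\barEp[x_{ij}^2])^{1/2}\}]$, but summing $n$ of these and dividing by $\sqrt{n}$ gives a drift of size $\sqrt{n}\cdot(\En[(\Ep[x_{ij}1\{\cdot\}])^2])^{1/2}$, i.e.\ of order $\sqrt{n}\,M_2\varphi(u)$, not $M_2\varphi(u)$. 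No maximal inequality over $j$ can remove that $\sqrt{n}$; once you have conditioned on $A$ there is no randomness left to concentrate. The paper's route, which you allude to parenthetically, is to apply the self-normalized tail inequality of de~la~Pe\~na--Lai--Shao to the mean-zero variables $x_{ij}-\tilde x_{ij}$ \emph{before} restricting to $A$: this yields $|X_j-\tilde X_j|\leq \Lambda_j\sqrt{2\log(p/\gamma)}$ with $\Lambda_j=4(\barEp[(x_{ij}-\tilde x_{ij})^2])^{1/2}+(\En[(x_{ij}-\tilde x_{ij})^2])^{1/2}$, and \emph{then} the event $A$ is used only to simplify the empirical piece of $\Lambda_j$, giving $\Lambda_j\leq 5M_2\varphi(u)$. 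The order of operations matters; as you have written it the argument does not close.

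One smaller point: in your ``main obstacle'' paragraph you describe the summed bound $\sum_{j,k,l}|\partial^3_{jkl}F_\beta|\lesssim\beta^2$ as the crux. That bound alone is not enough. The reason stability (i.e.\ $U_{jkl}(z)\asymp U_{jkl}(z+\tau w)$ for $\|w\|_\infty\beta\leq 1$) is essential is that it lets you replace $U_{jkl}(Z^{(i)}(t)+\tau Z_i(t))$ by $U_{jkl}(Z^{(i)}(t))$, which is \emph{independent} of $\dot Z_{ij}Z_{ik}Z_{il}$, so the expectation factors; and then replace $\Ep[U_{jkl}(Z^{(i)}(t))]$ by $\Ep[U_{jkl}(Z(t))]$, which is \emph{independent of $i$}, so the sum over $j,k,l$ can be pulled outside the sum over $i$. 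Without this decoupling you cannot reach the $n^{-1/2}M_3^3$ rate.
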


We will also invoke the following lemma, which is proved in \cite{ChernozhukovChetverikovKato2012c}.

 \begin{lemma}[Anti-Concentration]\label{lem: anticoncentration}
(a) Let $Y_{1},\dots,Y_{p}$ be jointly Gaussian random variables with $\Ep[Y_{j}]=0$ and $\sigma_{j}^{2} := \Ep [Y_{j}^{2} ]  > 0$ for all $1 \leq j \leq p$, and  let $a_{p} := \Ep [ \max_{1 \leq j \leq p} (Y_{j}/\sigma_{j}) ]$.
Let $\underline{\sigma} = \min_{1 \leq j \leq p} \sigma_{j}$ and $\bar{\sigma} = \max_{1 \leq j \leq p} \sigma_{j}$. Then for every $\varsigma > 0$,
\begin{equation*}
\sup_{z \in \RR} \Pr \left( | \max_{1 \leq j \leq p} Y_{j} - z|  \leq   \varsigma \right) \leq C \varsigma \{ a_p + \sqrt{1 \vee \log (\underline{\sigma}/\varsigma)} \},
\end{equation*}
where $C>0$ is  a constant depending only on $\underline{\sigma}$ and $\bar{\sigma}$.
When $\sigma_{j}$ are all equal,  $\log (\underline{\sigma}/\varsigma)$ on the right side can be replaced by $1$.  (b) Furthermore,
the worst case bound is obtained by bounding $a_p$ by $\sqrt{2 \log p}$.
\end{lemma}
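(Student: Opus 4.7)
The plan is to bound the L\'evy concentration function $Q(\varsigma) := \sup_{z} \Pr(|M - z| \leq \varsigma)$ of the Gaussian maximum $M := \max_{1 \leq j \leq p} Y_j$ by separately handling the ``bulk'' region where $M$ concentrates and the ``tail'' region where the event $\{|M - z| \leq \varsigma\}$ is rare. This reduces the task to a pointwise bound on the density of $M$ in the bulk, with tails absorbed via Gaussian concentration.

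For the tail regime, I would apply the Borell--TIS inequality, which gives $\Pr(|M - \Ep[M]| \geq t) \leq 2 \exp(-t^{2}/(2 \bar\sigma^{2}))$. Consequently, whenever $|z - \Ep[M]| \geq t_{\varsigma} := C \bar\sigma \sqrt{1 \vee \log(\bar\sigma / \varsigma)}$, the probability $\Pr(|M - z| \leq \varsigma)$ is already $O(\varsigma)$, which is absorbed into the right-hand side of the claim. It therefore suffices to handle $z$ in the bulk window $|z - \Ep[M]| \leq t_{\varsigma}$, where I aim to show that the density $f(z) := F'(z)$ of $F(z) := \Pr(M \leq z)$ satisfies $f(z) \leq C\{a_p + \sqrt{1 \vee \log(\underline\sigma / \varsigma)}\}$.

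To bound $f$ in the bulk, I would use the Gaussian differentiation identity
\begin{equation*}
f(z) = \sum_{j=1}^{p} \phi_{\sigma_j}(z)\, \Pr\bigl(Y_k \leq z,\ k \neq j \bigm| Y_j = z\bigr),
\end{equation*}
where $\phi_{\sigma_j}$ is the $N(0, \sigma_j^{2})$ density. The marginal factor $\phi_{\sigma_j}(z)$ contributes at most $1/(\underline\sigma \sqrt{2\pi})$, and the Gaussian exponent $e^{-z^{2}/(2 \sigma_j^{2})}$ combined with the bulk constraint $|z| \lesssim \sqrt{\log(\bar\sigma/\varsigma)}$ is what produces the $\sqrt{\log(\underline\sigma / \varsigma)}$ factor. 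A Slepian / Sudakov--Fernique comparison applied to the conditioned Gaussian vector $\bigl(Y_k - (\Sigma_{kj}/\sigma_j^{2}) z\bigr)_{k \neq j}$ controls the conditional maximum by an expression of order $a_p$, since conditioning on $Y_j = z$ only shifts means (by a bounded amount in the bulk) and reduces covariances. An appealing alternative route exploits the log-concavity of $F$: since $\{M \leq z\}$ is an intersection of half-spaces depending linearly on $z$, Pr\'ekopa--Leindler (or, sharper, Ehrhard's inequality) implies that $\Phi^{-1}(F(z))$ is concave, which yields a differential inequality on $f$ that integrates to the same density bound once the median is located via $\Ep[M]$.

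Integrating the density bound over $[z - \varsigma, z + \varsigma]$ delivers $Q(\varsigma) \leq C\varsigma\{a_p + \sqrt{1 \vee \log(\underline\sigma / \varsigma)}\}$. The case $\sigma_j \equiv \sigma$ is easier because all marginal densities coincide and the bulk constraint decouples from the condition number $\bar\sigma / \underline\sigma$, collapsing the logarithmic factor to a constant. Part (b) is immediate from the standard bound $\Ep[\max_{1 \leq j \leq p} Y_j / \sigma_j] \leq \sqrt{2 \log p}$, valid for an arbitrarily correlated standardised Gaussian vector. The main obstacle I anticipate is sharply extracting $a_p$ (rather than the cruder $\sqrt{\log p}$) in the non-equivariance case: the conditioning argument couples the marginal Gaussian factors with the correlation structure, and the delicate step is to bound the sum of $p$ conditional probabilities by a quantity involving the mean of a single maximum rather than a term growing with $p$.
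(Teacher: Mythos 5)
First, note that the paper does not prove Lemma \ref{lem: anticoncentration} at all: it is imported from the companion paper \cite{ChernozhukovChetverikovKato2012c}. The argument there (mirrored in the proof of the low-variance extension, Lemma \ref{lem: anticoncentration_extended}, in the Supplementary Material) has exactly the two-layer structure you propose for the outer shell: Borell's inequality confines attention to a bulk window of width of order $\bar{\sigma}\sqrt{\log(\underline{\sigma}/\varsigma)}$ around $\Ep[\max_j Y_j]$, and inside that window one invokes a concentration-function bound that is \emph{linear in} $|z|$, namely $\Pr(|\max_j Y_j - z|\leq \varsigma) \leq 4\varsigma\{(1/\underline{\sigma}-1/\bar{\sigma})|z| + a_p + 1\}/\underline{\sigma}$ (equation (16) of \cite{ChernozhukovChetverikovKato2012c}); plugging in the bulk restriction on $|z|$ produces the $a_p + \sqrt{1\vee\log(\underline{\sigma}/\varsigma)}$ factor, and the vanishing of the slope $(1/\underline{\sigma}-1/\bar{\sigma})$ explains why the logarithm disappears when all $\sigma_j$ are equal. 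So your tail/bulk split via Borell--TIS is on target.

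The genuine gap is that the heart of the lemma --- the bulk density-type bound with the $a_p$ dependence --- is exactly the step you leave unproved, and neither of your two suggested mechanisms delivers it as stated. From the identity $f(z)=\sum_{j}\phi_{\sigma_j}(z)\Pr(Y_k\leq z,\ k\neq j \mid Y_j=z)$ the trivial bound on each summand gives only $p/(\underline{\sigma}\sqrt{2\pi})$, and a Sudakov--Fernique comparison applied to each conditioned vector controls an \emph{expectation of a conditional maximum}, not the conditional probabilities appearing in the sum; you give no mechanism converting the sum of $p$ such terms into a single quantity of order $a_p$, which you yourself identify as ``the main obstacle.'' (Also, the conditional mean shifts $(\Sigma_{kj}/\sigma_j^{2})z$ are of order $|z|$, not bounded, in the bulk; tracking this is precisely where the linear-in-$|z|$ term in the true intermediate bound comes from, and it is glossed over.) The Ehrhard/Pr\'ekopa alternative is a real idea --- $\Phi^{-1}(F(z))$ is indeed concave --- but concavity alone only bounds the density in terms of values of $F$ at points to the left, so turning it into $f(z)\lesssim a_p+\sqrt{1\vee\log(\underline{\sigma}/\varsigma)}$ requires a quantitative \emph{lower} bound on the dispersion of the maximum (whose spread can genuinely be as small as $1/a_p$), which is not supplied and is itself nontrivial. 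As written, the proposal reproduces the easy reduction but not the substance of the lemma; to complete it you would need to either reproduce the argument of \cite{ChernozhukovChetverikovKato2012c} for the intermediate bound quoted above or supply an independent proof of a density bound of that form.
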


By Theorem \ref{theorem:comparison non-Gaussian} and Lemma \ref{lem: anticoncentration}, we can obtain  a bound on the Kolmogorov distance, $\rho$, between the distribution functions of $T_{0}$ and $Z_{0}$, which is the main theorem of this section.

\begin{theorem}[\textbf{Main Result 1: Gaussian Approximation}]
\label{cor: Gaussian to nonGaussian KS 2}
Suppose that there are some constants $0 < c_{1} < C_{1}$  such that $c_{1} \leq \barEp[x_{ij}^2] \leq C_{1}$ for all $1\leq j \leq p$. Then for every $\gamma \in (0,1)$,
\begin{equation*}
\rho \leq
C \left \{  n^{-1/8} (M_{3}^{3/4} \vee M_{4}^{1/2} ) (\log (pn/\gamma))^{7/8} + n^{-1/2} (\log (pn/\gamma))^{3/2} u(\gamma) + \gamma \right \},
\end{equation*}
 where $C>0$ is a constant that depends on $c_1$ and $C_1$ only.
\end{theorem}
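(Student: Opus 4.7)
The plan is to translate the expected-value bound of Theorem \ref{theorem:comparison non-Gaussian} into a Kolmogorov-distance bound by sandwiching the indicator $1\{\cdot\leq t\}$ between smooth envelopes, paying for the resulting one-sided smoothing error with the Gaussian anti-concentration bound of Lemma \ref{lem: anticoncentration}. Fix $t \in \RR$ and a smoothing scale $\epsilon > 0$, and pick $g \in C_b^3(\RR)$ satisfying $1\{z\leq t\}\leq g(z)\leq 1\{z\leq t+\epsilon\}$ with derivative bounds $G_k := \sup|\partial^k g| \lesssim \epsilon^{-k}$ for $k=1,2,3$ (obtained by dilating a fixed smooth cutoff), together with a mirror left-shifted envelope for the opposite direction. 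The sandwich yields
\begin{equation*}
\Pr(T_0\leq t) - \Pr(Z_0\leq t) \leq |\Ep[g(T_0)-g(Z_0)]| + \Pr(t < Z_0 \leq t+\epsilon),
\end{equation*}
and since $c_1\leq \barEp[x_{ij}^2]\leq C_1$ forces the marginal standard deviations of the $Y_j$ to be bounded above and below, Lemma \ref{lem: anticoncentration} gives $\Pr(t < Z_0\leq t+\epsilon) \lesssim \epsilon\sqrt{\log p}$ uniformly in $t$.

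Plugging $G_k \lesssim \epsilon^{-k}$ into Theorem \ref{theorem:comparison non-Gaussian} then reduces the task to choosing $(\beta,u,\gamma,\epsilon)$ to minimize
\begin{equation*}
n^{-1/2}(\epsilon^{-3}+\beta\epsilon^{-2}+\beta^2\epsilon^{-1})M_3^3 + (\epsilon^{-2}+\beta\epsilon^{-1})M_2^2\varphi(u) + \epsilon^{-1}M_2\varphi(u)\sqrt{\log(p/\gamma)} + \gamma + \beta^{-1}\epsilon^{-1}\log p + \epsilon\sqrt{\log p},
\end{equation*}
subject to $uM_2\beta/\sqrt n \lesssim 1$ and $u \geq u(\gamma)$. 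Guided by the $(M_3^{3/4}\vee M_4^{1/2})$ structure of the claim, I treat two regimes. In the $M_3$ regime, take $u = u(\gamma)$ so the mass outside the truncation threshold is absorbed into $\gamma$ and the high-probability truncation penalty is small; take $\beta$ near the constraint $\beta\asymp \sqrt n/(u(\gamma)M_2)$, which turns $\beta^{-1}\epsilon^{-1}\log p$ into the residual $n^{-1/2}(\log(pn/\gamma))^{3/2}u(\gamma)$ summand, and balance $n^{-1/2}\epsilon^{-3}M_3^3$ against $\epsilon\sqrt{\log p}$ by choosing $\epsilon$ of order $n^{-1/8}M_3^{3/4}$ up to logarithmic factors, producing the $n^{-1/8}M_3^{3/4}(\log(pn/\gamma))^{7/8}$ contribution. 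In the $M_4$ regime, I use Chebyshev's inequality to bound $\varphi(u) \lesssim M_4^2/u$, pick $u$ so the $\varphi(u)$-driven terms balance anti-concentration, and derive the parallel $n^{-1/8}M_4^{1/2}(\log(pn/\gamma))^{7/8}$ contribution; taking the worse of the two gives the $\max$ factor. The residual $G_0 \gamma$ term in Theorem \ref{theorem:comparison non-Gaussian} becomes the trailing $\gamma$ summand.

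The main obstacle I anticipate is the four-parameter optimization: $\beta$ enters the comparison bound both as $\beta^2\epsilon^{-1}$ (quadratically) and as $\beta^{-1}\epsilon^{-1}\log p$ (inversely), and is capped from above by the truncation constraint $\beta\lesssim\sqrt n/(uM_2)$; $\epsilon$ has to be small enough to track the indicator sharply but large enough to keep the negative-power terms $\epsilon^{-3}$, $\epsilon^{-2}$, $\epsilon^{-1}$ in check; and $u$ must be chosen so that the truncation residual $\varphi(u)$ is compatible with the anti-concentration scale while $\Pr(|x_{ij}| > u(\barEp[x_{ij}^2])^{1/2})$ stays at most $\gamma$. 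Keeping the log powers aligned with $(\log(pn/\gamma))^{7/8}$ and verifying that the Theorem \ref{theorem:comparison non-Gaussian} constraints $uM_2\beta/\sqrt n\leq 1$ and $u\geq u(\gamma)$ continue to hold at the selected parameter values are the bookkeeping steps that demand the most care.
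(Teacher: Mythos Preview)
Your overall strategy---smooth envelope, Theorem \ref{theorem:comparison non-Gaussian}, then anti-concentration via Lemma \ref{lem: anticoncentration}---is exactly the paper's. The gap is in your ``$M_3$ regime'' parameter choice. Taking $u = u(\gamma)$ and $\beta \asymp \sqrt{n}/u(\gamma)$ leaves $\beta$ uncontrolled when $u(\gamma)$ is small (e.g.\ bounded $x_{ij}$): the term $n^{-1/2}\beta^{2}\epsilon^{-1}M_3^{3} \asymp n^{1/2}u(\gamma)^{-2}\epsilon^{-1}M_3^{3}$ can be arbitrarily large, so balancing only $n^{-1/2}\epsilon^{-3}M_3^{3}$ against $\epsilon\sqrt{\log p}$ ignores the dominant piece of $(\epsilon^{-3}+\beta\epsilon^{-2}+\beta^{2}\epsilon^{-1})$. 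Nor is $\varphi(u(\gamma))$ automatically small: $u(\gamma)$ controls tail \emph{probability}, not truncated second moment, so the $\varphi$-driven terms are not absorbed either. In short, there is no ``$M_3$-only'' regime in which one may ignore both $\beta^{2}$ growth and $\varphi(u)$.

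The paper resolves this by never letting $u$ be small: it takes $u = u(\gamma)\vee u_1\vee u_2$ with $u_1 = n^{3/8}\ell_n^{-5/8}M_3^{3/4}$ and $u_2 = n^{3/8}\ell_n^{-5/8}M_4^{1/2}$ (where $\ell_n = \log(pn/\gamma)$), and always sets $\beta = \sqrt{n}/(2\sqrt{2}u)$. This caps $\beta \leq \bar\beta := n^{1/8}\ell_n^{5/8}M_3^{-3/4}$ and, together with the single bound $\bar\varphi(u) := CM_4^{2}/u$ (your Chebyshev step, used everywhere, not just in an ``$M_4$ regime''), reduces the whole optimization to showing that every term is $\lesssim n^{-1/2}u\,\ell_n^{3/2}$. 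The three summands in the theorem then emerge simply as $n^{-1/2}u_1\ell_n^{3/2}$, $n^{-1/2}u_2\ell_n^{3/2}$, and $n^{-1/2}u(\gamma)\ell_n^{3/2}$. The residual case split is not ``$M_3$ vs.\ $M_4$'' but whether $u$ lies above or below a threshold $u^{\star}$ at which the $\bar\varphi$-driven and $M_3$-driven contributions cross; this governs only the choice of the smoothing scale $\psi$ (your $\epsilon^{-1}$), not the choice of $u$.
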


\begin{remark}[Removing lower bounds on the variance]\label{comment:relaxed}
The condition that $\barEp[x_{ij}^2]\geq c_1$ for {\em all} $1\leq j\leq p$ can not be removed in general.
 However, this condition becomes redundant, if there is at least a nontrivial fraction of components $x_{ij}$'s of vector $x_{i}$ with variance bounded away from zero and all pairwise correlations bounded away from 1: for some $J\subset\{1,\dots,p\}$,
 $$
 |J| \geq \nu p,    \ \  \barEp[x_{ij}^2]\geq c_1,  \ \ \frac{|\barEp[x_{ij}x_{ik}] |}{ \sqrt{\barEp[x_{ij}^2] }\sqrt{\barEp[x_{ik}^2]}} \leq 1- \nu', \ \  \forall  (k, j) \in J \times J: k \neq j,
 $$
where $\nu>0$ and $\nu'>0$ are some constants independent of $n$ or $p$.   Section \ref{sec: low variance} of the SM \cite{CCK13} contains formal results  under this condition. \qed
\end{remark}

In applications, it is useful to have explicit bounds on the upper function $u(\gamma)$. To this end,
let  $h: [0,\infty) \to [0,\infty)$  be a {\em Young-Orlicz modulus}, that is, a convex and strictly increasing function with $h(0) = 0$. Denote by $h^{-1}$ the inverse function of $h$. Standard examples include the power function
 $h(v) =v^q$ with inverse  $h^{-1}(\gamma) = \gamma^{1/q}$  and the exponential function $h(v) = \exp(v) -1$ with  inverse
 $h^{-1}(\gamma) = \log(\gamma+1)$.  These functions describe how many moments the random variables have; for example,
a random variable $\xi$ has finite $q$th moment if $\Ep[|\xi|^q] < \infty$, and is sub-exponential if $\Ep[\exp(|\xi|/C)] < \infty$ for some $C > 0$. We refer to  \cite{VW96}, Chapter 2.2, for further details.

\begin{lemma}[Bounds on the upper function $u(\gamma)$]\label{lem: bound on u}
Let $h: [0,\infty) \to [0,\infty)$ be a Young-Orlicz modulus, and  let $B > 0$ and $D > 0$ be constants such that $(\Ep [ x_{ij}^{2} ])^{1/2} \leq B$ for all $1 \leq i \leq n, 1 \leq j \leq p$, and $\bar \Ep [ h( \max_{1 \leq j \leq p} | x_{ij} |/ D )] \leq 1$. Then under the condition of Theorem \ref{cor: Gaussian to nonGaussian KS 2},
\begin{equation*}
u (\gamma) \leq C \max \{ D h^{-1}(n/\gamma), B \sqrt{\log  (pn/\gamma)} \},
\end{equation*}
where $C>0$ is a constant that depends on $c_1$ and $C_1$ only.
\end{lemma}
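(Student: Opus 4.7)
The plan is to bound $u_x(\gamma)$ and $u_y(\gamma)$ separately, since by definition $u(\gamma)=u_x(\gamma)\vee u_y(\gamma)$. In both cases I reduce to a tail bound on $\max_{i,j}|x_{ij}|$ or $\max_{i,j}|y_{ij}|$, and then convert to a bound on the normalized maximum $|x_{ij}|/(\barEp[x_{ij}^2])^{1/2}$ using the lower variance bound $\barEp[x_{ij}^2]\geq c_1$ furnished by the hypothesis of Theorem \ref{cor: Gaussian to nonGaussian KS 2}.

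For the non-Gaussian piece, I use the Young--Orlicz modulus directly through a Markov argument. Since $h$ is strictly increasing, $h(\max_i a_i)=\max_i h(a_i)$, so
\begin{equation*}
\Ep\Bigl[ h\bigl( \max_{i,j}|x_{ij}|/D\bigr) \Bigr]
= \Ep\Bigl[\max_i h\bigl(\max_j|x_{ij}|/D\bigr)\Bigr]
\leq \sum_{i=1}^n \Ep\Bigl[ h\bigl(\max_j|x_{ij}|/D\bigr)\Bigr]\leq n,
\end{equation*}
the last inequality being the hypothesis $\barEp[h(\max_j|x_{ij}|/D)]\leq 1$. Applying Markov's inequality to the strictly increasing function $h$ gives
$\Pr(\max_{i,j}|x_{ij}|>t)\leq n/h(t/D)$, and choosing $t=Dh^{-1}(n/\gamma)$ makes this at most $\gamma$. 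Dividing by $(\barEp[x_{ij}^2])^{1/2}\geq\sqrt{c_1}$ yields $u_x(\gamma)\leq Dh^{-1}(n/\gamma)/\sqrt{c_1}$.

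For the Gaussian piece, each $y_{ij}$ is centered Gaussian with variance $\Ep[y_{ij}^2]=\Ep[x_{ij}^2]\leq B^2$, so the standard Gaussian tail bound plus a union bound give
\begin{equation*}
\Pr\bigl(\max_{i,j}|y_{ij}|>t\bigr)\leq 2np\exp\bigl(-t^2/(2B^2)\bigr).
\end{equation*}
Setting the right-hand side equal to $\gamma$ and solving yields $t=B\sqrt{2\log(2np/\gamma)}$, and dividing by $\sqrt{c_1}$ gives $u_y(\gamma)\leq C'B\sqrt{\log(pn/\gamma)}$ for some constant $C'$ depending on $c_1$. Taking the maximum of the two bounds and absorbing $c_1, C_1$ into the overall constant $C$ delivers the stated inequality.

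There is no real obstacle here: the proof is essentially a two-line computation per case. The only points requiring mild care are (i) pushing $h$ through the $\max$ over $i$ before summing (which avoids losing a factor of $p$), and (ii) keeping straight that the normalization in the definition of $u(\gamma)$ uses the column-averaged variance $\barEp[x_{ij}^2]$, which is controlled below by $c_1$ and above by $B^2$, whereas the tail bound naturally involves the individual variances $\Ep[x_{ij}^2]\leq B^2$.
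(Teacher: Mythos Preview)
Your proof is correct and follows essentially the same approach as the paper: bound $u_x(\gamma)$ by combining the lower variance bound $\barEp[x_{ij}^2]\geq c_1$ with a Markov argument through the Young--Orlicz modulus (summing over $i$ while keeping $\max_j$ inside to avoid a factor of $p$), and bound $u_y(\gamma)$ via a sub-Gaussian tail estimate plus union bound over all $(i,j)$. The only cosmetic difference is that the paper applies the $\sqrt{c_1}$ conversion before the Markov step rather than after, and uses the equivalent bound $\Ep[\exp(y_{ij}^2/(4B^2))]\lesssim 1$ in place of the explicit Gaussian tail inequality.
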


In applications, parameters $B$ and $D$ (with $M_3$ and $M_4$ as well) are allowed to increase with $n$.
The size of these parameters and the choice of the Young-Orlicz modulus are case-specific.

\subsection{Examples}\label{sub: examples of applications GAR}

The purpose of this subsection is to obtain bounds on $\rho$ for various leading examples frequently encountered in applications.
We are concerned with simple conditions under which $\rho$ decays polynomially in $n$.

Let $c_{1} > 0$ and $C_{1} > 0$ be some constants, and  let $B_{n} \geq 1$ be a sequence of constants.
We allow for the case where $B_{n} \to \infty$ as $n \to \infty$.
We shall first consider applications where one of the following  conditions is satisfied   {\em uniformly in} $1 \leq i \leq n$ and $1 \leq j \leq p$:



\begin{itemize}
\item[(E.1)] $c_{1} \leq \barEp[x^2_{ij}] \leq C_{1}$ and $\displaystyle \max_{k =1,2}\barEp[|x_{ij}|^{2+k}/B^k_n] + \Ep[\exp(|x_{ij}|/B_n)] \leq 4$;
\item[(E.2)] $c_{1} \leq \barEp[x^2_{ij}] \leq C_{1}$ and  $\displaystyle  \max_{k =1,2}\barEp[|x_{ij}|^{2+k}/B^k_n] +\Ep[ (\max_{1\leq j \leq p} |x_{ij}| / B_{n})^4] \leq 4$.
\end{itemize}

\begin{remark}
As a rather special case, Condition (E.1) covers vectors $x_i$ made up from sub-exponential random variables, that is, $$\barEp [ x_{ij}^{2} ] \geq c_{1} \text{ and }\Ep[\exp ( |x_{ij}|/C_{1}) ]  \leq 2$$
  (set $B_n = C_1$), which in turn includes, as a special case, vectors $x_i$ made up from sub-Gaussian random variables.  Condition (E.1) also covers the case when  $|x_{ij}| \leq B_n$ for all $i$ and $j$, where $B_n$ may increase with $n$. Condition (E.2) is weaker than (E.1) in that it restricts only the growth of the fourth moments but stronger than (E.1) in that it restricts the growth of $\max_{1\leq j\leq p}|x_{ij}|$. \qed
\end{remark}

We shall also consider regression applications where one of the following conditions is satisfied {\em uniformly in} $1 \leq i \leq n$ and $1 \leq j \leq p$:
\begin{itemize}
\item[(E.3)]  \ $x_{ij}= z_{ij} \eps_{ij}$, where $z_{ij}$ are non-stochastic with $|z_{ij}| \leq B_n$, $\En[z_{ij}^2]=1$, and $\Ep[\eps_{ij}]=0$, $\Ep[\eps^2_{ij}] \geq c_{1}$, and $\Ep[ \exp (|\eps_{ij}|/C_{1}) ] \leq  2$; or
\item[(E.4)]  \ $x_{ij}= z_{ij} \eps_{ij}$, where $z_{ij}$ are non-stochastic with $|z_{ij}| \leq B_n$, $\En[z_{ij}^2]=1$, and  $\Ep[\eps_{ij}]=0$, $\Ep[\eps^2_{ij}] \geq c_{1}$, and
 $\Ep[ \max_{1 \leq j \leq p} \eps_{ij}^4] \leq C_1$.
\end{itemize}
\begin{remark} Conditions (E.3) and (E.4) cover examples that arise in high-dimensional regression, for example, \cite{CandesTao2007}, which we shall revisit later in the paper.
Typically, $\eps_{ij}$'s are independent of $j$ (i.e., $\eps_{ij} = \eps_{i})$ and hence $\Ep[ \max_{1 \leq j \leq p} \eps_{ij}^4] \leq C_1$ in condition (E.4) reduces to $\Ep[ \eps_{i}^{4} ] \leq C_1$.
Interestingly, these examples are also connected to  spin glasses, see, for example, \cite{Talagrand2003} and \cite{Panchenko2013} ($z_{ij}$ can be interpreted
as generalized products of ``spins" and $\eps_{i}$ as their random ``interactions"). Note that conditions (E.3) and (E.4) are special cases of conditions (E.1) and (E.2) but we state (E.3) and (E.4) explicitly because these conditions are useful in applications.
\qed \\
\end{remark}

\begin{corollary}[\textbf{Gaussian Approximation in Leading Examples}]\label{cor: central limit theorem}
Suppose that there exist constants $c_2>0$ and $C_2>0$ such that one of the following conditions is satisfied:
(i) (E.1) or (E.3) holds and $B_n^2 (\log (pn))^7/n\leq C_2 n^{-c_2}$ or
(ii) (E.2) or (E.4) holds and $B_n^4 (\log (pn))^7/n\leq C_2 n^{-c_2}$.
Then there exist constants $c > 0$ and $C>0$ depending only on $c_{1}, C_1, c_{2}$, and $C_{2}$ such that $$\rho  \leq Cn^{-c}.$$
\end{corollary}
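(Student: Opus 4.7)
The plan is to derive the corollary from Theorem \ref{cor: Gaussian to nonGaussian KS 2} and Lemma \ref{lem: bound on u} by bounding the quantities $M_3$, $M_4$, and $u(\gamma)$ under each of the four conditions, and then making a suitable polynomial choice of the confidence level parameter, namely $\gamma = n^{-c}$ for small $c > 0$. First I would observe that (E.3) reduces to (E.1) after replacing $B_n$ by $C_1 B_n$, since $|z_{ij}| \leq B_n$ and sub-exponentiality of $\eps_{ij}$ together imply $\Ep[\exp(|x_{ij}|/(C_1 B_n))]\leq 2$ and $\barEp[|x_{ij}|^{2+k}/(C B_n)^k] \leq C'$; analogously, (E.4) reduces to (E.2) because $\barEp[(\max_j |x_{ij}|/B_n)^4] \leq C_1$ from $|z_{ij}| \leq B_n$ and $\Ep[\max_j \eps_{ij}^4] \leq C_1$. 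In both reductions the side-condition $c_1 \leq \barEp[x_{ij}^2] \leq C_1'$ required by Theorem \ref{cor: Gaussian to nonGaussian KS 2} also follows from $\En[z_{ij}^2]=1$ together with the two-sided control on $\Ep[\eps_{ij}^2]$.

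Next I would bound $M_3$ and $M_4$ uniformly. Under either (E.1) or (E.2), $\barEp[|x_{ij}|^{2+k}] \leq 4 B_n^k$ for $k=1,2$, so $M_3 \leq (4B_n)^{1/3}$ and $M_4 \leq (4B_n^2)^{1/4}$, and hence $M_3^{3/4} \vee M_4^{1/2} \lesssim B_n^{1/4}$. For $u(\gamma)$ I would select the Young-Orlicz modulus tailored to each tail regime. In case (E.1) take $h(v) = e^v - 1$; a standard maximal inequality for Orlicz norms (of the form $\|\max_{j\leq p}|x_{ij}|\|_{\psi_1} \lesssim (\log p) \max_j \|x_{ij}\|_{\psi_1}$) yields admissible $D \lesssim B_n \log p$ and $B \lesssim B_n$, so that Lemma \ref{lem: bound on u} gives
\begin{equation*}
u(\gamma) \lesssim B_n \log p \cdot \log(1 + n/\gamma) + B_n \sqrt{\log(pn/\gamma)} \lesssim B_n (\log(pn/\gamma))^{2}.
\end{equation*}
In case (E.2) take $h(v) = v^4$; then $D \lesssim B_n$ directly from the fourth-moment bound, $h^{-1}(n/\gamma) = (n/\gamma)^{1/4}$, and Lemma \ref{lem: bound on u} delivers
\begin{equation*}
u(\gamma) \lesssim B_n (n/\gamma)^{1/4} + B_n\sqrt{\log(pn/\gamma)} \lesssim B_n (n/\gamma)^{1/4}.
\end{equation*}

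Finally I would assemble the three terms in Theorem \ref{cor: Gaussian to nonGaussian KS 2} with $\gamma = n^{-c}$ so that $\log(pn/\gamma) \lesssim \log(pn)$ and verify polynomial decay. The first term is $n^{-1/8} B_n^{1/4}(\log(pn))^{7/8}$ in every case, whose 8th power is $B_n^2(\log(pn))^{7}/n$, controlled by condition (i) and a fortiori by (ii) since $B_n \geq 1$. In case (i) the second term is $B_n n^{-1/2} (\log(pn))^{7/2}$, whose square matches exactly $B_n^2(\log(pn))^{7}/n$ in condition (i). In case (ii) the second term is $B_n n^{-1/4 + c/4} (\log(pn))^{3/2}$, whose fourth power is bounded by $B_n^4 (\log(pn))^{6}/n \cdot n^{5c}$, controlled by condition (ii). Taking $c = c_2/8$ then makes all three terms $\leq C n^{-c}$, which proves the corollary. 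The main delicate step is the sub-exponential $u(\gamma)$ estimate, where the $\log p$ factor from the Orlicz maximal inequality combines with the $\log(n/\gamma)$ from $h^{-1}$ to give the $(\log(pn/\gamma))^2$ bound — this is precisely what forces the exponent $7$ on $\log(pn)$ in the assumption.
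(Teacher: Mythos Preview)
Your proof is correct and follows essentially the same route as the paper: reduce (E.3), (E.4) to (E.1), (E.2), bound $M_3^{3/4}\vee M_4^{1/2}\lesssim B_n^{1/4}$, apply Lemma \ref{lem: bound on u} with $h(v)=e^v-1$ (and $D\lesssim B_n\log p$) in the sub-exponential case and $h(v)=v^4$ (and $D\lesssim B_n$) in the finite-fourth-moment case, then plug into Theorem \ref{cor: Gaussian to nonGaussian KS 2} with $\gamma=n^{-c}$ for small $c$. The only slip is the exponent in ``$n^{5c}$'' in your case (ii) calculation --- the fourth power of $n^{-1/4+c/4}$ is $n^{-1+c}$, not $n^{-1+5c}$ --- but since you take $c=c_2/8<c_2$ the argument goes through unchanged.
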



\begin{remark}
This corollary follows relatively directly from Theorem \ref{cor: Gaussian to nonGaussian KS 2} with help of Lemma \ref{lem: bound on u}.
Moreover,  from Lemma \ref{lem: bound on u},  it is routine to find other conditions that lead to the conclusion of Corollary \ref{cor: central limit theorem}.  \qed
\end{remark}


\begin{remark}[The benefits from the overall proof strategy]
\label{comment: warmup}
We note in Section \ref{sec: elementary GAR} of the SM \cite{CCK13}, that it is possible to derive the following
result by a much simpler proof:

\begin{lemma}[A Simple GAR] Suppose that there are some constants $c_{1} > 0$ and  $C_{1} > 0$ such that $c_{1}  \leq \barEp[x_{ij}^2] \leq C_{1}$ for all $1\leq j \leq p$. Then there exists a constant $C>0$ depending only on $c_{1}$ and $C_{1}$ such that
\begin{equation}\label{eq: warmup}
\sup_{t\in\RR}\left|\Pr ( T_{0} \leq t ) - \Pr (Z_{0} \leq t) \right|   \leq C (n^{-1} (\log (pn))^7)^{1/8} (\barEp [S^3_i])^{1/4},
\end{equation}
 where $S_i :=  \max_{1\leq j\leq p} ( |x_{ij}| +|y_{ij}|)$.
 \end{lemma}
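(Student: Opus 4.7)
The plan is to combine Slepian's smart-path interpolation with Stein's leave-one-out method, applied to a softmax-smoothed maximum, and then translate the resulting inequality for smooth test functions into a Kolmogorov-distance bound via the anti-concentration Lemma~\ref{lem: anticoncentration}. The argument deliberately skips the truncation step of Theorem~\ref{theorem:comparison non-Gaussian}; that shortcut is precisely what lets us work with the crude factor $\barEp[S_i^3]$ instead of the more refined $M_3^3$ together with $\varphi(u)$. Concretely, fix a reference $\psi\in C^3_b(\RR)$ with $1_{(-\infty,0]}\leq \psi\leq 1_{(-\infty,1]}$ and all derivatives bounded, set $g_{t,\eps}(z):=\psi((z-t)/\eps)$, so that $|g_{t,\eps}^{(k)}|\lesssim \eps^{-k}=:G_k$, and work with $h:=g_{t,\eps}\circ F_\beta$.

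Along the interpolation path $Z(\tau)=\sqrt{\tau}\,X+\sqrt{1-\tau}\,Y$, apply the standard Gaussian / leave-one-out integration-by-parts identity. Because $x_i$ and $y_i$ share their covariance, the first- and second-order contributions cancel, and the remainder is a third-order trilinear form in the partials of $h$ against cubes of the summands of $Z(\tau)$. The key ingredient is the $p$-free \emph{softmax stability} bound
\begin{equation*}
\sum_{j,k,\ell=1}^p\bigl|\partial_{jk\ell}h(z)\bigr|\ \lesssim\ G_3+\beta G_2+\beta^2 G_1,
\end{equation*}
which follows from the fact that $(\partial_j F_\beta(z))_j$ is a probability vector in $j$ and that higher partials of $F_\beta$ can be written as polynomial-in-$\beta$ combinations of these softmax weights, with no explicit $p$-dependence. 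Each summand of $Z(\tau)$ satisfies $|z_{ij}|\leq |x_{ij}|+|y_{ij}|\leq S_i$, so the cube of the $i$-th summand is bounded by $S_i^3$, giving
\begin{equation*}
\bigl|\Ep[h(X)]-\Ep[h(Y)]\bigr|\ \lesssim\ n^{-1/2}\bigl(G_3+\beta G_2+\beta^2 G_1\bigr)\,\barEp[S_i^3].
\end{equation*}

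For the passage to Kolmogorov distance, use the softmax gap $0\leq F_\beta-\max\leq \beta^{-1}\log p$ and monotonicity of $g_{t,\eps}$ to sandwich $\Pr(T_0\leq t)\leq \Ep[g_{t+\beta^{-1}\log p,\eps}(F_\beta(X))]$, replace $X$ by $Y$ at the cost of the Slepian error above, and then bound $\Ep[g_{t+\beta^{-1}\log p,\eps}(F_\beta(Y))]\leq \Pr(Z_0\leq t+\beta^{-1}\log p+\eps)$. Closing the remaining gap $\Pr(Z_0\leq t+\beta^{-1}\log p+\eps)-\Pr(Z_0\leq t)\leq C\sqrt{\log p}\,(\beta^{-1}\log p+\eps)$ via Lemma~\ref{lem: anticoncentration}, and arguing symmetrically for the reverse direction, yields
\begin{equation*}
\rho\ \lesssim\ n^{-1/2}\bigl(\eps^{-3}+\beta\eps^{-2}+\beta^2\eps^{-1}\bigr)\barEp[S_i^3]+\sqrt{\log p}\,(\beta^{-1}\log p+\eps).
\end{equation*}
Choosing $\beta=\eps^{-1}\log p$ matches the softmax gap to the indicator smoothing; the Slepian piece then collapses to $n^{-1/2}\eps^{-3}(\log p)^2\barEp[S_i^3]$ and the anti-concentration piece to $\eps\sqrt{\log p}$. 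Equating the two through $\eps^4\sim n^{-1/2}(\log p)^{3/2}\barEp[S_i^3]$ yields $\rho\lesssim (n^{-1}(\log p)^7)^{1/8}(\barEp[S_i^3])^{1/4}$, which matches the claim after absorbing $\log p$ into $\log(pn)$.

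The main obstacle is the $p$-free third-derivative estimate in Step~2: expressing the third partials of $g\circ F_\beta$ in terms of the softmax weights and showing that, after summing over \emph{all} index triples, the result is controlled only by powers of $\beta$ and $G_1,G_2,G_3$, with no combinatorial $p^3$ blow-up. Once that bound is in hand, the remaining steps are a routine chain of monotonicity, softmax gap, and Lemma~\ref{lem: anticoncentration} inequalities.
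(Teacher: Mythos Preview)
Your proposal is correct and follows essentially the same route as the paper's own proof (given in the Supplementary Material as Lemma~\ref{thm: warmup comparison} and Corollary~\ref{cor: Gaussian to nonGaussian KS 1}): Slepian interpolation with leave-one-out to obtain $|\Ep[h(X)]-\Ep[h(Y)]|\lesssim n^{-1/2}(G_3+\beta G_2+\beta^2 G_1)\barEp[S_i^3]$, then smoothing the indicator, invoking the anti-concentration Lemma~\ref{lem: anticoncentration}, and optimizing via $\beta=\eps^{-1}\log p$ and $\eps^4\asymp n^{-1/2}(\log p)^{3/2}\barEp[S_i^3]$. The ``$p$-free third-derivative estimate'' you flag as the main obstacle is precisely the content of Lemma~\ref{lemma: bounds on derivatives of m}, and your final remark about replacing $\log p$ by $\log(pn)$ matches the paper's observation that $\psi\leq Cn^{1/8}$ forces $\log(p\psi)\lesssim\log(pn)$ in the anti-concentration step.
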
 This simple (though apparently new, at this level of generality) result follows from the classical Lindeberg's argument previously given in Chatterjee \cite{Chatterjee2005a} (in the special context of a spin-glass setting like (E.4) with $\epsilon_{ij} = \epsilon_i$)  in combination with Lemma \ref{lem: anticoncentration} and standard kernel smoothing of indicator functions.   In the SM \cite{CCK13}, we provide the proof using Slepian-Stein methods, which a reader wishing to see a simple exposition (before reading a much more involved proof of the main results) may find helpful.  The bound here is only useful in some limited cases, for example, in (E.3) or (E.4)  when $B_n^6 (\log (pn))^7/n \to 0$. When $B_n^6 (\log (pn))^7/n \to \infty$, the simple methods fail, requiring a more delicate argument.  Note
 that in applications $B_n$ typically grows at a fractional power of $n$, see, for example, \cite{ChernozhukovChetverikovKato2012b} and \cite{Chetverikov2011}, and so the limitation is rather major, and was the principal motivation for our whole paper. \qed
\end{remark}

\section{Gaussian Multiplier Bootstrap}
\label{sec: multiplier bootstrap}

\subsection{A Gaussian-to-Gaussian Comparison Lemma}\label{sec: Gaussian vs Gaussian}
The proofs of the main results in this section rely on the following lemma. Let $V$ and $Y$ be centered Gaussian random vectors in $\RR^{p}$
with covariance matrices $\Sigma^{V}$ and
$\Sigma^{Y}$, respectively. The following lemma compares the distribution functions
of $\max_{1 \leq j \leq p}V_{j} \text{and} \max_{1 \leq j \leq p} Y_{j}$
in terms of $p$ and
\begin{equation*}
\Delta_0:=\max_{1\leq j,k\leq p}\left|\Sigma^V_{jk}-\Sigma^Y_{jk}\right|.
\end{equation*}

\begin{lemma}[Comparison of Distributions of Gaussian Maxima]\label{lemma: distances Gaussian to Gaussian}
Suppose that there are some constants $0 < c_{1} < C_{1}$ such that  $c_{1} \leq \Sigma^Y_{jj}\leq C_{1}$ for  all $1\leq j \leq p$.
Then there exists a constant $C>0$ depending only on $c_{1}$ and $C_{1}$ such that
\begin{equation*}
\sup_{t\in\RR}\left|\Pr\left(\max_{1 \leq j \leq p} V_{j}\leq t\right)-\Pr\left(\max_{1 \leq j \leq p} Y_{j}\leq t\right)\right| \leq  C\Delta_{0}^{1/3}(1 \vee \log (p/\Delta_0))^{2/3}.
\end{equation*}
\end{lemma}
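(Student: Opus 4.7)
My plan is to combine a Slepian-style smart-path interpolation with a double smoothing argument, first smoothing the max by the free-energy function $F_\beta$ and then smoothing the indicator of $\{\cdot \leq t\}$ by a $C^3$ bump, and finally to balance the smoothing parameters against the anti-concentration bound of Lemma~\ref{lem: anticoncentration}. Without loss of generality I take $V$ and $Y$ independent. For fixed $t\in\RR$ and smoothing bandwidth $\delta>0$, I would pick $g_{t,\delta}\in C_b^3(\RR)$ with $g_{t,\delta}\equiv 1$ on $(-\infty,t]$, $g_{t,\delta}\equiv 0$ on $[t+\delta,\infty)$, and $\sup|g_{t,\delta}^{(k)}|\lesssim \delta^{-k}$ for $k=1,2,3$. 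Using $\max_j z_j \leq F_\beta(z) \leq \max_j z_j + \beta^{-1}\log p$, I sandwich $\Pr(\max_j V_j\leq t)$ between $E[g_{t-\beta^{-1}\log p,\delta}(F_\beta(V))]$ and $E[g_{t,\delta}(F_\beta(V))]$, and similarly for $Y$, reducing matters to bounding $|E[\varphi(V)]-E[\varphi(Y)]|$ with $\varphi := g_{t',\delta}\circ F_\beta \in C^3_b(\RR^p)$ for an appropriate shift $t'$.

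The Slepian step is then standard: set $Z(s) := \sqrt{1-s}\,V + \sqrt{s}\,Y$ and $\Psi(s) := E[\varphi(Z(s))]$. Differentiating in $s$ and applying Gaussian integration by parts coordinatewise gives the interpolation identity
\begin{equation*}
\Psi'(s) \;=\; \tfrac{1}{2}\sum_{j,k=1}^p \bigl(\Sigma^Y_{jk}-\Sigma^V_{jk}\bigr)\, E\bigl[\partial_j\partial_k\varphi(Z(s))\bigr],
\end{equation*}
so that $|E[\varphi(V)]-E[\varphi(Y)]| \leq \tfrac{1}{2}\Delta_0 \sum_{j,k}\sup_z |\partial_j\partial_k\varphi(z)|$. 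Using the chain rule, $\partial_j\partial_k(g\circ F_\beta) = g''(F_\beta)(\partial_j F_\beta)(\partial_k F_\beta) + g'(F_\beta)\partial_j\partial_k F_\beta$, together with the stability identities $\partial_j F_\beta \geq 0$, $\sum_j \partial_j F_\beta \equiv 1$, and $\sum_{j,k}|\partial_j\partial_k F_\beta| \leq 2\beta$ (properties of the softmax that are summarized in the Appendix of the paper), I obtain $\sum_{j,k}\sup|\partial_j\partial_k\varphi| \lesssim G_2 + G_1\beta \lesssim \delta^{-2} + \beta\delta^{-1}$.

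Putting the pieces together, the sandwich gives
\begin{equation*}
\Pr(\max_j V_j \leq t) - \Pr(\max_j Y_j \leq t) \;\lesssim\; \Delta_0\bigl(\delta^{-2}+\beta\delta^{-1}\bigr) \;+\; \Pr\!\bigl(t < \max_j Y_j \leq t+\delta+\beta^{-1}\log p\bigr),
\end{equation*}
and Lemma~\ref{lem: anticoncentration}(a), which applies thanks to $c_1 \leq \Sigma^Y_{jj} \leq C_1$, controls the last probability by $C(\delta+\beta^{-1}\log p)\bigl(\sqrt{\log p} + \sqrt{1\vee\log(1/(\delta+\beta^{-1}\log p))}\bigr)$. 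To make the two slack terms comparable, I would set $\beta = \delta^{-1}\log p$, so that $\beta^{-1}\log p = \delta$, and the bound collapses to
\begin{equation*}
\Delta_0(\log p)\,\delta^{-2} \;+\; \delta\bigl(\sqrt{\log p} + \sqrt{1\vee\log(1/\delta)}\bigr).
\end{equation*}
Optimizing the choice of $\delta$ by balancing $\Delta_0(\log p)\delta^{-2}$ against $\delta\sqrt{\log p}$ leads to $\delta \asymp \Delta_0^{1/3}(\log(p/\Delta_0))^{1/6}$ and the advertised rate $C\Delta_0^{1/3}(1\vee\log(p/\Delta_0))^{2/3}$; the reverse direction is symmetric.

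The main obstacle, in my view, is not the Slepian computation itself but the bookkeeping in the final optimization: one must keep the refined $\log(1/\delta)$ term from anti-concentration (rather than naively replacing it by $\log p$) in order to replace $\log p$ by $\log(p/\Delta_0)$ in the statement, which is essential when $\Delta_0$ is very small. A secondary technical point is verifying the Gaussian integration by parts argument when $\Sigma^V$ or $\Sigma^Y$ may be singular, which is handled by a standard regularization (adding $\epsilon I$ and letting $\epsilon\downarrow 0$).
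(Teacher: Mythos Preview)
Your approach is correct and is essentially the same Slepian interpolation + smooth-max + anti-concentration argument that the authors use (the paper does not prove Lemma~\ref{lemma: distances Gaussian to Gaussian} here but cites \cite{ChernozhukovChetverikovKato2012c}, where exactly this route is taken). One small correction: the bound you need after the Slepian identity is $\sup_{z}\sum_{j,k}|\partial_j\partial_k\varphi(z)|\leq G_2+2G_1\beta$ (Lemma~\ref{lemma: bounds on derivatives of m}), not $\sum_{j,k}\sup_z|\partial_j\partial_k\varphi(z)|$; the latter order of operations would lose the cancellation in the softmax derivatives, but your stated conclusion $G_2+G_1\beta$ is the right one, so this is only a notational slip.
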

\begin{remark} The result  is derived in \cite{ChernozhukovChetverikovKato2012c}, and  extends that of \cite{Chatterjee2005b} who gave an explicit error in Sudakov-Fernique comparison of expectations of maxima of Gaussian random vectors.  \qed
\end{remark}

\subsection{Results on Gaussian Multiplier Bootstrap}

Suppose that we have a dataset $(x_{i})_{i=1}^{n}$ consisting of $n$ independent centered random  vectors  $x_{i}$ in $\RR^{p}$.
In this section, we are interested in approximating quantiles of
\begin{equation}
T_0= \max_{1\leq j\leq p}\frac{1}{\sqrt{n}}\sum_{i=1}^{n}x_{ij} \label{average}
\end{equation}
using the multiplier bootstrap method. Specifically, let $(e_{i})_{i=1}^{n}$ be a  sequence of i.i.d. $N(0,1)$ variables independent of $(x_{i})_{i=1}^{n}$, and let
\begin{equation}
W_0= \max_{1\leq j\leq p}\frac{1}{\sqrt{n}}\sum_{i=1}^{n}x_{ij}e_{i}. \label{average-multiplier}
\end{equation}
Then we define the multiplier bootstrap estimator of the $\alpha$-quantile of $T_0$ as the conditional $\alpha$-quantile of $W_0$ given $(x_{i})_{i=1}^{n}$, that is,
\begin{equation*}
c_{W_0}(\alpha):=\inf\{ t \in \RR: \Pr_e(W_0\leq t) \geq \alpha \},
\end{equation*}
where $\Pr_e$ is the probability measure induced by the multiplier variables $(e_{i})_{i=1}^n$ holding $(x_{i})_{i=1}^{n}$ fixed (i.e., $\Pr_e(W_0\leq t) = \Pr (W_0\leq t \mid (x_{i})_{i=1}^{n})$).
The multiplier bootstrap theorem below provides a non-asymptotic  bound on the bootstrap estimation error.

Before presenting the theorem, we first give a simple useful lemma that is helpful in the proof of the theorem and in power analysis in applications. Define
\begin{equation*}
c_{Z_0}(\alpha):=\inf\{t\in\RR:\Pr(Z_0\leq t)\geq \alpha\},
\end{equation*}
where $Z_0=\max_{1\leq j\leq p}\sum_{i=1}^ny_{ij}/\sqrt{n}$ and $(y_{i})_{i=1}^n$ is a sequence of independent $N(0,\Ep[x_{i}x_{i}^{\prime}])$ vectors. Recall that $\Delta=\max_{1 \leq j, k \leq p} \left | \En[x_{ij}x_{ik}]-\barEp[x_{ij} x_{ik}] \right |$.

\begin{lemma}[Comparison of Quantiles, I]\label{lem: quantile conditional to unconditional}
Suppose that there are some constants $0 < c_{1} < C_{1}$ such that $c_{1} \leq \barEp[x_{ij}^2]\leq C_{1}$ for all $1\leq j\leq p$.
 Then for every  $\alpha \in (0,1)$,
\begin{align*}
&\Pr\big(c_{W_0}(\alpha)\leq c_{Z_0}(\alpha+\pi(\vartheta))\big) \geq 1- \Pr(\Delta> \vartheta), \\
&\Pr\big(c_{Z_0}(\alpha)\leq c_{W_0}(\alpha+\pi(\vartheta))\big) \geq 1- \Pr(\Delta> \vartheta),
\end{align*}
where, for $C_{2} > 0$ denoting a constant depending only on $c_{1}$ and $C_{1}$,
 $$\pi(\vartheta):=C_2\vartheta^{1/3}(1 \vee \log(p/\vartheta))^{2/3}.$$
\end{lemma}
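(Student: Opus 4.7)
The plan is to apply the Gaussian-to-Gaussian comparison (Lemma 3.1) conditionally on the data $(x_i)_{i=1}^n$ on the event $\{\Delta\leq\vartheta\}$, and then convert the resulting Kolmogorov-type bound into the stated quantile inequalities by a standard inversion argument.

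First I would observe that, conditional on $(x_i)_{i=1}^n$, the vector $(\sum_i x_{ij} e_i/\sqrt n)_{j=1}^p$ is centered Gaussian with covariance matrix $\hat\Sigma_{jk}=\En[x_{ij}x_{ik}]$, so $W_0$ is the maximum of a centered Gaussian vector with this (random) covariance. On the other hand, $Z_0$ is the maximum of a centered Gaussian vector with deterministic covariance $\Sigma_{jk}=\barEp[x_{ij}x_{ik}]$, and by hypothesis $c_1\leq \Sigma_{jj}\leq C_1$ for all $j$. On the event $\{\Delta\leq\vartheta\}$ we have $\max_{j,k}|\hat\Sigma_{jk}-\Sigma_{jk}|\leq\vartheta$, so applying Lemma 3.1 (conditionally) with $\Sigma^V=\hat\Sigma$ and $\Sigma^Y=\Sigma$ yields, for some constant $C_2$ depending only on $c_1,C_1$,
\begin{equation*}
\sup_{t\in\RR}\bigl|\Pr_e(W_0\leq t)-\Pr(Z_0\leq t)\bigr|\leq C_2\,\vartheta^{1/3}(1\vee\log(p/\vartheta))^{2/3}=\pi(\vartheta).
\end{equation*}

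Now I would convert this Kolmogorov bound into the two quantile inequalities. For the first, set $t^\ast:=c_{Z_0}(\alpha+\pi(\vartheta))$, which is deterministic; by definition $\Pr(Z_0\leq t^\ast)\geq\alpha+\pi(\vartheta)$. On $\{\Delta\leq\vartheta\}$ the display above at $t=t^\ast$ gives $\Pr_e(W_0\leq t^\ast)\geq\alpha+\pi(\vartheta)-\pi(\vartheta)=\alpha$, which by definition of $c_{W_0}(\alpha)$ as an infimum implies $c_{W_0}(\alpha)\leq t^\ast$. Hence
\begin{equation*}
\Pr\bigl(c_{W_0}(\alpha)\leq c_{Z_0}(\alpha+\pi(\vartheta))\bigr)\geq \Pr(\Delta\leq\vartheta)=1-\Pr(\Delta>\vartheta).
\end{equation*}
For the second inequality the argument is symmetric: take $t^\ast:=c_{W_0}(\alpha+\pi(\vartheta))$, which is $(x_i)$-measurable but constant with respect to the Gaussian sequence $(y_i)$; by right-continuity of the conditional CDF, $\Pr_e(W_0\leq t^\ast)\geq\alpha+\pi(\vartheta)$, and on $\{\Delta\leq\vartheta\}$ the comparison bound yields $\Pr(Z_0\leq t^\ast)\geq\alpha$, so $c_{Z_0}(\alpha)\leq t^\ast$, proving the claim.

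The only mildly delicate point is the measurability bookkeeping: $c_{W_0}(\cdot)$ and $\Pr_e(W_0\leq\cdot)$ are functions of $(x_i)_{i=1}^n$, whereas $c_{Z_0}(\cdot)$ and $\Pr(Z_0\leq\cdot)$ are deterministic because $(y_i)$ is independent of $(x_i)$. Once that is kept straight, the proof reduces to invoking Lemma 3.1 conditionally and the standard fact that a uniform CDF bound of size $\pi(\vartheta)$ translates into a quantile shift of at most $\pi(\vartheta)$. I do not anticipate any technical obstacle beyond verifying that the constants $c_1,C_1$ bounding $\barEp[x_{ij}^2]$ indeed satisfy the hypothesis of Lemma 3.1 uniformly, which is immediate from the assumption.
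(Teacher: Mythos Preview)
Your proposal is correct and follows essentially the same approach as the paper: apply Lemma~\ref{lemma: distances Gaussian to Gaussian} conditionally on $(x_i)_{i=1}^n$ on the event $\{\Delta\leq\vartheta\}$ to obtain the uniform bound $|\Pr_e(W_0\leq t)-\Pr(Z_0\leq t)|\leq\pi(\vartheta)$, then plug in $t=c_{Z_0}(\alpha+\pi(\vartheta))$ (respectively $t=c_{W_0}(\alpha+\pi(\vartheta))$) and invert. The paper's proof is terser and simply says the second claim ``follows similarly,'' whereas you spell out both directions and the measurability bookkeeping explicitly.
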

Recall that $\rho:=\sup_{t\in\RR}\left|\Pr(T_0\leq t)-\Pr(Z_0\leq t)\right|.$
We are now in position to state the first main theorem of this section.
\begin{theorem}[\textbf{Main Result 2: Validity of Multiplier Bootstrap for High-Dimensional Means}]\label{thm: multiplier bootrstrap I}
Suppose that for some constants $0 < c_{1} < C_{1}$,  we have $c_{1} \leq \barEp[x_{ij}^2]\leq C_{1}$ for all $1\leq j\leq p$.
Then for every  $\vartheta>0$,
\begin{equation*}
\rho_{\ominus}:=\sup_{\alpha\in(0,1)}\Pr(\{T_0\leq c_{W_0}(\alpha)\}\ominus\{T_0\leq c_{Z_0}(\alpha)\})\leq 2(\rho+\pi(\vartheta)+\Pr(\Delta>\vartheta)),
\end{equation*}
where $\pi(\cdot)$ is defined in Lemma \ref{lem: quantile conditional to unconditional}. In addition,
\[
\sup_{\alpha\in(0,1)} \left | \Pr(T_0\leq c_{W_0}(\alpha))-\alpha \right|\leq \rho_{\ominus}+\rho.
\]
\end{theorem}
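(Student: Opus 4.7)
The plan is to prove the two displayed bounds in order. For the first, fix $\alpha \in (0,1)$ and write $A := \{T_0 \leq c_{W_0}(\alpha)\}$ and $B := \{T_0 \leq c_{Z_0}(\alpha)\}$. The key observation is that the symmetric difference $A \ominus B$ is contained in the event that $T_0$ falls strictly between the two quantiles, that is, $A \ominus B \subseteq \{c_{W_0}(\alpha) \wedge c_{Z_0}(\alpha) < T_0 \leq c_{W_0}(\alpha) \vee c_{Z_0}(\alpha)\}$. So the whole game is to sandwich $c_{W_0}(\alpha)$ between nearby Gaussian quantiles and then use anti-concentration/Gaussian approximation to bound the probability of the sandwich event.

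Concretely, I would apply Lemma \ref{lem: quantile conditional to unconditional} in both directions: the first inequality there yields $c_{W_0}(\alpha) \leq c_{Z_0}(\alpha + \pi(\vartheta))$ off an event of probability at most $\Pr(\Delta > \vartheta)$; applying the second inequality at the shifted level $\alpha - \pi(\vartheta)$ yields $c_{Z_0}(\alpha - \pi(\vartheta)) \leq c_{W_0}(\alpha)$ off another event of probability at most $\Pr(\Delta > \vartheta)$. A union bound shows that on an event of probability at least $1 - 2\Pr(\Delta > \vartheta)$ we have
\begin{equation*}
c_{Z_0}(\alpha - \pi(\vartheta)) \leq c_{W_0}(\alpha) \leq c_{Z_0}(\alpha + \pi(\vartheta)),
\end{equation*}
so $A \ominus B$ is contained, modulo a set of probability $\leq 2\Pr(\Delta > \vartheta)$, in $\{c_{Z_0}(\alpha - \pi(\vartheta)) < T_0 \leq c_{Z_0}(\alpha + \pi(\vartheta))\}$. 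Replacing $T_0$ by $Z_0$ costs $2\rho$ (one $\rho$ on each side), and the resulting probability $\Pr(c_{Z_0}(\alpha - \pi(\vartheta)) < Z_0 \leq c_{Z_0}(\alpha + \pi(\vartheta)))$ is at most $2\pi(\vartheta)$ by the defining property of quantiles (using that $Z_0$ has a continuous distribution, which follows from Lemma \ref{lem: anticoncentration} applied with $\varsigma \downarrow 0$). Taking the supremum over $\alpha$ yields $\rho_{\ominus} \leq 2(\rho + \pi(\vartheta) + \Pr(\Delta > \vartheta))$.

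For the second bound, I would use the triangle inequality:
\begin{equation*}
|\Pr(T_0 \leq c_{W_0}(\alpha)) - \alpha| \leq |\Pr(T_0 \leq c_{W_0}(\alpha)) - \Pr(T_0 \leq c_{Z_0}(\alpha))| + |\Pr(T_0 \leq c_{Z_0}(\alpha)) - \alpha|.
\end{equation*}
The first term is at most $\Pr(A \ominus B) \leq \rho_{\ominus}$ since for any two events $|\Pr(A) - \Pr(B)| \leq \Pr(A \ominus B)$. The second term equals $|\Pr(T_0 \leq c_{Z_0}(\alpha)) - \Pr(Z_0 \leq c_{Z_0}(\alpha))| \leq \rho$, where we again use continuity of the distribution of $Z_0$ so that $\Pr(Z_0 \leq c_{Z_0}(\alpha)) = \alpha$. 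Taking the supremum over $\alpha$ finishes the proof.

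The main conceptual step is the sandwiching: once one sees that the symmetric difference lives in a narrow quantile window, everything else is routine triangle-inequality bookkeeping. The only delicate point is making sure the quantile shifts compose correctly when inverting the direction of Lemma \ref{lem: quantile conditional to unconditional}, and keeping track of the two exceptional events of probability $\Pr(\Delta > \vartheta)$ so as to recover the factor $2$ in front of $\Pr(\Delta > \vartheta)$ in the stated bound.
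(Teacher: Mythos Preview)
Your proof is correct and follows essentially the same approach as the paper's own proof: sandwich $c_{W_0}(\alpha)$ between $c_{Z_0}(\alpha-\pi(\vartheta))$ and $c_{Z_0}(\alpha+\pi(\vartheta))$ via Lemma~\ref{lem: quantile conditional to unconditional}, pass from $T_0$ to $Z_0$ at a cost of $2\rho$, and use atomlessness of $Z_0$ to bound the resulting quantile window by $2\pi(\vartheta)$; the second inequality is then the same triangle-inequality argument you give. One minor remark: both directions of Lemma~\ref{lem: quantile conditional to unconditional} in fact hold on the single event $\{\Delta\leq\vartheta\}$ (as is clear from its proof), so the union bound is not strictly necessary, but your version still reproduces the stated constant $2$ in front of $\Pr(\Delta>\vartheta)$, which is all that is claimed.
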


Theorem \ref{thm: multiplier bootrstrap I} provides a useful result for the case where the statistics
are maxima of exact averages. There are many applications, however, where the relevant statistics
arise as maxima of approximate averages.  The following result shows that
the theorem continues to apply if the approximation error of the relevant statistic
by a maximum of an exact average can be suitably controlled.  Specifically, suppose
that a statistic of interest, say $T=T(x_{1}\dots,x_{n})$ which may not be of the form (\ref{average}), can be approximated by $T_0$ of the form (\ref{average}), and that
the multiplier bootstrap is performed on a statistic  $W=W(x_{1},\dots,x_{n},e_{1},\dots,e_{n})$, which may be different from (\ref{average-multiplier}) but still can be approximated by $W_0$ of the form (\ref{average-multiplier}).

We require the approximation to hold in the following sense: there exist $\zeta_1 \geq 0$ and $\zeta_2 \geq 0$, depending on $n$ (and typically $\zeta_{1} \to 0, \zeta_{2} \to 0$ as $n \to \infty$),
such that
\begin{align}
&\Pr(|T-T_0|>\zeta_1)<\zeta_2, \label{eq: statistic approximation} \\
&\Pr(\Pr_e(|W-W_0|>\zeta_1)> \zeta_2)< \zeta_2. \label{eq: conditional quantiles}
\end{align}
We use the $\alpha$-quantile of $W=W(x_{1},\dots,x_{n},e_{1},\dots,e_{n})$, computed conditional on $(x_{i})_{i=1}^{n}$:
\begin{equation*}
c_{W}(\alpha):=\inf\{t\in\RR:\Pr_e(W\leq t)\geq \alpha\},
\end{equation*}
as an estimate of the $\alpha$-quantile of $T$.

\begin{lemma}[Comparison of Quantiles, II]\label{lem: quantile approximated to exact}
Suppose that condition (\ref{eq: conditional quantiles}) is satisfied. Then for every $\alpha\in(0,1)$,
\begin{align*}
&\Pr(c_{W}(\alpha)\leq c_{W_0}(\alpha+\zeta_2)+\zeta_1) \geq 1-\zeta_2,\\
&\Pr(c_{W_0}(\alpha)\leq c_{W}(\alpha+\zeta_2)+\zeta_1) \geq 1-\zeta_2.
\end{align*}
\end{lemma}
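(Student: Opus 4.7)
The plan is to condition on the data $(x_i)_{i=1}^n$ and work with the conditional probability $\Pr_e$, reducing the claim to a deterministic comparison of quantiles of two random variables $W, W_0$ that are close up to $\zeta_1$ with probability at least $1-\zeta_2$. Define the ``good data event''
\[
E := \{\Pr_e(|W-W_0|>\zeta_1)\leq \zeta_2\}.
\]
By assumption (\ref{eq: conditional quantiles}), $\Pr(E)\geq 1-\zeta_2$, so it suffices to prove both quantile inequalities hold on $E$ (pointwise in the data).

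For the first inequality, I would use the defining property of $c_{W_0}(\alpha+\zeta_2)$, namely $\Pr_e(W_0\leq c_{W_0}(\alpha+\zeta_2))\geq \alpha+\zeta_2$, and combine it with the inclusion $\{W_0\leq t,\ |W-W_0|\leq \zeta_1\}\subset\{W\leq t+\zeta_1\}$ applied at $t=c_{W_0}(\alpha+\zeta_2)$. A union bound on the conditional probability gives, on $E$,
\[
\Pr_e\bigl(W\leq c_{W_0}(\alpha+\zeta_2)+\zeta_1\bigr)\geq \Pr_e\bigl(W_0\leq c_{W_0}(\alpha+\zeta_2)\bigr)-\Pr_e(|W-W_0|>\zeta_1)\geq (\alpha+\zeta_2)-\zeta_2=\alpha.
\]
By the definition of $c_W(\alpha)$ as an infimum, this yields $c_W(\alpha)\leq c_{W_0}(\alpha+\zeta_2)+\zeta_1$ on $E$.

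The second inequality is fully symmetric: swap the roles of $W$ and $W_0$ in the preceding display, using the same bound $\Pr_e(|W-W_0|>\zeta_1)\leq \zeta_2$ on $E$, to conclude $c_{W_0}(\alpha)\leq c_W(\alpha+\zeta_2)+\zeta_1$ on $E$. Taking unconditional probabilities and using $\Pr(E)\geq 1-\zeta_2$ gives both stated bounds. There is no real obstacle here; the only point requiring care is to separate the two sources of randomness cleanly (the data $(x_i)$ versus the multipliers $(e_i)$), so that the event $E$ is measurable with respect to the data and the inequalities inside $\Pr_e$ are handled pointwise on $E$. No previously stated result from the paper is actually needed beyond the definitions of conditional quantiles.
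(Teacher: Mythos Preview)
Your proof is correct and essentially identical to the paper's own argument: the paper likewise defines the event $\{(x_i)_{i=1}^n:\Pr_e(|W-W_0|>\zeta_1)\leq \zeta_2\}$, observes it has probability at least $1-\zeta_2$, and on this event derives $\Pr_e(W\leq c_{W_0}(\alpha+\zeta_2)+\zeta_1)\geq \Pr_e(W_0\leq c_{W_0}(\alpha+\zeta_2))-\zeta_2\geq \alpha$, then invokes the infimum definition of $c_W(\alpha)$ and symmetry for the second claim.
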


The next result provides a bound on the bootstrap estimation error.

\begin{theorem}[\textbf{Main Result 3: Validity of Multiplier Bootstrap for Approximate High-Dimensional Means}]
\label{thm: multiplier bootrstrap II}
Suppose that, for some constants $0 < c_{1} < C_{1}$,  we have $c_{1} \leq \barEp[x_{ij}^2]\leq C_{1}$ for all $1\leq j\leq p$.
Moreover, suppose that  (\ref{eq: statistic approximation}) and (\ref{eq: conditional quantiles}) hold.
Then for every $\vartheta>0$,
\begin{align*}
\rho_\ominus&:=\sup_{\alpha\in(0,1)}\Pr(\{T\leq c_{W}(\alpha)\}\ominus\{T_0\leq c_{Z_0}(\alpha)\})\\
&\leq  2(\rho +  \pi(\vartheta) +  \Pr(\Delta>\vartheta)) + C_{3}\zeta_1\sqrt{1 \vee \log(p/\zeta_1)} +5\zeta_2,
\end{align*}
where $\pi(\cdot)$ is defined in Lemma \ref{lem: quantile conditional to unconditional}, and $C_{3}>0$ depends only on $c_{1}$ and $C_{1}$. In addition,
$
\sup_{\alpha\in(0,1)}\left|\Pr(T\leq c_W(\alpha))-\alpha\right|\leq \rho_{\ominus}+\rho.$
\end{theorem}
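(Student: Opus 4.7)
The strategy is to extend the argument behind Theorem~\ref{thm: multiplier bootrstrap I} by paying two additional errors: a $\zeta_1$ shift whenever we replace $T$ by $T_0$ (or $c_W$ by $c_{W_0}$), absorbed later through an anti-concentration step, and a $\zeta_2$ probability debit whenever we drop the corresponding bad event. The two quantile bridges $c_W \rightsquigarrow c_{W_0} \rightsquigarrow c_{Z_0}$ are already available through Lemmas~\ref{lem: quantile approximated to exact} and \ref{lem: quantile conditional to unconditional}, and chaining them reduces the entire statement to an anti-concentration computation for $Z_0$ plus the Kolmogorov bound $\rho$ transferring probabilities between $T_0$ and $Z_0$.

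Concretely, on the event $\{\Delta\le\vartheta\}$ of probability $\ge 1-\Pr(\Delta>\vartheta)$, Lemma~\ref{lem: quantile conditional to unconditional} (itself based on the simultaneous Kolmogorov closeness of $W_0$ and $Z_0$ afforded by Lemma~\ref{lemma: distances Gaussian to Gaussian}) yields $c_{Z_0}(\alpha-\pi(\vartheta))\le c_{W_0}(\alpha)\le c_{Z_0}(\alpha+\pi(\vartheta))$ uniformly in $\alpha$. Combining this with Lemma~\ref{lem: quantile approximated to exact} applied at $\alpha$ and at $\alpha-\zeta_2$ costs an additional $2\zeta_2$ and produces, on a good event of probability $\ge 1-\Pr(\Delta>\vartheta)-2\zeta_2$, the sandwich
\[
c_{Z_0}(\beta^-)-\zeta_1 \;\le\; c_W(\alpha) \;\le\; c_{Z_0}(\beta^+)+\zeta_1,\qquad \beta^\pm:=\alpha\pm(\zeta_2+\pi(\vartheta)).
\]
Intersecting with the event $\{|T-T_0|\le\zeta_1\}$ from (\ref{eq: statistic approximation}) at a further $\zeta_2$ cost, the symmetric-difference set $\{T\le c_W(\alpha)\}\ominus\{T_0\le c_{Z_0}(\alpha)\}$ is contained in $\{T_0\in(c_{Z_0}(\beta^-)-2\zeta_1,\,c_{Z_0}(\beta^+)+2\zeta_1]\}$ on this good event.

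The probability of this short-strip event is then bounded by passing from $T_0$ to $Z_0$ at a cost of $2\rho$ and splitting the strip into the middle part $(c_{Z_0}(\beta^-),c_{Z_0}(\beta^+)]$ of exact Gaussian mass $\beta^+-\beta^-=2(\pi(\vartheta)+\zeta_2)$ and the two end strips of width $2\zeta_1$, each of Gaussian mass $\lesssim \zeta_1\sqrt{1\vee\log(p/\zeta_1)}$ by the anti-concentration Lemma~\ref{lem: anticoncentration} (the absolute constant depending only on $c_1,C_1$ via the variance hypothesis). Adding up gives the stated bound on $\rho_\ominus$. The subsidiary inequality $\sup_{\alpha}|\Pr(T\le c_W(\alpha))-\alpha|\le\rho_\ominus+\rho$ follows immediately from $|\Pr(A)-\Pr(B)|\le\Pr(A\ominus B)$ with $A=\{T\le c_W(\alpha)\},B=\{T_0\le c_{Z_0}(\alpha)\}$, combined with $\Pr(Z_0\le c_{Z_0}(\alpha))=\alpha$ and $|\Pr(T_0\le t)-\Pr(Z_0\le t)|\le\rho$.

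The main obstacle is the bookkeeping rather than any deep new idea: each of the quantile bridge, the $T$-to-$T_0$ approximation, the $Z_0$-to-$T_0$ swap, and the two end-strip estimates contributes a distinct additive term, and one must confirm that the anti-concentration constant in Lemma~\ref{lem: anticoncentration} applies uniformly in the location $c_{Z_0}(\beta^\pm)$ of the end strips, which it does once $\min_j\Var(Y_j)\ge c_1$ and $\max_j\Var(Y_j)\le C_1$ are verified under the hypothesis. One should also be careful that the shift $2\zeta_1$ is genuinely the leading width and not a lower-order perturbation hidden inside $\log(p/\zeta_1)$, which is why $\zeta_1$ appears inside the logarithm in the final bound.
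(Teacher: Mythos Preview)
Your proposal is correct and follows essentially the same route as the paper: bridge $c_W\to c_{W_0}\to c_{Z_0}$ via Lemmas~\ref{lem: quantile approximated to exact} and~\ref{lem: quantile conditional to unconditional}, intersect with $\{|T-T_0|\le\zeta_1\}$, reduce the symmetric difference to the strip $\{\kappa_1-2\zeta_1<T_0\le\kappa_2+2\zeta_1\}$ with $\kappa_{1,2}=c_{Z_0}(\alpha\mp(\zeta_2+\pi(\vartheta)))$, pass from $T_0$ to $Z_0$ at cost $2\rho$, and bound the strip by its exact middle mass $2(\pi(\vartheta)+\zeta_2)$ plus two anti-concentration end pieces of size $C_3\zeta_1\sqrt{1\vee\log(p/\zeta_1)}$. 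The only difference is cosmetic constant tracking (the paper carries $2\Pr(\Delta>\vartheta)$ and $3\zeta_2$ through step~(1) by invoking the lemmas as black boxes rather than noting that both conclusions hold on a single good event), which does not affect the stated bound.
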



\begin{remark}[On Empirical and other bootstraps]
In this paper, we focus on the Gaussian multiplier bootstrap (which is a form of wild bootstrap).
This is because  other exchangeable bootstrap methods
are asymptotically equivalent to this bootstrap.  For example, consider the empirical (or Efron's) bootstrap which approximates the distribution of $T_{0}$ by the conditional distribution of $T_{0}^{*} = \max_{1 \leq j \leq p} \sum_{i=1}^{n}(x_{ij}^{*}-\En[x_{ij}])/\sqrt{n}$ where $x_{1},\dots,x_{n}^{*}$ are i.i.d. draws from the empirical distribution of $x_{1},\dots,x_{n}$.
We show in Section \ref{sec: nonparametric bootstrap} of the SM \cite{CCK13},  that the empirical bootstrap is asymptotically equivalent
to the Gaussian multiplier bootstrap, by virtue of Theorem \ref{cor: Gaussian to nonGaussian KS 2} (applied conditionally on the data).  The validity of the empirical bootstrap then follows from the validity of the Gaussian multiplier method.    The result is demonstrated
under a simplified condition.  A detailed analysis of more sophisticated conditions, and the validity of more general exchangeably weighted bootstraps (see \cite{PW93}) in the current setting, will be pursued in future work. \qed
\end{remark}

\subsection{Examples  Revisited}

Here we revisit the examples in Section \ref{sub: examples of applications GAR} and see how the multiplier bootstrap works for these leading examples.
Let, as before, $c_{2} > 0$ and $C_{2} > 0$ be some constants, and let $B_{n} \geq 1$ be a sequence of constants.
Recall conditions (E.1)-(E.4) in Section \ref{sub: examples of applications GAR}.
The next corollary shows that the multiplier bootstrap is valid with a polynomial rate of accuracy for the significance level under weak conditions.

\begin{corollary}[\textbf{Multiplier Bootstrap in Leading Examples}]
\label{cor: multiplier bootstrap examples}
Suppose that conditions (\ref{eq: statistic approximation}) and (\ref{eq: conditional quantiles}) hold with $\zeta_1 \sqrt{\log p} + \zeta_2 \leq C_{2} n^{-c_{2}}$.
Moreover, suppose that  one of the following conditions is satisfied:
(i) (E.1) or (E.3) holds and $B_n^2 (\log (pn))^7/n\leq C_2 n^{-c_2}$ or
(ii) (E.2) or (E.4) holds and $B_n^4 (\log (pn))^7/n\leq C_2 n^{-c_2}$.
Then there exist constants $c > 0$ and $C > 0$ depending only on $c_{1},C_1,c_{2}$, and $C_{2}$ such that
\begin{equation*}
\rho_{\ominus}=\sup_{\alpha\in(0,1)}\Pr(\{T\leq c_{W}(\alpha)\}\ominus\{T_0\leq c_{Z_0}(\alpha)\}) \leq Cn^{-c}.
\end{equation*}
In addition, $\sup_{\alpha \in (0,1)} | \Pr(T\leq c_W(\alpha)) - \alpha |\leq \rho_{\ominus}+\rho\leq Cn^{-c}$.
\end{corollary}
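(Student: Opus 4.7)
The plan is to apply Theorem \ref{thm: multiplier bootrstrap II} and bound each of the four terms on its right-hand side by $C n^{-c}$ for constants $c > 0$ and $C > 0$ depending only on $c_{1}, C_{1}, c_{2}, C_{2}$. First, the term $\rho$ is handled by Corollary \ref{cor: central limit theorem} under identical hypotheses, yielding $\rho \leq C n^{-c}$. Second, the assumption $\zeta_{1}\sqrt{\log p} + \zeta_{2} \leq C_{2} n^{-c_{2}}$ directly controls $\zeta_{2}$, while $\zeta_{1}\sqrt{1 \vee \log(p/\zeta_{1})}$ differs from $\zeta_{1}\sqrt{\log p}$ only by the factor $\sqrt{\log(1/\zeta_{1})}$, which is at most a power of $\log n$ under the stated growth conditions, and hence is absorbed into a slight reduction of the exponent.

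The main work lies in choosing $\vartheta$ so as to simultaneously control $\pi(\vartheta) = C_{2}\vartheta^{1/3}(1 \vee \log(p/\vartheta))^{2/3}$ and $\Pr(\Delta > \vartheta)$. I would take $\vartheta = n^{-c_{3}}$ with $c_{3} \in (0, c_{2}/2)$ to be chosen small; this immediately gives $\pi(\vartheta) \leq C n^{-c_{3}/3}(\log p)^{2/3} \leq C n^{-c}$ after an appropriate further shrinkage of the exponent. It remains to bound $\Pr(\Delta > n^{-c_{3}})$, i.e., the tail probability for the maximum of $p^{2}$ centered empirical averages of the products $x_{ij}x_{ik} - \barEp[x_{ij}x_{ik}]$.

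For this last step, under (E.1) or (E.3) the bound $\Ep[\exp(|x_{ij}|/B_{n})] \leq 4$ and the inequality $|x_{ij}x_{ik}| \leq (x_{ij}^{2} + x_{ik}^{2})/2$ imply that the products are sub-Weibull of order $1/2$ with scale $\lesssim B_{n}^{2}$, and $\Var(x_{ij}x_{ik})$ is bounded by a constant (since $\barEp[x_{ij}^{4}] \lesssim B_{n}^{2}$ under (E.1) but $\Ep[x_{ij}^{2}x_{ik}^{2}]$ is bounded using Cauchy--Schwarz and the fourth-moment bound). A Bernstein-type inequality combined with a union bound over the $p^{2}$ pairs then produces
\begin{equation*}
\Pr(\Delta > t) \lesssim p^{2}\exp\bigl(-c\, n t^{2}/(1 + B_{n}^{2} t)\bigr),
\end{equation*}
and inserting $t = n^{-c_{3}}$ together with $B_{n}^{2}(\log p)^{7}/n \leq C_{2}n^{-c_{2}}$ delivers the desired polynomial decay. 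The final conclusion and the bound on $\sup_{\alpha}|\Pr(T \leq c_{W}(\alpha)) - \alpha|$ then follow from Theorem \ref{thm: multiplier bootrstrap II} directly.

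The main obstacle is the weaker-moment cases (E.2) and (E.4), where only a fourth-moment bound on $\max_{1 \leq j \leq p}|x_{ij}|$ (respectively $\eps_{ij}$) is available instead of a sub-exponential tail. There I would truncate $x_{ij}$ at level $B_{n}n^{\delta}$ for small $\delta > 0$, apply the same Bernstein bound to the truncated bounded products (whose scale is now $\lesssim B_{n}^{2}n^{2\delta}$), and control the contribution of the untruncated remainder via Markov's inequality using $\Ep[(\max_{j}|x_{ij}|/B_{n})^{4}] \leq 4$. The stronger hypothesis $B_{n}^{4}(\log p)^{7}/n \leq C_{2}n^{-c_{2}}$ exactly compensates for the $n^{2\delta}$ and fourth-moment losses, again yielding $\Pr(\Delta > n^{-c_{3}}) \leq C n^{-c}$.
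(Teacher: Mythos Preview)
Your overall strategy---invoke Theorem \ref{thm: multiplier bootrstrap II} and bound each term on its right side---matches the paper exactly, but you diverge in how you control $\Pr(\Delta > \vartheta)$ and choose $\vartheta$. The paper does not use Bernstein or truncation at all. Instead it first bounds the \emph{expectation} $\Ep[\Delta]$ via the maximal inequality of Lemma \ref{lem: symmetrization inequality 1} (applied to the centered products $x_{ij}x_{ik}$), obtaining $\Ep[\Delta](\log p)^{2} \leq C n^{-c}$ under either (E.1)/(E.3) or (E.2)/(E.4) in a single stroke (Lemma \ref{lem: technical2}). It then takes $\vartheta = (\Ep[\Delta])^{1/2}/\log p$ and bounds $\Pr(\Delta > \vartheta)$ by Markov: $\Ep[\Delta]/\vartheta = (\Ep[\Delta])^{1/2}\log p \leq C n^{-c}$. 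This is shorter and unified---no case split between sub-exponential and finite-fourth-moment regimes, no union bound over $p^{2}$ pairs, and no need to track the sub-Weibull tails of products.

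Your route is viable but technically rougher in a couple of places. First, under (E.1), $\Var(x_{ij}x_{ik}) \leq \Ep[x_{ij}^{2}x_{ik}^{2}] \lesssim B_{n}^{2}$, not a constant (Cauchy--Schwarz and $\barEp[x_{ij}^{4}] \lesssim B_{n}^{2}$); your displayed exponent $nt^{2}/(1 + B_{n}^{2}t)$ should accordingly carry a $B_{n}^{2}$ in the variance slot, though the conclusion survives under $B_{n}^{2}(\log(pn))^{7}/n \leq C_{2}n^{-c_{2}}$. Second, the products $x_{ij}x_{ik}$ under (E.1) are only sub-Weibull of order $1/2$, so the ``Bernstein-type inequality'' you invoke is not the standard one and needs either the appropriate heavy-tailed version or an extra truncation layer---a detail you elide. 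Your truncation scheme for (E.2)/(E.4) is standard and works, but again the paper's moment-plus-Markov argument via Lemma \ref{lem: symmetrization inequality 1} handles both cases at once with less bookkeeping.
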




\section{Application: Dantzig Selector in the Non-Gaussian Model}
\label{sec: Dantzig}

The purpose of this section is to demonstrate the case with which the GAR and the multiplier bootstrap theorem given in Corollaries \ref{cor: central limit theorem} and \ref{cor: multiplier bootstrap examples} can be applied in important problems,
dealing with a high-dimensional inference and estimation. We consider
the Dantzig selector previously studied in the path-breaking works of \cite{CandesTao2007}, \cite{BickelRitovTsybakov2009}, \cite{YeZhang2010} in the  Gaussian setting
and of \cite{Koltchinskii2009} in a sub-exponential setting. Here we consider the non-Gaussian case, where the errors have only four bounded moments, and derive the performance bounds that are approximately as sharp as in the Gaussian model. We consider both homoscedastic and heteroscedastic models.

\subsection{Homoscedastic case}
Let $(z_{i},y_{i})_{i=1}^{n}$ be a sample of independent observations where $z_{i} \in \RR^{p}$ is a non-stochastic vector of regressors. We consider the model
\begin{equation*}
y_{i}=z_{i}^{\prime}\beta+\eps_{i}, \ \ \Ep[\eps_{i}]=0, \ i=1,\dots,n, \  \En[z_{ij}^2]=1,  \ j=1,\dots,p,
\end{equation*}
where $y_{i}$ is a random scalar dependent variable, and the regressors are normalized in such a way that $\En[z_{ij}^2]=1$.
 Here we consider the
homoscedastic case:
\begin{equation*}
\Ep[\eps_{i}^2] = \sigma^2, \ i=1,\dots,n,
\end{equation*}
where $\sigma^{2}$ is assumed to be known (for simplicity).  We allow $p$ to be substantially larger than $n$.  It is well known that a condition that gives a good performance for the Dantzig selector is that $\beta$ is sparse, namely
$\|\beta\|_0 \leq s \ll n$ (although this assumption will not be invoked below explicitly).

The aim is to estimate the vector $\beta$ in some semi-norms of interest:
$\|\cdot\|_I$, where the label $I$ is the name of a norm of interest. For example, given an estimator $\hat \beta$ the prediction semi-norm for $\delta = \hat \beta - \beta$ is
\begin{equation*}
\| \delta \|_{\pr} := \sqrt{\En[ (z_{i}'\delta)^2 ]},
\end{equation*}
or the $j$th component seminorm for $\delta$ is
$\| \delta \|_{\text{jc}} := |\delta_{j}|,$ and so on.

The Dantzig selector is the estimator defined by
\begin{equation}
\label{eq: dantzig estimator}
\hat\beta \in \arg\min_{b\in\RR^p}\Vert b\|_{\ell_1} \ \text{subject to} \ \sqrt{ n} \max_{1\leq j\leq p}|\En[z_{ij}(y_{i}-z_{i}^\prime b)]|\leq{\lambda},
\end{equation}
where $\| \beta \|_{\ell_{1}} = \sum_{j=1}^{p} | \beta_{j} |$ is the $\ell_{1}$-norm.
An ideal choice of the penalty level $\lambda$  is meant to ensure that
\begin{equation*}
T_0 := \sqrt{n} \max_{1\leq j\leq p}|\En[z_{ij}\eps_{i}]|\leq{\lambda}
\end{equation*}
with a prescribed confidence level $1-\alpha$ (where $\alpha$ is a number close
to zero.) Hence we would like to set penalty level ${\lambda}$ equal to
\begin{equation*}
c_{T_0} (1-\alpha) := \text{$(1-\alpha)$-quantile of $T_{0}$},
\end{equation*}
(note that $z_{i}$ are treated as fixed).  Indeed, this penalty
would take into account the correlation amongst the regressors, thereby adapting the performance
of the estimator to the design condition.

We can approximate this quantity using the
Gaussian approximations derived in Section 2.  Specifically,  let
\begin{equation*}
Z_0: = \sigma \sqrt{n} \max_{1\leq j\leq p}|\En[z_{ij} e_{i}]|,
\end{equation*}
where $e_{i}$ are i.i.d. $N(0,1)$ random variables independent of the data. We then estimate $c_{T_0} (1-\alpha)$ by
\begin{equation*}
c_{Z_0}(1-\alpha):= \text{ $(1-\alpha)$-quantile of $Z_{0}$}.
\end{equation*}
Note that we can calculate $c_{Z_0}(1-\alpha)$ numerically with any specified precision by the simulation.
(In a Gaussian model, design-adaptive penalty level $c_{Z_0}(1-\alpha)$ was proposed in \cite{BelloniChernozhukov2011}, but its extension to non-Gaussian cases was not available up to now).

An alternative choice of the penalty level is given by
\begin{equation*}
c_{0}(1-\alpha) :=  \sigma \Phi^{-1}(1- \alpha/(2p)),
\end{equation*}
which is the canonical choice; see \cite{CandesTao2007} and \cite{BickelRitovTsybakov2009}.
Note that canonical choice $c_{0}(1-\alpha)$ disregards the correlation amongst the regressors, and is
therefore more conservative than $c_{Z_0}(1-\alpha)$.  Indeed, by the union bound, we see that
\begin{equation*}
c_{Z_0}(1-\alpha) \leq c_{0}(1-\alpha).
\end{equation*}

Our first result below shows that the \textit{either} of the two penalty choices,  $\lambda= c_{Z_0}(1-\alpha)$ or $\lambda = c_0(1-\alpha)$, are
approximately valid under non-Gaussian noise--under the mild moment assumption $\Ep[\eps_{i}^4] \leq \text{const}.$ replacing the canonical Gaussian noise assumption. To derive this result we apply our GAR to $T_0$ to establish that the difference between distribution functions of $T_0$ and $Z_0$ approaches zero at polynomial speed.
Indeed $T_0$ can be represented as a maximum of averages, $
T_0 = \max_{1\leq k\leq 2p} n^{-1/2} \sum_{i=1}^ n \tilde z_{ik}\eps_{i}$, for
$\tilde z_{i} = (z_{i}', -z_{i}')'$ where $z_i^\prime$ denotes the transpose of $z_i$.

To derive the bound on estimation error $\| \delta \|_I$ in a seminorm of interest, we employ the following identifiability factor:
\begin{equation*}
\kappa_I(\beta):=\inf_{\delta \in \RR^p} \left \{ \max_{1\leq j\leq p}\frac{|\En[z_{ij}(z_{i}^\prime\delta)]|}{\| \delta \|_I }:
\delta\in \mathcal{R}(\beta),  \| \delta\|_I \neq 0 \right \},
\end{equation*}
where $\mathcal{R}(\beta):= \{ \delta \in \RR^p: \Vert\beta+\delta\|_{\ell_1}\leq\Vert\beta\|_{\ell_1}\}$ is the restricted
set; $\kappa_I(\beta)$ is defined as $\infty$ if $\mathcal{R}(\beta) = \{0\}$ (this happens if $\beta =0$).    The factors summarize the impact of sparsity of true parameter value $\beta$ and the design on the identifiability of $\beta$ with respect to the norm $\| \cdot \|_I$.

\begin{remark}[A comment on the identifiability factor $\kappa_I(\beta)$]
The identifiability factors $\kappa_I(\beta)$ depend on the true parameter value $\beta$. These factors represent a modest generalization of the cone invertibility factors and sensitivity characteristics defined in \cite{YeZhang2010} and \cite{GautierTsybakov2011}, which are known to be quite general.
The difference is the use of a norm of interest $\|\cdot\|_I$ instead of the $\ell_q$ norms and the use of smaller (non-conic) restricted set $\mathcal{R}(\beta)$ in the definition.
It is useful to note for later comparisons that in the case of prediction norm $\|\cdot \|_{I} = \| \cdot \|_{\pr}$ and under the exact sparsity assumption $\|\beta\|_0 \leq s$, we have
\begin{equation}
\kappa_{\pr}(\beta) \geq 2^{-1} s^{-1/2} \kappa(s,1), \label{relate to RE}
\end{equation}
where $\kappa(s,1)$ is the restricted eigenvalue defined in \cite{BickelRitovTsybakov2009}.  \qed
 \end{remark}

Next we state bounds on the estimation error for the Dantzig selector $\hat \beta^{(0)}$ with
canonical penalty level $\lambda = \lambda^{(0)} := c_0(1-\alpha)$ and the Dantzig selector $\hat \beta^{(1)}$
with design-adaptive penalty level $\lambda= \lambda^{(1)} := c_{Z_0}(1-\alpha).$

\begin{theorem}[Performance of Dantzig Selector in Non-Gaussian Model]\label{thm: dantzig estimator}
Suppose that there are some constants $c_{1} > 0, C_{1} > 0$ and $\sigma^2 > 0$, and a sequence $B_{n} \geq 1$ of constants such that for all $1 \leq i \leq n$ and $1 \leq j \leq p$:
(i) $|z_{ij}|\leq B_n$; (ii) $\En[z_{ij}^2]= 1$; (iii) $ \Ep[\eps_{i}^2] =\sigma^2$; (iv) $\Ep[\eps_{i}^{4}]\leq C_{1}$; and (v) $B_n^4(\log (pn))^7/n\leq C_{1} n^{-c_{1}}$.
Then there exist constants $c > 0$ and $C > 0$ depending only on $c_{1},C_{1}$ and $\sigma^{2}$ such that, with probability at least $ 1- \alpha - C n^{-c}$, for either $k=0$ or $1$,
\begin{equation*}
\Vert\hat\beta^{(k)} -\beta \Vert_I \leq \frac{2 \lambda^{(k)}}{\sqrt{n} \kappa_I(\beta)}.
\end{equation*}
\end{theorem}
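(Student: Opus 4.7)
The plan is to split the argument into a deterministic (Dantzig-type) part and a probabilistic part that invokes the GAR of Corollary \ref{cor: central limit theorem}.

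First I would work on the event $\mathcal{A} := \{T_0 \leq \lambda\}$, where $\lambda \in \{\lambda^{(0)}, \lambda^{(1)}\}$. On $\mathcal{A}$, the true $\beta$ is feasible for the Dantzig program (\ref{eq: dantzig estimator}) since $\sqrt{n}\max_j |\En[z_{ij}(y_i - z_i'\beta)]| = T_0 \leq \lambda$, so by optimality $\|\hat\beta^{(k)}\|_{\ell_1} \leq \|\beta\|_{\ell_1}$, which places $\delta := \hat\beta^{(k)} - \beta \in \mathcal{R}(\beta)$. Both $\beta$ and $\hat\beta^{(k)}$ satisfy the constraint, so by the triangle inequality
\begin{equation*}
\sqrt{n}\max_{1\leq j \leq p}\left|\En\bigl[z_{ij}(z_i'\delta)\bigr]\right| \leq \sqrt{n}\max_{j}|\En[z_{ij}(y_i - z_i'\beta)]| + \sqrt{n}\max_{j}|\En[z_{ij}(y_i - z_i'\hat\beta^{(k)})]| \leq 2\lambda.
\end{equation*}
Applying the definition of $\kappa_I(\beta)$ to $\delta \in \mathcal{R}(\beta)$ (treating the degenerate case $\delta = 0$ trivially) yields $\|\delta\|_I \leq 2\lambda / (\sqrt{n}\,\kappa_I(\beta))$, which is the desired bound.

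Next I would show that $\Pr(\mathcal{A}) \geq 1 - \alpha - C n^{-c}$ for both penalty choices. The key observation is that $T_0 = \max_{1\leq k \leq 2p} n^{-1/2}\sum_{i=1}^n \tilde z_{ik}\eps_i$ with $\tilde z_i = (z_i', -z_i')'$ is exactly a maximum of sums of independent mean-zero random variables to which Corollary \ref{cor: central limit theorem} applies: condition (E.4) holds with dimension $2p$, bound $B_n$ on the $|\tilde z_{ik}|$, $\En[\tilde z_{ik}^2] = 1$, $\Ep[\eps_i^2] = \sigma^2 \geq c_1$, and $\Ep[\max_k \eps_i^4] = \Ep[\eps_i^4] \leq C_1$ (since $\eps_{ij}$ does not depend on $j$). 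The growth condition $B_n^4 (\log(pn))^7/n \leq C_1 n^{-c_1}$ is assumption (v). Hence
\begin{equation*}
\rho := \sup_{t\in\RR}\bigl|\Pr(T_0 \leq t) - \Pr(Z_0 \leq t)\bigr| \leq C n^{-c}.
\end{equation*}

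Finally I would translate this into the statement for each penalty. For $\lambda^{(1)} = c_{Z_0}(1-\alpha)$, by definition $\Pr(Z_0 \leq c_{Z_0}(1-\alpha)) \geq 1-\alpha$, so
\begin{equation*}
\Pr(T_0 \leq \lambda^{(1)}) \geq \Pr(Z_0 \leq \lambda^{(1)}) - \rho \geq 1 - \alpha - C n^{-c}.
\end{equation*}
For $\lambda^{(0)} = \sigma \Phi^{-1}(1-\alpha/(2p))$, a union bound on the $2p$ Gaussian components $\sigma n^{-1/2}\sum_i \tilde z_{ik} e_i \sim N(0,\sigma^2)$ gives $\Pr(Z_0 > \lambda^{(0)}) \leq \alpha$, hence $c_{Z_0}(1-\alpha) \leq \lambda^{(0)}$, and the same argument yields $\Pr(T_0 \leq \lambda^{(0)}) \geq 1-\alpha - Cn^{-c}$. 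Combining with the deterministic bound completes the proof.

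The main obstacle is really just checking that the regression setup fits the hypotheses of Corollary \ref{cor: central limit theorem}; once that verification is in place, everything else is the classical Dantzig calculation together with elementary comparison of quantiles. The heavy lifting, namely the polynomial Gaussian approximation for a maximum of $2p$ non-sub-exponential averages, is done by the GAR itself.
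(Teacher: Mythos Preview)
Your proposal is correct and follows essentially the same approach as the paper's own proof: the paper also reduces to the event $\{T_0 \leq \lambda\}$, applies Corollary \ref{cor: central limit theorem}-(ii) under condition (E.4) with the doubled vector $\tilde z_i = (z_i', -z_i')'$ to get $\Pr(T_0 \leq \lambda) \geq 1 - \alpha - Cn^{-c}$, and then uses the same triangle-inequality plus $\kappa_I(\beta)$ argument to obtain the deterministic bound. The only cosmetic difference is that the paper presents the probabilistic step first and the deterministic Dantzig calculation afterward.
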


The most important feature of this result is that it provides Gaussian-like conclusions (as
explained below) in a model with non-Gaussian noise, having only four bounded moments.  However, the probabilistic guarantee
is not $1-\alpha$ as, for example, in \cite{BickelRitovTsybakov2009}, but rather $1- \alpha - C n^{-c}$, which reflects the cost of non-Gaussianity
(along with more stringent side conditions). In what follows
we discuss details of this result.  Note that the bound above holds for any semi-norm of interest $\| \cdot \|_I$.

\begin{remark}[Improved Performance from Design-Adaptive Penalty Level] The use of the design-adaptive penalty level
implies a better performance guarantee for $\hat \beta^{(1)}$ over $\hat \beta^{(0)}$. Indeed,
we have
\begin{equation*}
\frac{2 c_{Z_0}(1-\alpha)}{\sqrt{n} \kappa_I(\beta)} \leq \frac{2 c_0(1-\alpha)}{\sqrt{n} \kappa_I(\beta)}.
\end{equation*}
For example, in some designs, we can have $\sqrt{n} \max_{1 \leq j \leq p}| \En[z_{ij} e_{i}]| = O_{\Pr}(1)$,
so that $c_{Z_0}(1-\alpha) = O(1)$, whereas $c_0(1-\alpha) \propto \sqrt{\log p}$.  Thus,
the performance guarantee provided by $\hat \beta^{(1)}$ can be much better than
that of $\hat \beta^{(0)}$.  \qed
\end{remark}

\begin{remark}[Relation to the previous results under Gaussianity] To compare to the previous results obtained for
the Gaussian settings, let us focus on the prediction norm and on estimator $\hat \beta^{(1)}$ with penalty level $\lambda= c_{Z_0}(1-\alpha)$.
Suppose that the true value $\beta$ is sparse, namely $\|\beta\|_0 \leq s$. In this case, with probability at least $1- \alpha - C n^{-c}$,
\begin{equation}\label{eq: boundD}
\Vert\hat\beta^{(1)} -\beta \Vert_{\pr} \leq \frac{2 c_{Z_0}(1-\alpha)}{\sqrt{n} \kappa_{\pr}(\beta)} \leq \frac{4 \sqrt{s}  c_0(1-\alpha)}{\sqrt{n} \kappa(s,1)} \leq \frac{4 \sqrt{s}  \sqrt{2 \log ( \alpha/(2p))}}{\sqrt{n} \kappa(s,1)},
 \end{equation}
where the last bound is the same as in \cite{BickelRitovTsybakov2009}, Theorem 7.1, obtained for the Gaussian case.
We recover the same (or tighter) upper bound without making the Gaussianity assumption on the errors. However,  the probabilistic guarantee is
not $1-\alpha$ as in  \cite{BickelRitovTsybakov2009}, but rather $1- \alpha - Cn^{-c}$, which together with side conditions is the cost of non-Gaussianity.  \qed
\end{remark}

\begin{remark}[Other refinements]  Unrelated to the main theme of this paper, we can see from (\ref{eq: boundD}) that
 there is some tightening of the performance bound due to the use of the identifiability factor $\kappa_{\pr}(\beta)$ in place of the restricted eigenvalue $\kappa(s,1)$; for example, if $p=2$ and $s=1$ and the two regressors are identical, then $\kappa_{\pr}(\beta)>0$, whereas $\kappa(1,1) = 0$. There is also some tightening due to the use of $c_{Z_0}(1-\alpha)$ instead of $c_{0}(1-\alpha)$ as penalty level, as mentioned above. \qed
\end{remark}

\subsection{Heteroscedastic case} We consider the same model as above, except
now the assumption on the error becomes
\begin{equation*}
\sigma_{i}^2 := \Ep[\eps_{i}^2]  \leq  \sigma^2, \ \ i=1,\dots,n,
\end{equation*}
that is, $\sigma^{2}$ is the upper bound on the conditional variance, and we assume that this bound is known (for simplicity).  As before, ideally we would like to set penalty level ${\lambda}$ equal to
\begin{equation*}
c_{T_0} (1-\alpha) := \text{$(1-\alpha)$-quantile of $T_{0}$},
\end{equation*}
(where $T_0$ is defined above, and we note that $z_{i}$ are treated as fixed).
The GAR applies as before, namely the difference of the distribution functions of $T_0$ and its Gaussian analogue $Z_0$
converges to zero.  In this case, the Gaussian analogue can be represented as
\begin{equation*}
Z_0 :=  \sqrt{n} \max_{1\leq j\leq p}|\En[z_{ij} \sigma_{i} e_{i}]|.
\end{equation*}
Unlike in the homoscedastic case, the covariance structure is no longer known, since
$\sigma_{i}$ are unknown and we can no longer calculate the quantiles of $Z_0$.  However,
we can estimate them using the following multiplier bootstrap procedure.

First,  we estimate the residuals $\hat \eps_{i} = y_{i} - z_{i}'\hat \beta^{(0)}$
obtained from a preliminary Dantzig selector $\hat \beta^{(0)}$ with the conservative penalty level ${\lambda} = \lambda^{(0)} := c_0(1-1/n) :=  \sigma \Phi^{-1}(1- 1/(2pn))$,
where $\sigma^{2}$ is  the upper bound on the error variance assumed to be known.
Let $(e_{i})_{i=1}^{n}$ be a sequence of i.i.d. standard Gaussian  random variables, and let
\begin{equation*}
W: = \sqrt{n} \max_{1\leq j\leq p}|\En[z_{ij}\hat\eps_{i}e_{i}]|.
\end{equation*}
Then we estimate $c_{Z_0} (1-\alpha)$ by
\begin{equation*}
c_{W}(1-\alpha):=  \text{$(1-\alpha)$-quantile of $W$},
\end{equation*}
defined conditional on data $(z_{i},y_{i})_{i=1}^{n}$. Note that $c_W(1-\alpha)$ can be calculated numerically with any specified precision by the simulation. Then we apply program (\ref{eq: dantzig estimator}) with ${\lambda} = \lambda^{(1)}= c_W(1-\alpha)$ to obtain $\hat\beta^{(1)}$.

\begin{theorem}[Performance of Dantzig in Non-Gaussian Model with Bootstrap Penalty Level]\label{thm: dantzig estimator 2}
Suppose that there are some constants $c_{1} > 0, C_{1} > 0,\underline{\sigma}^2 > 0$ and $\sigma^2 > 0$, and a sequence $B_{n} \geq 1$ of constants such that for all $1 \leq i \leq n$ and $1 \leq j \leq p$:
(i) $|z_{ij}|\leq B_n$;
(ii) $\En[z_{ij}^2]= 1$;
(iii) $\underline{\sigma}^2 \leq \Ep[\eps_{i}^2] \leq \sigma^2$;
(iv) $\Ep[\eps_{i}^{4}]\leq C_{1}$;
(v) $B_n^4(\log (pn))^7/n\leq C_{1} n^{-c_{1}}$;
and (vi)  $ (\log p) B_n c_0(1-1/n)/(\sqrt{n} \kappa_{\pr}(\beta) ) \leq C_{1} n^{-c_{1}}$.
Then there exist constants $c > 0$ and $C > 0$ depending only on  $c_{1}, C_{1},\underline{\sigma}^2$ and $\sigma^2$ such that,
with probability at least $ 1- \alpha -  \nu_{n}$ where $\nu_{n} = Cn^{-c}$, we have
 \begin{equation}
\label{eq: bound1}
\Vert\hat\beta^{(1)} -\beta \Vert_I \leq \frac{2 \lambda^{(1)}}{\sqrt{n} \kappa_{I}(\beta)}.
 \end{equation}
Moreover, with probability at least $1-  \nu_{n}$,
\begin{equation*}
\lambda^{(1)} = c_W(1-\alpha) \leq  c_{Z_0}(1-\alpha + \nu_n),
\end{equation*}
where $c_{Z_0}(1-a)  := \text{ $(1-a)$-quantile of $Z_{0}$}$; where $c_{Z_0}(1-a) \leq c_0(1-a)$.
\end{theorem}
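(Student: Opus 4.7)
The plan is to reduce the estimation bound (\ref{eq: bound1}) to showing that $T_{0}:=\sqrt{n}\max_{1\leq j\leq p}|\En[z_{ij}\eps_{i}]|\leq \lambda^{(1)}$ with probability at least $1-\alpha-\nu_{n}$. On this event, $\beta$ is feasible for the Dantzig program (\ref{eq: dantzig estimator}), so $\hat\beta^{(1)}-\beta\in\mathcal{R}(\beta)$; combining the program constraint at $\hat\beta^{(1)}$ with $T_{0}\leq\lambda^{(1)}$ gives $\sqrt{n}\max_{j}|\En[z_{ij}z_{i}'(\hat\beta^{(1)}-\beta)]|\leq 2\lambda^{(1)}$, and dividing by $\kappa_{I}(\beta)$ yields (\ref{eq: bound1}) exactly as in Theorem \ref{thm: dantzig estimator}. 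The task therefore reduces to validating the bootstrap penalty via the chain $c_{W}(1-\alpha)\leftrightarrow c_{W_{0}}(1-\alpha)\leftrightarrow c_{Z_{0}}(1-\alpha)\leftrightarrow c_{T_{0}}(1-\alpha)$, where $W_{0}:=\sqrt{n}\max_{j}|\En[z_{ij}\eps_{i}e_{i}]|$ is the oracle multiplier statistic.

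I would first apply Theorem \ref{thm: dantzig estimator} to the preliminary estimator $\hat\beta^{(0)}$ with $\alpha$ replaced by $1/n$ and $\|\cdot\|_{I}=\|\cdot\|_{\pr}$: conditions (i)--(v) verify the hypotheses and give $\|\delta\|_{\pr}\leq 2c_{0}(1-1/n)/(\sqrt{n}\kappa_{\pr}(\beta))$ with probability at least $1-1/n-Cn^{-c}$, where $\delta:=\beta-\hat\beta^{(0)}$ and $\hat\eps_{i}=\eps_{i}+z_{i}'\delta$. The bootstrap residual error satisfies
$$|W-W_{0}| \leq \sqrt{n}\max_{j}|\En[z_{ij}(z_{i}'\delta)e_{i}]|,$$
which, conditionally on $(z_{i},y_{i})_{i=1}^{n}$, is the maximum absolute value of $p$ centered Gaussians with variances bounded by $\En[z_{ij}^{2}(z_{i}'\delta)^{2}]\leq B_{n}^{2}\|\delta\|_{\pr}^{2}$; a standard Gaussian maximal inequality bounds it by $CB_{n}\|\delta\|_{\pr}\sqrt{\log p}$ with conditional probability at least $1-n^{-1}$, and by condition (vi) this is $O(n^{-c})$. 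Rewriting $T_{0}$ and $W_{0}$ as maxima over the $2p$ signed coordinates $\pm z_{ij}\eps_{i}$ places them in the form (\ref{average})--(\ref{average-multiplier}), and condition (E.4) holds with $\eps_{ij}=\eps_{i}$ under (i)--(v). Taking (\ref{eq: statistic approximation}) trivially (since $T=T_{0}$) and (\ref{eq: conditional quantiles}) from the display above, Corollary \ref{cor: multiplier bootstrap examples} yields $\sup_{\alpha}|\Pr(T_{0}\leq c_{W}(\alpha))-\alpha|\leq Cn^{-c}$, so $\Pr(T_{0}\leq\lambda^{(1)})\geq 1-\alpha-\nu_{n}$ for $\nu_{n}=Cn^{-c}$.

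For the upper bound $\lambda^{(1)}\leq c_{Z_{0}}(1-\alpha+\nu_{n})$ with probability at least $1-\nu_{n}$, I would combine Lemma \ref{lem: quantile approximated to exact} (using the $|W-W_{0}|$ bound above) with Lemma \ref{lem: quantile conditional to unconditional} applied to $(W_{0},Z_{0})$, absorbing the additive shift $\zeta_{1}$ into a confidence-level inflation of order $\zeta_{1}\sqrt{\log p}$ via the anti-concentration Lemma \ref{lem: anticoncentration}; the input $\Delta=\max_{j,k}|\En[z_{ij}z_{ik}(\eps_{i}^{2}-\sigma_{i}^{2})]|=O_{\Pr}(n^{-c})$ follows from a Bernstein-type bound using $\Ep[\eps_{i}^{4}]\leq C_{1}$, $|z_{ij}|\leq B_{n}$, and growth (v). The final inequality $c_{Z_{0}}(1-a)\leq c_{0}(1-a)$ is a union bound: each of the $2p$ signed coordinates is $N(0,\En[z_{ij}^{2}\sigma_{i}^{2}])$ with variance at most $\sigma^{2}$. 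The main obstacle is the careful bookkeeping across three layers of approximation---residual versus oracle bootstrap ($W$ vs.\ $W_{0}$), conditional versus unconditional Gaussian ($W_{0}$ vs.\ $Z_{0}$), and non-Gaussian versus Gaussian ($T_{0}$ vs.\ $Z_{0}$)---so that each contributes only $O(n^{-c})$ error; condition (vi) is designed precisely to control the first layer through the prediction-norm performance of $\hat\beta^{(0)}$.
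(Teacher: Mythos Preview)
Your proposal is correct and follows essentially the same route as the paper's proof: a preliminary prediction-norm bound for $\hat\beta^{(0)}$ (Step 0), the bound $\En[z_{ij}^2(\hat\eps_i-\eps_i)^2]\leq B_n^2\|\delta\|_{\pr}^2$ feeding into a Gaussian maximal (Pisier) inequality to control $|W-W_0|$ conditionally (Steps 1--2), application of Corollary \ref{cor: multiplier bootstrap examples} with $T=T_0$ to obtain $\Pr(T_0\leq c_W(1-\alpha))\geq 1-\alpha-\varrho_n$, and then the chain Lemma \ref{lem: quantile approximated to exact} $\to$ Lemma \ref{lem: quantile conditional to unconditional} $\to$ Lemma \ref{lem: anticoncentration} for the upper bound on $\lambda^{(1)}$ (Step 3). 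The only minor caveat is that invoking ``Theorem \ref{thm: dantzig estimator}'' for $\hat\beta^{(0)}$ is slightly imprecise since that theorem is stated for the homoscedastic case---the paper instead re-runs its argument with $\sigma^2$ as the \emph{upper} bound on $\Ep[\eps_i^2]$---and your ``Bernstein-type bound'' for $\Delta$ should, under only fourth moments, be read as the maximal inequality of Lemma \ref{lem: symmetrization inequality 1} plus Markov rather than an exponential inequality; neither point affects the validity of your argument.
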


\begin{remark}[A Portmanteu Signicance Test] The result above contains a practical test of joint significance of all regressors, that is, a test of the hypothesis that $\beta_0 =0$, with the exact asymptotic size $\alpha$.

\begin{corollary}  Under conditions of the either of preceding two theorems, the test, that rejects the null hypothesis $\beta_0= 0$ if $\hat \beta^{(1)} \neq 0$, has size equal to $\alpha + Cn^{-c}$.
\end{corollary}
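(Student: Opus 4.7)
My plan is to reduce the event $\{\hat\beta^{(1)}\neq 0\}$ to an event involving only the pivotal statistic $T_0=\sqrt{n}\max_{1\leq j\leq p}|\En[z_{ij}\eps_i]|$ and then invoke the previously established distributional approximations to control its probability.

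First, I would analyze the Dantzig program under $H_0:\beta=0$. In this case $y_i=\eps_i$, so the feasibility constraint at the candidate point $b=0$ reads $\sqrt{n}\max_{1\leq j\leq p}|\En[z_{ij}y_i]|=T_0\leq\lambda^{(1)}$. Since $b=0$ is the unique minimizer of $\|b\|_{\ell_1}$ over $\RR^p$, the Dantzig selector returns $\hat\beta^{(1)}=0$ whenever $b=0$ lies in the feasible set, and conversely if $T_0>\lambda^{(1)}$ then $b=0$ is infeasible and $\hat\beta^{(1)}\neq 0$ necessarily. This yields the exact identity
\begin{equation*}
\Pr_{H_0}(\hat\beta^{(1)}\neq 0)=\Pr(T_0>\lambda^{(1)}).
\end{equation*}

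Next I would handle the two regimes separately using the results already proven. Writing $T_0$ as a maximum of averages over $2p$ indices via the doubling trick $\tilde z_i=(z_i',-z_i')'$ noted in the excerpt, all hypotheses of Corollary \ref{cor: central limit theorem} and Corollary \ref{cor: multiplier bootstrap examples} are met under conditions (i)--(v) of Theorems \ref{thm: dantzig estimator} and \ref{thm: dantzig estimator 2}. In the homoscedastic case $\lambda^{(1)}=c_{Z_0}(1-\alpha)$, so applying the Gaussian approximation bound (\ref{eq: inference}) to $T_0$ gives $|\Pr(T_0>c_{Z_0}(1-\alpha))-\alpha|\leq Cn^{-c}$. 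In the heteroscedastic case $\lambda^{(1)}=c_W(1-\alpha)$, and the validity of the multiplier bootstrap established in Theorem \ref{thm: multiplier bootrstrap II} (specialized as in Corollary \ref{cor: multiplier bootstrap examples}) yields $|\Pr(T_0>c_W(1-\alpha))-\alpha|\leq Cn^{-c}$. Combining either bound with the identity above delivers the claimed size $\alpha+Cn^{-c}$.

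I do not anticipate a real obstacle here; the corollary is essentially a corollary of the two preceding theorems, and the only subtlety is verifying that the feasibility of $b=0$ in the Dantzig program is both necessary and sufficient for $\hat\beta^{(1)}=0$ under the null. The mild care needed is (a) to cast the two-sided statistic $T_0$ as a one-sided max over the doubled index set so that the GAR and bootstrap results apply verbatim, and (b) in the heteroscedastic case, to observe that the pilot residuals $\hat\eps_i$ from $\hat\beta^{(0)}$ enter only through $c_W(1-\alpha)$, whose accuracy is already controlled by Theorem \ref{thm: dantzig estimator 2}, so no additional argument is required.
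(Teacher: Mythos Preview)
Your proposal is correct and follows essentially the same approach as the paper. The paper's own argument is a two-line remark: under $\beta_0=0$ the true parameter satisfies the Dantzig constraint with probability $1-\alpha-Cn^{-c}$ by construction of $\lambda$, and on that event $\|\hat\beta^{(1)}\|_{\ell_1}\leq\|\beta_0\|_{\ell_1}=0$ forces $\hat\beta^{(1)}=0$; your version spells out the same equivalence $\{\hat\beta^{(1)}\neq0\}=\{T_0>\lambda^{(1)}\}$ and then separately invokes the GAR/bootstrap bounds for the two penalty choices, which is exactly what the preceding theorems already packaged.
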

To see this note that under the null hypothesis of $\beta_0 =0$, $\beta_0$ satisfies the constraint  in (\ref{eq: dantzig estimator}) with probability  $(1-\alpha - Cn^{-c})$, by construction of $\lambda$; hence $\|\hat \beta^{(1)}\| \leq \| \beta_0\| = 0$ with exactly this probability.  Appendix \ref{sub: AST} of the SM \cite{CCK13} generalizes this to a more general test, which tests $\beta_0 =0$ in the regression model $y_i= d_i'\gamma_0 + x_i'\beta_0 + \varepsilon_i$,  where $d_i$'s are a small set of variables, whose coefficients are not known and need to be estimated.  The test orthogonalizes each $x_{ij}$ with respect to $d_i$ by partialling out linearly the effect of $d_i$ on $x_{ij}$.  The result similar to that in the corollary continues to hold.  \qed
\end{remark}

\begin{remark}[Confidence Bands]
Following  Gautier and Tsybakov \cite{GautierTsybakov2011},  the bounds given in the preceding theorems can be used for Scheffe-type (simultaneous) inference on all components of $\beta_0$.
\begin{corollary} Under the conditions of either of the two preceding theorems,  a $(1-\alpha - Cn^{-c})$-confidence rectangle for $\beta_0$ is given by  the region $\times_{j=1}^p I_j$, where
$I_j = [ \hat \beta^{(1)}_{j} \pm 2 \lambda^{(1)}/(\sqrt{n} \kappa_{\text{jc}}(\beta)].
$\end{corollary}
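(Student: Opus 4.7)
The plan is to apply the preceding theorem (Theorem~\ref{thm: dantzig estimator} in the homoscedastic case, Theorem~\ref{thm: dantzig estimator 2} in the heteroscedastic case) with the $j$th-component seminorm $\|\delta\|_{\text{jc}} = |\delta_j|$ playing the role of the seminorm of interest, and to exploit the fact that the high-probability guarantee there is built on a \emph{single} event that does not depend on the choice of seminorm. This is what converts per-coordinate bounds into a Scheffe-type simultaneous rectangle without any extra union bound across $j$.

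Concretely, I would first isolate from the proofs of the preceding theorems a master event $\mathcal{E}$ with $\Pr(\mathcal{E}) \geq 1 - \alpha - Cn^{-c}$ on which two things hold: (i) the truth $\beta$ is feasible for the Dantzig program, i.e.\ $T_0 \leq \lambda^{(1)}$, and (ii) consequently $\hat\beta^{(1)} - \beta \in \mathcal{R}(\beta)$ together with $\sqrt{n}\,\max_{1 \leq j \leq p}|\En[z_{ij}\, z_i'(\hat\beta^{(1)} - \beta)]| \leq 2\lambda^{(1)}$. In the homoscedastic case one simply takes $\mathcal{E} = \{T_0 \leq \lambda^{(1)}\}$, whose probability is controlled by the GAR already used in Theorem~\ref{thm: dantzig estimator}. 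In the heteroscedastic case one intersects $\{T_0 \leq \lambda^{(1)}\}$ with the bootstrap-consistency event from the proof of Theorem~\ref{thm: dantzig estimator 2} (i.e.\ $\lambda^{(1)} = c_W(1-\alpha) \geq c_{T_0}(1-\alpha - \nu_n)$); these still have joint probability at least $1 - \alpha - Cn^{-c}$. Crucially, neither (i) nor (ii) references any particular seminorm.

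Given $\mathcal{E}$, the definition of the identifiability factor applied with $\|\cdot\|_I = \|\cdot\|_{\text{jc}}$ yields, for each $j = 1, \dots, p$,
\begin{equation*}
|\hat\beta^{(1)}_j - \beta_j| \;=\; \|\hat\beta^{(1)} - \beta\|_{\text{jc}} \;\leq\; \frac{\max_{1 \leq k \leq p}|\En[z_{ik}\, z_i'(\hat\beta^{(1)} - \beta)]|}{\kappa_{\text{jc}}(\beta)} \;\leq\; \frac{2\lambda^{(1)}}{\sqrt{n}\,\kappa_{\text{jc}}(\beta)},
\end{equation*}
which is exactly the statement $\beta_j \in I_j$. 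Since all $p$ of these inequalities follow from the same event $\mathcal{E}$, no union bound across $j$ is needed, and one concludes
\begin{equation*}
\Pr\!\left(\beta \in \times_{j=1}^p I_j\right) \;\geq\; \Pr(\mathcal{E}) \;\geq\; 1 - \alpha - Cn^{-c}.
\end{equation*}

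The only step requiring any care is verifying that the event supporting the preceding theorems can indeed be chosen independently of the seminorm; any seminorm-dependent construction would force a union bound over $j$ and hence a $\sqrt{\log p}$ inflation that the statement of the corollary does not allow for. Once this bookkeeping is in hand, the Scheffe-type conclusion drops out with no further probabilistic input beyond what Theorems~\ref{thm: dantzig estimator} and~\ref{thm: dantzig estimator 2} already deliver.
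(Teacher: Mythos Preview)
Your proposal is correct and matches the paper's intent: the paper does not give an explicit proof of this corollary, treating it as an immediate consequence of the preceding theorems, and your argument is precisely the right way to unpack that---the key observation being that the single high-probability event $\{T_0\leq\lambda^{(1)}\}$ (intersected with the bootstrap event in the heteroscedastic case) from Steps~1--3 of those proofs is independent of the seminorm $\|\cdot\|_I$, so all $p$ coordinate bounds hold simultaneously without a union bound.
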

We note that  $\kappa_{\text{jc}}(\beta) = 1$ if $\En[z_{ij}z_{ik}] =0$ for all $k \neq j$.  Therefore,
in the orthogonal model of Donoho and Johnstone,  where $\En[z_{ij}z_{ik}] =0$ for all pairs $j \neq k$, we have that
 $\kappa_{\text{jc}}(\beta) = 1$ for all $1 \leq j \leq p$,  so that $I_j = [ \hat \beta^{(1)}_{j} \pm 2 \lambda^{(1)}/\sqrt{n}]$,  which gives a practical  simultaneous $(1-\alpha - Cn^{-c})$ confidence rectangle for $\beta$.    In non-orthogonal designs,
 we can rely on \cite{GautierTsybakov2011}'s tractable linear programming   algorithms for computing lower bounds on $\kappa_I(\beta)$ for various norms $I$ of interest; see also \cite{JuditskyNemirovski2011}.
 \qed \end{remark}

\begin{remark}[Generalization of Dantzig Selector] There are many interesting applications where the results
given above apply. There are, for example, interesting works by \cite{AlquierHebiri2011} and \cite{FrickMarnitzMunk2012} that consider
related estimators that minimize a convex penalty subject to the multiresolution screening constraints.
In the context of the regression problem studied above, such estimators may be defined as:
\begin{equation*}
\hat\beta \in \arg\min_{b\in\RR^p} J(b) \text{ subject to } \sqrt{ n} \max_{1\leq j\leq p}|\En[z_{ij}(y_{i}-z_{i}^\prime b)]|\leq{\lambda},
\end{equation*}
where $J$ is a convex penalty, and the constraint is used for multiresolution screening.  For example, the Lasso estimator is nested by the above formulation by using $J(b) = \|b \|_{\pr}$, and the previous Dantzig selector by using  $J(b) = \|b \|_{\ell_1}$; the estimators can be interpreted as a point in confidence set for $\beta$, which lies closest to zero under $J$-discrepancy (see references cited above for both of these points). Our results on choosing $\lambda$ apply to this class of estimators, and the previous analysis also applies by redefining the identifiability factor $\kappa_I(\beta)$ relative to the new restricted set  $\mathcal{R}(\beta):= \{ \delta \in \RR^p:  J (\beta+\delta)\leq  J(\beta)\}$; where
$\kappa_I(\beta)$ is defined as $\infty$ if $\mathcal{R}(\beta) = \{0\}$. \qed
\end{remark}

\section{Application: Multiple Hypothesis Testing via the Stepdown Method}
\label{sub: MHT}

In this section, we study the problem of multiple hypothesis testing
in the framework of multiple means or, more generally, approximate means. The latter possibility allows us
 to cover the case of testing multiple coefficients in multiple regressions, which is often required in empirical studies; see, for example, \cite{Anderson08}. We combine a general stepdown
procedure described in \cite{RomanoWolf05} with the multiplier bootstrap developed in this paper.
In contrast with \cite{RomanoWolf05}, our results do not require
weak convergence arguments, and, thus, can be applied to models with an increasing
number of means. Notably, the number of means can be large in comparison with the sample size.

Let $\beta:=(\beta_1,\dots,\beta_p)^\prime\in\RR^p$ be a vector of parameters of interest. We are interested in simultaneously testing the set of null hypotheses $H_j:\beta_j\leq\beta_{0j}$ against the alternatives $H_j^\prime:\beta_j> \beta_{0j}$ for $j=1,\dots,p$ where $\beta_0:=(\beta_{01},\dots,\beta_{0p})^\prime\in\RR^p$. Suppose that the estimator $\hat{\beta}:=(\hat{\beta}_1,\dots,\hat{\beta}_p)^\prime\in\RR^p$ is available that has an approximately linear form:
\begin{equation}\label{linearize}
\sqrt{n}(\hat{\beta}-\beta) = \frac{1}{\sqrt{n}}\sum_{i=1}^nx_{i}+r_n,
\end{equation}
where $x_1,\dots,x_n$ are independent zero-mean random vectors in $\RR^p$, the influence functions, and  $r_n:=(r_{n1},\dots,r_{np})^\prime\in\RR^p$ are linearization errors that are small in the sense required by condition (M) below. Vectors $x_1,\dots,x_n$ need not be directly observable. Instead, some estimators $\hat{x}_1,\dots,\hat{x}_n$ of influence functions $x_1,\dots,x_n$ are available, which will be used in the bootstrap simulations.

We refer to this framework as testing multiple approximate means.  This framework covers the case of testing multiple means with $r_{n}=0$.  More generally, this framework also covers the case of multiple linear and non-linear m-regressions; see, for example, \cite{HeShao00} for explicit conditions giving rise to linearizaton (\ref{linearize}). The detailed exposition of how the case of multiple linear regressions fits into this framework can be found in \cite{ChernozhukovChetverikovKato2012a}. Note also that this framework implicitly covers the case of testing equalities ($H_j:\beta_j=\beta_{0j}$) because equalities can be rewritten as pairs of inequalities.

We are interested in a procedure with the strong control of the family-wise error rate.
In other words, we seek a procedure that would reject at least one true null hypothesis
with probability not greater than $\alpha+o(1)$ uniformly over a large class of data-generating processes and, in particular, uniformly over the
set of true null hypotheses. More formally, let $\Omega$ be a set
of all data generating processes, and $\omega$ be the true process.
Each null hypothesis $H_{j}$ is equivalent to $\omega\in\Omega_{j}$
for some subset $\Omega_{j}$ of $\Omega$.
Let $\mathcal{W}:=\{1,\dots,p\}$ and for $w\subset \mathcal{W}$
denote $\Omega^{w}:=(\cap_{j\in w}\Omega_{j})\cap(\cap_{j\notin w}\Omega_{j}^c)$ where $\Omega_{j}^c:=\Omega\backslash\Omega_{j}$. The strong control of the family-wise error rate means
\begin{equation}
\sup_{w \subset \mathcal{W}}\sup_{\omega\in\Omega^{w}}
\Pr_{\omega}\{\text{reject at least one hypothesis among \ensuremath{H_{j}}, \ensuremath{j\in w}}\}
\leq \alpha+o(1) \label{eq: strong control}
\end{equation}
where $\Pr_{\omega}$ denotes the probability distribution under the data-generating process $\omega$.
This setting is clearly of interest in many empirical studies.

For $j=1,\dots,p$, denote $t_{j}:=\sqrt{n}(\hat{\beta}_j-\beta_{0j})$. The stepdown procedure of \cite{RomanoWolf05} is described as follows. For
a subset $w\subset \mathcal{W}$, let $c_{1-\alpha,w}$ be some estimator of
the $(1-\alpha)$-quantile of $\max_{j\in w}t_{j}$. On the first step, let $w(1)=\mathcal{W}$. Reject
all hypotheses $H_{j}$ satisfying $t_{j}>c_{1-\alpha,w(1)}$.
If no null hypothesis is rejected, then stop. If some $H_{j}$ are
rejected, let $w(2)$ be the set of all null hypotheses that
were not rejected on the first step. On step $l\geq2$, let $w(l)\subset \mathcal{W}$
be the subset of null hypotheses that were not rejected up to step
$l$. Reject all hypotheses $H_{j}$, $j\in w(l)$, satisfying $t_{j}>c_{1-\alpha,w(l)}$.
If no null hypothesis is rejected, then stop. If some $H_{j}$ are
rejected, let $w(l+1)$ be the subset of all null hypotheses
among $j\in w(l)$ that were not rejected. Proceed in this way until
the algorithm stops.

Romano and Wolf \cite{RomanoWolf05} proved the following result. Suppose that $c_{1-\alpha,w}$
satisfy
\begin{align}
&c_{1-\alpha,w^{\prime}}\leq c_{1-\alpha,w^{\prime\prime}} \quad \text{whenever $w^{\prime}\subset w^{\prime\prime}$},\label{eq: critical value property 1}\\
&\sup_{w\subset \mathcal{W}}\sup_{\omega\in\Omega^{w}}\Pr_\omega \left ( \max_{j\in w}t_{j}>c_{1-\alpha,w} \right ) \leq \alpha+o(1),\label{eq: conditions MHT}
\end{align}
then inequality (\ref{eq: strong control}) holds if the stepdown procedure is used. Indeed, let $w$
be the set of true null hypotheses. Suppose that the procedure rejects
at least one of these hypotheses. Let $l$ be the step when the procedure
rejected a true null hypothesis for the first time, and let $H_{j_0}$ be
this hypothesis. Clearly, we have $w(l)\supset w$. So,
\begin{equation*}
\max_{j\in w}t_{j}\geq t_{j_0}>c_{1-\alpha,w(l)}\geq c_{1-\alpha,w}.
\end{equation*}
Combining this chain of inequalities with (\ref{eq: conditions MHT})
yields (\ref{eq: strong control}).

To obtain suitable $c_{1-\alpha,w}$ that satisfy inequalities (\ref{eq: critical value property 1}) and (\ref{eq: conditions MHT}) above, we can use the multiplier bootstrap method. Let $(e_{i})_{i=1}^n$ be an i.i.d. sequence of $N(0,1)$ random variables that are independent of the data. Let $c_{1-\alpha,w}$ be the conditional
$(1-\alpha)$-quantile of $\max_{j \in w}\sum_{i=1}^n\hat{x}_{ij}e_i/\sqrt{n}$
given $(\hat{x}_{i})_{i=1}^n$.

To prove that so defined critical values $c_{1-\alpha,w}$ satisfy inequalities (\ref{eq: critical value property 1}) and (\ref{eq: conditions MHT}), the following two quantities play a key role:
$$
\Delta_1:=\max_{1\leq j\leq p}|r_{nj}|\text{ and } \Delta_2:=\max_{1\leq j\leq p}\En[(\hat{x}_{ij}-x_{ij})^2].
$$
We will assume the following regularity condition,
\begin{itemize}
\item[(M)] There are positive constants $c_2 $ and $C_2$:  (i) $\Pr\left(\sqrt{\log p}\Delta_1>C_2n^{-c_2}\right)$ $<$ $C_2n^{-c_2}$ and (ii) $\Pr\left((\log (pn))^2\Delta_2>C_2n^{-c_2}\right)< C_2n^{-c_2}$. In addition, one of the following conditions is satisfied:
(iii) (E.1) or (E.3) holds and $B_n^2 (\log (pn))^7/n\leq C_2 n^{-c_2}$ or
(iv) (E.2) or (E.4) holds and $B_n^4 (\log (pn))^7/n\leq C_2 n^{-c_2}$.
\end{itemize}



\begin{theorem}[Strong Control of Family-Wise Error Rate]\label{thm: MHT}
Suppose that (M) is satisfied uniformly over a class of data-generating processes $\Omega$. Then the stepdown procedure with the multiplier bootstrap critical values $c_{1-\alpha,w}$ given above satisfy (\ref{eq: strong control}) for this $\Omega$ with $o(1)$ strengthened to $Cn^{-c}$ for some constants $c>0$ and $C>0$ depending only on $c_1,C_1,c_2$, and $C_2$.
\end{theorem}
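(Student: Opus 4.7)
The plan is to verify the two sufficient conditions (\ref{eq: critical value property 1}) and (\ref{eq: conditions MHT}) of Romano and Wolf's framework, strengthened so that the $o(1)$ in (\ref{eq: conditions MHT}) becomes $Cn^{-c}$. Once these are in place, the conclusion (\ref{eq: strong control}) follows by the chain-of-inequalities argument already displayed above Theorem \ref{thm: MHT}. Monotonicity (\ref{eq: critical value property 1}) is immediate: for $w'\subset w''$, $\max_{j\in w'}\frac{1}{\sqrt n}\sum_i\hat x_{ij}e_i\leq\max_{j\in w''}\frac{1}{\sqrt n}\sum_i\hat x_{ij}e_i$ pointwise in $(e_i)$ given the data, so their conditional $(1-\alpha)$-quantiles obey the same inequality.

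For (\ref{eq: conditions MHT}), fix $w\subset\mathcal W$ and $\omega\in\Omega^w$. Under $\omega$ the null $\beta_j\leq\beta_{0j}$ holds for every $j\in w$, hence
\begin{equation*}
\max_{j\in w}t_j\;\leq\;\max_{j\in w}\sqrt n(\hat\beta_j-\beta_j)\;=\;\max_{j\in w}\Bigl(\tfrac{1}{\sqrt n}\sum_{i=1}^n x_{ij}+r_{nj}\Bigr)\;=:\;T.
\end{equation*}
Let $T_0:=\max_{j\in w}\frac{1}{\sqrt n}\sum_i x_{ij}$, $W_0:=\max_{j\in w}\frac{1}{\sqrt n}\sum_i x_{ij}e_i$, and $W:=\max_{j\in w}\frac{1}{\sqrt n}\sum_i\hat x_{ij}e_i$, so that $c_{1-\alpha,w}$ is the conditional $(1-\alpha)$-quantile of $W$. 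I would then apply Corollary \ref{cor: multiplier bootstrap examples} to the sub-vector indexed by $w$ to obtain $|\Pr_\omega(T\leq c_{1-\alpha,w})-(1-\alpha)|\leq Cn^{-c}$, which yields (\ref{eq: conditions MHT}) since $\Pr_\omega(\max_{j\in w}t_j>c_{1-\alpha,w})\leq\Pr_\omega(T>c_{1-\alpha,w})$.

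To verify the two hypotheses (\ref{eq: statistic approximation})--(\ref{eq: conditional quantiles}) of Theorem \ref{thm: multiplier bootrstrap II} that underlie the corollary, note first that $|T-T_0|\leq\Delta_1$ pointwise, so (M)(i) directly supplies (\ref{eq: statistic approximation}) with both $\zeta_1$ and $\zeta_2$ bounded by a polynomial $n^{-c_2}$. Second, conditional on the data, $W-W_0$ is a centered Gaussian maximum over $j\in w$ whose coordinate variances are at most $\Delta_2$; the Gaussian maximal inequality therefore gives $\Pr_e(|W-W_0|>C\sqrt{\Delta_2\log p})\leq n^{-c_2}$, and combining this with (M)(ii) produces (\ref{eq: conditional quantiles}) with $\zeta_1\sqrt{\log p}+\zeta_2\lesssim n^{-c_2/2}$ after some bookkeeping. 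The remaining hypotheses of Corollary \ref{cor: multiplier bootstrap examples}--the lower bound on $\barEp[x_{ij}^2]$ and one of the four moment/boundedness regimes--are furnished by (M)(iii)--(iv). Crucially, the constants produced by the corollary depend only on $c_1,C_1,c_2,C_2$, and the conditions on $(x_{ij})_{i,j}$ restrict equally well when we pass to the sub-vector indexed by $w$, so the bound $Cn^{-c}$ is uniform in $w\subset\mathcal W$ and $\omega\in\Omega^w$.

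The main obstacle is the uniform-in-$w$ verification of (\ref{eq: conditional quantiles}): one must pass from the pure second-moment control $\Delta_2$ to a uniform-in-$j$ deviation bound on the conditional Gaussian process $W-W_0$, and the resulting estimate must remain polynomial in $n^{-1}$ after being inflated by the $\sqrt{\log p}$ factor present in the error term of Theorem \ref{thm: multiplier bootrstrap II}. This is exactly why (M)(ii) is stated with $(\log(pn))^2\Delta_2$ rather than a single logarithm--one log is consumed by the Gaussian maximal inequality applied conditionally, and the second survives the $\sqrt{\log p}$ amplification in the bootstrap error bound.
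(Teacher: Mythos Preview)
Your proposal is correct and follows essentially the same route as the paper's proof: verify monotonicity of $c_{1-\alpha,w}$ trivially, then for each $w$ reduce to applying Corollary~\ref{cor: multiplier bootstrap examples} to the sub-vector indexed by $w$, checking (\ref{eq: statistic approximation}) via $|T-T_0|\leq\Delta_1$ and (\ref{eq: conditional quantiles}) via Gaussian concentration (the paper invokes Borell's inequality) on the bound $|W-W_0|\leq\max_{j}|n^{-1/2}\sum_i(\hat x_{ij}-x_{ij})e_i|$. One small phrasing slip: $W-W_0$ is not itself ``a centered Gaussian maximum'' but is dominated by one, which is what you actually use; otherwise the argument and the uniformity-in-$w$ observation match the paper.
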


\begin{remark}[The case of sample means] Let us consider the simple case of testing multiple means.
In this case,  $\beta_j = \Ep[z_{ij}]$ and  $\hat \beta_j = \En[z_{ij}]$, where $z_i =(z_{ij})_{j=1}^p$ are i.i.d. vectors, so that the influence functions are $x_{ij} = z_{ij} - \Ep[z_{ij}]$, and the remainder is zero, $r_{n}=0$. The influence functions $x_i$ are not directly observable,  though easily estimable by demeaning, $\hat x_{ij} = z_{ij} - \En[z_{ij}]$ for all $i$ and $j$.  It is instructive to see the implications of Theorem \ref{thm: MHT} in this simple setting. Condition (i) of assumption (M) holds trivially in this case. Condition (ii) of assumption (M) follows from Lemma \ref{lem: symmetrization inequality 1} under conditions (iii) or (iv) of assumption (M). Therefore, Theorem \ref{thm: MHT} applies provided that $\underline \sigma^2 \leq \Ep[\x_{ij}^2] \leq \bar \sigma^2$, $(\log p)^7 \leq C_2n^{1-c_2}$ for arbitrarily small $c_2$ and, for example, either (a) $\Ep [ \exp (|x_{ij}|/C_{1}) ] \leq 2$ (condition (E.1)) or (b) $\Ep[\max_{1\leq j\leq p} x_{ij}^4] \leq C_{1}$ (condition (E.2)). Hence, the theorem implies that the Gaussian multiplier bootstrap as described above leads to a testing procedure with the strong control of the family-wise error rate for the multiple hypothesis testing problem of which the {\em logarithm} of the number of hypotheses is nearly of order $n^{1/7}$. Note here that no assumption that limits the dependence between $x_{i1},\dots,x_{ip}$ or the distribution of $x_i$ is made. Previously, \cite{ArlotBlanchardRoquain2010b} proved strong control of the family-wise error rate for the Rademacher multiplier bootstrap with some adjustment factors assuming that $x_i$'s are Gaussian with unknown covariance structure.  \qed \end{remark}

\begin{remark}[Relation to Simultaneous Testing]The question on how large $p$ can be was studied in \cite{FanHallYao07} but from a conservative perspective. The motivation there is to know how fast $p$ can grow to maintain the size of the simultaneous test when we calculate critical values (conservatively) ignoring the dependency among $t$-statistics $t_{j}$ and assuming that $t_{j}$ were distributed as, say,  $N(0,1)$. This framework is conservative in
that correlation amongst statistics is dealt away by independence, namely by \v{S}id\'{a}k  procedures. In contrast, our approach takes into account the correlation  amongst statistics and hence is asymptotically exact, that is, asymptotically non-conservative. \qed
\end{remark}

\appendix

\section{Preliminaries}
\label{sec: auxiliary lemmas}

\subsection{A Useful Maximal Inequality}\label{sec: auxiliary results}
The following lemma, which is derived in \cite{ChernozhukovChetverikovKato2012c},  is a useful variation of standard maximal inequalities.
\begin{lemma}[Maximal Inequality]\label{lem: symmetrization inequality 1}
Let $x_{1},\dots,x_{n}$ be independent random vectors in $\RR^{p}$ with $p \geq 2$.  Let $M=\max_{1\leq i\leq n}\max_{1\leq j\leq p}|x_{ij}|$ and $\sigma^2=\max_{1\leq j\leq p}\barEp[x_{ij}^2]$.
Then
\begin{equation*}
\Ep \left[ \max_{1\leq j\leq p}|\En[x_{ij}]-\barEp[x_{ij}]| \right ] \lesssim  \sigma\sqrt{ (\log p)/n} + \sqrt{\Ep[M^2]}(\log p)/n.
\end{equation*}
\end{lemma}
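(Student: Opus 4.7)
The plan is a three-level argument: symmetrize, apply a sub-Gaussian maximal inequality conditionally on the data, and then control the resulting second-moment quantity by a second round of symmetrization, contraction, and a self-bounding trick.

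First I would use the standard symmetrization lemma to write
\[
\Ep\!\left[\max_{1\le j\le p}|\En[x_{ij}]-\barEp[x_{ij}]|\right]\;\le\;\frac{2}{n}\,\Ep\!\left[\max_{1\le j\le p}\Big|\sum_{i=1}^{n}\eps_i x_{ij}\Big|\right],
\]
where $\eps_1,\dots,\eps_n$ are i.i.d.\ Rademacher variables independent of $(x_i)_{i=1}^n$. Conditionally on $(x_i)_{i=1}^n$, each $\sum_i \eps_i x_{ij}$ is sub-Gaussian with variance proxy $\sum_i x_{ij}^2$, so the usual sub-Gaussian maximal inequality for $p$ variables yields
\[
\Ep_\eps\!\left[\max_{j}\Big|\sum_i\eps_i x_{ij}\Big|\right]\;\lesssim\;\sqrt{\log p}\cdot\max_{j}\Big(\sum_i x_{ij}^2\Big)^{1/2}.
\]
Taking expectations and applying Jensen, it remains to bound $S:=\Ep[\max_j\sum_i x_{ij}^2]$, since $\Ep[\max_j(\sum_i x_{ij}^2)^{1/2}]\le\sqrt{S}$.

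To control $S$, I would use the decomposition $\max_j\sum_i x_{ij}^2\le n\sigma^2+\max_j|\sum_i(x_{ij}^2-\Ep[x_{ij}^2])|$ and symmetrize again:
\[
\Ep\!\left[\max_{j}\Big|\sum_i(x_{ij}^2-\Ep[x_{ij}^2])\Big|\right]\;\le\;2\,\Ep\!\left[\max_{j}\Big|\sum_i\eps_i x_{ij}^2\Big|\right].
\]
Conditionally on $(x_i)_{i=1}^n$ the map $t\mapsto t^2$ is $(2M)$-Lipschitz on $[-M,M]$ with value $0$ at $0$, so the Ledoux--Talagrand contraction principle gives $\Ep_\eps[\max_j|\sum_i\eps_i x_{ij}^2|]\le 4M\,\Ep_\eps[\max_j|\sum_i\eps_i x_{ij}|]$. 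Taking unconditional expectation and applying Cauchy--Schwarz,
\[
\Ep\!\left[M\cdot\Ep_\eps\!\left[\max_j\Big|\sum_i\eps_i x_{ij}\Big|\right]\right]\;\le\;\sqrt{\Ep[M^2]}\,\sqrt{\Ep\!\left[\max_j\Big(\sum_i\eps_i x_{ij}\Big)^2\right]},
\]
and an $L^2$ version of the sub-Gaussian maximal inequality gives $\Ep_\eps[\max_j(\sum_i\eps_i x_{ij})^2]\lesssim(\log p)\max_j\sum_i x_{ij}^2$, whose unconditional expectation is $\lesssim(\log p)\,S$.

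Combining these estimates produces the self-bounding inequality $S\lesssim n\sigma^2+\sqrt{\Ep[M^2]}\sqrt{(\log p)\,S}$, which, viewed as a quadratic in $\sqrt{S}$, yields $\sqrt{S}\lesssim\sqrt{n}\,\sigma+\sqrt{(\log p)\Ep[M^2]}$. Plugging this into the first display and dividing by $n$ delivers the bound $\sigma\sqrt{(\log p)/n}+\sqrt{\Ep[M^2]}(\log p)/n$, as claimed. The main obstacle is the second stage: because the effective Lipschitz constant $M$ is random, one must be careful to apply the contraction principle conditionally and then use Cauchy--Schwarz together with an $L^2$-type Khintchine/sub-Gaussian bound to decouple $M$ from the Rademacher maximum, ultimately producing the quadratic self-bounding relation that drives the proof.
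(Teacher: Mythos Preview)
Your proof is correct. The paper itself does not give a self-contained argument for this lemma; it simply cites Lemma~8 of \cite{ChernozhukovChetverikovKato2012c}. The approach you outline---symmetrization, the conditional sub-Gaussian maximal bound, a second symmetrization followed by the Ledoux--Talagrand contraction principle applied to $t\mapsto t^2$, and then a self-bounding quadratic inequality for $S=\Ep[\max_j\sum_i x_{ij}^2]$---is precisely the standard route and matches what is done in the cited companion paper. The one point to keep straight, which you handle correctly, is that the contraction step must be applied conditionally on the data (so that the Lipschitz constant $2M$ is deterministic at that stage), after which Cauchy--Schwarz decouples $M$ from the Rademacher maximum.
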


\begin{proof}
See \cite{ChernozhukovChetverikovKato2012c}, Lemma 8.
\end{proof}

\subsection{Properties of the Smooth Max Function}
We will use the following properties of the smooth max function.

\begin{lemma}[Properties of $F_\beta$]\label{lemma: smooth max property} For every $1 \leq j,k,l \leq p$,
\begin{eqnarray*}
\ \ \partial_{j} F_{\beta}(z) = \pi_{j}(z), \ \ \partial_{j} \partial_k F_{\beta}(z) = \beta w_{jk} (z), \ \  \partial_{j} \partial_k \partial_l F_{\beta}(z) = \beta^2 q_{jkl}(z).
\end{eqnarray*}
where, for $\delta_{jk}  := 1\{j=k\}$,
\begin{align*}
\pi_{j}(z) & :=    e^{\beta z_{j}}/{\textstyle \sum}_{m=1}^pe^{\beta z_m}, \ w_{jk}(z) :=  ( \pi_{j} \delta_{jk} - \pi_{j} \pi_{k}) (z), \ \  \\
q_{jkl}(z) & :=  (\pi_{j}\delta_{jl}\delta_{jk}-\pi_{j}\pi_{l}\delta_{jk} -\pi_{j}\pi_{k}(\delta_{jl}+\delta_{kl})    + 2\pi_{j}\pi_{k}\pi_{l})(z).
\end{align*}
Moreover,
\begin{equation*}
\pi_{j}(z) \geq 0,  \  {\textstyle \sum}_{j=1}^p \pi_{j} (z) = 1, \ {\textstyle \sum}_{j,k=1}^p |w_{jk}(z)|  \leq 2, \ {\textstyle \sum}_{j,k,l=1}^p |q_{jkl}(z)| \leq 6 .
\end{equation*}
\end{lemma}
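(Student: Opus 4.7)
The plan is a direct computation, since this is a calculus lemma with no probabilistic content. I would proceed in three small steps, computing each order of derivative from the previous one.

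First, write $F_\beta(z) = \beta^{-1}\log S(z)$ with $S(z) := \sum_{m=1}^p e^{\beta z_m}$. Then $\partial_j F_\beta(z) = \beta^{-1}\cdot (\beta e^{\beta z_j})/S(z) = \pi_j(z)$, which gives the first identity and also makes the bounds $\pi_j \geq 0$ and $\sum_j \pi_j = 1$ immediate.

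Next, differentiate $\pi_j(z) = e^{\beta z_j}/S(z)$ in $z_k$ using the quotient rule: $\partial_k \pi_j = (\beta e^{\beta z_j}\delta_{jk})/S - e^{\beta z_j}(\beta e^{\beta z_k})/S^2 = \beta(\pi_j\delta_{jk} - \pi_j\pi_k) = \beta w_{jk}(z)$. The bound $\sum_{j,k}|w_{jk}(z)| \leq 2$ then follows from the triangle inequality: $\sum_{j,k}|w_{jk}| \leq \sum_j \pi_j \delta_{jj} + \sum_{j,k} \pi_j\pi_k = 1 + 1 = 2$, using $\sum_j \pi_j = 1$.

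For the third derivative, differentiate $w_{jk} = \pi_j\delta_{jk} - \pi_j\pi_k$ in $z_l$, applying the formula for $\partial_l \pi_j$ just computed and the product rule to $\pi_j\pi_k$. Collecting terms yields
\begin{equation*}
\partial_l w_{jk}(z) = \beta\bigl[\pi_j\delta_{jl}\delta_{jk} - \pi_j\pi_l\delta_{jk} - \pi_j\pi_k(\delta_{jl}+\delta_{kl}) + 2\pi_j\pi_k\pi_l\bigr],
\end{equation*}
so $\partial_j\partial_k\partial_l F_\beta(z) = \beta^2 q_{jkl}(z)$ with $q_{jkl}$ as defined. The bound $\sum_{j,k,l}|q_{jkl}| \leq 6$ follows from the triangle inequality by summing each of the four pieces separately: $\sum_{j,k,l}\pi_j\delta_{jl}\delta_{jk} = 1$, $\sum_{j,k,l}\pi_j\pi_l\delta_{jk} = 1$, $\sum_{j,k,l}\pi_j\pi_k(\delta_{jl}+\delta_{kl}) = 2$, and $\sum_{j,k,l}2\pi_j\pi_k\pi_l = 2$, giving $1+1+2+2=6$.

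There is no genuine obstacle here; the only thing to watch is careful bookkeeping of the $\delta$-symbols in the third derivative so that the expression for $q_{jkl}$ comes out exactly as stated (in particular, that the symmetric pair $\delta_{jl}+\delta_{kl}$ arises from differentiating the product $\pi_j\pi_k$).
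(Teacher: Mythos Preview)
Your proposal is correct and matches the paper's approach exactly: the paper's proof simply says the first identity was noted by Chatterjee and that ``the other properties follow from repeated application of the chain rule,'' which is precisely the direct computation you carry out. Your argument is in fact more detailed than the paper's own one-line proof, and all the bookkeeping (including the $\delta$-symbol terms in $q_{jkl}$ and the summation bounds) checks out.
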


\begin{proof}[Proof of Lemma \ref{lemma: smooth max property}]
The first property was noted in \cite{Chatterjee2005b}.  The other properties follow from repeated application of the chain rule.
\end{proof}

\begin{lemma}[Lipschitz Property of $F_\beta$]\label{lemma: lipschitz F} For every $x \in \RR^p$ and $z \in \RR^p$, we have $ |F_\beta(x) - F_{\beta}(z)| \leq \max_{1\leq j \leq p} |x_{j} - z_{j}|$.
\end{lemma}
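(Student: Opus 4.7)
The plan is to exploit the convex-combination structure of the gradient of $F_\beta$ already recorded in Lemma \ref{lemma: smooth max property}. I would reduce the claim to a one-dimensional calculation by interpolating linearly between $z$ and $x$. Concretely, set $\phi(t) := F_\beta(z + t(x-z))$ for $t \in [0,1]$, which is smooth in $t$ because $F_\beta$ is smooth on $\RR^p$.

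By the fundamental theorem of calculus, $F_\beta(x) - F_\beta(z) = \phi(1) - \phi(0) = \int_0^1 \phi'(t)\,dt$, and the chain rule combined with $\partial_j F_\beta = \pi_j$ (from Lemma \ref{lemma: smooth max property}) gives
\begin{equation*}
\phi'(t) = \sum_{j=1}^p \pi_j(z + t(x-z))(x_j - z_j).
\end{equation*}
Since $\pi_j \geq 0$ and $\sum_{j=1}^p \pi_j \equiv 1$, at each $t$ the value $\phi'(t)$ is a convex combination of the numbers $x_j - z_j$, so
\begin{equation*}
|\phi'(t)| \leq \sum_{j=1}^p \pi_j(z+t(x-z))\,|x_j - z_j| \leq \max_{1\leq j\leq p} |x_j - z_j|.
\end{equation*}
Integrating over $t \in [0,1]$ yields $|F_\beta(x) - F_\beta(z)| \leq \max_{1\leq j\leq p}|x_j-z_j|$, which is the claim.

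There is no real obstacle here; the only thing to be careful about is invoking the right identity for $\partial_j F_\beta$ and the normalization $\sum_j \pi_j = 1$, both of which are already established in Lemma \ref{lemma: smooth max property}. One could equivalently phrase the argument as: the gradient $\nabla F_\beta(z) = (\pi_1(z),\dots,\pi_p(z))$ lies in the probability simplex, hence has $\ell_1$-norm equal to $1$, so by the mean value theorem $F_\beta$ is $1$-Lipschitz with respect to the $\ell_\infty$-norm on $\RR^p$, which is exactly the stated inequality.
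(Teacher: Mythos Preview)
Your proof is correct and is essentially the same as the paper's: both rely on $\partial_j F_\beta = \pi_j$ with $\pi_j \geq 0$ and $\sum_j \pi_j = 1$ from Lemma~\ref{lemma: smooth max property}, which immediately gives that $\nabla F_\beta$ has $\ell_1$-norm $1$ and hence $F_\beta$ is $1$-Lipschitz in $\ell_\infty$. The paper states this in one line; you have simply written out the interpolation explicitly.
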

\begin{proof}[Proof of Lemma \ref{lemma: lipschitz F}] The proof follows from the fact that $\partial_{j} F_{\beta}(z) = \pi_{j}(z)$ with $\pi_{j}(z) \geq 0$ and $\sum_{j=1}^{p} \pi_{j}(z)=1$.
\end{proof}

 We will also use the following properties of $m = g \circ F_{\beta}$. We assume $g \in C_{b}^{3}(\RR)$ in Lemmas \ref{lemma: second deriv m}-\ref{lemma: switching property} below.

\begin{lemma}[Three derivatives of $m = g\circ F_\beta$]\label{lemma: second deriv m} For every $1 \leq j,k,l \leq p$,
\begin{align*}
\partial_{j} m (z) & = ( \partial g(F_\beta) \pi_{j}) (z), \ \partial_{j} \partial_k m(z) =  (\partial^2 g (F_\beta) \pi_{j} \pi_{k} +  \partial g (F_\beta) \beta w_{jk}) (z),\\
\partial_{j} \partial_k \partial_l m(z) & = (\partial^3 g(F_\beta) \pi_{j} \pi_k \pi_l +
\partial^2 g (F_\beta)  \beta (w_{jk} \pi_l + w_{jl} \pi_k + w_{kl} \pi_{j}) \\
& \quad +   \partial g(F_\beta)  \beta^2 q_{jkl})(z), \ \
\end{align*}
where $\pi_{j}$, $w_{jk}$ and $q_{jkl}$ are defined in Lemma \ref{lemma: smooth max property}, and $(z)$ denotes evaluation at $z$, including evaluation of $F_\beta$ at $z$.
\end{lemma}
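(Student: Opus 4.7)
The plan is to prove the identities by direct application of the chain rule, starting from the expressions for the derivatives of $F_\beta$ that were already computed in the Lemma on Properties of $F_\beta$. Since $m = g \circ F_\beta$ and $g \in C_b^3(\RR)$, all derivatives up to order three are well-defined and we can differentiate pointwise without any further justification.

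First I would compute $\partial_j m(z)$ by the one-variable chain rule: $\partial_j m(z) = \partial g(F_\beta(z)) \cdot \partial_j F_\beta(z) = (\partial g(F_\beta) \pi_j)(z)$, using $\partial_j F_\beta = \pi_j$ from the earlier Lemma on Properties of $F_\beta$. Next, to obtain the second derivative, I would differentiate the product $\partial g(F_\beta) \cdot \pi_j$ using the product rule, producing one term where the derivative falls on $\partial g(F_\beta)$ (giving $\partial^2 g(F_\beta) \pi_k \pi_j$ by a second chain rule application) and one term where it falls on $\pi_j = \partial_j F_\beta$ (giving $\partial g(F_\beta) \cdot \partial_k \partial_j F_\beta = \partial g(F_\beta) \beta w_{jk}$). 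Summing these yields the stated formula.

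For the third derivative, I would differentiate each of the two terms in $\partial_j\partial_k m$ with respect to $z_l$, again by the product/chain rule. The term $\partial^2 g(F_\beta) \pi_j \pi_k$ produces three summands: one from differentiating $\partial^2 g(F_\beta)$, yielding $\partial^3 g(F_\beta) \pi_l \pi_j \pi_k$, and two from differentiating each of $\pi_j$ and $\pi_k$, yielding $\partial^2 g(F_\beta) \beta (w_{jl} \pi_k + w_{kl} \pi_j)$. The term $\partial g(F_\beta) \beta w_{jk}$ produces two summands: $\partial^2 g(F_\beta) \beta w_{jk} \pi_l$ from differentiating $\partial g(F_\beta)$, and $\partial g(F_\beta) \beta \cdot \partial_l w_{jk}$ from differentiating $w_{jk}$. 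Using the identity $\beta \partial_l w_{jk} = \partial_l \partial_j \partial_k F_\beta = \beta^2 q_{jkl}$, this last term equals $\partial g(F_\beta) \beta^2 q_{jkl}$. Collecting all five summands and grouping those that share the factor $\beta (w_{jk}\pi_l + w_{jl}\pi_k + w_{kl}\pi_j)$ gives the claimed formula.

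There is no genuine obstacle here: everything is an exercise in the chain rule combined with symmetry bookkeeping. The only place where one must be careful is in not overcounting or mislabeling indices in the three symmetric contributions of the form $w_{\cdot\cdot}\pi_\cdot$, so I would write out each of the five intermediate terms explicitly before combining them. Since $g \in C_b^3$, all of $\partial g, \partial^2 g, \partial^3 g$ are bounded and continuous, so the formal differentiation is rigorous without any additional hypothesis.
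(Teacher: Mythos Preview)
Your proposal is correct and follows exactly the approach the paper uses: the paper's proof consists of the single sentence ``The proof follows from repeated application of the chain rule and by the properties noted in Lemma \ref{lemma: smooth max property},'' and you have simply written out those chain-rule computations in full detail. Your bookkeeping of the five intermediate terms in the third derivative is accurate and matches the stated formula.
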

\begin{proof}[Proof of lemma \ref{lemma: second deriv m}]
The proof follows from  repeated application
of the chain rule and by the properties noted in Lemma \ref{lemma: smooth max property}.
\end{proof}

\begin{lemma}[Bounds on derivatives of $m = g \circ F_{\beta}$]\label{lemma: bounds on derivatives of m}
For every $1 \leq j,k,l \leq p$,
\begin{equation*}
|\partial_{j} \partial_k m(z)| \leq U_{jk}(z), \quad  |\partial_{j} \partial_k \partial_l m(z)| \leq  U_{jkl}(z),
\end{equation*}
where
\begin{align*}
&U_{jk}(z):= (G_2 \pi_{j} \pi_{k} +  G_1 \beta W_{jk}) (z), \ W_{jk}(z) :=  ( \pi_{j} \delta_{jk} + \pi_{j} \pi_{k}) (z), \\
&U_{jkl}(z) := ( G_3 \pi_{j} \pi_k \pi_l + G_2 \beta (W_{jk} \pi_l +W_{jl} \pi_k + W_{kl} \pi_{j}) + G_1  \beta^2 Q_{jkl})(z), \\
&Q_{jkl}(z)  :=  (\pi_{j}\delta_{jl}\delta_{jk}+\pi_{j}\pi_{l}\delta_{jk} +\pi_{j}\pi_{k}(\delta_{jl}+\delta_{kl})    + 2\pi_{j}\pi_{k}\pi_{l})(z).
\end{align*}
Moreover,
\begin{equation*}
{\textstyle \sum}_{j,k=1}^p U_{jk}(z) \leq (G_2  +  2 G_1 \beta), \ {\textstyle \sum}_{j,k,l=1}^p U_{jkl}(z) \leq ( G_3  +  6 G_2 \beta + 6G_1  \beta^2).
\end{equation*}
\end{lemma}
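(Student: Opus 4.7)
The plan is to apply the triangle inequality directly to the explicit derivative formulas from Lemma \ref{lemma: second deriv m}, bound $|\partial^i g(F_\beta(z))| \leq G_i$, and then bound $|w_{jk}|$ and $|q_{jkl}|$ termwise by replacing each signed summand in the formulas from Lemma \ref{lemma: smooth max property} with its absolute value; since $\pi_j(z) \geq 0$, this yields exactly $W_{jk}$ and $Q_{jkl}$. The summation bounds will then follow by exploiting that $\{\pi_j(z)\}_{j=1}^p$ is a probability distribution on $\{1,\dots,p\}$ by Lemma \ref{lemma: smooth max property}.

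For the pointwise bounds, I would first take the formula $\partial_j \partial_k m = \partial^2 g(F_\beta)\,\pi_j \pi_k + \partial g(F_\beta)\,\beta w_{jk}$ and estimate
\[
|\partial_j \partial_k m(z)| \leq G_2 \pi_j \pi_k + G_1 \beta |w_{jk}|,
\]
and then observe that $|w_{jk}| = |\pi_j \delta_{jk} - \pi_j \pi_k| \leq \pi_j \delta_{jk} + \pi_j \pi_k = W_{jk}$ since both terms on the right are nonnegative. The same argument applied term by term to the formula for $\partial_j \partial_k \partial_l m$ yields $|q_{jkl}| \leq Q_{jkl}$ and hence $|\partial_j \partial_k \partial_l m(z)| \leq U_{jkl}(z)$.

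For the total-mass bounds, using $\sum_j \pi_j = 1$ I would compute
\[
{\textstyle\sum}_{j,k} \pi_j \pi_k = 1, \qquad {\textstyle\sum}_{j,k} W_{jk} = {\textstyle\sum}_j \pi_j + {\textstyle\sum}_{j,k}\pi_j \pi_k = 2,
\]
which gives $\sum_{j,k} U_{jk}(z) \leq G_2 + 2 G_1 \beta$. For the triple sum, the three symmetric mixed terms each satisfy ${\textstyle\sum}_{j,k,l} W_{jk}\pi_l = 2$, so they contribute $6$ in total. For $Q_{jkl}$, summing the four nonnegative terms gives
\[
{\textstyle\sum}_{j,k,l}\pi_j \delta_{jl}\delta_{jk} = 1,\quad {\textstyle\sum}_{j,k,l}\pi_j \pi_l \delta_{jk} = 1,\quad {\textstyle\sum}_{j,k,l}\pi_j \pi_k(\delta_{jl}+\delta_{kl}) = 2,\quad {\textstyle\sum}_{j,k,l} 2\pi_j \pi_k \pi_l = 2,
\]
summing to $6$, and hence $\sum_{j,k,l} U_{jkl}(z) \leq G_3 + 6 G_2 \beta + 6 G_1 \beta^2$.

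There is no real obstacle here: the lemma is a purely mechanical consequence of Lemmas \ref{lemma: smooth max property} and \ref{lemma: second deriv m} together with the triangle inequality and the fact that $\pi_j(z)$ defines a probability mass function. The only minor care needed is to ensure that when passing from signed quantities $w_{jk}$ and $q_{jkl}$ to their majorants $W_{jk}$ and $Q_{jkl}$, one replaces each signed term by its absolute value rather than applying the cruder bound $|w_{jk}| \leq \pi_j \delta_{jk} + \pi_j \pi_k$ after first taking absolute values of the whole expression — the sharper termwise replacement is what keeps the sum bounds at $2$ and $6$ rather than larger constants.
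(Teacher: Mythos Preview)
Your proof is correct and is exactly the direct calculation the paper alludes to: apply the triangle inequality to the formulas in Lemma~\ref{lemma: second deriv m}, replace signed $w_{jk}$ and $q_{jkl}$ by their termwise absolute values $W_{jk}$ and $Q_{jkl}$, and then sum using $\sum_j \pi_j = 1$. The paper's own proof consists of the single sentence ``The lemma follows from a direct calculation,'' so you have simply written out what that calculation is.
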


\begin{proof}[Proof of Lemma \ref{lemma: bounds on derivatives of m}]
The lemma follows from a direct calculation.
\end{proof}

The following lemma plays a critical role.

\begin{lemma}[Stability Properties of Bounds over Large Regions]\label{lemma: switching property}
For every $z \in \RR^p$, $w \in \RR^p$ with $\max_{j \leq p}|w_{j}| \beta \leq 1$, $\tau \in [0,1]$, and every $1 \leq j,k,l \leq p$, we have
\begin{align*}
U_{jk}(z)  \lesssim U_{jk}(z+ \tau w) \lesssim  U_{jk}(z), \
U_{jkl}(z)  \lesssim  U_{jkl}(z+ \tau w) \lesssim  U_{jkl}(z).
\end{align*}
\end{lemma}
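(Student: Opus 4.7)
My plan is to reduce everything to a pointwise stability property of the softmax weights $\pi_j(z) = e^{\beta z_j}/\sum_m e^{\beta z_m}$, and then observe that every term defining $U_{jk}$ and $U_{jkl}$ is a nonnegative linear combination of products of at most three such $\pi_j$'s (with an optional Kronecker factor that is translation-invariant).

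First I would prove the key ratio bound: under $\max_j|w_j|\beta \leq 1$ and $\tau \in [0,1]$, for every $m$,
\begin{equation*}
e^{-2}\,\pi_m(z) \;\leq\; \pi_m(z+\tau w) \;\leq\; e^{2}\,\pi_m(z).
\end{equation*}
The computation is direct:
\begin{equation*}
\frac{\pi_m(z+\tau w)}{\pi_m(z)} \;=\; e^{\beta\tau w_m}\,\frac{\sum_{r} e^{\beta z_r}}{\sum_{r} e^{\beta(z_r+\tau w_r)}}.
\end{equation*}
Since $|\beta\tau w_m|\leq 1$ and $e^{\beta\tau w_r}\in[e^{-1},e]$ for each $r$, both factors lie in $[e^{-1},e]$, giving the claimed two-sided bound with constants $e^{-2}$ and $e^{2}$.

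Next, I would lift this to products: any product $\pi_{j_1}(z+\tau w)\pi_{j_2}(z+\tau w)\cdots \pi_{j_r}(z+\tau w)$ with $r\leq 3$ is sandwiched between $e^{-2r}$ and $e^{2r}$ times the corresponding product at $z$. Inspecting the formulas from Lemma~\ref{lemma: bounds on derivatives of m},
\begin{align*}
U_{jk}(z) &= G_2\,\pi_j(z)\pi_k(z) + G_1\beta\bigl(\pi_j(z)\delta_{jk} + \pi_j(z)\pi_k(z)\bigr),\\
U_{jkl}(z) &= G_3\,\pi_j(z)\pi_k(z)\pi_l(z) + G_2\beta\bigl(W_{jk}\pi_l + W_{jl}\pi_k + W_{kl}\pi_j\bigr)(z) + G_1\beta^2 Q_{jkl}(z),
\end{align*}
every summand is a product of at most three $\pi$ factors (the Kronecker symbols $\delta_{\cdot\cdot}$ do not depend on $z$) multiplied by a nonnegative constant. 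Hence termwise application of the product bound gives
\begin{equation*}
e^{-4}\,U_{jk}(z) \leq U_{jk}(z+\tau w) \leq e^{4}\,U_{jk}(z),\qquad e^{-6}\,U_{jkl}(z) \leq U_{jkl}(z+\tau w) \leq e^{6}\,U_{jkl}(z),
\end{equation*}
which is the claim with universal constants. There is no real obstacle; the only point requiring care is to keep the Kronecker factors out of the $z$-dependence and to verify that the summands in $U_{jk}, U_{jkl}, W_{jk}, Q_{jkl}$ are all nonnegative so that the termwise inequalities can be aggregated without cancellation.
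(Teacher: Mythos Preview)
Your proof is correct and follows essentially the same approach as the paper: both arguments establish the two-sided bound $e^{-2}\pi_m(z)\leq \pi_m(z+\tau w)\leq e^{2}\pi_m(z)$ directly from the softmax formula, and then conclude by observing that $U_{jk}$ and $U_{jkl}$ are finite sums of nonnegative products of at most three $\pi$-factors (with translation-invariant Kronecker symbols). Your version is slightly more explicit about the resulting constants $e^{\pm 4}$ and $e^{\pm 6}$, but the idea is identical.
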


\begin{proof}[Proof of Lemma \ref{lemma: switching property}]
Observe that
\begin{equation*}
\pi_{j}(z + \tau w)
=\frac{e^{z_{j} \beta + \tau w_{j} \beta}}{ \sum_{m=1}^p  e^{ z_m \beta + \tau w_m \beta}}
\leq \frac{e^{z_{j} \beta}}{ \sum_{m=1}^p  e^{z_m \beta}} \cdot \frac{e^{\tau \max_{j \leq p} |w_{j}| \beta }}{ e^{ - \tau \max_{j \leq p} |w_{j}| \beta }}  \leq  e^{2} \pi_{j}(z).
\end{equation*}
Similarly, $\pi_{j}(z + \tau w) \geq  e^{-2} \pi_{j}(z)$. Since $U_{jk}$ and $U_{jkl}$ are finite sums of  products  of terms such as $\pi_{j}$, $\pi_k$, $\pi_l$, $\delta_{jk}$, the claim of the lemma follows.
\end{proof}

\subsection{Lemma on Truncation}
\label{sec: truncation}

The proof of Theorem \ref{theorem:comparison non-Gaussian} uses the following properties of the truncation operation.
Define $\tilde x_{i} = ( \tilde x_{ij})_{j=1}^p$ and $\tilde X$ $ =$ $n^{-1/2}$ $\sum_{i=1}^n \tilde x_{i}$,
where ``tilde" denotes the truncation operation defined in Section 2. The following lemma
also covers the special case where $(x_{i})_{i=1}^n = (y_{i})_{i=1}^n$. The property (d) is a
consequence of sub-Gaussian inequality of \cite{Shao2009}, Theorem 2.16, for self-normalized sums.
\begin{lemma}[Truncation Impact]
\label{lemma: truncation}
For every $1 \leq j,k \leq p$ and  $q \geq 1$,
(a) $(\barEp[|\tilde x_{ij}|^q])^{1/q} \leq 2 (\barEp[|x_{ij}|^q])^{1/q}$;
(b) $\bar \Ep[ | \tilde x_{ij} \tilde x_{ik}-  x_{ij}  x_{ik} | ] \leq  (3/2) ( \barEp[x_{ij}^2] + \barEp[x^2_{ik}]) \varphi(u)$;
(c) $\En[(\Ep[ x_{ij}1\{ |x_{ij}| > u(\barEp[ x_{ij}^{2}])^{1/2} \} ])^{2}]  \leq \barEp[x_{ij}^2] \varphi^{2} (u)$.
Moreover, for a given $\gamma \in (0,1)$,  let  $u \geq u(\gamma)$ where $u(\gamma)$ is defined in Section \ref{sec: Gaus vs NonGaus}.  Then:  (d) with probability at least $1 - 5 \gamma$,  for all $1 \leq j \leq p$,
\begin{equation*}
 | X_{j} - \tilde X_{j}| \leq 5\sqrt{\barEp[x_{ij}^2] } \varphi(u)  \sqrt{2 \log (p/\gamma)}.
\end{equation*}
\end{lemma}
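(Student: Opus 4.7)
Parts (a)--(c) will be established by direct moment manipulations using only the definitions and Cauchy--Schwarz/Jensen, while part (d) is the real content and will follow from Shao's self-normalized sub-Gaussian inequality combined with a union bound and a deterministic control of the self-normalizer on the high-probability event $E_\gamma := \{|x_{ij}| \leq u(\barEp[x_{ij}^2])^{1/2}\text{ for all } i,j\}$, which by definition of $u \geq u(\gamma)$ has $\Pr(E_\gamma) \geq 1-\gamma$.

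For (a), writing $A_{ij} := \{|x_{ij}|\leq u(\barEp[x_{ij}^2])^{1/2}\}$ and applying the triangle inequality in $L^q$ to $\tilde x_{ij} = x_{ij}1_{A_{ij}} - \Ep[x_{ij}1_{A_{ij}}]$ gives $(\Ep[|\tilde x_{ij}|^q])^{1/q} \leq (\Ep[|x_{ij}|^q 1_{A_{ij}}])^{1/q} + |\Ep[x_{ij}1_{A_{ij}}]| \leq 2(\Ep[|x_{ij}|^q])^{1/q}$ by Jensen; raising to the $q$th power and averaging over $i$ yields the claim. For (c), Jensen's inequality gives $(\Ep[x_{ij}1_{A_{ij}^c}])^2 \leq \Ep[x_{ij}^2 1_{A_{ij}^c}]$, and averaging over $i$ together with the defining inequality of $\varphi(u)$ immediately gives the stated bound.

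For (b), I will algebraically expand $\tilde x_{ij}\tilde x_{ik} - x_{ij}x_{ik}$, using $\Ep x_{ij}=0$ to replace $\Ep[x_{ij}1_{A_{ij}}]$ by $-\Ep[x_{ij}1_{A_{ij}^c}]$, into a main term $x_{ij}x_{ik}1_{A_{ij}^c \cup A_{ik}^c}$ and cross terms linear or quadratic in the small centering constants. Each term is then bounded by Cauchy--Schwarz, first inside $\Ep$ and then across the $\En$ average, to extract a factor of the form $\varphi(u)\sqrt{\barEp[x_{ij}^2]\barEp[x_{ik}^2]}$, which is dominated by $\varphi(u)(\barEp[x_{ij}^2]+\barEp[x_{ik}^2])/2$ via AM--GM; the constant $3/2$ assembles from accounting for the four contributions while using $\varphi(u)\leq 1$ on the quadratic centering term.

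For (d), the critical step, write $\sqrt{n}(X_j - \tilde X_j) = S_j := \sum_{i=1}^n \xi_{ij}$ with $\xi_{ij} = x_{ij}1_{A_{ij}^c} - \Ep[x_{ij}1_{A_{ij}^c}]$, which are independent and mean zero. Shao's self-normalized sub-Gaussian inequality (Theorem 2.16 of \cite{Shao2009}) yields $\Pr(|S_j| \geq t V_j) \leq K\exp(-t^2/2)$, where $V_j^2 = \sum_i \xi_{ij}^2$. Applying this with $t = \sqrt{2\log(Kp/\gamma)}$ and taking a union bound over $j$ produces an event $F$ of probability $\geq 1- 4\gamma$ (absorbing $K$) on which $|S_j| \leq t V_j$ for every $j$. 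Separately, on $E_\gamma$ the indicator $1_{A_{ij}^c}$ is identically zero, so $\xi_{ij} = -\Ep[x_{ij}1_{A_{ij}^c}]$ is deterministic and hence $V_j^2 = \sum_i(\Ep[x_{ij}1_{A_{ij}^c}])^2 \leq n\varphi^2(u)\barEp[x_{ij}^2]$ by part (c). On $E_\gamma \cap F$ (of probability $\geq 1-5\gamma$), dividing by $\sqrt{n}$ gives the target inequality.

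The main obstacle is reconciling the naive Cauchy--Schwarz bound $|S_j|/V_j \leq \sqrt{n}$ with the much sharper conclusion required: Shao's inequality is precisely what shows that the deterministic value of $|S_j|/V_j$ enforced on $E_\gamma$ cannot exceed $O(\sqrt{\log(p/\gamma)})$, since otherwise $\Pr(|S_j|/V_j \geq t)$ would exceed Shao's sub-Gaussian bound for that $t$. This self-consistency is what turns the crude polynomial-in-$\sqrt n$ deterministic estimate into the desired logarithmic-in-$p$ concentration bound.
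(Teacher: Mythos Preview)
Your handling of (a) and (c) is correct and matches the paper. There are two issues elsewhere, one minor and one genuine.

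\textbf{Part (b): a constant discrepancy.} Your four-term expansion is valid but, as written, yields $5/2$ rather than $3/2$: the main term $\barEp[|x_{ij}x_{ik}|1_{A_{ij}^c\cup A_{ik}^c}]$ contributes a factor $2$ after the union bound and Cauchy--Schwarz, each linear cross term contributes $1$, and the quadratic term contributes $\varphi(u)\le 1$; after AM--GM that is $(2+1+1+1)/2=5/2$. The paper instead telescopes $\tilde x_{ij}\tilde x_{ik}-x_{ij}x_{ik}=(\tilde x_{ij}-x_{ij})\tilde x_{ik}+x_{ij}(\tilde x_{ik}-x_{ik})$ and applies Cauchy--Schwarz directly in the $\barEp$-inner product; using part (a) on $\tilde x_{ik}$ gives $2\varphi(u)+\varphi(u)=3\varphi(u)$ times $\sqrt{\barEp[x_{ij}^2]\barEp[x_{ik}^2]}$, hence exactly $3/2$ after AM--GM.

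\textbf{Part (d): the key inequality is misstated.} Theorem~2.16 of de~la~Pe\~na--Lai--Shao (restated in the paper as Lemma~\ref{lemma: SN}) reads
\[
\Pr\big(|S_n|>x(4B_n+V_n)\big)\le 4e^{-x^2/2},\qquad B_n^2=\sum_i\Ep[\xi_i^2],\ V_n^2=\sum_i\xi_i^2,
\]
and the deterministic $4B_n$ cannot be dropped. The purely self-normalized bound $\Pr(|S_n|\ge tV_n)\le Ke^{-t^2/2}$ is false for general mean-zero $\xi_i$: take $\xi_i$ i.i.d.\ with $\Pr(\xi_i=1)=1-1/n$ and $\Pr(\xi_i=-(n-1))=1/n$; on the event $\{\xi_1=\cdots=\xi_n=1\}$, which has probability $\approx e^{-1}$, one has $|S_n|/V_n=\sqrt n$, ruling out any sub-Gaussian tail with universal $K$. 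Your own ``self-consistency'' paragraph actually exposes the problem: on $E_\gamma$ the ratio $|S_j|/V_j$ is deterministic and equals $\sqrt n\,|\En a_i|/\sqrt{\En a_i^2}$ with $a_i=\Ep[x_{ij}1_{A_{ij}^c}]$, which can be as large as $\sqrt n$ (e.g.\ when all $a_i$ share a sign), so your stated inequality would force $\sqrt n=O(1)$.

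The repair is one line and gives exactly the paper's proof: use the correct form of Shao's inequality, observe that $B_j^2=n\,\barEp[(x_{ij}-\tilde x_{ij})^2]\le n\,\varphi^2(u)\barEp[x_{ij}^2]$ holds deterministically, and combine this with your bound $V_j\le\sqrt n\,\varphi(u)\sqrt{\barEp[x_{ij}^2]}$ on $E_\gamma$ to get $4B_j+V_j\le 5\sqrt n\,\varphi(u)\sqrt{\barEp[x_{ij}^2]}$. This is precisely the origin of the constant $5$ in the statement; without the $4B_j$ contribution the constant cannot be accounted for.
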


\begin{proof} See Section \ref{sec: additional proofs} of SM \cite{CCK13}.
\end{proof}

\section{Proofs for Section \ref{sec: Gaus vs NonGaus}}

\subsection{Proof of Theorem \ref{theorem:comparison non-Gaussian}}
The second claim of the theorem follows from property (\ref{eq: smooth max property}) of the smooth max function.
Hence we shall prove the first claim.
The proof strategy is similar to the proof of Lemma \ref{thm: warmup comparison}.
However, to control effectively the third order terms in the leave-one-out expansions
we shall use truncation and  replace $X$ and $Y$ by their truncated versions $\tilde X$ and $\tilde Y$, defined as follows:
let $\tilde x_{i}= ( \tilde x_{ij})_{j=1}^p$, where $\tilde x_{ij}$ was defined before the statement of the theorem, and define the truncated version of $X$
as $\tilde X  = n^{-1/2} \sum_{i=1}^n \tilde x_{i}$.
Also let
\begin{equation*}
\tilde y_{i}:= ( \tilde y_{ij})_{j=1}^p,  \ \tilde y_{ij} := y_{ij} 1 \left\{ |y_{ij}| \leq u (\barEp[y_{ij}^2])^{1/2} \right\},
\ \tilde Y  =\frac{1}{\sqrt{n}} \sum_{i=1}^n \tilde y_{i}.
\end{equation*}
Note that by the symmetry of the distribution of $y_{ij}$, $\Ep [\tilde y_{ij} ] = 0$.
Recall that we are assuming that sequences $(x_{i})_{i=1}^n$ and $(y_{i})_{i=1}^n$ are independent.

The proof consists of four steps.
Step 1 will show that we can replace $X$ by $\tilde X$ and $Y$ by $\tilde Y$.
Step 2 will bound the difference of the expectations of the relevant functions of $\tilde X$ and $\tilde Y$.
This is the main step of the proof.
Steps 3 and 4 will carry out supporting calculations.
The steps of the proof will also call on various technical lemmas collected in Appendix \ref{sec: auxiliary lemmas}.

\textbf{Step 1.}   Let $m := g \circ F_\beta$. The main goal is to bound $\Ep[m (X) - m(Y)]$.
Define
\begin{equation*}
\mathcal{I} = 1\left\{  \max_{1\leq j \leq p} | X_{j} - \tilde X_{j}| \leq \Delta(\gamma, u) \ \text{and}  \ \max_{1\leq j \leq p} | Y_{j} - \tilde Y_{j}|  \leq \Delta(\gamma, u) \right\},
\end{equation*}
where $\Delta(\gamma,u) :=  5M_2  \varphi(u) \sqrt{ 2\log (p/\gamma)}$.
By Lemma \ref{lemma: truncation}, we have  $\Ep[\mathcal{I}] \geq 1- 10 \gamma$.  Observe that by Lemma \ref{lemma: lipschitz F},
\begin{equation*}
| m (x) - m (y) | \leq G_{1} | F_{\beta}(x) - F_{\beta}(y) | \leq G_{1} \max_{1 \leq j \leq p} | x_{j} - y_{j} |,
\end{equation*}
so that
\begin{align*}
|\Ep[ m(X) - m(\tilde X)]| &\leq  | \Ep[ ( m(X) - m(\tilde X) ) \mathcal{I}] | + | \Ep [( m(X) - m(\tilde X) ) (1- \mathcal{I})]| \\
&\lesssim  G_1 \Delta(\gamma, u) + G_0  \gamma, \\
|\Ep[ m(Y) - m(\tilde Y)]| &\leq | \Ep[ ( m(Y) - m(\tilde Y) )  \mathcal{I}] | + | \Ep [( m(Y) - m(\tilde Y) ) (1- \mathcal{I})] |\\
&\lesssim  G_1 \Delta(\gamma, u) + G_0  \gamma,
\end{align*}
hence
\begin{align*}
|  \Ep[m (X) - m(Y)] | &\lesssim  |\Ep[m (\tilde X) - m(\tilde Y)]| + G_1 \Delta(\gamma, u) +   G_0 \gamma.
\end{align*}

\textbf{Step 2.} (Main Step)
The purpose of this step is to establish the bound:
\begin{equation*}
|\Ep[m (\tilde X) - m(\tilde Y)]| \lesssim  n^{-1/2}  ( G_3  +  G_2 \beta + G_1  \beta^2)M_3^3 + (G_2 +  \beta G_1) M_2^2 \varphi(u).
\end{equation*}

We define the Slepian interpolation $Z(t)$ between $\tilde Y$ and $\tilde Z$, Stein's leave-one-out version $Z^{(i)}(t)$ of $Z(t)$,
and other useful terms:
\begin{align*}
&Z(t) := \sqrt{t} \tilde X + \sqrt{1-t} \tilde Y =\sum_{i=1}^n Z_{i}(t),  \ \ Z_{i}(t) := \frac{1}{\sqrt{n}} ( \sqrt{t} \tilde x_{i}  + \sqrt{1-t} \tilde y_{i}), \ \text{and} \\
&Z^{(i)}(t): = Z(t) - Z_{i}(t), \ \dot Z_{ij}(t) =\frac{1}{\sqrt{n}} \(\frac{1}{\sqrt{t}} \tilde x_{ij} - \frac{1}{\sqrt{1-t}} \tilde y_{ij}\).
\end{align*}
We have by Taylor's theorem,
\begin{equation*}
 \Ep[m (\tilde X) - m(\tilde Y)] =\frac{1}{2} \sum_{j=1}^p \sum_{i=1}^n \int_0^1 \Ep[\partial_{j} m(Z(t)) \dot Z_{ij}(t) ]  dt = \frac{1}{2}(I+II+III),
\end{equation*}
where
{\small \begin{align*}
I &=  \sum_{j=1}^p \sum_{i=1}^n \int_0^1 \Ep [  \partial_{j} m(Z^{(i)}(t) )  \dot Z_{ij}(t) ]dt, \\
II &= \sum_{j,k=1}^p \sum_{i=1}^n \int_0^1 \Ep [ \partial_{j} \partial_k  m(Z^{(i)}(t) )   \dot Z_{ij}(t) Z_{ik}(t) ] dt, \\
III &=  \sum_{j,k,l=1}^p \sum_{i=1}^n \int_0^1 \int_0^1 (1-\tau) \Ep [  \partial_{j} \partial_k \partial_l m( Z^{(i)}(t) + \tau Z_{i}(t)) \dot Z_{ij}(t) Z_{ik}(t) Z_{il}(t) ] d \tau dt.
\end{align*}}
\!By independence of $ Z^{(i)}(t)$ and $ \dot Z_{ij}(t)$ together with the fact that  $\Ep [\dot Z_{ij}(t)] =0$, we have $I=0$.
Moreover, in Steps 3 and 4 below, we will show that
\begin{equation*}
| II | \lesssim (G_2 +  \beta G_1) M_2^2 \varphi(u), \
| III | \lesssim  n^{-1/2} ( G_3  +  G_2 \beta + G_1  \beta^2) M_3^3.
\end{equation*}
The claim of this step now follows.

\textbf{Step 3.} (Bound on $II$) By independence of $Z^{(i)}(t)$ and $\dot Z_{ij}(t) Z_{ik}(t)$,
\begin{align*}
| II | &= \left | \sum_{j,k=1}^p \sum_{i=1}^n  \int_0^1 \Ep[ \partial_{j} \partial_k m(Z^{(i)}(t))]  \Ep[ \dot Z_{ij}(t) Z_{ik}(t)] dt \right | \\
&\leq  \sum_{j,k=1}^p \sum_{i=1}^n  \int_0^1 \Ep[| \partial_{j} \partial_k m(Z^{(i)}(t) )| ] \cdot  | \Ep[ \dot Z_{ij}(t) Z_{ik}(t)] | dt \\
&\leq  \sum_{j,k=1}^p \sum_{i=1}^n  \int_0^1  \Ep[U_{jk}( Z^{(i)}(t)) ]\ \cdot  |\Ep[\dot Z_{ij}(t) Z_{ik}(t)] | dt,
\end{align*}
where the last step follows from Lemma \ref{lemma: bounds on derivatives of m}.
Since $|\sqrt{t}\tilde{x}_{ij}+\sqrt{1-t}\tilde{y}_{ij}|\leq 2\sqrt{2}u M_2$, so that $|\beta(\sqrt{t}\tilde{x}_{ij}+\sqrt{1-t}\tilde{y}_{ij})/\sqrt{n}|\leq 1$ (which is satisfied by the assumption $ \beta 2\sqrt{2}u M_2/\sqrt{n} \leq 1$),
by Lemmas \ref{lemma: switching property} and \ref{lemma: bounds on derivatives of m}, the last expression is bounded up to an absolute constant by
\begin{align*}
&\sum_{j,k=1}^p \sum_{i=1}^n  \int_0^1  \Ep[U_{jk}( Z(t)) ] \cdot  |\Ep[\dot Z_{ij}(t) Z_{ik}(t)] | dt \\
&=  \int_0^1  \left  \{ \sum_{j,k=1}^p \Ep[U_{jk}( Z(t)) ] \right \} \max_{1\leq j,k\leq p}\sum_{i=1}^n   |\Ep[\dot Z_{ij}(t) Z_{ik}(t)] | dt \\
&\lesssim  (G_2 + G_1 \beta) \int_0^1 \max_{1\leq j,k\leq p}\sum_{i=1}^n   |\Ep[\dot Z_{ij}(t) Z_{ik}(t)] | dt.
\end{align*}
Observe that since $\Ep[ x_{ij} x_{ik} ] = \Ep [ y_{ij} y_{ik} ]$, we have that $\Ep[\dot Z_{ij}(t) Z_{ik}(t)] = n^{-1}  \Ep [ \tilde{x}_{ij} \tilde{x}_{ik} - \tilde{y}_{ij} \tilde{y}_{ik} ] = n^{-1} \Ep [ \tilde{x}_{ij} \tilde{x}_{ik}  - x_{ij}x_{ik} ] + n^{-1} \Ep [ y_{ij} y_{ik} - \tilde{y}_{ij} \tilde{y}_{ik}],$ so that by  Lemma \ref{lemma: truncation} (b), $
 \sum_{i=1}^n | \Ep[\dot Z_{ij}(t) Z_{ik}(t)] | \leq  \bar \Ep [| \tilde{x}_{ij} \tilde{x}_{ik}  - x_{ij}x_{ik}| ] + \bar \Ep [ | y_{ij} y_{ik} - \tilde{y}_{ij} \tilde{y}_{ik} |]
\lesssim (\barEp[x_{ij}^2] + \barEp[x_{ik}^2] ) \varphi(u) \lesssim M_{2}^{2}  \varphi(u).
$ Therefore, we conclude that $| II | \lesssim (G_2 + G_1 \beta) M_{2}^{2} \varphi(u).$ \\

\textbf{Step 4.} (Bound on $III$) Observe that
\begin{align}
| III |  &\leq_{(1)}  \sum_{j,k,l=1}^p \sum_{i=1}^n \int_0^1 \int_0^1 \Ep  [ U_{jkl}( Z^{(i)}(t) + \tau Z_{i}(t)) |\dot Z_{ij}(t) Z_{ik}(t) Z_{il}(t)| ] d \tau dt \notag \\
& \lesssim_{(2)}  \sum_{j,k,l=1}^p \sum_{i=1}^n \int_0^1  \Ep  [ U_{jkl}( Z^{(i)}(t)) |\dot Z_{ij}(t) Z_{ik}(t) Z_{il}(t)| ] dt \notag \\
& =_{(3)} \sum_{j,k,l=1}^p \sum_{i=1}^n \int_0^1  \Ep [ U_{jkl}( Z^{(i)}(t)) ] \cdot \Ep [ |\dot Z_{ij}(t) Z_{ik}(t) Z_{il}(t)| ] dt, \label{eq:last}
\end{align}
where (1) follows from $|\partial_{j} \partial_k \partial_l m(z)| \leq U_{jkl}(z)$  (see Lemma \ref{lemma: bounds on derivatives of m}),
(2) from  Lemma \ref{lemma: switching property},
(3) from independence of $Z^{(i)}(t)$ and $\dot Z_{ij}(t) Z_{ik}(t) Z_{il}(t)$.
Moreover, the last expression is bounded as follows:
\begin{align*}
& \text{right-hand side of (\ref{eq:last})} \lesssim_{(4)}  \sum_{j,k,l=1}^p \sum_{i=1}^n \int_0^1  \Ep [ U_{jkl}( Z(t)) ] \cdot \Ep [ |\dot Z_{ij}(t) Z_{ik}(t) Z_{il}(t)| ] dt   \\
& =_{(5)} \sum_{j,k,l=1}^p  \int_0^1  \Ep [ U_{jkl}( Z(t)) ] \cdot n \barEp [  |\dot Z_{ij}(t) Z_{ik}(t) Z_{il}(t)| ] dt\\
& \leq_{(6)}   \int_0^1 \left( \sum_{j,k,l=1}^p  \Ep [ U_{jkl}( Z(t)) ]  \right) \max_{1\leq j,k,l \leq p } n \barEp[  |\dot Z_{ij}(t) Z_{ik}(t) Z_{il}(t)| ] dt \\
& \lesssim_{(7)}   ( G_3  +  G_2 \beta + G_1  \beta^2)   \int_0^1 \max_{1\leq j,k,l \leq p }  n \barEp [  |\dot Z_{ij}(t) Z_{ik}(t) Z_{il}(t)| ] dt,
\end{align*}
where
(4) follows from  Lemma \ref{lemma: switching property},
(5) from  definition of $\barEp$,
(6) from a trivial inequality,
(7) from Lemma \ref{lemma: bounds on derivatives of m}.
We have to bound the integral on the last line.
Let $\omega(t)=1/(\sqrt{t} \wedge \sqrt{1-t})$, and observe that
 \begin{align*}
&\int_0^1 \max_{1\leq j,k,l \leq p }  n \barEp[  |\dot Z_{ij}(t) Z_{ik}(t) Z_{il}(t)| ]dt   \\
&= \int_0^1 \omega(t) \max_{1\leq j,k,l \leq p }  n \barEp[  |(\dot Z_{ij}(t)/\omega(t))  Z_{ik}(t) Z_{il}(t)| ]dt \\
&\leq n \int_0^1 \omega(t) \max_{1\leq j,k,l \leq p } \(\barEp[|\dot Z_{ij}(t)/\omega(t)|^3] \barEp[| Z_{ik}(t) |^3] \barEp[| Z_{il}(t) |^3] \) ^{1/3} dt,
\end{align*}
where the last inequality is by H\"{o}lder. The last term is further bounded as
 \begin{align*}
&\leq_{(1)} n^{-1/2} \left \{ \int_0^1 \omega(t) dt \right \} \max_{1\leq j \leq p } \barEp[ \(|\tilde x_{ij}| + |\tilde y_{ij}|\)^3] \\
&\lesssim_{(2)}  n^{-1/2}   \max_{1\leq j \leq p } ( \barEp[|\tilde x_{ij}|^{3}] + \barEp[|\tilde y_{ij}|^{3}]) \\
&\lesssim_{(3)} n^{-1/2}   \max_{1\leq j \leq p } ( \barEp[| x_{ij}|^{3}] + \barEp[|y_{ij}|^{3}] ) \\
&\lesssim_{(4)}  n^{-1/2}   \max_{1 \leq j \leq p }  \barEp[| x_{ij} |^{3}],
\end{align*}
where (1) follows from the fact that:
$|\dot Z_{ij}(t)/\omega(t) | \leq (| \tilde x_{ij}| + | \tilde y_{ij}|)/\sqrt{n} $,
$|Z_{im}(t)| \leq (| \tilde x_{im}| + | \tilde y_{im}|)/\sqrt{n}$, and
the product of terms $\barEp[ \(|\tilde x_{ij}| + |\tilde y_{ij}|\)^3]^{1/3}$,
$\barEp[ \(|\tilde x_{ik}| + |\tilde y_{ik}|\)^3]^{1/3}$ and  $\barEp[ \(|\tilde x_{il}| + |\tilde y_{il}|\)^3]^{1/3}$
is trivially bounded by $\max_{1\leq j \leq p }$ $\barEp[ (|\tilde x_{ij}| + |\tilde y_{ij}|)^3];$
 (2) follows from $\int_0^1 \omega(t) dt \lesssim 1$,
(3) from Lemma \ref{lemma: truncation} (a),
and (4) from the normality of $y_{ij}$ with $\Ep [y_{ij}^2] =\Ep[x^2_{ij}] $, so that
$\Ep[|y_{ij}|^{3}] \lesssim (\Ep[y_{ij}^2])^{3/2} =  (\Ep[|x_{ij}^2|])^{3/2} \leq  \Ep[|x_{ij}|^{3}]$.
This completes the overall proof.
\qed

\subsection{Proof of Theorem \ref{cor: Gaussian to nonGaussian KS 2}} See Appendix \ref{Proof of cor: Gaussian to nonGaussian KS 2} of the SM \cite{CCK13}. \qed

\subsection{Proof of Lemma \ref{lem: bound on u}}
Since $ \barEp [ x_{ij}^{2} ] \geq c_{1}$ by assumption,  we have  $
1\{ | x_{ij} | > u (\bar \Ep [ x_{ij}^{2} ])^{1/2} \} \leq 1 \{ | x_{ij} | > c^{1/2}_{1} u \}.$
By Markov's inequality and the condition of the lemma, we have
\begin{align*}
&\Pr \left( | x_{ij} | > u (\bar \Ep [ x_{ij}^{2} ])^{1/2}, \ \text{for some $(i,j)$} \right)
\leq {\textstyle  \sum}_{i=1}^{n} \Pr \left( \max_{1 \leq j \leq p} | x_{ij} | > c^{1/2}_1u \right) \\
&\quad \leq  {\textstyle  \sum}_{i=1}^{n} \Pr \left( h(\max_{1 \leq j \leq p} | x_{ij} |/D) > h(c^{1/2}_1u/D) \right) \leq n/h(c^{1/2}_1u/D ).
\end{align*}
This implies $u_x(\gamma) \leq c_{1}^{-1/2}Dh^{-1}(n/\gamma)$.
For $u_{y}(\gamma)$, by $y_{ij} \sim N(0,\Ep[ x_{ij}^{2} ])$ with $\Ep [ x_{ij}^{2} ] \leq B^{2}$, we have $\Ep [ \exp ( y_{ij}^{2}/(4B^{2})) ] \lesssim 1$. Hence
\begin{align*}
&\Pr \left( | y_{ij} | > u (\bar \Ep [ y_{ij}^{2} ])^{1/2}, \ \text{for some $(i,j)$} \right)
\leq {\textstyle  \sum}_{i=1}^{n} {\textstyle  \sum}_{j=1}^{p}\Pr (  | y_{ij} | > c^{1/2}_1u ) \\
&\quad \leq  {\textstyle  \sum}_{i=1}^{n} {\textstyle  \sum}_{j=1}^{p}\Pr (  | y_{ij} |/(2B) > c^{1/2}_1u/(2B) ) \lesssim np \exp (- c_1u^{2}/(4B^{2})).
\end{align*}
Therefore,  $u_y(\gamma) \leq C B \sqrt{\log (pn/\gamma)}$ where $C >0$ depends only on $c_{1}$. \qed

\subsection{Proof of Corollary \ref{cor: central limit theorem}}
Since conditions (E.3) and (E.4) are special cases of (E.1) and (E.2), it suffices to prove the result under conditions (E.1) and (E.2) only. The proof consists of two steps.

\textbf{Step 1. }  In this step,  in each case of conditions (E.1) and (E.2), we shall compute the following bounds
on  moments $M_3$ and $M_{4}$ and parameters $B$ and $D$ in Lemma \ref{lem: bound on u} with
specific choice of $h$:
\begin{itemize}
\item[(E.1)] \ $B \vee M_3^3 \vee M_4^2 \leq CB_n$, $D \leq CB_{n}  \log p$, $h(v) = e^{v} - 1$;
\item[(E.2)] \ $B \vee D \vee M_3^3 \vee M_4 ^2\leq C B_n$, $h(v) = v^4$;
\end{itemize}
Here $C >0$ is a (sufficiently large) constant that depends only on $c_{1}$ and $C_{1}$. The bounds on ${B}$, $M_3$ and $M_4$ follow
from elementary computations using H\"{o}lder's inequality.
The bounds on $D$ follow from an elementary application of Lemma 2.2.2 in \cite{VW96}. For brevity, we omit the detail.

\textbf{Step 2. }  In all cases, there are sufficiently small constants $c_{3} >0 $ and $c_{4} > 0$, and a sufficiently large constant $C_{3} > 0$, depending only on $c_{1},C_1,c_{2},C_2$ such that, with $\ell_{n} := \log (pn^{1+c_{3}})$,
\begin{align*}
&n^{-1/2} \ell_{n}^{3/2} \max \{ B \ell_{n}^{1/2}, D h^{-1}(n^{1+c_{3}}) \} \leq C_{3} n^{-c_{4}}, \\
&n^{-1/8} (M^{3/4}_{3} \vee M^{1/2}_{4}) \ell_{n}^{7/8} \leq C_{3} n^{-c_{4}}.
\end{align*}
Hence taking $\gamma = n^{-c_3}$,  we conclude  from Theorem \ref{cor: Gaussian to nonGaussian KS 2} and Lemma \ref{lem: bound on u} that   $\rho  \leq Cn^{- \min \{ c_{3},c_{4} \}}$ where $C>0$ depends only on $c_{1},C_1,c_{2},C_{2}$.
\qed

\section{Proofs for Section \ref{sec: multiplier bootstrap}}\label{sec: multiplier bootstrap proofs}
\subsection{Proof of Lemma \ref{lem: quantile conditional to unconditional}} Recall that $\Delta= \max_{1\leq j,k\leq p}|\En[x_{ij}x_{ik}]-\barEp[x_{ij}x_{ik}]|$. By Lemma \ref{lemma: distances Gaussian to Gaussian}, on the event $\{ (x_{i})_{i=1}^{n}: \Delta \leq \vartheta\}$, we have $|\Pr(Z_0\leq t)- \Pr_e(W_0\leq t)|\leq \pi(\vartheta)$ for all $t\in\RR$, and so on this event
{\small \begin{equation*}
\Pr_e(W_0\leq c_{Z_0}(\alpha+\pi(\vartheta)))\geq \Pr(Z_0\leq c_{Z_0}(\alpha+\pi(\vartheta)))-\pi(\vartheta)\geq \alpha+\pi(\vartheta)-\pi(\vartheta)=\alpha,
\end{equation*}}
\!implying the first claim. The second claim follows similarly.
\qed

\subsection{Proof of Lemma \ref{lem: quantile approximated to exact}}
By equation (\ref{eq: conditional quantiles}), the probability of the event $\{ (x_{i})_{i=1}^{n} : \Pr_e(|W-W_0|>\zeta_1)\leq \zeta_2\}$ is at least $1-\zeta_2$. On this event,
\begin{equation*}
\Pr_e(W\leq c_{W_0}(\alpha+\zeta_2)+\zeta_1)
\geq
\Pr_e(W_0\leq c_{W_0}(\alpha+\zeta_2))-\zeta_2
\geq
\alpha+\zeta_2-\zeta_2=\alpha,
\end{equation*}
implying that $\Pr(c_{W}(\alpha)\leq c_{W_0}(\alpha+\zeta_2)+\zeta_1)\geq 1-\zeta_2$. The second claim of the lemma follows similarly.
\qed

\subsection{Proof of Theorem \ref{thm: multiplier bootrstrap I}}
For $\vartheta>0$, let $\pi(\vartheta):=C_2\vartheta^{1/3}(1 \vee \log(p/\vartheta))^{2/3}$ as defined in Lemma \ref{lem: quantile conditional to unconditional}. To prove the first inequality, note that
\begin{align*}
&\Pr(\{T_0\leq c_{W_0}(\alpha)\}\ominus\{T_0\leq c_{Z_0}(\alpha)\})\\
&\leq_{(1)}\Pr(c_{Z_0}(\alpha-\pi(\vartheta)) <  T_0\leq c_{Z_0}(\alpha+\pi(\vartheta)))+2\Pr(\Delta>\vartheta)\\
&\leq_{(2)}\Pr(c_{Z_0}(\alpha-\pi(\vartheta)) <  Z_0\leq c_{Z_0}(\alpha+\pi(\vartheta)))+2\Pr(\Delta>\vartheta)+2\rho\\
&\leq_{(3)}2\pi(\vartheta)+2\Pr(\Delta>\vartheta)+2\rho,
\end{align*}
where (1) follows from Lemma \ref{lem: quantile conditional to unconditional}, (2) follows from the definition of $\rho$, and (3) follows from the fact that $Z_0$ has no point masses. The first inequality follows. The second inequality follows from the first inequality and the definition of $\rho$.
\qed

\subsection{Proof of Theorem \ref{thm: multiplier bootrstrap II}}
For $\vartheta>0$, let $\pi(\vartheta):=C_2\vartheta^{1/3}(1 \vee \log(p/\vartheta))^{2/3}$ with $C_2>0$ as in Lemma \ref{lem: quantile conditional to unconditional}. In addition, let $\kappa_1(\vartheta):=c_{Z_0}(\alpha-\zeta_2-\pi(\vartheta))$  and $\kappa_2(\vartheta):=c_{Z_0}(\alpha+\zeta_2+\pi(\vartheta))$. To prove the first inequality, note that
\begin{align*}
&\Pr(\{T\leq c_{W}(\alpha)\}\ominus\{T_0\leq c_{Z_0}(\alpha)\})\\
&\leq_{(1)}\Pr(\kappa_1(\vartheta)-2\zeta_1 < T_0\leq \kappa_2(\vartheta)+2\zeta_1)+2\Pr(\Delta>\vartheta)+3\zeta_2\\
&\leq_{(2)}\Pr(\kappa_1(\vartheta)-2\zeta_1 <  Z_0\leq \kappa_2(\vartheta)+2\zeta_1)+2\Pr(\Delta>\vartheta)+2\rho+3\zeta_2\\
&\leq_{(3)} 2\pi(\vartheta)+2\Pr(\Delta>\vartheta)+2\rho+C_3\zeta_1\sqrt{1 \vee \log (p/\zeta_1)}+5\zeta_2
\end{align*}
\!where $C_3>0$ depends on $c_1$ and $C_1$ only and
where (1) follows from equation (\ref{eq: statistic approximation}) and Lemmas \ref{lem: quantile conditional to unconditional} and \ref{lem: quantile approximated to exact}, (2) follows from the definition of $\rho$, and (3) follows from Lemma \ref{lem: anticoncentration} and the fact that $Z_0$ has no point masses. The first inequality follows. The second inequality follows from the first inequality and the definition of $\rho$.
\qed

\subsection{Proof of Corollary \ref{cor: multiplier bootstrap examples}}

Since conditions (E.3) and (E.4) are special cases of (E.1) and (E.2), it suffices to prove the result under conditions (E.1) and (E.2) only. The proof of this corollary relies on:
\begin{lemma}
\label{lem: technical2}
Recall conditions (E.1)-(E.2) in Section \ref{sub: examples of applications GAR}. Then
\begin{equation*}
\Ep [ \Delta ]
\leq C \times
\begin{cases}
\sqrt{\frac{B_{n}^{2}\log p}{n}} \bigvee \frac{B_{n}^{2}(\log (pn))^{2}(\log p)}{n}, & \text{under (E.1)}, \\
\sqrt{\frac{B_{n}^{2}\log p}{n}} \bigvee \frac{B_{n}^{2}(\log p)}{\sqrt{n}}, & \text{under (E.2)},
\end{cases}
\end{equation*}
where $C>0$ depends only on $c_{1}$ and $C_{1}$ that appear in (E.1)-(E.2).
\end{lemma}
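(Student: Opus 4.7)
The plan is to apply the Maximal Inequality (Lemma \ref{lem: symmetrization inequality 1}) to the $p^{2}$-dimensional array of centered products $\{x_{ij}x_{ik} - \Ep[x_{ij}x_{ik}]: 1\leq j,k\leq p\}_{i=1}^{n}$, which turns the problem into controlling two quantities: the ``variance term'' $\sigma^{2}:= \max_{1\leq j,k\leq p}\barEp[(x_{ij}x_{ik})^{2}]$ and the ``envelope term'' $\Ep[M^{2}]$ with $M:=\max_{1\leq i\leq n}\max_{1\leq j,k\leq p}|x_{ij}x_{ik}| = \max_{1\leq i\leq n}(\max_{1\leq j\leq p}|x_{ij}|)^{2}$. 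Applied to the $p^{2}$-dimensional vector (so $\log p$ gets replaced by $\log p^{2}\lesssim \log p$), the inequality yields
\begin{equation*}
\Ep[\Delta] \lesssim \sigma\sqrt{(\log p)/n} + \sqrt{\Ep[M^{2}]}\,(\log p)/n.
\end{equation*}

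The first step is to bound $\sigma$ in both regimes. Under either (E.1) or (E.2), the moment bound $\barEp[|x_{ij}|^{2+k}/B_{n}^{k}]\leq 4$ with $k=2$ gives $\barEp[x_{ij}^{4}]\leq 4B_{n}^{2}$, so by Cauchy--Schwarz $\barEp[(x_{ij}x_{ik})^{2}] \leq (\barEp[x_{ij}^{4}])^{1/2}(\barEp[x_{ik}^{4}])^{1/2}\leq 4B_{n}^{2}$, hence $\sigma \lesssim B_{n}$. This produces the leading $\sqrt{B_{n}^{2}\log p/n}$ contribution in both stated bounds.

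The second step, which I expect to be the slightly delicate one, is the envelope $\Ep[M^{2}]=\Ep[(\max_{i,j}|x_{ij}|)^{4}]$ in each regime. Under (E.2) the bound is immediate by a union bound over $i$: since $\Ep[(\max_{j}|x_{ij}|/B_{n})^{4}]\leq 4$, we get $\Ep[M^{2}]\leq n\cdot 4B_{n}^{4}$, so $\sqrt{\Ep[M^{2}]}\,(\log p)/n \lesssim B_{n}^{2}(\log p)/\sqrt{n}$, which is the advertised second term in the (E.2) case. Under (E.1) the condition $\Ep[\exp(|x_{ij}|/B_{n})]\leq 4$ gives sub-exponential tails; by the standard Orlicz-norm maximal inequality (e.g., Lemma 2.2.2 of van der Vaart and Wellner), $\|\max_{i,j}|x_{ij}|\|_{\psi_{1}}\lesssim B_{n}\log(np)$, and a conversion to $L^{4}$ gives $\Ep[(\max_{i,j}|x_{ij}|)^{4}]\lesssim B_{n}^{4}(\log(np))^{4}$. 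Therefore $\sqrt{\Ep[M^{2}]}\,(\log p)/n \lesssim B_{n}^{2}(\log(np))^{2}(\log p)/n$, which matches the second term in the (E.1) case.

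Combining the bounds on $\sigma$ and $\sqrt{\Ep[M^{2}]}$ into the conclusion of Lemma \ref{lem: symmetrization inequality 1} yields the two claimed inequalities. The main obstacle is the sub-exponential envelope bound under (E.1), but this is a routine consequence of Orlicz-norm maximal inequalities once the $p^{2}$ coordinates have been indexed correctly and the constants absorbed into $C$.
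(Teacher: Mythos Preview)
Your proposal is correct and is essentially the same proof as in the paper: apply Lemma~\ref{lem: symmetrization inequality 1} to the $p^{2}$ products $x_{ij}x_{ik}$, bound the variance term by $M_{4}^{2}\lesssim B_{n}$ via Cauchy--Schwarz/H\"{o}lder, and bound the envelope $\Ep[(\max_{i,j}|x_{ij}|)^{4}]$ directly under (E.2) and via the $\psi_{1}$ maximal inequality (Lemma~2.2.2 in \cite{VW96}) under (E.1). The paper's proof is stated in exactly this form, just more tersely.
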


\begin{proof}
By Lemma \ref{lem: symmetrization inequality 1} and H\"{o}lder's inequality, we have
\begin{equation*}
\Ep [ \Delta ] \lesssim   M_{4}^{2}\sqrt{(\log p)/n}  + (\Ep [ \max_{i,j} | x_{ij} |^{4} ])^{1/2}(\log p)/n.
\end{equation*}
The conclusion of the lemma follows from elementary calculations  with help of Lemma 2.2.2 in \cite{VW96}.
\end{proof}

\begin{proof}[Proof of Corollary \ref{cor: multiplier bootstrap examples}]
To prove the first inequality, we make use of Theorem \ref{thm: multiplier bootrstrap II}.
Let $c  > 0$ and $C > 0$ denote generic constants depending only on $c_{1},C_1,c_{2},C_{2}$, and their values may change from place to place.
By Corollary \ref{cor: central limit theorem}, in all cases,  $\rho \leq Cn^{-c}$.
Moreover,  $\zeta_1\sqrt{\log p}\leq C_{2} n^{-c_{2}}$ implies that
$\zeta_1 \leq C_{2}n^{-c_{2}}$ (recall $p \geq 3$), and hence $\zeta_1\sqrt{\log(p/\zeta_1)} \leq Cn^{-c}$. Also, $\zeta_2\leq Cn^{-c}$ by assumption.

Let $\vartheta=\vartheta_n:=(\Ep[\Delta])^{1/2}/\log p$. By Lemma \ref{lem: technical2}, $\Ep[\Delta](\log p)^2\leq Cn^{-c}$. Therefore, $\pi(\vartheta)\leq Cn^{-c}$ (with possibly different $c,C>0$). In addition, by Markov's inequality, $\Pr(\Delta>\vartheta)\leq\Ep[\Delta]/\vartheta\leq Cn^{-c}$. Hence, by Theorem \ref{thm: multiplier bootrstrap II}, 
the first inequality follows. The second inequality follows from the first inequality and the fact that $\rho\leq Cn^{-c}$ as shown above.
\end{proof}

\section*{Acknowledgments}
The authors would like to express their appreciation to L.H.Y. Chen, David Gamarnik, Qi-Man Shao, Vladimir Koltchinskii, Enno Mammen, Axel Munk, Steve Portnoy, Adrian R\"{o}llin, Azeem Shaikh, and Larry Wasserman for enlightening discussions.  We thank the editors and referees for the comments of the highest quality that have lead to substantial improvements.

\begin{supplement}
\stitle{Supplement to ``Gaussian approximations and multiplier bootstrap for maxima of sums of high-dimensional random vectors''}
\slink[url]{}
\sdescription{This supplemental  file contains the additional technical proofs, theoretical and simulation results.}
\end{supplement}


\newpage

\begin{center}
\textbf{{\Large Supplement to ``Gaussian Approximations and Multiplier Bootstrap for Maxima of Sums of High-Dimensional Random Vectors''}}\\
\text{}\\
\end{center}

\begin{center}
{\Large by V. Chernozhukov, D. Chetverikov, and K. Kato}\\
\text{}\\
\textit{{\Large MIT, UCLA, and University of Tokyo}}\\
\text{}\\
\end{center}

\appendix

\setcounter{section}{3}

\begin{center}
{\Large  Supplementary Material I} \\
\text{} \\
{\textbf{Deferred Proofs for Results from Main Text}} \\
\end{center}


\section{Deferred proofs for Section 2}
\label{sec: additional proofs}

\subsection{Proof of Lemma \ref{lemma: truncation}}
Claim (a).
Define  $I_{ij} = 1\{|x_{ij}| \leq u (\barEp[x_{ij}^2])^{1/2} \}$, and observe that
\begin{align*}
( \barEp[|\tilde x_{ij}|^q] )^{1/q}
&\leq  ( \barEp[|x_{ij} I_{ij}|^q] )^{1/q} +   ( \En [ | \Ep[  x_{ij} I_{ij}  ] |^{q}])^{1/q} \\
&\leq ( \barEp[|x_{ij} I_{ij}|^q] )^{1/q}  + ( \barEp[|x_{ij} I_{ij}|^q] )^{1/q} \leq 2 ( \barEp[|x_{ij}|^q] )^{1/q}.
\end{align*}


Claim (b). Observe that
\begin{align*}
\barEp[ | \tilde x_{ij} \tilde x_{ik} -  x_{ij} x_{ik}| ]
&\leq \barEp[|  (\tilde x_{ij}-  x_{ij}) \tilde x_{ik} | ]  +\barEp[| x_{ij} (\tilde x_{ik} - x_{ik} ) |] \\
&\leq \sqrt{\barEp[(\tilde x_{ij} -x_{ij} )^2]} \sqrt{ \barEp[\tilde x^2_{ik}]} +   \sqrt{\barEp[(\tilde x_{ik} -x_{ik} )^2]} \sqrt{ \barEp[x^2_{ij}]} \\
&\leq  2 \varphi(u) \sqrt{ \barEp[x^2_{ij}]} \sqrt{ \barEp[x^2_{ik}]} +    \varphi(u) \sqrt{ \barEp[x^2_{ik}]} \sqrt{ \barEp[x^2_{ij}]}    \\
&\leq (3/2) \varphi(u) ( \barEp[x^2_{ij}] +  \barEp[x^2_{ik}]),
\end{align*}
where the first inequality follows from the triangle inequality, the second  from the Cauchy-Schwarz inequality, the third from the definition
of $\varphi(u)$ together with claim (a), and the last from inequality $ |a b| \leq (a^2 + b^2)/2$.

Claim (c). This follows from the Cauchy-Schwarz inequality and the definition of $\varphi(u)$.


Claim (d).
We shall use the following lemma.

\begin{lemma}[Tail Bounds for Self-Normalized Sums]\label{lemma: SN}
Let $\xi_{1},\dots,\xi_{n}$  be independent real-valued random variables such that $\Ep[\xi_{i}]=0$ and $\Ep[\xi_{i}^2] < \infty$ for all $1 \leq i \leq n$.
Let $S_n = \sum_{i=1}^n \xi_{i}$. Then for every $x > 0$,
\begin{equation*}
\Pr ( |S_n| > x (4 B_n + V_n) ) \leq 4 \exp(-x^2/2),
\end{equation*}
where $B^2_n = \sum_{i=1}^n \Ep [\xi_{i}^2]$ and $V_n^2= \sum_{i=1}^n \xi_{i}^2$.
\end{lemma}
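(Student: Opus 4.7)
The plan is to reduce the problem to the case of symmetric summands via symmetrization, apply a conditional Hoeffding bound to get a clean self-normalized sub-Gaussian tail for the symmetrized sum, and then transport that tail back to the original $S_n$ and $V_n$. Let $\xi'_1,\dots,\xi'_n$ be an independent copy of $\xi_1,\dots,\xi_n$ and set $\eta_i=\xi_i-\xi'_i$, $T_n=\sum_i\eta_i=S_n-S'_n$, $W_n^2=\sum_i\eta_i^2$. Each $\eta_i$ is independent, symmetric and mean-zero, with $\Ep[W_n^2]=2B_n^2$.

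For the symmetrized sum, conditionally on $(|\eta_i|)_{i=1}^n$ the summands are distributed as $\varepsilon_i|\eta_i|$ with independent Rademacher signs $\varepsilon_i$. Hoeffding's inequality applied conditionally yields $\Pr(|T_n|\geq yW_n\mid(|\eta_i|))\leq 2\exp(-y^2/2)$, and integrating gives $\Pr(|T_n|\geq yW_n)\leq 2\exp(-y^2/2)$ for every $y>0$. This is the self-normalized sub-Gaussian bound in its cleanest form.

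Next I would desymmetrize. A weak symmetrization bound obtained via Chebyshev on $S'_n$ (using $\Ep[|S'_n|]\leq B_n$) yields $\Pr(|S_n|\geq s)\leq 2\Pr(|T_n|\geq s-\sqrt{2}B_n)$ once $s$ exceeds a modest multiple of $B_n$; and the $\ell^2$ triangle inequality gives $W_n\leq V_n+V'_n$. A further Markov/peeling step controls $V'_n$ so that, outside an event of suitably small probability, $W_n\lesssim V_n+B_n$. Plugging $s=x(4B_n+V_n)$ and choosing $y$ so that $yW_n+\sqrt{2}B_n\leq s$ on this event, one verifies that $y\geq x$ is admissible, and the aggregated bound becomes $4\exp(-x^2/2)$ after a conservative choice of all universal constants.

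The main obstacle is the bookkeeping required to make the constant come out exactly to $4$ on the right-hand side, which is precisely what forces the $4B_n$ cushion on the left: it must absorb the factor-of-$2$ loss in probability from symmetrization, the additive $\sqrt{2}B_n$ shift in $s$, and the $O(B_n)$ gap between $W_n$ and $V_n$ after desymmetrization. An alternative route that avoids symmetrization works directly with the exponential supermartingale $\exp(\lambda S_n-\lambda^2 V_n^2/c)$ for a suitable $c$, combined with a dyadic peeling over the values of $V_n$; this is the route taken in de la Pe\~na, Lai and Shao (2009), from which this lemma is quoted verbatim.
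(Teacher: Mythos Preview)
The paper does not prove this lemma; it simply cites Theorem~2.16 of de~la~Pe\~na, Lai and Shao (2009). Your symmetrization strategy is the right idea and can be pushed through, but the desymmetrization as you describe it has a gap. After weak symmetrization you obtain $\Pr(|S_n|>s)\leq 2\Pr(|T_n|>s-\sqrt{2}B_n)$ and then propose a \emph{separate} Markov/peeling step on $V_n'$ to replace $W_n$ by $V_n+O(B_n)$. The problem is that this second step is applied to the full probability $\Pr(|T_n|>\cdots)$, and splitting off $\{V_n'>cB_n\}$ costs an additive term $\Pr(V_n'>cB_n)\leq c^{-2}$ which is only polynomially small; for large $x$ this residual dominates $4\exp(-x^2/2)$ and the claimed bound fails. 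Peeling over dyadic shells $\{V_n'\sim 2^kB_n\}$ does not rescue the argument either, since on shell $k$ the admissible $y$ in $\Pr(|T_n|>yW_n)\leq 2e^{-y^2/2}$ is forced down to order $x/2^{k}$ while the shell probability decays only like $4^{-k}$.

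The repair is to fold the control of $V_n'$ into the weak-symmetrization step itself rather than performing it afterwards. Let $B'=\{|S_n'|\leq 2B_n\}\cap\{V_n'\leq 2B_n\}$, an event depending only on $\xi'$; two applications of Chebyshev give $\Pr(B')\geq 1-\tfrac14-\tfrac14=\tfrac12$. Since $A=\{|S_n|>x(4B_n+V_n)\}$ depends only on $\xi$, independence yields $\Pr(A)\leq 2\,\Pr(A\cap B')$. On $A\cap B'$ one has $|T_n|\geq|S_n|-|S_n'|>x(4B_n+V_n)-2B_n$ and $W_n\leq V_n+V_n'\leq V_n+2B_n$; for $x\geq 1$ the former quantity is at least $x(V_n+2B_n)\geq xW_n$, so $A\cap B'\subset\{|T_n|>xW_n\}$ and hence $\Pr(A)\leq 2\,\Pr(|T_n|>xW_n)\leq 4\exp(-x^2/2)$. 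For $x<1$ the right side exceeds $1$ and the bound is trivial.
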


\begin{proof}[Proof of Lemma \ref{lemma: SN}]
See \cite{Shao2009}, Theorem 2.16.
\end{proof}


Define
\begin{equation*}
\Lambda_{j} := 4 \sqrt{ \barEp[(x_{ij} - \tilde x_{ij})^2] } + \sqrt{ \En[(x_{ij} - \tilde x_{ij})^2] }.
\end{equation*}
Then by Lemma \ref{lemma: SN} and the union bound, with probability at least $1- 4 \gamma$,
\begin{equation*}
|X_{j} - \tilde X_{j}| \leq \Lambda_{j} \sqrt{ 2 \log (p/\gamma) }, \ \text{for all $1 \leq j \leq p$}.
\end{equation*}
By claim (c), for $u \geq  u(\gamma)$, with probability at least $1-\gamma$,  for all $1 \leq j \leq p$,
\begin{multline*}
\Lambda_{j} = 4 \sqrt{ \barEp[(x_{ij} - \tilde x_{ij})^2] } + \sqrt{\En[ (\Ep [x_{ij}1\{|x_{ij}|>u(\barEp[x_{ij}^2])^{1/2}\}])^{2} ]} \\
 \leq 5 \sqrt{ \barEp[x_{ij}^{2}]} \varphi(u).
\end{multline*}
The last two assertions imply claim (d).
\qed



\subsection{Proof of Theorem \ref{cor: Gaussian to nonGaussian KS 2}}\label{Proof of cor: Gaussian to nonGaussian KS 2}
Since $M_2$ is bounded from below and above by positive constants, we may normalize $M_2 =1$, without loss of generality.
 In this proof, let $C>0$ denote a generic constant depending only on $c_{1}$ and $C_{1}$, and its value may change from place to place.

For given $\gamma \in (0,1)$, denote $\ell_n := \log(pn/\gamma) \geq 1$ and let
$$
u_1:=n^{3/8}\ell_n^{-5/8}M_3^{3/4}\text{ and } u_2 :=n^{3/8}\ell_n^{-5/8}M_4^{1/2}.
$$
Define
$u := u(\gamma) \vee u_1 \vee u_2$ and $\beta:=\sqrt{n}/(2\sqrt{2}u)$. Then $u\geq u(\gamma)$ and the choice of $\beta$ trivially obeys $ 2 \sqrt{2} u \beta \leq \sqrt{n}$. So, by Theorem \ref{theorem:comparison non-Gaussian} and using the  argument as that in the proof of Corollary \ref{cor: Gaussian to nonGaussian KS 1}, for every $\psi > 0$ and any $\bar \varphi(u) \geq \varphi(u)$, we have
\begin{eqnarray}
& \rho  \leq
 C \big [ n^{-1/2} (\psi^3+\psi^2\beta+\psi\beta^2)M_3^3
+(\psi^2+\psi\beta) \bar \varphi(u) \nonumber \\
& \ \ \ \ \ \ \ \ \qquad  + \psi \bar \varphi(u)\sqrt{\log(p/\gamma)}+( \beta^{-1} \log p + \psi^{-1}) \sqrt{1 \vee \log (p \psi)} +\gamma\big ].
\label{key to 1}
\end{eqnarray}

\textbf{Step 1.} We claim that we can take $\bar\varphi(u) := C M_4^2/ u$ for all $u >0$. Since $ \barEp [ x_{ij}^{2} ] \geq c_{1}$, we have
$1\{ | x_{ij} | > u (\bar \Ep [ x_{ij}^{2} ])^{1/2} \} \leq 1 \{ | x_{ij} | > c^{1/2}_{1} u \}.$
Hence \begin{align*}
&\bar \Ep [ x_{ij}^{2} 1\{  | x_{ij} | > u (\bar \Ep [ x_{ij}^{2} ])^{1/2} \} ] \leq  \bar \Ep [ x_{ij}^{2} 1\{  | x_{ij} | > c^{1/2}_{1} u\} ] \\
&\quad \leq \bar \Ep [ x_{ij}^{4} 1\{  | x_{ij} | > c^{1/2}_1 u\} ]/(c_1u^2)   \leq \bar \Ep [ x_{ij}^{4}]/(c_{1} u^{2}) \leq  M_4^4/(c_{1} u^{2}).
\end{align*}
This implies $\varphi_{x}(u) \leq C M_4^2/u$.
For $\varphi_{y}(u)$, note that
$$
\barEp [ y_{ij}^{4}]=\En[\Ep[y_{ij}^4]] = 3 \En[(\Ep[ y_{ij}^{2}])^2] = 3 \En[(\Ep[ x_{ij}^{2}])^2]\leq 3\En[\Ep[ x_{ij}^{4}]] = 3\barEp [ x_{ij}^{4}],
$$
and hence $\varphi_y(u) \leq CM_4^2 / u$ as well. This implies the claim of this step.

\textbf{Step 2.}
We shall bound the right side of (\ref{key to 1}) by suitably choosing $\psi$ depending on the range of $u$.
In order to set up this choice we define $u^\star$ by the following equation:
$$
\bar\varphi(u^\star)n^{3/8} / (M_3^{3}\ell_n^{5/6})^{3/4}=1.
$$
We then take
\begin{eqnarray}
\psi = \psi(u) := \left\{ \begin{array}{lll} n^{1/8}\ell_n^{-3/8}M_3^{-3/4} & \text{ if } u \geq u^\star, \\
 \ell_n^{-1/6}(\bar\varphi(u))^{-1/3} & \text{ if } u < u^\star.
\end{array}\right.
\end{eqnarray}
We note that  for $u < u^\star$,
$$
 \psi(u) \leq  \psi(u^\star) = n^{1/8}\ell_n^{-3/8}M_3^{-3/4}.
$$
That is, when $u < u^\star$ the smoothing parameter $\psi$ is smaller than when $ u \geq u^\star$.

Using these choices of parameters $\beta$ and $\psi$ and elementary calculations (which will be done in Step 3 below), we conclude from (\ref{key to 1}) that whether $u < u^\star$ or $u\geq u^\star$,
\begin{equation*}
\rho \leq C(n^{-1/2} u\ell_n^{3/2} + \gamma).
\end{equation*}
The bound in the theorem follows from this inequality.
\

\textbf{Step 3.}  (Computation of the bound on $\rho$). Note that since $\rho \leq 1$, we only had to consider  the case where $n^{-1/2} u\ell_n^{3/2} \leq 1$ since otherwise the inequality is trivial by taking, say, $C = 1$. Since  $u_1=n^{3/8}M_3^{3/4}/\ell_n^{5/8}$ and $u_2 =n^{3/8}M_4^{1/2}/\ell_n^{5/8}$, we have
\begin{align*}
&(\bar\varphi(u^\star))^{4/3} =n^{-1/2}\ell_n^{5/6} M_3^{3},\\
&\bar\varphi(u_1) \leq C n^{-3/8}\ell_n^{5/8} M_4^2/M_3^{3/4},\\
&\bar\varphi(u_2) \leq C n^{-3/8}\ell_n^{5/8}M_4^{3/2}.
\end{align*}
Also note that $\psi\leq n^{1/8}$, and so $1 \vee \log (p \psi) \lesssim \log(pn) \leq \ell_n$. Therefore,
$$
\beta^{-1}\log p\sqrt{1\vee\log(p\psi)}\lesssim \beta^{-1}\ell_n^{3/2}\lesssim n^{-1/2}u\ell_n^{3/2}.
$$
 In addition, note that $\beta\lesssim \sqrt{n}/ u\leq \sqrt{n}/u_1 = n^{1/8} \ell_{n}^{5/8} M_{3}^{-3/4} =: \bar \beta$ and $\psi \leq \bar \beta$ under either case. This implies
that $(\psi^3+\psi^2\beta+\psi\beta^2) \lesssim \psi \bar \beta^2$ and $(\psi^2+\psi\beta) \leq \psi \bar \beta$. \\
Using these inequalities, we can compute the bounds claimed above.

\textbf{(a)}.  Bounding $\rho$ when $u\geq u^\star$. Then
\begin{align*}
&n^{-1/2} (\psi^3+\psi^2\beta+\psi\beta^2)M_3^3
\lesssim n^{-1/2} \psi\bar{\beta}^2M_3^3
\leq n^{-1/8}\ell_n^{7/8}M_3^{3/4} \leq n^{-1/2} u\ell_n^{3/2}; \\
&(\psi^2+\psi\beta)\bar \varphi(u)
\lesssim \psi\bar{\beta}\bar \varphi(u) \leq  \psi\bar{\beta}\bar \varphi(u^{\star})
\leq n^{-1/8}\ell_n^{7/8}M_3^{3/4}  \leq n^{-1/2} u\ell_n^{3/2}; \\
& \psi \bar \varphi(u)\sqrt{\log(p/\gamma)} \leq \psi\bar{\beta}\bar \varphi(u)\sqrt{\ell_n}/\bar{\beta} \leq \psi\bar{\beta}\bar \varphi(u^{\star})
\leq n^{-1/2} u\ell_n^{3/2};  \ \text{and} \\
&\psi^{-1} \sqrt{\ell_n}\leq n^{-1/8}\ell_n^{7/8}M_3^{3/4} \leq n^{-1/2} u\ell_n^{3/2};
\end{align*}
 where we have used Step 1 and the fact that $$\sqrt{\ell_n}/\bar{\beta} = \ell_{n}^{-1/2} \psi^{-1} \leq n^{-1/8} \ell_{n}^{-1/8} M_{3}^{3/4} \leq n^{-1/2} u\ell_n^{3/2} \leq 1.$$ The claimed bound on $\rho$ now follows.

\textbf{(b)}. Bounding $\rho$ when $u<u^\star$. Since $\psi$ is smaller than in case (a), by the calculations in Step (a)
\begin{equation*}
n^{-1/2} (\psi^3+\psi^2\beta+\psi\beta^2)M_3^3/\sqrt{n}\lesssim n^{-1/2} u\ell_n^{3/2}.
\end{equation*}
Moreover, using definition of $\psi$, $u>u_2$, definition of $u_2$,
 we have
\begin{align*}
&\psi\beta \bar \varphi(u) \leq \beta \bar \varphi(u)^{2/3}\ell_n^{-1/6} \leq \beta \bar \varphi(u_2)^{2/3}\ell_n^{-1/6}
\leq  n^{-1} \beta u_2^2\ell_n^{5/3-1/6}
\lesssim n^{-1/2} u \ell_n^{3/2}; \\
&\psi^2 \bar \varphi(u)
\leq \bar \varphi(u)^{1/3}\ell_n^{-1/3}  \leq \bar \varphi(u_2)^{1/3}\ell_n^{-1/3}
\leq n^{-1/2} u_2\sqrt{\ell_n} \leq n^{-1/2} u\ell_n^{3/2}.
\end{align*}
Analogously and using $n^{-1/2} u\ell_n^{3/2} \leq 1$, we have
\begin{align*}
&\psi \bar \varphi(u)\sqrt{\log(p/\gamma)} \leq \bar \varphi(u)^{2/3}\ell_n^{1/3}  \leq \bar \varphi(u_2)^{2/3}\ell_n^{1/3} \leq n^{-1} u_{2}^{2} \ell_{n}^{2}   \leq n^{-1/2} u\ell_n^{3/2}. \\
&\psi^{-1} \sqrt{\ell_n} = \bar \varphi(u)^{1/3}\ell_n^{2/3} \leq n^{-1/2}  u\ell_n^{3/2}.
\end{align*}
This completes the proof.
\qed

\section{Deferred Proofs for Section \ref{sec: Dantzig}}
\label{sec: proof for Dantzig}

\subsection{Proof of Theorem \ref{thm: dantzig estimator}}  The proof proceeds in three steps.
In the proof $(\hat \beta, \lambda)$ denotes  $(\hat \beta^{(k)}, \lambda^{(k)})$
with $k$ either $0$ or $1$.

\textbf{Step 1.} Here we show that there exist some constants $c>0$ and $C > 0$ (depending only $c_{1},C_{1}$ and $\sigma^{2}$) such that
\begin{equation}\label{eq: d constraint}
\Pr( T_0 \leq \lambda) \geq 1- \alpha - \nu_n,
\end{equation}
with $\nu_{n} = C n^{-c}$.
We first note that
$T_0 = \sqrt{n} \max_{1 \leq k \leq 2p}\En [ \tilde z_{ik} \eps_{i} ]$,
where $\tilde z_{i} = (z_{i}', -z_{i}')'$.  Application of Corollary \ref{cor: central limit theorem}-(ii) gives
\begin{equation*}
|\Pr ( T_0 \leq \lambda )- \Pr ( Z_0 \leq \lambda )| \leq C  n^{-c},
\end{equation*}
where $c>0$ and $C>0$ are constants depending only on $c_{1},C_{1}$ and $\sigma^{2}$.
The claim follows since $\lambda \geq c_{Z_0}(1-\alpha)$, which holds because  $\lambda^{(1)} = c_{Z_0}(1-\alpha)$,
and $\lambda^{(1)} \leq \lambda^{(0)} = c_0(1-\alpha) := \sigma \Phi^{-1}(1- \alpha/(2p))$ 
(by the union bound
$\Pr ( Z_0 \geq c_0(1-\alpha) )  \leq 2p \Pr ( \sigma N(0,1)\geq c_0(1-\alpha) ) = \alpha$).

\textbf{Step 2.} We claim that with probability $\geq 1- \alpha - \nu_n$, $\hat \delta = \hat \beta - \beta$ obeys:
\begin{equation*}
\sqrt{n} \max_{1 \leq j \leq p} | \En[z_{ij}(z_{i}'\hat \delta)] | \leq 2 \lambda.
\end{equation*}
Indeed, by definition of $\hat \beta$,
$
\sqrt{n} \max_{1 \leq j \leq p} | \En [ z_{ij}(y_{i} - z_{i}' \hat \beta)] | \leq \lambda,
$
which by the triangle inequality  implies
$
\sqrt{n} \max_{1 \leq j \leq p} | \En[z_{ij}(z_{i}'\hat \delta)] | \leq  T_0 + \lambda.
$
The claim follows from Step 1.

\textbf{Step 3.}   By
Step 1,  with probability $\geq 1- \alpha - \nu_n$, the true value $\beta$ obeys the constraint in optimization problem (\ref{eq: dantzig estimator}) in the main text, in which case by definition of $\hat \beta$, $\| \hat \beta \|_{\ell_1} \leq \| \beta\|_{\ell_1}$. Therefore, with the same probability, $\hat \delta  \in \mathcal{R}(\beta)= \{ \delta \in \RR^d: \| \beta + \delta \|_{\ell_1} \leq \| \beta\|_{\ell_1}\}.$  By definition of $\kappa_I(\beta)$ we have that with the same probability,
$$
\kappa_I(\beta) \| \hat \delta\|_I \leq  \max_{1 \leq j \leq p} | \En[z_{ij}(z_{i}'\hat \delta)] |.
$$
Combining this inequality with Step 2 gives the claim of the theorem. \qed

\subsection{Proof of Theorem \ref{thm: dantzig estimator 2}}  The proof has four steps. In the proof,
we let $\varrho_{n} = C n^{-c}$ for sufficiently small $c > 0$ and sufficiently large $C>0$ depending only on $c_{1},C_{1},\underline{\sigma}^{2},\sigma^{2}$, where $c$ and $C$ (and hence $\varrho_{n}$) may change from place to place.

\textbf{Step 0.} The same argument as in the previous proof applies to $\hat \beta^{(0)}$ with $\lambda = \lambda^{(0)}:= c_0(1-1/n)$,
where now $\sigma^{2}$ is the upper bound on $\Ep[\eps_{i}^2]$. Thus, we conclude that with probability  at least $1- \varrho_n$,
\begin{equation*}
\| \hat  \beta^{(0)} - \beta\|_{\pr} \leq  \frac{2 c_0(1-1/n)}{\sqrt{n} \kappa_{\pr}(\beta)}.
\end{equation*}

\textbf{Step 1.}
We claim that with probability at least $1- \varrho_n$,
\begin{equation*}
\max_{1 \leq j \leq p} \(\En[z_{ij}^2(\hat{\eps}_{i}-\eps_{i})^2]\)^{1/2} \leq  B_n  \frac{2 c_0(1-1/n)}{\sqrt{n} \kappa_{\pr}(\beta)}=: \iota_n.
\end{equation*}
Application of H\"{o}lder's inequality and identity $\eps_{i} - \hat{\eps}_{i}=  z_{i}'(\hat \beta^{(0)} - \beta)$
 gives
\begin{equation*}
\max_{1 \leq j \leq p} \(\En[z_{ij}^2(\hat{\eps}_{i}-\eps_{i})^2]\)^{1/2}
\leq  B_n (\En[z_{i}' (\hat  \beta^{(0)} - \beta)]^2)^{1/2} \leq B_n  \| \hat   \beta^{(0)} - \beta\|_{\pr}.
\end{equation*}
The claim follows from Step 0.

\textbf{Step 2.} In this step, we apply Corollary \ref{cor: multiplier bootstrap examples}-(ii) to
\begin{align*}
&T =T_0 =\sqrt{n}\max_{1\leq j \leq 2p} \En[\tilde z_{ij}\eps_{i}], \ W =\sqrt{n}\max_{1\leq j \leq 2p} \En[\tilde z_{ij}\hat{\eps}_{i}e_{i}], \ \text{and} \\
&W_0 =\sqrt{n}\max_{1\leq j \leq 2p} \En[\tilde z_{ij}\eps_{i}e_{i}],
\end{align*}
where $\tilde z_{i} = (z_{i}',-z_{i}')'$,  to conclude that uniformly in $\alpha \in (0,1)$
\begin{equation}\label{eq: given}
\Pr ( T_0 \leq c_W(1-\alpha) ) \geq 1- \alpha - \varrho_n.
\end{equation}
 To show applicability of Corollary \ref{cor: multiplier bootstrap examples}-(ii), we note that for any $\zeta_1>0$,
{\small \begin{align*}
 \Pr_e(|W-W_0|>\zeta_1) & \leq \Ep_e[|W-W_0|]/\zeta_1 \leq \sqrt{n}\Ep_e\left[\max_{1\leq j  \leq p}|\En[z_{ij}(\hat{\eps}_{i}-\eps_{i})e_{i}]|\right]/\zeta_1 \\ & \lesssim \sqrt{\log p} \max_{1\leq j \leq p}(\En[z_{ij}^2(\hat{\eps}_{i}-\eps_{i})^2])^{1/2}/\zeta_1,
\end{align*}}
\!where the third inequality is due to Pisier's inequality.
The last quantity is bounded by $ (\iota_n^2 \log p)^{1/2}/\zeta_1$  with probability $\geq 1-\varrho_n$ by Step 1.

Since $\iota_n\log p\leq C_1n^{-c_{1}}$ by  assumption (vi) of the theorem, we can take $\zeta_1$ in such a way that
$\zeta_1(\log p)^{1/2}\leq \varrho_n$ and $(\iota_n^2\log p)^{1/2}/\zeta_1\leq \varrho_n$.
Then all the conditions of Corollary \ref{cor: multiplier bootstrap examples}-(ii) with so defined $\zeta_1$ and $\zeta_2=\varrho_n\vee((\iota_n^2\log p)^{1/2}/\zeta_1)$ are satisfied,
and hence application of the corollary gives that uniformly in $\alpha \in (0,1)$,
\begin{equation}\label{equation meta}
| \Pr ( T_0 \leq c_W(1-\alpha) ) - 1-\alpha | \leq \varrho_n,
\end{equation}
which implies the claim of this step.

\textbf{Step 3.} In this step we claim that with probability at least $1- \varrho_n$,
$$
c_W( 1-\alpha ) \leq c_{Z_0}(1-\alpha + 2\varrho_n).
$$
Combining Step 2 and Lemma \ref{lem: quantile approximated to exact} gives that with probability at least $1-\zeta_2$, $c_W(1-\alpha)\leq c_{W_0}(1-\alpha+\zeta_2)+\zeta_1$, where $\zeta_1$ and $\zeta_2$ are chosen as in Step 2. In addition, Lemma \ref{lem: quantile conditional to unconditional} shows that $c_{W_0}(1-\alpha+\zeta_2)\leq c_{Z_0}(1-\alpha+\varrho_n)$. Finally, Lemma \ref{lem: anticoncentration} yields $c_{Z_0}(1-\alpha+\varrho_n)+\zeta_1\leq c_{Z_0}(1-\alpha+2\varrho_n)$. Combining these bounds gives the claim of this step.

\textbf{Step 4.} Given (\ref{eq: given}), the rest of the proof is identical to
Steps 2-3 in the proof of Theorem \ref{thm: dantzig estimator}
with $\lambda = c_W(1-\alpha)$. The result follows for $\nu_n = 2\varrho_n$.\qed

\section{Deferred Proofs for Section \ref{sub: MHT}}
\subsection{Proof of Theorem \ref{thm: MHT}}
The multiplier bootstrap critical values $c_{1-\alpha,w}$ clearly satisfy $c_{1-\alpha,w}\leq c_{1-\alpha,w^\prime}$ whenever $w\subset w^\prime$,
so  inequality (\ref{eq: critical value property 1}) in the main text is satisfied. Therefore, it suffices to prove (\ref{eq: conditions MHT}) in the main text.

Let $w$ denote the set of true null hypotheses. Then for any $j\in w$,
$$
t_j=\sqrt{n}(\hat{\beta}_j-\beta_{0j})\leq \frac{1}{\sqrt{n}}\sum_{i=1}^n x_{ij}+r_{nj}.
$$
Therefore, for all $j\in w$, we can and will assume that $\beta_j=\beta_{0j}$.

For $w\subset\mathcal{W}=\{1,\dots,p\}$, define
\begin{equation*}
T:=T(w):=\max_{j\in w}\sqrt{n}(\hat{\beta}_j-\beta_{0j}), \ W:=W(w):=\max_{j\in w}\frac{1}{\sqrt{n}}\sum_{i=1}^{n}\hat{x}_{ij}e_i.
\end{equation*}
In addition, define
\begin{equation*}
T_0:=T_0(w):=\max_{j\in w}\frac{1}{\sqrt{n}}\sum_{i=1}^{n}x_{ij}, \ W_0:=W_0(w):=\max_{j\in w}\frac{1}{\sqrt{n}}\sum_{i=1}^{n}x_{ij}e_i.
\end{equation*}
To prove (\ref{eq: conditions MHT}), we will apply Corollary \ref{cor: multiplier bootstrap examples}.
By assumption, either (i) or (ii) of Corollary \ref{cor: multiplier bootstrap examples} holds. Therefore, it remains to verify conditions in equations (\ref{eq: statistic approximation}) and (\ref{eq: conditional quantiles}) in the main text with $\zeta_1\sqrt{\log p}+\zeta_2\leq Cn^{-c}$ for some $c>0$ and $C>0$ uniformly over all $w\subset\mathcal{W}$.

Set $\zeta_1=(C/2)n^{-c}/\sqrt{\log p}$ and $\zeta_2=(C/2)n^{-c}$ for sufficiently small $c>0$ and large $C>0$ depending on $c_1,C_1,c_2$, and $C_2$ only. Note that $\zeta_1\sqrt{\log p}+\zeta_2\leq Cn^{-c}$.
Also note that
$$
|T-T_0|\leq \max_{1\leq j\leq p}\left|r_{nj}\right|=\Delta_1
$$
for all $w\subset\mathcal{W}$.
Therefore, it follows from assumption (i) that $\Pr(|T-T_0|>\zeta_1)< \zeta_2$ for all $w\subset\mathcal{W}$,  i.e. condition in equation (\ref{eq: statistic approximation}) holds uniformly over all $w\subset\mathcal{W}$.
Further, note that $\sum_{i=1}^n(\hat{x}_{ij}-x_{ij})e_i/\sqrt{n}$ conditional on $(x_i)_{i=1}^n$ and $(\hat{x}_i)_{i=1}^n$ is distributed as $N(0,\En[(\hat{x}_{ij}-x_{ij})^2])$ random variable and
$$
|W-W_0|\leq \max_{1\leq j\leq p}\left|\frac{1}{\sqrt{n}}\sum_{i=1}^n(\hat{x}_{ij}-x_{ij})e_i\right|
$$
for all $w\subset\mathcal{W}$.
Therefore, $\Ep_e[|W-W_0|]\leq (C/2)\sqrt{\Delta_2\log p}$, and so it follows from Borell inequality and assumption (ii) that $\Pr(\Pr_e(|W-W_0|>\zeta_1)>\zeta_2)<\zeta_2$ for all $w\subset\mathcal{W}$, i.e. condition in equation (\ref{eq: conditional quantiles}) holds  uniformly over all $w\subset\mathcal{W}$. This completes the proof by applying Corollary \ref{cor: multiplier bootstrap examples}.
\qed

\section{Monte Carlo Experiments in Support of Section 4}\label{sec: monte carlo}
In this section, we present results of Monte Carlo simulations that illustrate our theoretical results on Dantzig selector given in Section \ref{sec: Dantzig}. We consider Gaussian and Non-Gaussian noise with homoscedasticity and heteroscedasticity. We study 3 types of Dantzig selector depending on the choice of the penalty level: canonical, ideal (based on gaussian approximation, GAR), and multiplier bootstrap (MB).

We consider the following regression model:
\begin{equation*}
y_{i}=z_{i}^{\prime}\beta+\eps_{i},
\end{equation*}
where observations are independent across $i$, $y_i$ is a scalar dependent variable, $z_i$ is a $p$-dimensional vector of covariates, and $\varepsilon_i$ is noise. The first component of $z_i$ equals 1 in all experiments (an intercept). Other $p-1$ components are simulated as follows: first, we simulate a vector $w_i\in\RR^{p-1}$ from the Gaussian distribution with zero mean so that $\Ep[w_{ij}^2]=1$ for all $1\leq j\leq p-1$ and $\Ep[w_{ij}w_{ik}]=\rho$ for all $1\leq j,k\leq p-1$ with $j\neq k$; second, we set $z_{ij+1}=w_{ij}/(\En[w_{ij}^2])^{1/2}$ (equicorrelated design). Depending on the experiment, we set $\rho=0$, $0.5$, $0.9$, or $0.99$. We simulate $\epsilon_i=\sigma_0\sigma(z_i)e_i$ where depending on the experiment, $\sigma_0=0.5$ or $1.0$ and $e_i$ is taken either from $N(0,1)$ distribution (Gaussian noise) or from t-distribution with 5 degrees of freedom normalized to have variance 1 (Non-Gaussian noise). To investigate the effect of heteroscedasticity on the properties of different estimators, we set $$
\sigma(z_i)=\frac{2\exp(\gamma z_{i2})}{1+\exp(\gamma z_{i2})}
$$
where $\gamma$ is either 0 (homoscedastic case) or 1 (heteroscedastic case).

\begin{table}
\caption{Results of Monte Carlo experiments for prediction error. Non-Gaussian noise.}

\begin{tabular}{cccccc}
\hline
\multirow{2}{*}{Distribution $\varepsilon$} & \multirow{2}{*}{~~~~$\sigma_0$~~~~} & \multirow{2}{*}{~~~~$\rho$~~~~} & \multicolumn{3}{c}{Method}\tabularnewline
\cline{4-6}
 &  &  & Canonical & ~~~GAR~~~ & ~~~MB~~~\tabularnewline
\hline
\multirow{8}{*}{Homoscedastic} & \multirow{4}{*}{0.5} & 0.00 & 0.224 & 0.207 & 0.208\tabularnewline
 &  & 0.50 & 0.390 & 0.353 & 0.379\tabularnewline
 &  & 0.90 & 0.352 & 0.317 & 0.340\tabularnewline
 &  & 0.99 & 0.107 & 0.057 & 0.058\tabularnewline
\cline{2-6}
 & \multirow{4}{*}{1.0} & 0.00 & 0.648 & 0.604 & 0.674\tabularnewline
 &  & 0.50 & 0.695 & 0.643 & 0.644\tabularnewline
 &  & 0.90 & 0.538 & 0.406 & 0.412\tabularnewline
 &  & 0.99 & 0.348 & 0.139 & 0.137\tabularnewline
\hline
\multirow{8}{*}{Heteroscedastic} & \multirow{4}{*}{0.5} & 0.00 & 0.656 & 0.252 & 0.393\tabularnewline
 &  & 0.50 & 0.660 & 0.407 & 0.469\tabularnewline
 &  & 0.90 & 0.516 & 0.326 & 0.342\tabularnewline
 &  & 0.99 & 0.339 & 0.066 & 0.064\tabularnewline
\cline{2-6}
 & \multirow{4}{*}{1.0} & 0.00 & 1.588 & 0.661 & 0.909\tabularnewline
 &  & 0.50 & 1.590 & 0.722 & 0.755\tabularnewline
 &  & 0.90 & 1.336 & 0.445 & 0.454\tabularnewline
 &  & 0.99 & 1.219 & 0.153 & 0.156\tabularnewline
\hline
\end{tabular}
\end{table}

\begin{table}
\caption{Results of Monte Carlo experiments for prediction error. Gaussian noise.}

\begin{tabular}{cccccc}
\hline
\multirow{2}{*}{Distribution $\varepsilon$} & \multirow{2}{*}{~~~~$\sigma_0$~~~~} & \multirow{2}{*}{~~~~$\rho$~~~~} & \multicolumn{3}{c}{Method}\tabularnewline
\cline{4-6}
 &  &  & Canonical & ~~~GAR~~~ & ~~~MB~~~\tabularnewline
\hline
\multirow{8}{*}{Homoscedastic} & \multirow{4}{*}{0.5} & 0.00 & 0.229 & 0.211 & 0.210\tabularnewline
 &  & 0.50 & 0.421 & 0.386 & 0.417\tabularnewline
 &  & 0.90 & 0.365 & 0.315 & 0.350\tabularnewline
 &  & 0.99 & 0.109 & 0.059 & 0.059\tabularnewline
\cline{2-6}
 & \multirow{4}{*}{1.0} & 0.00 & 0.663 & 0.618 & 0.671\tabularnewline
 &  & 0.50 & 0.679 & 0.627 & 0.624\tabularnewline
 &  & 0.90 & 0.554 & 0.424 & 0.429\tabularnewline
 &  & 0.99 & 0.326 & 0.127 & 0.120\tabularnewline
\hline
\multirow{8}{*}{Heteroscedastic} & \multirow{4}{*}{0.5} & 0.00 & 0.674 & 0.257 & 0.395\tabularnewline
 &  & 0.50 & 0.643 & 0.412 & 0.451\tabularnewline
 &  & 0.90 & 0.503 & 0.310 & 0.324\tabularnewline
 &  & 0.99 & 0.319 & 0.060 & 0.059\tabularnewline
\cline{2-6}
 & \multirow{4}{*}{1.0} & 0.00 & 1.690 & 0.708 & 0.976\tabularnewline
 &  & 0.50 & 1.537 & 0.665 & 0.679\tabularnewline
 &  & 0.90 & 1.334 & 0.439 & 0.452\tabularnewline
 &  & 0.99 & 1.189 & 0.155 & 0.148\tabularnewline
\hline
\end{tabular}
\end{table}

Tables 1 and 2 present results on prediction error of Dantzig selector for the case of Non-Gaussian and Gaussian noise, respectively. Prediction error is defined as
$$
\Vert \hat{\beta}-\beta\Vert_{pr}=\sqrt{\En[z_i^\prime(\hat{\beta}-\beta)]}
$$
where $\hat{\beta}$ is the Dantzig selector; see Section \ref{sec: Dantzig} for the definition of the Dantzig selector. Recall that implementing the Dantzig selector requires selecting the penalty level $\lambda$. Both tables show results for 3 different choices of the penalty level. Canonical penalty is $\lambda=\bar{\sigma}\Phi^{-1}(1-\alpha/(2p))$ where $\bar{\sigma}=\sigma_0(1+I\{|\gamma|>0\})$, the upper bound on the variance of $\varepsilon_i$'s. Ideal (based on gaussian approximation, GAR) penalty is $\lambda=c_{Z_0}(1-\alpha)$, the conditional $(1-\alpha)$ quantile of $Z_0$ given $(z_i)_{i=1}^n$ where
$$
Z_0=\sqrt{n}\max_{1\leq j\leq p}|\En[z_{ij}\sigma_0\sigma(z_i)e_i]|
$$
where $e_i\sim N(0,1)$ independently across $i$.
Finally, multiplier bootstrap (MB) penalty is defined as follows. First, we calculate the Dantzig selector with the canonical choice of the penalty level, $\hat{\beta}$, and select regressors corresponding to non-zero components of $\hat{\beta}$. Second, we run the OLS regression of $y_i$ on the set of selected regressors, and take residuals from this regression, $(\hat{\epsilon}_i)_{i=1}^n$. Then the multiplier bootstrap penalty level is $\lambda=c_{W}(1-\alpha)$, the conditional $(1-\alpha)$ quantile of $W$ given $(z_{i},\hat{\varepsilon}_i)_{i=1}^n$ where
$$
W=\sqrt{n}\max_{1\leq j\leq p}|\En[z_{ij}\hat{\varepsilon}_ie_i]|
$$
where $e_i\sim N(0,1)$ independently across $i$.

The results show that the GAR penalty always yields smaller prediction error than that of the canonical penalty. Moreover, as predicted by the theory, GAR penalty works especially good in comparison with the canonical penalty in heteroscedastic case and/or in the case with high correlation between regressors (high $\rho$). In addition, in most cases, the results for the MB penalty are similar to those for the GAR penalty. In particular, the MB penalty in most cases yields smaller prediction error than that of the canonical penalty. Finally, the GAR penalty in most cases is slightly better than the MB penalty. Note, however, that when heteroscedasticity function $\sigma(z_i)$ is unknown, the GAR penalty becomes infeasible but the MB penalty is feasible given that the upper bound on the variance of $\varepsilon_i$'s exists.

\newpage

\begin{center}
{\Large  Supplementary Material II} \\
\text{} \\
{\textbf{Additional Results and Discussions}} \\
\end{center}


\section{A note on Slepian-Stein type methods for normal approximations}\label{sec: Slepian-Stein note}
To keep the notation simple, consider a random vector $X$ in $\RR^{p}$ and a standard normal vector
$Y$ in $\RR^{p}$.  We are interested in bounding
\begin{equation*}
\Ep[g(X)] - \Ep[g(Y)],
\end{equation*}
over some collection of test functions $g \in \mathcal{G}$.  Without loss of generality, suppose that $Y$ and $X$ are independent.

Consider Stein's partial differential equation:
\begin{equation*}
g(x) -\Ep[g(Y)]  = \triangle h(x) -  x'\nabla h(x)
\end{equation*}
where $\triangle h(X)$ and $\nabla h(X)$ refer to the Laplacian and the gradient of $h(X)$.
It is well known, e.g. \cite{GoldsteinRinott1996} and \cite{ChatterjeeMeckes2008}, that an explicit solution for $h$ in this equation is given by
\begin{equation*}
h(x) := -\int_0^1 \frac{1}{2t} \left[\Ep[g( \sqrt{t} x + \sqrt{1-t} Y )] - \Ep[g(Y)] \right ] dt,
\end{equation*}
so that
\begin{equation*}
\Ep[g(X)] - \Ep[g(Y)] = \Ep[\triangle h(X) -  X ' \nabla h(X)].
\end{equation*}
The Stein type method for normal approximation bounds the right side for $g \in \mathcal{G}$.

Next, let us consider the Slepian smart path interpolation:
\begin{equation*}
Z(t) = \sqrt{t} X + \sqrt{1-t} Y.
\end{equation*}
Then we have
\begin{equation*}
\Ep[g(X)] - \Ep[g(Y)] = \Ep \left [\int_0^1 \frac{1}{2}  \nabla g(Z(t))' \left (\frac{X}{\sqrt{t}} - \frac{Y}{\sqrt{1-t}}\right) \right] dt.
\end{equation*}
The Slepian type method, as used in our paper, bounds the right side for $g \in \mathcal{G}$.  We also
refer the reader to \cite{Rollin2011} for a related discussion and interesting results (see in particular Lemma 2.1 in  \cite{Rollin2011}).

Elementary calculations and integration by parts yield the following observation.
\begin{lemma}\label{compare Slepian vs Stein} Suppose that $g : \RR^p \to \RR$ is a $C^2$-function with
uniformly bounded derivatives up to order two. Then
\begin{align*}
I &:= \Ep \left [\int_0^1 \frac{1}{2}  \nabla g(Z(t))' \left (\frac{X}{\sqrt{t}}\right) \right] =  -\Ep[X' \nabla h(X)]
\intertext{and}
II &:= \Ep \left [\int_0^1 \frac{1}{2} \nabla g(Z(t))' \left ( \frac{Y}{\sqrt{1-t}}\right) \right] = -\Ep[\triangle h(X)].
\end{align*}
\end{lemma}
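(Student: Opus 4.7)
The plan is to compute the gradient and Laplacian of $h$ by differentiating under the integral sign, and then verify the two identities by direct substitution, using Gaussian integration by parts for the second one.

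First I will differentiate the formula
\[
h(x) = -\int_0^1 \frac{1}{2t} \bigl[\Ep[g(\sqrt{t}\,x + \sqrt{1-t}\,Y)] - \Ep[g(Y)]\bigr]\,dt
\]
with respect to $x$. Since $g \in C^2$ with bounded first and second derivatives, dominated convergence justifies interchanging $\partial_j$ with $\Ep$ and with the outer integral, yielding
\[
\nabla h(x) = -\int_0^1 \frac{1}{2\sqrt{t}}\,\Ep[\nabla g(\sqrt{t}\,x + \sqrt{1-t}\,Y)]\,dt,
\]
and similarly
\[
\triangle h(x) = -\int_0^1 \frac{1}{2}\,\Ep[\triangle g(\sqrt{t}\,x + \sqrt{1-t}\,Y)]\,dt.
\]

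For identity $I$, I substitute the expression for $\nabla h$ directly. Using independence of $X$ and $Y$, writing $\Ep_Y$ for expectation in $Y$ alone, and applying Fubini (which is valid because $\nabla g$ is bounded),
\[
-\Ep[X'\nabla h(X)] = \Ep\!\left[\int_0^1 \frac{1}{2\sqrt{t}}\,X'\,\Ep_Y[\nabla g(\sqrt{t}\,X + \sqrt{1-t}\,Y)]\,dt\right]
= \Ep\!\left[\int_0^1 \frac{1}{2}\,\nabla g(Z(t))'\frac{X}{\sqrt{t}}\,dt\right],
\]
which is $I$.

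For identity $II$, the key step is Gaussian integration by parts. For fixed $x$ and $t\in(0,1)$, applying the componentwise Stein identity to the $C^1$ function $y\mapsto \partial_j g(\sqrt{t}\,x + \sqrt{1-t}\,y)$ (which has bounded derivatives) gives
\[
\Ep_Y[Y_j\,\partial_j g(\sqrt{t}\,x + \sqrt{1-t}\,Y)] = \sqrt{1-t}\,\Ep_Y[\partial_j^2 g(\sqrt{t}\,x + \sqrt{1-t}\,Y)],
\]
so that, summing in $j$,
\[
\Ep_Y[Y'\nabla g(\sqrt{t}\,x + \sqrt{1-t}\,Y)] = \sqrt{1-t}\,\Ep_Y[\triangle g(\sqrt{t}\,x + \sqrt{1-t}\,Y)].
\]
Plugging this into $II$ and using independence of $X$ and $Y$,
\[
II = \int_0^1 \frac{1}{2\sqrt{1-t}}\,\Ep\bigl[Y'\nabla g(Z(t))\bigr]\,dt = \int_0^1 \frac{1}{2}\,\Ep[\triangle g(Z(t))]\,dt,
\]
which matches $-\Ep[\triangle h(X)]$ by the formula for $\triangle h$ derived above. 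The main technical obstacles are the interchange-of-limits justifications and Gaussian integration by parts, both of which are routine given the $C^2$-boundedness hypothesis on $g$.
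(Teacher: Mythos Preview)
Your proof is correct and follows essentially the same approach as the paper: differentiate $h$ under the integral sign to obtain $\nabla h$ and $\triangle h$, match $I$ directly, and use Stein's identity (Gaussian integration by parts) to connect $II$ with $-\Ep[\triangle h(X)]$. The only cosmetic difference is that you pass from $II$ to $-\Ep[\triangle h(X)]$, whereas the paper writes the chain in the reverse direction.
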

Hence the Slepian and Stein methods both show that difference between $I$ and $II$ is small or approaches zero under suitable
conditions on $X$; therefore, they are very similar in spirit, if not identical.   The details of treating terms may be different from application
to application; see more on this in \cite{Rollin2011}.

\begin{proof}[Proof of Lemma \ref{compare Slepian vs Stein}]  By definition of $h$, we have
\begin{eqnarray*}
-\Ep[X'\nabla h(X)]  =   \Ep\left [ X' \int_0^1 \frac{1}{2t}  \nabla g (Z(t))  \sqrt{t} dt\right] =  \Ep\left [ \int_0^1  \nabla g(Z(t))' \frac{X}{2 \sqrt{t}}  dt \right].
\end{eqnarray*}
On the other hand,  by definition of $h$ and Stein's identity (Lemma \ref{lemma: talagrand}),
\begin{eqnarray*}
-\Ep[\triangle h(X)] =  \Ep\left[ \frac{1}{2} \int_0^1 \triangle g ( Z(t))    dt \right]  =   \Ep \left [ \frac{1}{2} \int_0^1 \nabla g(Z(t))' \left ( \frac{Y}{\sqrt{1-t}}\right) dt \right].
\end{eqnarray*}
This completes the proof.
\end{proof}


\begin{lemma}[Stein's identity]
\label{lemma: talagrand}
Let $W =(W_{1},\dots,W_{p})^{T}$ be a centered Gaussian random vector in $\RR^{p}$.
Let $f: \RR^{p} \to \RR$ be a $C^1$-function such that $\Ep [ |\partial_{j} f(W) | ] < \infty$ for all $1 \leq j \leq p$. Then for every $1 \leq j \leq p$,
\begin{equation*}
\Ep[W_{j}f(W)]=\sum_{k=1}^p\Ep[W_{j} W_{k}] \Ep [\partial_k f (W)].
\end{equation*}
\end{lemma}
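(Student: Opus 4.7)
The plan is to reduce the multivariate identity to the one-dimensional Stein identity by writing $W$ as a linear image of a standard Gaussian vector, and then applying the univariate identity coordinate-by-coordinate.

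As a first step I would establish the one-dimensional version: if $Z \sim N(0,1)$ and $g \in C^1(\RR)$ with $\Ep[|g'(Z)|] < \infty$, then $\Ep[Z g(Z)] = \Ep[g'(Z)]$. This is the classical Gaussian integration by parts, obtained from $z\phi(z) = -\phi'(z)$ for the standard normal density $\phi$ and a routine integration-by-parts argument (with the boundary terms vanishing because $\phi$ decays superpolynomially; the integrability of $g'(Z)$ is what licenses the integration by parts and guarantees $\Ep[|Z g(Z)|] < \infty$ via Cauchy--Schwarz together with truncation).

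Next I would introduce the representation $W = AZ$, where $A \in \RR^{p \times p}$ satisfies $A A^T = \Sigma := \Ep[WW^T]$ and $Z = (Z_1,\dots,Z_p)^T$ is a vector of i.i.d.\ $N(0,1)$ variables. Such an $A$ exists by the spectral decomposition $A = \Sigma^{1/2}$ regardless of whether $\Sigma$ is invertible, so this step handles the possibly degenerate case without perturbation. Define $\tilde f(z) := f(Az)$, which lies in $C^1(\RR^p)$ with $\partial_i \tilde f(z) = \sum_{k=1}^{p} A_{ki} (\partial_k f)(Az)$. Writing $W_j = \sum_{i=1}^{p} A_{ji} Z_i$ and applying the one-dimensional Stein identity to $Z_i$ (conditionally on the remaining coordinates, and then integrating out via Fubini) gives
\[
\Ep[Z_i \tilde f(Z)] \;=\; \Ep[\partial_i \tilde f(Z)] \;=\; \sum_{k=1}^{p} A_{ki}\, \Ep[(\partial_k f)(W)].
\]
Summing over $i$ with weights $A_{ji}$ yields
\[
\Ep[W_j f(W)] \;=\; \sum_{i=1}^{p} A_{ji}\, \Ep[Z_i \tilde f(Z)] \;=\; \sum_{k=1}^{p} (AA^T)_{jk}\, \Ep[(\partial_k f)(W)] \;=\; \sum_{k=1}^{p} \Ep[W_j W_k]\, \Ep[(\partial_k f)(W)],
\]
which is the claimed identity.

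The main technical obstacle is the regularity bookkeeping. The hypothesis only gives $\Ep[|\partial_k f(W)|] < \infty$, and I must ensure that (i) $\Ep[|\partial_i \tilde f(Z)|] < \infty$ so the one-dimensional Stein identity applies in each coordinate, and (ii) $\Ep[|Z_i \tilde f(Z)|] < \infty$ so the Fubini exchange is legitimate. Point (i) follows from the triangle inequality and boundedness of the entries of $A$. For (ii), the standard route is to first prove the identity for $f$ belonging to a dense regular subclass (e.g., compactly supported smooth functions, or functions with polynomially bounded $f$ and $\nabla f$), and then pass to the general case by a truncation argument: multiply $f$ by a smooth cutoff $\chi_N$ supported on $\{|w| \leq N\}$, verify the identity for $f \chi_N$, and send $N \to \infty$ using dominated convergence justified by the integrability of $|\partial_k f(W)|$ and of $W_j$ (which has all moments finite under the Gaussian assumption).
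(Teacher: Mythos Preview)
Your proof is correct and follows the standard route via the one-dimensional Stein identity and the representation $W = \Sigma^{1/2} Z$. The paper, however, does not supply its own argument for this lemma: its proof consists solely of the citation ``See Section A.6 of \cite{Talagrand2003}, and also \cite{Stein1981}.'' So there is nothing to compare at the level of technique; you have written out what the paper defers to references. Your handling of the possibly degenerate covariance (taking $A = \Sigma^{1/2}$ directly rather than perturbing) and your flagging of the approximation step needed to justify integrability of $Z_i \tilde f(Z)$ are both appropriate, and match how the cited sources proceed.
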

\begin{proof}[Proof of Lemma \ref{lemma: talagrand}]
See Section A.6 of \cite{Talagrand2003}, and also \cite{Stein1981}.
\end{proof}

\section{A Simple Gaussian Approximation Result}\label{sec: elementary GAR}
This section can be helpful to the reader wishing to see how Slepian-Stein
methods can be used to prove a simple Gaussian approximation (whose
applicability is limited however.)  We start with the following elementary lemma.
\begin{lemma}[A Simple Comparison of Gaussian to Non-Gaussian Maxima] \label{thm: warmup comparison}
For every $g \in C_{b}^{3}(\RR)$ and $\beta>0$,
\begin{align*}
&|\Ep[g(F_\beta(X))-g(F_\beta(Y))]| \lesssim n^{-1/2} (G_3+G_2 \beta + G_1\beta^2 ) \barEp [S^3_i], \\
\intertext{and hence}
& |\Ep[g(T_{0})- g(Z_{0})]| \lesssim n^{-1/2}(G_3+G_2 \beta + G_1\beta^2) \barEp [S^3_i]
 + \beta^{-1}G_1\log p.
\end{align*}
\end{lemma}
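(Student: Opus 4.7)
The plan is to mimic the strategy of Theorem 2.1 but, since we are now willing to pay the price of a third absolute moment directly, we can skip the truncation step entirely and rely on a clean Lindeberg/Slepian interpolation combined with Stein's leave-one-out trick. Set $m := g \circ F_\beta$, use the Slepian interpolation $Z(t) := \sqrt{t}\,X + \sqrt{1-t}\,Y = \sum_{i=1}^n Z_i(t)$ with $Z_i(t) := n^{-1/2}(\sqrt{t}\,x_i + \sqrt{1-t}\,y_i)$, and denote $Z^{(i)}(t) := Z(t) - Z_i(t)$ and $\dot Z_{ij}(t) := n^{-1/2}(x_{ij}/\sqrt{t} - y_{ij}/\sqrt{1-t})$. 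By the fundamental theorem of calculus and a third-order Taylor expansion in each coordinate direction about $Z^{(i)}(t)$,
\[
\Ep[m(X)-m(Y)] = \tfrac{1}{2}(I + II + III),
\]
where $I$, $II$, $III$ are the first-, second-, and third-order terms exactly as in the proof of Theorem 2.1.

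First, I would show that $I = 0$ by independence of $Z^{(i)}(t)$ and $\dot Z_{ij}(t)$ together with $\Ep[\dot Z_{ij}(t)] = 0$. Next, and this is the clean feature of the untruncated setup, I would show that $II = 0$ as well: again by independence of $Z^{(i)}(t)$ and the bilinear form $\dot Z_{ij}(t)Z_{ik}(t)$, and then by the exact matching of covariances,
\[
\Ep[\dot Z_{ij}(t)Z_{ik}(t)] = n^{-1}\Ep[x_{ij}x_{ik} - y_{ij}y_{ik}] = 0.
\]
Thus only the remainder $III$ survives, and there is no second-moment/truncation error $\varphi(u)$ to carry around—this is precisely what makes the argument simple but makes the resulting bound require a third absolute moment.

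The main (routine) work is then to estimate $III$. Using Lemma 1.4 to bound $|\partial_j\partial_k\partial_l m(z)| \leq U_{jkl}(z)$ and Lemma 1.4's normalization $\sum_{j,k,l} U_{jkl}(z) \leq G_3 + 6G_2\beta + 6G_1\beta^2$, then applying independence of $Z^{(i)}(t)$ and $\dot Z_{ij}(t)Z_{ik}(t)Z_{il}(t)$, I would reduce to bounding $\sup_{j,k,l}\, n\,\barEp[|\dot Z_{ij}(t)Z_{ik}(t)Z_{il}(t)|]$. By H\"older's inequality and the trivial bound $|\dot Z_{ij}(t)|\leq \omega(t)(|x_{ij}|+|y_{ij}|)/\sqrt{n}$ with $\omega(t) = 1/(\sqrt{t}\wedge\sqrt{1-t})$, and similarly $|Z_{ik}(t)|\leq (|x_{ik}|+|y_{ik}|)/\sqrt{n}$, the product is pointwise dominated by $n^{-3/2}\omega(t)S_i^3$. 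Since $\int_0^1 \omega(t)\,dt \lesssim 1$, we obtain $|III| \lesssim n^{-1/2}(G_3 + G_2\beta + G_1\beta^2)\,\barEp[S_i^3]$, proving the first claim.

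For the second claim, I would simply combine the first claim with the uniform approximation $0 \leq F_\beta(z) - \max_j z_j \leq \beta^{-1}\log p$ from (\ref{eq: smooth max property}) and $|g(a) - g(b)| \leq G_1|a-b|$, which costs an additive $G_1\beta^{-1}\log p$. No single step is a genuine obstacle here; the argument is essentially ``Lindeberg via Slepian'' and the only reason the bound is weaker than Theorem 2.1 is that we have not truncated, so large values of $|x_{ij}|$ contribute directly to $\barEp[S_i^3]$ instead of being traded against the smoothness parameter $\beta$.
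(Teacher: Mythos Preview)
Your proposal is essentially the paper's own proof: Slepian interpolation, $I=II=0$ by exact covariance matching (no truncation, hence no $\varphi(u)$ term), and all the work lands on $III$, which you finish via the pointwise domination $|\dot Z_{ij}(t)Z_{ik}(t)Z_{il}(t)|\leq n^{-3/2}\omega(t)S_i^3$ and $\int_0^1\omega(t)\,dt\lesssim 1$.

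One step is misdescribed. In $III$ the third derivative is evaluated at $Z^{(i)}(t)+\tau Z_i(t)$, which depends on $Z_i(t)$, so you cannot factor the expectation by independence of $Z^{(i)}(t)$ and $\dot Z_{ij}(t)Z_{ik}(t)Z_{il}(t)$; that route would require the stability Lemma~\ref{lemma: switching property}, which in turn needs $\beta\max_j|Z_{ij}(t)|\leq 1$ and is precisely what Theorem~\ref{theorem:comparison non-Gaussian} buys via truncation. The paper instead uses the purely pointwise inequality
\[
\sum_{j,k,l}U_{jkl}(z)\,|\dot Z_{ij}Z_{ik}Z_{il}|
\leq \Big(\sum_{j,k,l}U_{jkl}(z)\Big)\max_{j,k,l}|\dot Z_{ij}Z_{ik}Z_{il}|
\lesssim (G_3+G_2\beta+G_1\beta^2)\max_{j,k,l}|\dot Z_{ij}Z_{ik}Z_{il}|,
\]
and then takes expectations; after this your $S_i^3$ bound and $\omega(t)$ integration go through verbatim. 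Note this yields $\barEp[\max_{j,k,l}|\dot Z_{ij}Z_{ik}Z_{il}|]$ (max inside the expectation), not $\sup_{j,k,l}\barEp[|\dot Z_{ij}Z_{ik}Z_{il}|]$, but your pointwise domination by $n^{-3/2}\omega(t)S_i^3$ handles the former just as well.
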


The optimal value
of the last bound is given by taking the minimum over $\beta$. We postpone choices of $\beta$ to the proof of the subsequent corollary, leaving ourselves more flexibility in optimizing bounds in the corollary.

\begin{remark} The bound above per se seems new, though it is merely a simple extension of results in \cite{Chatterjee2005a},  who obtained the bound for the case with $X$ having a special structure like in our example (E.4), related to spin glasses, using classical Lindeberg's method.  We give a  proof using a variant of Slepian-Stein method, since this is the tool we end up using to prove our main results, as the Lindeberg's method, in its pure form, did not yield the same sharp results.  Our proof is related but rather different in details from the more abstract/general arguments based on Stein triplets given in \cite{Rollin2011} (Lemma 2.1), but given for the special case of data $(x_i)_{i=1}^n$ with coordinates $x_i$'s that $\Bbb{R}$-valued, in contrast to the $\Bbb{R}^p$-valued case treated here.   \cite{Rollin2011}  re-analyzed \cite{Chatterjee2005a}'s setup under local dependence and gave a number of other interesting applications. \qed \end{remark}

The next result states a bound on the Kolmogorov distance between distributions of $T_{0}$ and $Z_{0}$. The result follows from Lemma \ref{thm: warmup comparison} and the anti-concentration inequality for maxima of Gaussian random variables stated in  Lemma \ref{lem: anticoncentration}.  Note that this result was not included in either \cite{Chatterjee2005a} or  \cite{Rollin2011} for the cases that they have analyzed.


\begin{corollary}[\textbf{A Simple Gaussian Approximation}]\label{cor: Gaussian to nonGaussian KS 1}
Suppose that there are some constants $c_{1} > 0$ and  $C_{1} > 0$ such that $c_{1}  \leq \barEp[x_{ij}^2] \leq C_{1}$ for all $1\leq j \leq p$. Then there exists a constant $C>0$ depending only on $c_{1}$ and $C_{1}$ such that
\begin{align*}
\rho &:= \sup_{t\in\RR}\left|\Pr ( T_{0} \leq t ) - \Pr (Z_{0} \leq t) \right|   \leq C (n^{-1} (\log (pn))^7)^{1/8} (\barEp [S^3_i])^{1/4}.
 \end{align*}
\end{corollary}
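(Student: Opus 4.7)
The plan is to combine the comparison in Lemma \ref{thm: warmup comparison} with the Gaussian anti-concentration bound of Lemma \ref{lem: anticoncentration} through a kernel-smoothing of the indicator, with the key twist that both the smooth-max gap $\beta^{-1}\log p$ and the indicator-smoothing width $1/\psi$ are absorbed into anti-concentration rather than into the comparison itself.

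First I would fix a smooth non-increasing cutoff $\phi : \RR \to [0,1]$ with $\phi \equiv 1$ on $(-\infty, 0]$, $\phi \equiv 0$ on $[1, \infty)$, and $\| \phi^{(k)} \|_{\infty} \leq C$ for $k = 0,1,2,3$, and set $g_s(z) := \phi(\psi(z - s))$ for parameters $\psi, \beta > 0$ to be chosen. Then $g_s \in C_b^3(\RR)$ with $G_k \lesssim \psi^k$ and $\mathbf{1}\{z \leq s\} \leq g_s(z) \leq \mathbf{1}\{z \leq s + 1/\psi\}$. Using the sandwich $T_0 \leq F_\beta(X) \leq T_0 + \beta^{-1}\log p$ and the analogous bound for $Y$, one obtains $\Pr(T_0 \leq t) \leq \Ep[g_{t+\beta^{-1}\log p}(F_\beta(X))]$. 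Applying Lemma \ref{thm: warmup comparison} to this smooth functional of $F_\beta$ (using its \emph{first} bound, so no $\beta^{-1}G_1\log p$ term is incurred), followed by $\Ep[g_s(F_\beta(Y))] \leq \Pr(F_\beta(Y) \leq s + 1/\psi) \leq \Pr(Z_0 \leq s + 1/\psi)$, yields
\begin{equation*}
\Pr(T_0 \leq t) - \Pr(Z_0 \leq t) \lesssim n^{-1/2}(\psi^3 + \psi^2\beta + \psi\beta^2)\barEp[S_i^3] + \Pr(t < Z_0 \leq t + \beta^{-1}\log p + 1/\psi),
\end{equation*}
with a symmetric reverse bound obtained via $g_{t-1/\psi}$.

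The condition $c_1 \leq \barEp[x_{ij}^2] \leq C_1$ ensures $\sigma_j \in [\sqrt{c_1},\sqrt{C_1}]$ and $a_p \lesssim \sqrt{\log p}$, so Lemma \ref{lem: anticoncentration} bounds the last probability by $\lesssim (\beta^{-1}\log p + 1/\psi)\sqrt{\log(pn)}$ (the logarithm $\log(\underline{\sigma}/\varsigma)$ appearing there is $O(\log n)$ at the optimum, since $1/\psi$ will be at worst polynomial in $n$). Restricting attention to $\beta \geq \psi$ so that $\psi\beta^2$ dominates the polynomial in $(\beta,\psi)$, this gives
\begin{equation*}
\rho \lesssim n^{-1/2}\psi\beta^2\barEp[S_i^3] + (\beta^{-1}\log p + 1/\psi)\sqrt{\log(pn)}.
\end{equation*}
I would then optimize by fixing $\psi$ and balancing $n^{-1/2}\psi\beta^2 \barEp[S_i^3] \asymp \beta^{-1}\log p\sqrt{\log(pn)}$ to select $\beta$, then balancing the result against $\psi^{-1}\sqrt{\log(pn)}$ to select $\psi$. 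Routine algebra shows that the minimum is of order $\bigl(n^{-1/2} \barEp[S_i^3] \cdot (\log p)^2 (\log(pn))^{3/2}\bigr)^{1/4}$, which is bounded by $n^{-1/8}\barEp[S_i^3]^{1/4}(\log(pn))^{7/8}$, the claim.

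The main obstacle is the need to keep the two sources of smoothing separate. If one applied Lemma \ref{thm: warmup comparison} to $g(T_0)$ via its second bound, the extra term $\beta^{-1}G_1 \log p = \beta^{-1}\psi\log p$ would exceed the corresponding anti-concentration cost $\beta^{-1}\log p\sqrt{\log(pn)}$ exactly when $\psi \gtrsim \sqrt{\log(pn)}$ (which is the case at the optimum), and the rate would degrade from $n^{-1/8}$ to $n^{-1/12}$. Performing the comparison at the $F_\beta$-level and paying the shift $\beta^{-1}\log p$ only through anti-concentration is what buys the correct $n^{-1/8}$ rate.
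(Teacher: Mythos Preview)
Your proof is correct and follows essentially the same route as the paper's own argument. The paper likewise fixes a smooth cutoff $g_0$, sets $g(s)=g_0(\psi(s-t-e_\beta))$ with $e_\beta=\beta^{-1}\log p$, applies the first (i.e., $F_\beta$-level) bound of Lemma \ref{thm: warmup comparison}, and then uses Lemma \ref{lem: anticoncentration} to control the resulting shift $e_\beta+\psi^{-1}$ in the $Z_0$ distribution function; the paper's optimization takes the slightly different route of first forcing $\beta=\psi\log p$ (so $e_\beta=\psi^{-1}$) and then choosing $\psi=(\log p)^{-3/8}(n^{-1/2}\barEp[S_i^3])^{-1/4}$, but this lands on the same bound as your two-stage balancing. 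Your closing remark about why the second bound of the lemma (the one for $g(T_0)$) would lose a factor is a useful observation that the paper leaves implicit.
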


Theorem \ref{thm: warmup comparison} and Corollary \ref{cor: Gaussian to nonGaussian KS 1} imply that the error of approximating the maximum coordinate in the sum of independent random vectors by its Gaussian analogue
depends on $p$ (possibly) only through $\log p$. This is the main qualitative feature of all the results in this paper.
Both Lemma \ref{thm: warmup comparison} and Corollary \ref{cor: Gaussian to nonGaussian KS 1} and all the results in this paper do not limit the dependence among the coordinates in $x_{i}$.

While Lemma \ref{thm: warmup comparison} and Corollary \ref{cor: Gaussian to nonGaussian KS 1} convey an important qualitative aspect of
the problem and admit easy-to-grasp proofs, an important disadvantage of these results is that the bounds depend on $\bar\Ep[S_{i}^3]$.
When $\bar\Ep[S_{i}^3]$ increases with $n$,  for example when $|x_{ij}|\leq B_n$ for all $i$ and $j$ and $B_n$ grows with $n$,  the simple bound
above may be too poor, and can be improved considerably using several inputs.
We derive in Theorem \ref{theorem:comparison non-Gaussian} in the main text a bound that can be much better in the latter scenario.  The improvement there comes at a cost of  more involved statements and proofs.

\begin{proof}[Proof of Lemma \ref{thm: warmup comparison}]
Without loss of generality, we are assuming that sequences $(x_{i})_{i=1}^n$ and $(y_{i})_{i=1}^n$ are independent. For $t\in[0,1]$, we consider the Slepian interpolation between $Y$ and $X$:
\begin{equation*}
Z(t):=\sqrt{t}X+\sqrt{1-t}Y=\sum_{i=1}^{n}Z_{i}(t)\text{, }Z_{i}(t):=\frac{1}{\sqrt{n}}(\sqrt{t}x_{i}+\sqrt{1-t}y_{i}).
\end{equation*}
We shall also employ Stein's leave-one-out expansions:
\begin{equation*}
Z^{(i)}(t):=Z(t)-Z_{i}(t).
\end{equation*}
Let $\Psi(t)=\Ep[m(Z(t))]$ for $m:=g\circ F_\beta$. Then by Taylor's theorem,
\begin{align*}
\Ep[m(X)-m(Y)] &= \Psi (1) - \Psi (0) =\int_0^1 \Psi'(t) dt \\
&=\frac{1}{2} \sum_{j=1}^{p}\sum_{i=1}^{n}\int_0^1\Ep[\partial_{j}m(Z(t)) \dot Z_{ij}(t)]dt=\frac{1}{2} (I + II + III),
\end{align*}
where
{\small \begin{align*}
\dot Z_{ij}(t) &= \frac{d}{dt} Z_{ij}(t) = \frac{1}{\sqrt{n}} \( \frac{1}{\sqrt{t}}  x_{ij} - \frac{1}{\sqrt{1-t}}  y_{ij} \),  \ \text{and} \\
I &=  \sum_{j=1}^{p} \sum_{i=1}^{n} \int_0^1 \Ep [  \partial_{j} m(Z^{(i)}(t) )  \dot Z_{ij}(t) ] dt, \\
II &= \sum_{j,k=1}^{p} \sum_{i=1}^{n} \int_0^1 \Ep [ \partial_{j} \partial_k  m(Z^{(i)}(t) )   \dot Z_{ij}(t) Z_{ik}(t) ] dt, \\
III &= \sum_{j,k,l=1}^{p} \sum_{i=1}^{n} \int_0^1 \int_0^1 (1-\tau) \Ep [ \partial_{j} \partial_k \partial_l m( Z^{(i)}(t) + \tau Z_{i}(t)) \dot Z_{ij}(t) Z_{ik}(t) Z_{il}(t) ] d \tau dt.
\end{align*}}
\! Note that random vector $Z^{(i)}(t)$ is independent of $(\dot Z_{ij}(t), Z_{ij}(t))$, and $\Ep [\dot Z_{ij}(t)] =0$. Hence we have $I=0$; moreover, since $\Ep[\dot Z_{ij}(t)Z_{ik}(t)]=n^{-1}\Ep[x_{ij}x_{ik}-y_{ij}y_{ik}]=0$ by construction of
$(y_i)_{i=1}^n$, we also have $II=0$.
Consider the third term $III$. We have that
\begin{align*}
| III | &\lesssim_{(1)} (G_3+G_2\beta+G_1\beta^2)  n \int \barEp \left [ \max_{1 \leq j,k,l \leq p} |  \dot Z_{ij}(t) Z_{ik}(t) Z_{il}(t) |\right ] dt, \\
&\lesssim_{(2)} n^{-1/2} (G_3+G_2\beta+G_1\beta^2)  \barEp\left[\max_{1\leq j \leq p }  \(| x_{ij}| + | y_{ij}|\)^3\right],
\end{align*}
where (1) follows from $| \partial_{j} \partial_k \partial_l m( Z^{(i)}(t) + \tau Z_{i}(t))| \leq U_{jkl}( Z^{(i)}(t) + \tau Z_{i}(t))  \lesssim
(G_{3}+ G_{2}\beta + G_{1}\beta^{2})$ holding by Lemma \ref{lemma: bounds on derivatives of m}, and (2) is shown below.
The first claim of the theorem now follows. The second claim follows directly from property (\ref{eq: smooth max property}) in the main text of the smooth max function.

It remains to show  (2). Define $\omega(t)=1/(\sqrt{t} \wedge \sqrt{1-t})$ and note,
{\small \begin{align*}
&\int_0^1   n \barEp \left [\max_{1\leq j,k,l \leq p }  |\dot Z_{ij}(t) Z_{ik}(t) Z_{il}(t)|\right ]dt   \\
&= \int_0^1 \omega(t) n \barEp \left [  \max_{1\leq j,k,l \leq p }  |\dot Z_{ij}(t)/\omega(t))  Z_{ik}(t) Z_{il}(t)| \right ]dt \\
&\leq n \int_0^1 \omega(t) \(\barEp[  \max_{1\leq j \leq p } | \dot Z_{ij}(t)/\omega(t)|^3] \barEp[  \max_{1\leq j \leq p }| Z_{ij}(t) |^3] \barEp[ \max_{1\leq j \leq p } | Z_{ij}(t) |^3] \) ^{1/3} dt \\
&\leq n^{-1/2} \left \{ \int_0^1 \omega(t) dt \right \} \barEp\left [ \max_{1\leq j \leq p }  \(| x_{ij}| + | y_{ij}|\)^3\right ] 
\end{align*}}
where the first inequality follows from H\"{o}lder's inequality, and the second from the fact that
$|\dot Z_{ij}(t)/\omega(t)| \leq (|  x_{ij}| + |  y_{ij}|)/\sqrt{n} $,
$|Z_{ij}(t)| \leq (|  x_{ij}| + |  y_{ij}|)/\sqrt{n}$.  Finally we note that $\int_0^1 \omega(t) dt \lesssim 1$, so inequality (2) follows.  This completes the overall proof.
\end{proof}

\begin{proof}[Proof of Corollary \ref{cor: Gaussian to nonGaussian KS 1}]
In this proof, let $C>0$ denote a generic constant depending only on $c_{1}$ and $C_{1}$, and its value may change from place to place.
For $\beta > 0$, define $e_{\beta} := \beta^{-1} \log p$. Recall that $S_i:= \max_{1\leq j\leq p}(|x_{ij}|+|y_{ij}|)$.
Consider and fix a $C_b^{3}(\mathbb{R})$-function $g_0:\RR \to [0,1]$ such that $g_0(s)=1$ for $s\leq 0$ and $g_0(s)=0$ for $s\geq 1$.
Fix any $t \in \RR$, and define $g(s)=g_0(\psi(s-t-e_\beta))$.  For this function $g$, $G_0=1, \ G_1\lesssim \psi, \ G_2 \lesssim \psi^2$ and $G_3 \lesssim \psi^3$.

Observe now that
\begin{align*}
\Pr ( T_{0}  \leq t )
&\leq \Pr ( F_{\beta} (X) \leq t + e_{\beta})
\leq \Ep[g(F_\beta(X))] \\
&\leq \Ep[g(F_\beta(Y))] + C(\psi^3+\beta\psi^2+\beta^2\psi)(n^{-1/2}\barEp[S_i^3]) \\
&\leq \Pr (F_\beta(Y) \leq t+e_\beta+\psi^{-1} ) + C(\psi^3+\beta\psi^2+\beta^2\psi)(n^{-1/2}\barEp[S_i^3]) \\
&\leq \Pr (Z_{0} \leq t+e_\beta+\psi^{-1} ) + C(\psi^3+\beta\psi^2+\beta^2\psi)(n^{-1/2}\barEp[S_i^3]).
\end{align*}
where the first inequality follows from (\ref{eq: smooth max property}), the second from construction of $g$,  the third from Theorem \ref{thm: warmup comparison}, and the fourth from construction of $g$, and the last from (\ref{eq: smooth max property}).
The remaining step is to compare $\Pr (Z_{0} \leq t+e_\beta+\psi^{-1} )$ with $\Pr (Z_{0} \leq t )$ and this is where Lemma \ref{lem: anticoncentration} plays its role.
By Lemma  \ref{lem: anticoncentration},
\begin{align*}
\Pr (Z_{0} \leq t+e_\beta+\psi^{-1} ) - \Pr (Z_{0} \leq t)   \leq C(e_{\beta}+\psi^{-1})\sqrt{1 \vee \log (p \psi)}.
\end{align*}
by which we have
{\small \begin{equation*}
\Pr ( T_{0}  \leq t ) - \Pr (Z_{0} \leq t)  \leq C [ (\psi^3+\beta\psi^2+\beta^2\psi)(n^{-1/2}\barEp[S_i^3]) + (e_{\beta}+\psi^{-1})\sqrt{1 \vee \log (p \psi)}  ].
\end{equation*}}
\!We have to minimize the right side with respect to $\beta$ and $\psi$. It is reasonable to choose $\beta$ in such a way that $e_{\beta}$ and $\psi^{-1}$ are balanced, i.e., $\beta = \psi \log p$. With this $\beta$, the bracket on the right side is bounded from above by
\begin{equation*}
C[\psi^{3} (\log p)^{2} (n^{-1/2}\barEp[S_i^3])  + \psi^{-1} \sqrt{1 \vee \log (p \psi)}],
\end{equation*}
which is approximately minimized by $\psi = (\log p)^{-3/8} (n^{-1/2}\barEp[S_i^3])^{-1/4}$. With this $\psi$, $\psi \leq  (n^{-1/2}\barEp[S_i^3])^{-1/4} \leq C n^{1/8}$ (recall that $p \geq 3$), and hence $\log (p \psi) \leq C\log (pn)$. Therefore,
\begin{equation*}
\Pr ( T_{0}  \leq t ) - \Pr (Z_{0} \leq t) \leq C (n^{-1/2}\barEp[S_i^3])^{1/4} (\log (pn))^{7/8}.
\end{equation*}
This gives one half of the claim. The other half follows similarly.
\end{proof}

\section{Gaussian Approximation and Multiplier Bootsrap Results,  allowing for Low Variances}\label{sec: low variance}
The purpose of this section is to provide results without an assumption that $\barEp[x_{ij}^2]>c$ for all $1\leq j\leq p$ and some constant $c>0$.
\subsection{Gaussian Approximation Results}
In this subsection, we use the same setup and notation as those in Section \ref{sec: Gaus vs NonGaus}. In particular, $x_1,\dots,x_n$ is a sequence of independent centered random vectors in $\RR^p$, $y_1,\dots,y_n$ is a sequence of independent centered Gaussian random vectors such that $\Ep[y_iy_i^\prime]=\Ep[x_ix_i^\prime]$, $T_0=\max_{1\leq j\leq p}X_j$ where $X=\sum_{i=1}^nx_i/\sqrt{n}$, $Z_0=\max_{1\leq j\leq p}Y_j$ where $Y=\sum_{i=1}^ny_i/\sqrt{n}$, and $$\rho=\sup_{t\in\RR}|\Pr(T_0\leq t)-\Pr(Z_0\leq t)|.$$
 In addition, denote
$$
M_{k,2}:=\max_{1\leq j\leq p}\frac{\barEp[|x_{ij}|^k]^{1/k}}{\barEp[x_{ij}^2]^{1/2}}\text{ and }\ell_n:=\log(pn/\gamma).
$$
We will impose the following condition:
\begin{itemize}
\item[(SM)] There exists $J\subset\{1,\dots,p\}$ such that $|J|\geq \nu p$ and for all $(j,k)\in J\times J$ with $j\neq k$, $\barEp[x_{ij}^2]\geq c_1$ and $|\barEp[x_{ij}x_{ik}]|\leq (1-\nu^\prime)(\barEp[x_{ij}^2]\barEp[x_{ik}^2])^{1/2}$ for some strictly positive constants $\nu,\nu^\prime$, and $c_1$ independent of $n$.
\end{itemize}

\begin{theorem}\label{thm: main_result1_extended}
Suppose that condition (SM) holds. In addition, suppose that there is some constant $C_1>0$  such that $\barEp[x_{ij}^2] \leq C_{1}$ for $1\leq j \leq p$. Then for every $\gamma \in (0,1)$,
\begin{equation*}
\rho \leq
C \left \{  n^{-1/8} (M_{3}^{3/4} \vee M_{4,2}^{1/2} ) \ell_n^{7/8} + n^{-1/2} \ell_n^{3/2} u(\gamma)+ p^{-c}\ell_n^{1/2} + \gamma \right \},
\end{equation*}
 where $c,C>0$ are constants that depend only on $\nu,\nu^\prime,c_1$ and $C_1$.
\end{theorem}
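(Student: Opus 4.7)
The plan is to extend the proof of Theorem 2.2 by isolating the two places where the uniform lower bound $\barEp[x_{ij}^2] \geq c_1$ was used, and replacing each with an argument that exploits condition (SM) instead. The third-derivative Lindeberg bound in Theorem 2.1 itself does not rely on a uniform lower bound on variances, so the engine of the proof is unchanged; only the calibration of the truncation function $\bar\varphi$ and the final anti-concentration step need to be redone.

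The first change is in Step 1 of the proof of Theorem 2.2, where the bound $\bar\varphi(u) \leq CM_4^2/u$ is derived by converting a variance-normalized truncation into an absolute one via $\barEp[x_{ij}^2] \geq c_1$. Without a uniform lower bound, a direct Chebyshev-type computation gives
\[
\barEp[x_{ij}^2 \, 1\{|x_{ij}| > u(\barEp[x_{ij}^2])^{1/2}\}] \leq \frac{\barEp[x_{ij}^4]}{u^2 \, \barEp[x_{ij}^2]} \leq \frac{M_{4,2}^4 \, \barEp[x_{ij}^2]}{u^2},
\]
yielding $\varphi_x(u) \leq M_{4,2}^2/u$, with the analogous bound for $\varphi_y(u)$ following from $\Ep[y_{ij}^4] = 3(\Ep[x_{ij}^2])^2$. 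This explains the appearance of $M_{4,2}$ in place of $M_4$, while $M_3$ is untouched because it enters only through the third-derivative term in Theorem 2.1 and does not interact with the truncation level.

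The second and more delicate change is the replacement of Lemma 2.1, whose constant depends on $\underline\sigma$ and hence is not directly usable when some coordinates of $Y$ have vanishing variance. The main technical step will be a modified anti-concentration inequality under (SM):
\[
\sup_{z \in \RR} \Pr(|Z_0 - z| \leq \varsigma) \leq C\varsigma \sqrt{\log p} + C p^{-c} \sqrt{\log p},
\]
which produces exactly the additional $p^{-c}\ell_n^{1/2}$ term in the statement after the usual kernel smoothing argument. My plan is to combine two ingredients: first, Sudakov's minoration on the restricted Gaussian subvector $\{Y_j\}_{j \in J}$ gives $\Ep[\max_{j \in J} Y_j] \geq c'\sqrt{\log p}$, since $|J| \geq \nu p$, coordinate variances lie in $[c_1, C_1]$, and pairwise correlations of normalized entries are bounded by $1 - \nu'$; second, a perturbation-and-compare argument regularizes the covariance of $Y$ by adding a small multiple of the identity, applies Lemma 2.1 to the regularized vector (whose minimum variance is now bounded below), and controls the regularization error via Lemma 3.1, optimizing over the regularization parameter in terms of $p$.

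With these two ingredients in place, the remainder of the proof tracks Step 2 of the proof of Theorem 2.2 almost verbatim: Theorem 2.1 supplies the bound on $|\Ep[g(F_\beta(X)) - g(F_\beta(Y))]|$ (with the new $\bar\varphi$ substituted), the parameters $\beta$ and $\psi$ are chosen as before, and the Kolmogorov conversion uses the modified anti-concentration in place of Lemma 2.1. The hardest part will be establishing the modified anti-concentration at the precise quantitative level above, so that the exponent $c$ depends only on $\nu, \nu', c_1, C_1$; this requires careful bookkeeping in the perturbation step and in the Sudakov lower bound, and is the main obstacle to executing the plan.
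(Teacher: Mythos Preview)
Your overall architecture matches the paper exactly: the truncation bound $\bar\varphi(u)=CM_{4,2}^2/u$ via the Chebyshev computation you wrote, the Sudakov-type lower bound $\Ep[Z_0]\geq c\sqrt{\log p}$ from (SM), and then rerunning the optimization of Theorem~\ref{cor: Gaussian to nonGaussian KS 2} with the new $\bar\varphi$ and the new anti-concentration input---all of this is precisely what the paper does.

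The gap is in the anti-concentration step itself. Your perturbation-and-compare route does not close: the constants in both Lemma~\ref{lem: anticoncentration} and Lemma~\ref{lemma: distances Gaussian to Gaussian} depend on the minimum variance of the reference Gaussian, and there is no choice of the regularization parameter $\epsilon$ that balances the two losses. If you add $\epsilon G$ and keep $\epsilon$ bounded away from zero so that Lemma~\ref{lem: anticoncentration} applies to the regularized vector with a fixed constant, then the Gaussian-to-Gaussian comparison error from Lemma~\ref{lemma: distances Gaussian to Gaussian} is of order $\epsilon^{2/3}(\log p)^{2/3}$, not $p^{-c}$. If instead you send $\epsilon\to 0$ (say $\epsilon=p^{-a}$), then for the regularized vector the quantity $a_p=\Ep[\max_j \tilde Y_j/\tilde\sigma_j]$ is of order $\sqrt{\log p}/\epsilon$ and the anti-concentration constant in Lemma~\ref{lem: anticoncentration} blows up polynomially in $p$; the same happens to the constant in Lemma~\ref{lemma: distances Gaussian to Gaussian}, whose proof feeds on that very anti-concentration bound. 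A direct coupling $|\max_j \tilde Y_j-\max_j Y_j|\leq \epsilon\max_j|G_j|$ in place of Lemma~\ref{lemma: distances Gaussian to Gaussian} does not save this either, since the resulting term $(\varsigma+\epsilon\sqrt{\log p})\sqrt{\log p}/\epsilon^2$ still contains $(\log p)/\epsilon$, which cannot be made $p^{-c}$ while keeping the $\varsigma$-coefficient bounded. So ``careful bookkeeping'' is not the obstacle; the approach is structurally blocked.

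The paper proves the extended anti-concentration (its Lemma~\ref{lem: anticoncentration_extended}) directly, without regularization. Having $b_p:=\Ep[Z_0]\geq c_1\sqrt{\log p}$, it fixes an \emph{adaptive} threshold $\underline\sigma=c_2 b_p/\sqrt{\log p}$, which is automatically bounded below by a constant depending only on $c_1,c_2$. It then does a case analysis on $z$: for $z$ far from $b_p$, Borell's inequality alone shows $\max_j Y_j$ is unlikely to be within $\varsigma$ of $z$. For $z$ near $b_p$, split the index set into $\tilde J=\{j:\sigma_j\leq\underline\sigma\}$ and its complement. Over $\tilde J$ the expected maximum is at most a small fraction of $b_p$, so Borell's inequality again shows that this piece cannot reach $z$; over the complement, all variances are at least the fixed constant $\underline\sigma$, and the original anti-concentration bound applies with constants depending only on $\underline\sigma,\bar\sigma$. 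The $p^{-c}$ in the statement appears because the whole argument only needs to treat $\varsigma\geq \bar\sigma p^{-c}$, with $c$ chosen so that $4\bar\sigma\sqrt{c}=c_1$. This splitting argument, not perturbation, is the missing piece in your plan.
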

Theorem \ref{thm: main_result1_extended} has the following applications.
Let $C_{1} > 0$ be some constant that is independent of $n$, and  let $B_{n} \geq 1$ be a sequence of constants.
We allow for the case where $B_{n} \to \infty$ as $n \to \infty$.
We will assume that one of the following  conditions is satisfied   {\em uniformly in} $1 \leq i \leq n$ and $1 \leq j \leq p$:
\begin{itemize}
\item[(E.5)] $\barEp[x^2_{ij}] \leq C_{1}$ and $\displaystyle \max_{k =1,2}M_{k+2,2}^{k+2}/B_n^k + \Ep[\exp(|x_{ij}|/B_n)] \leq 2$;
\item[(E.6)] $\barEp[x^2_{ij}] \leq C_{1}$ and  $\displaystyle  \max_{k =1,2}M_{k+2,2}^{k+2}/B_n^k +\Ep[ (\max_{1\leq j \leq p} |x_{ij}| / B_{n})^4] \leq 2$.
\end{itemize}

\begin{corollary}\label{cor: central limit theorem extended}
Suppose that there exist constants $c_2>0$ and $C_2>0$ such that one of the following conditions is satisfied:
(i) (E.5) holds and $B_n^2 (\log (pn))^7/n\leq C_2 n^{-c_2}$ or
(ii) (E.6) holds and $B_n^4 (\log (pn))^7/n\leq C_2 n^{-c_2}$.
In addition, suppose that condition (SM) holds and $p\geq C_3n^{c_3}$ for some constants $c_3>0$ and $C_3>0$.
Then there exist constants $c > 0$ and $C>0$ depending only on $\nu,\nu^\prime,c_{1}, C_1, c_{2},C_2,c_3$, and $C_{3}$ such that $$\rho  \leq Cn^{-c}.$$
\end{corollary}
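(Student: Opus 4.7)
The plan is to apply Theorem \ref{thm: main_result1_extended} and show that each of its four error terms is at most $Cn^{-c}$ under the stated hypotheses. Since (E.3)--(E.4) are special cases of (E.5)--(E.6), I can restrict attention to the latter. The argument closely parallels the proof of Corollary \ref{cor: central limit theorem}, with two new wrinkles: the $p^{-c}\ell_n^{1/2}$ term now appearing in the bound, and the absence of a universal lower bound on $\barEp[x_{ij}^2]$ across all coordinates.

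First, I would bound the moment quantities in terms of $B_n$. Since $\barEp[x_{ij}^2]\leq C_1$, one has $M_3 \leq M_{3,2}\sqrt{C_1}$. The inequality $\max_{k=1,2} M_{k+2,2}^{k+2}/B_n^k \leq 2$ then yields $M_{3,2}\leq (2B_n)^{1/3}$ and $M_{4,2}\leq (2B_n^2)^{1/4}$, hence $M_3^{3/4}\vee M_{4,2}^{1/2}\leq CB_n^{1/4}$. Combined with the side condition $B_n^2\ell_n^7/n\leq C_2n^{-c_2}$ (resp.\ $B_n^4\ell_n^7/n\leq C_2n^{-c_2}$), this makes the first summand $n^{-1/8}(M_3^{3/4}\vee M_{4,2}^{1/2})\ell_n^{7/8}$ decay polynomially in $n$.

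Second, I would bound $u(\gamma)$ by adapting Lemma \ref{lem: bound on u} to the present setting. Under (E.5), Markov's inequality together with $\Ep[\exp(|x_{ij}|/B_n)]\leq 2$ gives $\Pr(\max_{i,j}|x_{ij}|>t)\leq 2np\,e^{-t/B_n}$, while under (E.6), the bound $\Ep[(\max_j|x_{ij}|/B_n)^4]\leq 2$ gives $\Pr(\max_{i,j}|x_{ij}|>t)\leq 2nB_n^4/t^4$. The delicate point is that $u(\gamma)$ involves the ratios $|x_{ij}|/\barEp[x_{ij}^2]^{1/2}$, so coordinates with tiny variance must be controlled. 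For $j\in J$, condition (SM) supplies $\barEp[x_{ij}^2]\geq c_1$ and the calculation of Lemma \ref{lem: bound on u} runs verbatim, producing $u(\gamma)\leq C\max\{DH^{-1}(n/\gamma),B\sqrt{\log(pn/\gamma)}\}$ with $B,D,H$ as in that lemma. Coordinates outside $J$ are precisely the ones whose Gaussian comparison is handled by the extra $p^{-c}\ell_n^{1/2}$ error term built into Theorem \ref{thm: main_result1_extended}, which is where the (SM)/large-$p$ regime enters. Together with the side condition on $B_n$, this yields $n^{-1/2}\ell_n^{3/2}u(\gamma)\leq Cn^{-c}$.

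Finally, taking $\gamma=n^{-c_4}$ for small $c_4>0$ makes the $\gamma$ term polynomially small, and the hypothesis $p\geq C_3n^{c_3}$ makes $p^{-c}\ell_n^{1/2}\leq Cn^{-cc_3/2}$ polynomially small. Collecting the four bounds produces $\rho\leq Cn^{-c}$. The main obstacle in this plan is the second step: without a uniform lower bound on $\barEp[x_{ij}^2]$, the ratio $|x_{ij}|/\sigma_{ij}$ can be unbounded over the coordinates outside $J$, and the usual union-bound argument underlying Lemma \ref{lem: bound on u} breaks down for those coordinates. The resolution relies on the dual role played by condition (SM) in Theorem \ref{thm: main_result1_extended}---it both guarantees a comparable set of ``well-behaved'' coordinates where the Slepian--Stein interpolation can proceed, and licenses absorbing the low-variance coordinates into the $p^{-c}\ell_n^{1/2}$ term that is active precisely in the high-dimensional regime $p\geq C_3n^{c_3}$.
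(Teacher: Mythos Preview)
Your overall strategy---apply Theorem \ref{thm: main_result1_extended}, bound $M_3^{3/4}\vee M_{4,2}^{1/2}$ via $B_n$, bound $u(\gamma)$ through a tail argument, pick $\gamma$ polynomially small, and kill the $p^{-c}\ell_n^{1/2}$ summand using $p\geq C_3 n^{c_3}$---is exactly the paper's. The paper's own proof is correspondingly terse: it records $M_{4,2}^2\leq CB_n$ and
\[
M_3^3=\max_{1\leq j\leq p}\barEp[|x_{ij}|^3]\leq M_{3,2}^3\max_{1\leq j\leq p}\barEp[x_{ij}^2]^{3/2}\leq CM_{3,2}^3\leq CB_n,
\]
then asserts that the result follows from Theorem \ref{thm: main_result1_extended} ``by the same argument as that leading to Corollary \ref{cor: central limit theorem} from Theorem \ref{cor: Gaussian to nonGaussian KS 2}.''

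There is, however, one genuine misstep in your write-up. The extra $p^{-c}\ell_n^{1/2}$ term in Theorem \ref{thm: main_result1_extended} does \emph{not} arise from ``absorbing the low-variance coordinates'' in the truncation analysis; inspecting the proof of that theorem, it comes solely from replacing Lemma \ref{lem: anticoncentration} by the extended anti-concentration bound Lemma \ref{lem: anticoncentration_extended} in the Kolmogorov-smoothing step. The quantity $u(\gamma)$ in the statement of Theorem \ref{thm: main_result1_extended} is still the full one, defined over \emph{all} $1\leq j\leq p$, and must be bounded directly. Splitting into $J$ and $J^c$ as you propose therefore does not help: a single coordinate $j\notin J$ with tiny $\barEp[x_{ij}^2]$ can inflate $u(\gamma)$ regardless of what happens on $J$, and the $p^{-c}\ell_n^{1/2}$ term is simply a separate summand, not a receptacle for that contribution. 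Your diagnosis of the obstacle is sharp; your proposed mechanism for resolving it is not. To be fair, the paper is equally silent here---its ``by the same argument'' defers to the proof of Corollary \ref{cor: central limit theorem}, which in turn invokes Lemma \ref{lem: bound on u}, whose very first line uses $\barEp[x_{ij}^2]\geq c_1$. So neither your sketch nor the paper's one-line proof actually spells out how $u(\gamma)$ is controlled in the absence of a uniform lower variance bound. (A minor aside: your opening remark that ``(E.3)--(E.4) are special cases of (E.5)--(E.6)'' is out of place, since those conditions do not appear in this corollary.)
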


\subsection{Multiplier Bootstrap Results}
In this subsection, we use the same setup and notation as those in Section \ref{sec: multiplier bootstrap}. In particular, in addition to the notation used above, we assume that random variables $T$ and $W$ satisfy conditions (\ref{eq: statistic approximation}) and (\ref{eq: conditional quantiles}) in the main text, respectively, where $\zeta_1\geq 0$ and $\zeta_2\geq 0$ depend on $n$ and where $W_0$ appearing in condition (\ref{eq: conditional quantiles}) is defined in equation (\ref{average-multiplier}) in the main text. Recall that $\Delta=\max_{1\leq j\leq p}|\En[x_{ij}]-\barEp[x_{ij}]|$.
\begin{theorem}\label{thm: multiplier bootstrap}
Suppose that there is some constant $C_1>0$ such that $\bar{\sigma}:=\max_{1\leq j\leq p}\barEp[x_{ij}^2]\leq C_1$ for all $1\leq j\leq p$. In addition, suppose that condition (SM) holds. Moreover, suppose that conditions (\ref{eq: statistic approximation}) and (\ref{eq: conditional quantiles}) are satisfied.
Then for every $\vartheta>0$,
\begin{align*}
\rho_\ominus&:=\sup_{\alpha\in(0,1)}\Pr(\{T\leq c_{W}(\alpha)\}\ominus\{T_0\leq c_{Z_0}(\alpha)\})\\
&\leq  2(\rho +  \pi(\vartheta) +  \Pr(\Delta>\vartheta)) + C(\zeta_1\vee p^{-c})\sqrt{1 \vee \log(p/\zeta_1)} +5\zeta_2,
\end{align*}
where
$$
\pi(\vartheta):=C\vartheta^{1/3}(1\vee\log(p/\vartheta))^{2/3}+Cp^{-c}\sqrt{1\vee\log(p/\vartheta)}
$$
and $c,C>0$ depend only on $\nu,\nu^\prime,c_{1}$ and $C_{1}$. In addition,
$$
\sup_{\alpha\in(0,1)}\left|\Pr(T\leq c_W(\alpha))-\alpha\right|\leq \rho_{\ominus}+\rho.
$$
\end{theorem}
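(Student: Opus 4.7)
The plan is to mirror the proof of Theorem \ref{thm: multiplier bootrstrap II} step-by-step, replacing the two Gaussian-analytic tools---the Gaussian-to-Gaussian comparison Lemma \ref{lemma: distances Gaussian to Gaussian} and the anti-concentration Lemma \ref{lem: anticoncentration}---by versions valid under condition (SM) rather than the stronger hypothesis $\barEp[x_{ij}^2]\geq c_1$ for every $j$. For a tolerance $\vartheta > 0$, set $\kappa_1(\vartheta) := c_{Z_0}(\alpha - \zeta_2 - \pi(\vartheta))$ and $\kappa_2(\vartheta) := c_{Z_0}(\alpha + \zeta_2 + \pi(\vartheta))$ with the $\pi(\vartheta)$ of the theorem statement, and chain the following steps uniformly in $\alpha \in (0,1)$: (a) Lemma \ref{lem: quantile approximated to exact} applied with $(\zeta_1,\zeta_2)$ shifts from $c_W(\alpha)$ to $c_{W_0}(\alpha \pm \zeta_2) \pm \zeta_1$ off of a $\zeta_2$-event; (b) an extended version of Lemma \ref{lem: quantile conditional to unconditional}, built from the extended Gaussian-to-Gaussian comparison, upgrades $c_{W_0}$-quantiles to $c_{Z_0}$-quantiles at cost $\pi(\vartheta)$ on the event $\{\Delta \leq \vartheta\}$; (c) the definition of $\rho$ converts $\Pr$-statements about $T_0$ into ones about $Z_0$ at cost $2\rho$; and (d) the extended anti-concentration bounds $\Pr(\kappa_1(\vartheta) - 2\zeta_1 < Z_0 \leq \kappa_2(\vartheta) + 2\zeta_1)$ by $2\pi(\vartheta) + C(\zeta_1\vee p^{-c})\sqrt{1\vee\log(p/\zeta_1)} + 2\zeta_2$. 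Summing the error contributions yields the claimed bound on $\rho_\ominus$; the second inequality follows from $\rho_\ominus$ and the definition of $\rho$ by the standard two-inequality argument, exactly as in Theorem \ref{thm: multiplier bootrstrap II}.

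The technical heart and main obstacle is establishing the two extended Gaussian lemmas under (SM) alone. The key idea is to split the index set into the ``good'' subset $J$ of size $\geq \nu p$, where the hypotheses of Lemmas \ref{lem: anticoncentration} and \ref{lemma: distances Gaussian to Gaussian} hold (variances bounded below by $c_1$, pairwise correlations bounded away from $1$), and its complement $J^c$. Restricting each maximum to $J$ yields the standard bounds $C\Delta_0^{1/3}(1\vee\log(p/\Delta_0))^{2/3}$ for comparison and $C\varsigma\sqrt{\log p}$ for anti-concentration. The real work is in showing that replacing $\max_{j \in J} V_j$ by $\max_{1 \leq j \leq p} V_j$ changes every distribution function uniformly by at most $Cp^{-c}\sqrt{\log p}$. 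This combines two inputs: Borell's inequality applied to the Gaussian vector $(V_j)_{j \in J^c}$, which under the upper variance bound $C_1$ controls $\max_{j \in J^c} V_j$ up to a $C\sqrt{\log p}$ additive fluctuation off a $p^{-c}$ event; and the $J$-side anti-concentration, which controls the density of $\max_{j \in J} V_j$ on the relevant range, so that the event $\{\max_{j\in J^c} V_j > \max_{j\in J} V_j + \epsilon\}$ contributes at most a polynomial-in-$p^{-1}$ shift to the distribution function at any level. Propagating these $p^{-c}$ terms through the extended Lemmas \ref{lemma: distances Gaussian to Gaussian} and \ref{lem: anticoncentration} produces exactly the additive $Cp^{-c}\sqrt{1\vee\log(p/\vartheta)}$ inside $\pi(\vartheta)$ and the $C(\zeta_1\vee p^{-c})\sqrt{1\vee\log(p/\zeta_1)}$ in the final estimate, as announced.
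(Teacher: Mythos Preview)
Your high-level plan---mirror the proof of Theorem \ref{thm: multiplier bootrstrap II}, replacing Lemmas \ref{lem: anticoncentration} and \ref{lemma: distances Gaussian to Gaussian} by extended versions valid under (SM)---is exactly what the paper does, and steps (a)--(d) of your chain are correct. The issue is in how you propose to establish the extended anti-concentration and Gaussian-comparison lemmas themselves.

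Your claim that ``replacing $\max_{j\in J}V_j$ by $\max_{1\leq j\leq p}V_j$ changes every distribution function uniformly by at most $Cp^{-c}\sqrt{\log p}$'' is false. Condition (SM) says nothing about the variances in $J^c$: they may all equal $C_1$. Take $p$ even, $V_1,\dots,V_p$ i.i.d.\ $N(0,1)$, $J=\{1,\dots,p/2\}$. Then (SM) holds, yet $\Pr(\max_{j\in J^c}V_j>\max_{j\in J}V_j)=1/2$, so the two distribution functions differ by a quantity bounded away from zero. Borell's inequality on $(V_j)_{j\in J^c}$ only concentrates $\max_{j\in J^c}V_j$ around its mean, and that mean can be of the same order $\sqrt{\log p}$ as $\Ep[\max_{j\in J}V_j]$; it gives you no leverage for the event $\{\max_{j\in J^c}V_j>\max_{j\in J}V_j\}$.

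The paper's route is different. It uses (SM) only once, via Sudakov minoration (Theorem 2.3.16 in \cite{Dudley1999}) on the subset $J$, to conclude $b_p:=\Ep[\max_{1\leq j\leq p}Y_j]\geq c\sqrt{\log p}$. After that, $J$ never reappears. The extended anti-concentration lemma is proved under the abstract hypothesis $b_p\geq c_1\sqrt{\log p}$ by splitting the full index set according to whether $\sigma_j$ lies above or below a \emph{variance} threshold $\underline{\sigma}:=c_2 b_p/\sqrt{\log p}$, not according to membership in $J$. Low-variance indices have $\Ep[\max_{j:\sigma_j\leq\underline{\sigma}}Y_j]\leq b_p/4$ by choice of $c_2$, so for $z$ in the relevant window around $b_p$ their maximum is far below $z$ except on a $\varsigma/\underline{\sigma}$-event (Borell); high-variance indices satisfy $\sigma_j\geq\underline{\sigma}\geq c_1c_2$, so the original anti-concentration argument applies there. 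The extended Gaussian comparison then follows by rerunning the proof of Lemma \ref{lemma: distances Gaussian to Gaussian} with this new anti-concentration bound in place of the old one.
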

\begin{corollary}\label{cor: multiplier bootstrap}
Suppose that there exist constants $c_2,C_2>0$ such that
conditions (\ref{eq: statistic approximation}) and (\ref{eq: conditional quantiles}) hold with $\zeta_1 \sqrt{\log p} + \zeta_2 \leq C_{2} n^{-c_{2}}$.
Moreover, suppose that  one of the following conditions is satisfied:
(i) (E.5) holds and $B_n^2 (\log (pn))^7/n\leq C_2 n^{-c_2}$ or
(ii) (E.6) holds and $B_n^4 (\log (pn))^7/n\leq C_2 n^{-c_2}$.
Finally, suppose that condition (SM) holds and $p\geq C_3n^{c_3}$ for some constants $c_3>0$ and $C_3>0$.
Then there exist constants $c > 0$ and $C > 0$ depending only on $\nu,\nu^\prime,c_{1},C_1,c_{2},C_2,c_3$, and $C_{3}$ such that
\begin{equation*}
\rho_{\ominus}=\sup_{\alpha\in(0,1)}\Pr(\{T\leq c_{W}(\alpha)\}\ominus\{T_0\leq c_{Z_0}(\alpha)\}) \leq Cn^{-c}.
\end{equation*}
In addition, $\sup_{\alpha \in (0,1)} | \Pr(T\leq c_W(\alpha)) - \alpha |\leq \rho_{\ominus}+\rho\leq Cn^{-c}$.
\end{corollary}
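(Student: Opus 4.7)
\textbf{Proof proposal for Corollary \ref{cor: multiplier bootstrap}.}
The plan is to apply Theorem \ref{thm: multiplier bootstrap} directly and then control each of the five types of terms appearing on the right-hand side, choosing $\vartheta$ optimally. Throughout, $c > 0$ and $C > 0$ will denote generic constants depending only on $\nu,\nu',c_1,C_1,c_2,C_2,c_3,C_3$ whose values may change from line to line. Since (E.5) and (E.6) are special cases aligned with (E.1) and (E.2) after the variance-normalization change, it suffices to treat them directly.

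First I would handle the ``easy'' terms. Corollary \ref{cor: central limit theorem extended} gives $\rho \leq C n^{-c}$ under conditions (i) or (ii). The assumption $\zeta_1 \sqrt{\log p} + \zeta_2 \leq C_2 n^{-c_2}$ immediately yields $\zeta_2 \leq C n^{-c}$ and, since $p \geq 3$, $\zeta_1 \leq C n^{-c}$, so $\zeta_1 \sqrt{1 \vee \log(p/\zeta_1)} \leq C n^{-c}$. The assumption $p \geq C_3 n^{c_3}$ converts $p^{-c}$ into $C n^{-c c_3}$, and similarly $p^{-c}\sqrt{1 \vee \log(p/\zeta_1)} \leq C n^{-c}$ (up to adjusting constants).

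The substantive step is choosing $\vartheta$ so that $\pi(\vartheta) + \Pr(\Delta > \vartheta) \leq C n^{-c}$. I would first prove an analog of Lemma \ref{lem: technical2}: applying Lemma \ref{lem: symmetrization inequality 1} to the $p^2$ variables $x_{ij}x_{ik}$ gives
\begin{equation*}
\Ep[\Delta] \lesssim \sigma \sqrt{(\log p)/n} + \sqrt{\Ep[M^2]}\,(\log p)/n,
\end{equation*}
with $\sigma^2 = \max_{j,k}\barEp[(x_{ij}x_{ik})^2] \leq \max_j \barEp[x_{ij}^4]$ and $M = \max_{i,j} x_{ij}^2$. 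Under (E.5), $\barEp[x_{ij}^4] = M_{4,2}^4 \barEp[x_{ij}^2]^2 \leq C B_n^2$ and $\Ep[M^2] \leq C B_n^4 (\log(pn))^4$ via Lemma 2.2.2 in \cite{VW96}; under (E.6), $\barEp[x_{ij}^4] \leq C B_n^2$ and $\Ep[M^2] \leq C B_n^4$. In either case the growth condition on $B_n$ yields $\Ep[\Delta] (\log p)^2 \leq C n^{-c}$.

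With that bound in hand, set $\vartheta = \vartheta_n := (\Ep[\Delta])^{1/2}/\log p$. By Markov's inequality $\Pr(\Delta > \vartheta) \leq \Ep[\Delta]/\vartheta \leq C n^{-c}$, and substitution into the formula for $\pi(\vartheta)$ in Theorem \ref{thm: multiplier bootstrap} also yields $\pi(\vartheta) \leq C n^{-c}$ (the $p^{-c}\sqrt{\log(p/\vartheta)}$ contribution is absorbed using $p \geq C_3 n^{c_3}$, and the $\vartheta^{1/3}(\log(p/\vartheta))^{2/3}$ contribution by the above). Combining everything in Theorem \ref{thm: multiplier bootstrap} gives the first claimed bound on $\rho_\ominus$, and the second bound on $\sup_\alpha |\Pr(T \leq c_W(\alpha)) - \alpha|$ follows from the first together with $\rho \leq C n^{-c}$.

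The main obstacle will be the moment bound on $\Delta$ under (E.5)/(E.6), where variances are allowed to be small: one must verify that the fourth-moment control $M_{4,2}^4/B_n^2 \leq 2$ together with the assumed upper bound $\barEp[x_{ij}^2] \leq C_1$ still delivers $\barEp[x_{ij}^4] \leq C B_n^2$, and that the envelope $M$ is controlled by the exponential or fourth-moment condition through Lemma 2.2.2 in \cite{VW96}, exactly as in Step 1 of the proof of Corollary \ref{cor: central limit theorem}. Once this is in place, the rest is a bookkeeping assembly mirroring the proof of Corollary \ref{cor: multiplier bootstrap examples}.
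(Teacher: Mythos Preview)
Your overall strategy matches the paper's proof: invoke Theorem \ref{thm: multiplier bootstrap}, bound $\rho$ via Corollary \ref{cor: central limit theorem extended}, bound $\Ep[\Delta]$ via Lemma \ref{lem: symmetrization inequality 1} (using $M_4^2 \leq C B_n$, which follows from $M_{4,2}^4/B_n^2 \leq 2$ together with $\barEp[x_{ij}^2]\leq C_1$), and pick $\vartheta$ proportional to $(\Ep[\Delta])^{1/2}/\log p$. The moment computations you outline are correct.

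There is, however, a genuine technical gap that the paper handles and you do not. In the low-variance version, the bound from Theorem \ref{thm: multiplier bootstrap} contains the terms $(\zeta_1\vee p^{-c})\sqrt{1\vee\log(p/\zeta_1)}$ and, inside $\pi(\vartheta)$, the term $p^{-c}\sqrt{1\vee\log(p/\vartheta)}$. Your claim that ``$p^{-c}\sqrt{1\vee\log(p/\zeta_1)}\leq Cn^{-c}$ (up to adjusting constants)'' is not justified by the hypotheses: nothing prevents $\zeta_1$ from being zero (or smaller than any polynomial in $n^{-1}$), in which case $\log(p/\zeta_1)$ is unbounded and the term $p^{-c}\sqrt{\log(p/\zeta_1)}$ is not controlled. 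The same issue arises for $p^{-c}\sqrt{\log(p/\vartheta)}$, since nothing rules out $\Ep[\Delta]$ being arbitrarily small. Note this difficulty is absent in the proof of Corollary \ref{cor: multiplier bootstrap examples}, because Theorem \ref{thm: multiplier bootrstrap II} has only $\zeta_1\sqrt{\log(p/\zeta_1)}$ (which does vanish as $\zeta_1\to 0$) and $\pi(\vartheta)=C\vartheta^{1/3}(\log(p/\vartheta))^{2/3}$ with no additive $p^{-c}$ term.

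The paper's fix is simple: replace $\zeta_1$ by $\tilde\zeta_1:=\zeta_1\vee n^{-1}$ (conditions (\ref{eq: statistic approximation}) and (\ref{eq: conditional quantiles}) continue to hold with the larger $\tilde\zeta_1$, and $\tilde\zeta_1\sqrt{\log p}+\zeta_2\leq Cn^{-c}$ because $(\log(pn))^7/n\leq C_2 n^{-c_2}$), and set $\vartheta:=((\Ep[\Delta])^{1/2}/\log p)\vee n^{-1}$. With these floors in place, $\log(p/\tilde\zeta_1)$ and $\log(p/\vartheta)$ are at most $C\log(pn)$, and then $p\geq C_3 n^{c_3}$ does convert the $p^{-c}$ prefactors into $Cn^{-c}$ as you intended. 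Once you add this small adjustment, your argument is complete and coincides with the paper's.
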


The proofs rely on the following auxiliary lemmas, whose proofs will be given below.

\begin{lemma}\label{lem: anticoncentration_extended}
(a) Let $Y_{1},\dots,Y_{p}$ be jointly Gaussian random variables with $\Ep[Y_{j}]=0$ and $\sigma_{j}^{2} := \Ep [Y_{j}^{2} ] $ for all $1 \leq j \leq p$. Let $b_{p} := \Ep [ \max_{1 \leq j \leq p} Y_{j} ]$ and $\bar{\sigma} = \max_{1 \leq j \leq p} \sigma_{j}>0$. Assume that $b_p\geq c_1\sqrt{\log p}$ for some $c_1>0$. Then for every $\varsigma > 0$,
\begin{equation}\label{eq: anticoncentration_extended}
\sup_{z \in \RR} \Pr \left( | \max_{1 \leq j \leq p} Y_{j} - z|  \leq   \varsigma \right) \leq C (\varsigma\vee p^{-c}) \left(b_p + \sqrt{1 \vee \log (\bar{\sigma}/\varsigma) }\right)
\end{equation}
where $c,C>0$ are some constants depending only on $c_1$ and $\bar{\sigma}$. (b) Furthermore, the worst case bound is obtained by bounding $b_p$ by $\bar{\sigma}\sqrt{2\log p}$.
\end{lemma}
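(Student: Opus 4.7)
The plan is to reduce to Lemma \ref{lem: anticoncentration} via a small independent Gaussian perturbation of $Y$, then exploit the hypothesis $b_p \geq c_1\sqrt{\log p}$ to replace the normalized quantity $a_p$ appearing there by the unnormalized $b_p$. Part (b) is then immediate from the standard bound $b_p \leq \bar\sigma\sqrt{2\log p}$.

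Concretely, let $g_1,\dots,g_p$ be i.i.d.\ $N(0,1)$ independent of $Y$, fix an exponent $\alpha > 0$ to be chosen large depending on $c_1$ and $\bar\sigma$, set $\delta := p^{-\alpha}$, and define $\tilde Y_j := Y_j + \delta g_j$, so that $\tilde\sigma_j^2 := \Ep[\tilde Y_j^2] = \sigma_j^2 + \delta^2 \in [\delta^2,\,\bar\sigma^2 + \delta^2]$. Since $\underline{\tilde\sigma} \geq \delta > 0$, Lemma \ref{lem: anticoncentration} applies to $\tilde Y$. A standard Gaussian tail bound gives $\Pr(\max_j|g_j| > C\sqrt{\log p}) \leq p^{-\alpha}$ for a suitable $C$, so with probability at least $1 - p^{-\alpha}$ one has $|\max_j \tilde Y_j - \max_j Y_j| \leq C\delta\sqrt{\log p} =: \eta_p$, and consequently
$$
\Pr(|\max_j Y_j - z| \leq \varsigma) \leq \Pr(|\max_j \tilde Y_j - z| \leq \varsigma + \eta_p) + p^{-\alpha}.
$$
Applying Lemma \ref{lem: anticoncentration} to $\tilde Y$ bounds the first term on the right by $C_1(\varsigma + \eta_p)(\tilde a_p + \sqrt{1 \vee \log(\delta/(\varsigma+\eta_p))})$, where $\tilde a_p := \Ep[\max_j \tilde Y_j/\tilde\sigma_j]$. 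Since each $\tilde Y_j/\tilde\sigma_j$ is standard normal, the worst-case estimate in Lemma \ref{lem: anticoncentration}(b) gives $\tilde a_p \leq \sqrt{2\log p} \leq (\sqrt{2}/c_1)\, b_p$. Choosing $\alpha$ large enough relative to the target exponent $c$, the perturbation $\eta_p \leq p^{-\alpha/2}$ is absorbed into $\varsigma \vee p^{-c}$, the logarithmic factor is bounded by $\sqrt{\log(\bar\sigma/\varsigma)} + \sqrt{\log p} \lesssim \sqrt{\log(\bar\sigma/\varsigma)} + b_p$, and the residual $p^{-\alpha}$ term is absorbed into $p^{-c}(b_p + \sqrt{\log(\bar\sigma/\varsigma)})$; this yields (\ref{eq: anticoncentration_extended}).

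The main obstacle is the dependence of the constant $C_1$ in Lemma \ref{lem: anticoncentration} on $\underline{\tilde\sigma} = p^{-\alpha}$, which a priori could blow up polynomially in $p$ and wipe out the entire bound. Inspection of the proof in \cite{ChernozhukovChetverikovKato2012c} indicates that $\underline\sigma$ enters the constant only through the logarithmic term $\sqrt{\log(\underline\sigma/\varsigma)}$ already accounted for, while the prefactor depends only on $\bar{\tilde\sigma} \leq \bar\sigma + 1$; a rescaling $\tilde Y_j \mapsto \tilde Y_j/\bar{\tilde\sigma}$ further reduces that prefactor to a universal constant. If that fine dependence is not directly extractable, a robust alternative is to split $\{1,\dots,p\} = J \cup J^c$ with $J := \{j : \sigma_j \geq \delta\}$, apply Lemma \ref{lem: anticoncentration} to $\max_{j \in J} Y_j$ (which has $\underline{\sigma} \geq \delta$), and absorb $\max_{j \in J^c} Y_j$ via a $p^{-c}$ Gaussian tail estimate on the maximum of low-variance variables.
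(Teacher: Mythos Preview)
Your perturbation route runs into exactly the obstacle you flag, and the resolution you sketch does not work. The constant in Lemma~\ref{lem: anticoncentration} genuinely depends polynomially on $1/\underline{\sigma}$, not just through the logarithm: the paper itself quotes, from \cite{ChernozhukovChetverikovKato2012c}, the explicit bound
\[
\Pr\bigl(|\max_{j\in J\setminus\tilde J} Y_j - z|\le \varsigma\bigr)\le 4\varsigma\{(1/\underline{\sigma}-1/\bar{\sigma})|z|+a_p+1\}/\underline{\sigma},
\]
which carries factors $|z|/\underline{\sigma}^{2}$ and $a_p/\underline{\sigma}$. With $\underline{\tilde\sigma}=p^{-\alpha}$ this blows up by $p^{2\alpha}$, wiping out the bound. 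Your fallback of splitting at $\delta=p^{-\alpha}$ inherits the same defect: applying Lemma~\ref{lem: anticoncentration} to $\max_{j\in J}Y_j$ still gives a constant depending on $\underline{\sigma}\ge\delta=p^{-\alpha}$. Moreover, the tail bound you invoke for $\max_{j\in J^c}Y_j$ controls $\Pr(\max_{j\in J^c}Y_j\ge z-\varsigma)$ only when $z-\varsigma$ is bounded away from zero, which you have not arranged for arbitrary $z$.

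The paper's argument fixes both issues by choosing the splitting threshold to be a \emph{constant}, namely $\underline{\sigma}:=c_2 b_p/\sqrt{\log p}$, which by the hypothesis $b_p\ge c_1\sqrt{\log p}$ satisfies $\underline{\sigma}\ge c_1c_2>0$. Then the displayed bound above, applied to the high-variance piece $J\setminus\tilde J=\{j:\sigma_j>\underline{\sigma}\}$, has a prefactor depending only on $c_1$ and $\bar\sigma$. The low-variance piece $\tilde J$ is handled not by anti-concentration but by Borell's inequality: since $\Ep[\max_{j\in\tilde J}Y_j]\le \underline{\sigma}\sqrt{2\log p}\le c_2 C_3 b_p\le b_p/4$ for small $c_2$, the event $\{\max_{j\in\tilde J}Y_j\ge z-\varsigma\}$ is exponentially unlikely \emph{provided} $z$ is near $b_p$. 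That restriction on $z$ is supplied by a preliminary case analysis, again via Borell on the full maximum, which disposes of all $z$ outside $[b_p-\bar\sigma\sqrt{2\log(\underline{\sigma}/\varsigma)}-\varsigma,\ b_p+\bar\sigma\sqrt{2\log(\underline{\sigma}/\varsigma)}+\varsigma]$. The hypothesis $b_p\ge c_1\sqrt{\log p}$ is used precisely to guarantee that this interval stays well above $\Ep[\max_{j\in\tilde J}Y_j]$.
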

\begin{lemma}\label{lem: gaussian comparison extended}
Let $V$ and $Y$ be centered Gaussian random vectors in $\RR^p$ with covariance matrices $\Sigma^V$ and $\Sigma^Y$, respectively. Let $\Delta_0:=\max_{1\leq j,k\leq p}|\Sigma^V_{jk}-\Sigma^Y_{jk}|$. Suppose that there are some constants $0<c_1<C_1$ such that $\bar{\sigma}:=\max_{1\leq j\leq p}\Ep[Y_j^2]\leq C_1$ for all $1\leq j\leq p$ and $b_p:=\Ep[\max_{1\leq j\leq p}Y_j]\geq c_1\sqrt{\log p}$. Then there exist constants $c>0$ and $C>0$ depending only on $c_1$ and $C_1$ such that
\begin{align*}
\sup_{t\in\RR}\left|\Pr\left(\max_{1 \leq j \leq p} V_{j}\leq t\right)-\Pr\left(\max_{1 \leq j \leq p} Y_{j}\leq t\right)\right| \leq  &C\Delta_{0}^{1/3}(1 \vee \log (p/\Delta_0))^{2/3}\\
&+Cp^{-c}\sqrt{1\vee\log(p/\Delta_0)}.
\end{align*}
\end{lemma}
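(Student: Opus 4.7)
The plan is to mimic the proof of the original Gaussian-to-Gaussian comparison Lemma \ref{lemma: distances Gaussian to Gaussian} but substitute the extended anti-concentration bound from Lemma \ref{lem: anticoncentration_extended} at the final step, which is where the relaxed lower-bound on variances matters. Without loss of generality, $V$ and $Y$ may be taken independent. I will first compare smooth surrogates of the maxima via Slepian interpolation, then pass to the indicator functions via smoothing, and finally optimize the smoothing parameters.

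First, for the smooth comparison, introduce the Slepian path $Z(t):=\sqrt{t}V+\sqrt{1-t}Y$ and the composite $m := g\circ F_{\beta}$ for a test function $g\in C_{b}^{3}(\RR)$ and smoothing parameter $\beta>0$. Because $Z(t)$ is Gaussian, applying Stein's identity (Lemma \ref{lemma: talagrand}) coordinate-wise together with the standard computation
\[
\frac{d}{dt}\Ep[m(Z(t))]=\frac{1}{2}\sum_{j,k=1}^{p}(\Sigma^{V}_{jk}-\Sigma^{Y}_{jk})\,\Ep[\partial_{j}\partial_{k}m(Z(t))],
\]
and invoking the second-order bound $\sum_{j,k}|\partial_{j}\partial_{k}m(z)|\leq G_{2}+2G_{1}\beta$ from Lemma \ref{lemma: bounds on derivatives of m}, integrating over $t\in[0,1]$ yields
\[
|\Ep[m(V)]-\Ep[m(Y)]|\lesssim \Delta_{0}(G_{2}+G_{1}\beta).
\]

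Second, fix $t\in\RR$, set $e_{\beta}:=\beta^{-1}\log p$, and pick a smooth kernel $g(s):=g_{0}(\psi(s-t-e_{\beta}))$, where $g_{0}\in C_{b}^{3}(\RR)$, $g_{0}\equiv 1$ on $(-\infty,0]$ and $g_{0}\equiv 0$ on $[1,\infty)$, so that $G_{1}\lesssim\psi$ and $G_{2}\lesssim\psi^{2}$. Using $\max_{j}z_{j}\leq F_{\beta}(z)\leq \max_{j}z_{j}+e_{\beta}$, this gives the sandwich
\[
\Pr(\max_{j}V_{j}\leq t)\leq \Ep[g(F_{\beta}(V))]\leq \Ep[g(F_{\beta}(Y))]+C(\psi^{2}+\psi\beta)\Delta_{0}\leq \Pr(\max_{j}Y_{j}\leq t+e_{\beta}+\psi^{-1})+C(\psi^{2}+\psi\beta)\Delta_{0}.
\]
The symmetric inequality holds with $V$ and $Y$ swapped. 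The key new ingredient enters now: applying the extended anti-concentration Lemma \ref{lem: anticoncentration_extended} (and the worst-case bound $b_{p}\leq \bar\sigma\sqrt{2\log p}$ along with the assumed lower bound $b_{p}\geq c_{1}\sqrt{\log p}$) with $\varsigma=e_{\beta}+\psi^{-1}$ gives
\[
\Pr(\max_{j}Y_{j}\leq t+\varsigma)-\Pr(\max_{j}Y_{j}\leq t)\leq C(\varsigma\vee p^{-c})\sqrt{1\vee\log(p/\varsigma)},
\]
where the crucial $\vee\, p^{-c}$ floor is precisely what replaces the lower-variance assumption of the original lemma.

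Third and finally, set $\beta:=\psi\log p$ so that $e_{\beta}=\psi^{-1}$ and the two contributions to $\varsigma$ balance, reducing the overall bound to
\[
\rho_{\text{Gauss}}\lesssim \psi^{2}(\log p)\Delta_{0}+(\psi^{-1}\vee p^{-c})\sqrt{1\vee\log(p\psi)}.
\]
Choosing $\psi\asymp \Delta_{0}^{-1/3}(\log(p/\Delta_{0}))^{-1/6}$ optimizes the first two deterministic terms simultaneously and produces $C\Delta_{0}^{1/3}(1\vee\log(p/\Delta_{0}))^{2/3}$. The residual $p^{-c}$ floor survives as the extra summand $Cp^{-c}\sqrt{1\vee\log(p/\Delta_{0})}$, matching the claimed bound. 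The main (but minor) obstacle is verifying that when $\Delta_{0}$ is extremely small---so that $\psi^{-1}<p^{-c}$---the optimization must use the $p^{-c}$ branch of the maximum; this is handled by simply noting that if $\psi^{-1}\leq p^{-c}$, then $(\psi^{-1}\vee p^{-c})\sqrt{\log(p\psi)}=p^{-c}\sqrt{\log(p\psi)}\lesssim p^{-c}\sqrt{1\vee\log(p/\Delta_{0})}$, so the floor term absorbs this case automatically without affecting the first summand.
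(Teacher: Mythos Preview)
Your proposal is correct and takes essentially the same approach as the paper, which simply states that the proof is the same as that of Theorem~2 in \cite{ChernozhukovChetverikovKato2012c} with Lemma~\ref{lem: anticoncentration_extended} substituted for Lemma~\ref{lem: anticoncentration}. You have spelled out precisely this substitution: the Slepian--Stein Gaussian interpolation bound $|\Ep[m(V)]-\Ep[m(Y)]|\lesssim \Delta_{0}(G_{2}+G_{1}\beta)$, the smoothing sandwich, and then the extended anti-concentration inequality (whose $p^{-c}$ floor produces the second summand), followed by the same parameter balancing as in the original argument.
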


\begin{proof}[Proof of Theorem \ref{thm: main_result1_extended}]
It follows from Theorem 2.3.16 in \cite{Dudley1999} that condition (SM) implies that $\Ep[Z_0]\geq c\sqrt{\log p}$ for some $c>0$ that depends only on $\nu,\nu^\prime$, and $c_1$. Therefore, using the argument like that in the proof of Theorem \ref{cor: Gaussian to nonGaussian KS 2} with an application of Lemma \ref{lem: anticoncentration_extended} instead of Lemma \ref{lem: anticoncentration}, we obtain
\begin{eqnarray}
& \rho  \leq
 C \big [ n^{-1/2} (\psi^3+\psi^2\beta+\psi\beta^2)M_3^3
+(\psi^2+\psi\beta) \bar \varphi(u) \nonumber \\
& \ \ \ \ \ \ \ \ \qquad  + \psi \bar \varphi(u)\sqrt{\log(p/\gamma)}+( \beta^{-1} \log p + \psi^{-1} + p^{-c}) \sqrt{1 \vee \log (p \psi)} \big ]
\label{key to}
\end{eqnarray}
where all notation is taken from the proof of Theorem \ref{cor: Gaussian to nonGaussian KS 2}. Recall that $\bar{\varphi}(\cdot)$ is any function satisfying $\bar{\varphi}(u)\geq \varphi(u)$ for all $u>0$ and $\varphi(u)=\varphi_x(u)\vee\varphi_y(u)$. To bound $\varphi_x(u)$, we have
\begin{align*}
\barEp[ x_{ij}^{2} 1\{  | x_{ij} | > u (\bar \Ep [ x_{ij}^{2} ])^{1/2} \} ]&\leq \barEp[x_{ij}^4]/(u^2\barEp[x_{ij}^2])\\
&=\barEp[x_{ij}^4]/(u^2\barEp[x_{ij}^2]^2)\barEp[x_{ij}^2]\leq (M_{4,2}^4/u^2)\barEp[x_{ij}^2].
\end{align*}
This implies that $\varphi_x(u)\leq M_{4,2}^2/u$. To bound $\varphi_y(u)$, note that $\barEp[y_{ij}^4]\leq 3\barEp[x_{ij}^4]$, which was shown in the proof of Theorem \ref{cor: Gaussian to nonGaussian KS 2}. In addition, $\barEp[y_{ij}^2]=\barEp[x_{ij}^2]$. Therefore, $\varphi_y(u)\leq C\varphi_x(u)$. Hence, we can set $\bar{\varphi}(u):=CM_{4,2}^2/u$ for all $u>0$. The rest of the proof is the same as that for Theorem \ref{cor: Gaussian to nonGaussian KS 2} with $M_4$ replaced by $M_{4,2}$.
\end{proof}
\begin{proof}[Proof of Corollary \ref{cor: central limit theorem extended}]
Note that in both cases, $M_{4,2}^2\leq CB_n$ and
$$
M_3^3=\max_{1\leq j\leq p}\barEp[|x_{ij}|^3]\leq M_{3,2}^3\max_{1\leq j\leq p}\barEp[x_{ij}^2]^{3/2}\leq CM_{3,2}^3\leq CB_n.
$$
Therefore, the claim of the corollary follows from Theorem \ref{cor: central limit theorem extended} by the same argument as that leading to Corollary \ref{cor: central limit theorem} from Theorem \ref{cor: Gaussian to nonGaussian KS 2}.
\end{proof}
\begin{proof}[Proof of Theorem \ref{thm: multiplier bootstrap}]
The proof is the same as that for Theorem \ref{thm: multiplier bootrstrap II} with Lemmas \ref{lem: anticoncentration_extended} and \ref{lem: gaussian comparison extended} replacing Lemmas \ref{lem: anticoncentration} and \ref{lemma: distances Gaussian to Gaussian}.
\end{proof}
\begin{proof}[Proof of Corollary \ref{cor: multiplier bootstrap}]
Since $B_n\geq 1$, both under (E.5) and under (E.6) we have $(\log(pn))^7/n\leq C_2n^{-c_2}$. Let $\tilde{\zeta}_1:=\zeta_1\vee n^{-1}$. Then conditions (\ref{eq: statistic approximation}) and (\ref{eq: conditional quantiles}) hold with $(\tilde{\zeta}_1,\zeta_2)$  replacing $(\zeta_1,\zeta_2)$ and $\tilde{\zeta}_1\sqrt{\log p}+\zeta_2\leq Cn^{-c}$. Further, since $p\geq C_3n^{c_3}$, we have $p^{-c}(1\vee \log(p/\tilde{\zeta}_1))^{1/2}\leq Cn^{-c}$.

Let $\vartheta=\vartheta_n:=((\Ep[\Delta])^{1/2}/\log p)\vee n^{-1}$. Then $p^{-c}(1\vee \log(p/\vartheta))^{1/2}\leq Cn^{-c}$. In addition, if $\vartheta=n^{-1}$, then $\vartheta^{1/3}(\log(p/\vartheta))^{2/3}\leq Cn^{-c}$.
Finally,
$$
M_4^2=\max_{1\leq j\leq p}\barEp[x_{ij}^4]^{1/2}\leq M_{4,2}^2\max_{1\leq j\leq p}\barEp[x_{ij}^2]\leq CM_{4,2}^2\leq CB_n.
$$
The rest of the proof is similar to that for Corollary \ref{cor: multiplier bootstrap examples}.
\end{proof}
\begin{proof}[Proof of Lemma \ref{lem: anticoncentration_extended}]
In the proof, several constants will be introduced. All of these constants are implicitly assumed to depend only on $c_1$ and $\bar{\sigma}$.

We choose $c>0$ such that $4\bar{\sigma}\sqrt{c}=c_1$.
Fix $\varsigma>0$. It suffices to consider the case $\varsigma\geq \bar{\sigma}p^{-c}$.
Let $\underline{\sigma}:=c_2b_p/\sqrt{\log p}$ for sufficiently small $c_2>0$ to be chosen below. Note that $\underline{\sigma}\geq c_1c_2$. So, if $\varsigma>\underline{\sigma}$, (\ref{eq: anticoncentration_extended}) holds trivially by selecting sufficiently large $C$.

Consider the case $\varsigma\leq \underline{\sigma}$. Assume that $z>b_p+\varsigma+\bar{\sigma}\sqrt{2\log(\underline{\sigma}/\varsigma)}$. Then
\begin{align*}
\Pr( | \max_{1 \leq j \leq p} Y_{j} - z|  \leq   \varsigma )&\leq \Pr(\max_{1\leq j\leq p}Y_j\geq z-\varsigma)\\
&\leq\Pr(\max_{1\leq j\leq p}Y_j\geq b_p+\bar{\sigma}\sqrt{2\log(\underline{\sigma}/\varsigma)})\leq \varsigma/\underline{\sigma}
\end{align*}
where the last inequality follows from Borell's inequality. So, (\ref{eq: anticoncentration_extended}) holds by selecting sufficiently large $C$.

Now assume that $z<b_p-\varsigma-\bar{\sigma}\sqrt{2\log(\underline{\sigma}/\varsigma)}$. Then
\begin{align*}
\Pr( | \max_{1 \leq j \leq p} Y_{j} - z|  \leq   \varsigma )&\leq \Pr(\max_{1\leq j\leq p}Y_j\leq z+\varsigma)\\
&\leq\Pr(\max_{1\leq j\leq p}Y_j\leq b_p-\bar{\sigma}\sqrt{2\log(\underline{\sigma}/\varsigma)})\leq \varsigma/\underline{\sigma}
\end{align*}
where the last inequality follows from Borell's inequality. So, (\ref{eq: anticoncentration_extended}) holds by selecting sufficiently large $C$.

Finally, assume that
$$
b_p-\varsigma-\bar{\sigma}\sqrt{2\log(\underline{\sigma}/\varsigma)}\leq z\leq b_p+\varsigma+\bar{\sigma}\sqrt{2\log(\underline{\sigma}/\varsigma)}.
$$
Then
$$
\Pr( | \max_{1 \leq j \leq p} Y_{j} - z|\leq\varsigma)  \leq \Pr( | \max_{j\in\tilde{J}} Y_{j} - z| \leq\varsigma) + \Pr( | \max_{j\in J\backslash\tilde{J}} Y_{j} - z|\leq\varsigma) =: I+II
$$
where $J:=\{1,\dots,p\}$ and $\tilde{J}:=\{j\in J:\sigma_j\leq \underline{\sigma}\}$.
Consider $I$.
We have
\begin{equation*}
b_p\geq_{(1)} c_1\sqrt{\log p}=_{(2)}4\bar{\sigma}\sqrt{c\log p}= 4\bar{\sigma}\sqrt{\log(p^c)}\geq_{(3)} 4\bar{\sigma}\sqrt{\log(\bar{\sigma}/\varsigma)}
\end{equation*}
where (1) holds by assumption, (2) follows from the definition of $c$, and (3) holds because $\varsigma\geq \bar{\sigma}p^{-c}$.
In addition, there exists $C_2>0$ such that $\Ep[\max_{j\in\tilde{J}}Y_j]\leq c_2C_2b_p$, and there exist $C_3>0$ such that $b_p\leq C_3\bar{\sigma}\sqrt{\log p}$, so that $\underline{\sigma}\leq c_2C_3\bar{\sigma}$. We choose $c_2$ so that $c_2C_2\leq 1/4$, $c_2/\sqrt{\log p}\leq 1/8$, and $c_2C_3\leq 1$.  Then $\underline{\sigma}\leq \bar{\sigma}$, $\underline{\sigma}\leq b_p/8$, and $\Ep[\max_{j\in\tilde{J}}Y_j]\leq b_p/4$. Also recall that $\varsigma\leq \underline{\sigma}$. Therefore,
\begin{align*}
b_p-2\varsigma-\bar{\sigma}\sqrt{2\log(\underline{\sigma}/\varsigma)}-\Ep[\max_{j\in\tilde{J}}Y_j]
&\geq b_p/2-\bar{\sigma}\sqrt{2\log(\bar{\sigma}/\varsigma)}\\
&\geq \bar{\sigma}\sqrt{2\log(\bar{\sigma}/\varsigma)}
\geq \underline{\sigma}\sqrt{2\log(\underline{\sigma}/\varsigma)}.
\end{align*}
So, Borell's inequality yields
$$
I\leq \Pr(\max_{j\in\tilde{J}}Y_j\geq z-\varsigma)\leq \Pr(\max_{j\in\tilde{J}}Y_j\geq b_p-2\varsigma-\bar{\sigma}\sqrt{2\log(\underline{\sigma}/\varsigma)})\leq \varsigma/\underline{\sigma}
$$
because $\sigma_j\leq \underline{\sigma}$ for all $j\in\tilde{J}$.

Consider $II$. It is proved in \cite{ChernozhukovChetverikovKato2012c}
that
\begin{equation}\label{eq: standard_anticoncentration}
II\leq 4\varsigma\{(1/\underline{\sigma}-1/\bar{\sigma})|z|+a_p+1\}/\underline{\sigma}
\end{equation}
where $a_p:=\Ep[\max_{j\in J\backslash\tilde{J}}Y_j/\sigma_j]$. See, in particular, equation (16) in that paper. Note that $a_p\leq b_p/\underline{\sigma}$. Therefore, (\ref{eq: standard_anticoncentration}) combined with our restriction on $z$ yields
\begin{align*}
II&\leq 4\varsigma\left(2b_p+\varsigma+\bar{\sigma}\sqrt{2\log(\underline{\sigma}/\varsigma)}
+\underline{\sigma}\right)/\underline{\sigma}^2\\
&\leq 4\varsigma\left(2b_p+2\underline{\sigma}+\bar{\sigma}\sqrt{2\log(\bar{\sigma}/\varsigma)}\right)/\underline{\sigma}^2
\end{align*}
where in the second line we used the facts that $\varsigma\leq \underline{\sigma}$ and $\underline{\sigma}\leq \bar{\sigma}$ by assumption and by construction, respectively.
Now (\ref{eq: anticoncentration_extended}) holds by selecting sufficiently large $C>0$, and using the fact that $\underline{\sigma}\geq c_1c_2>0$. This completes the proof.
\end{proof}
\begin{proof}[Proof of Lemma \ref{lem: gaussian comparison extended}]
The proof is the same as that for Theorem 2 in \cite{ChernozhukovChetverikovKato2012c} with Lemma \ref{lem: anticoncentration_extended} replacing Lemma \ref{lem: anticoncentration}.
\end{proof}

\section{Validity of Efron's Empirical bootstrap}
\label{sec: nonparametric bootstrap}

In this section, we study the validity of the empirical (or Efron's) bootstrap in approximating the distribution of $T_{0}$ in the simple case where $x_{ij}$'s are uniformly bounded (the bound can increase with $n$).
Moreover, we consider here the asymptotics where $n \to \infty$ and possibly $p=p_{n} \to \infty$.
Recall the setup in Section \ref{sec: Gaus vs NonGaus}: let $x_{1},\dots,x_{n}$ be independent centered random vectors in $\RR^{p}$ and define
\[
T_{0}= \max_{1 \leq j \leq p} \frac{1}{\sqrt{n}} \sum_{i=1}^{n} x_{ij}.
\]
The empirical bootstrap procedure is described as follows.
Let $x_{1}^{*},\dots,x_{n}^{*}$ be i.i.d. draws from the empirical distribution of $x_{1},\dots,x_{n}$.
Conditional on $(x_{i})_{i=1}^{n}$, $x_{1}^{*},\dots,x_{n}^{*}$ are i.i.d. with mean $\En [ x_{i} ]$ and covariance matrix $\En [ (x_{i}-\En[x_{i}])(x_{i}-\En[ x_{i}])' ]$. Define
\[
T_{0}^{*}= \max_{1 \leq j \leq p} \frac{1}{\sqrt{n}} \sum_{i=1}^{n} (x_{ij}^{*}-\En[ x_{ij} ]).
\]
The empirical bootstrap approximates the distribution of $T_{0}$ by the conditional distribution $T_{0}^{*}$ given $(x_{i})_{i=1}^{n}$.

Recall
\[
W_{0} = \max_{1 \leq j \leq p} \frac{1}{\sqrt{n}} \sum_{i=1}^{n}e_{i} x_{ij},
\]
where $e_{1},\dots,e_{n}$ are i.i.d. $N(0,1)$ random variables independent of $(x_{i})_{i=1}^{n}$.
We shall here compare the conditional distribution of $T_{0}^{*}$ to that of $W_{0}$.

\begin{theorem}
\label{prop: efron}
Suppose that there exists constants $C_{1} > c_{1} > 0$ and a sequence $B_{n} \geq 1$ of constants such that
$c_{1} \leq \barEp [ x_{ij}^{2} ] \leq C_{1}$ for all $1 \leq j \leq p$ and $| x_{ij} | \leq B_{n}$ for all $1 \leq i \leq n$ and $1 \leq j \leq p$.
Then provided that $B_{n}^{2}( \log (pn) )^{7} = o(n)$, with probability $1-o(1)$,
\[
\sup_{t \in \RR} | \Pr \{ T_{0}^{*} \leq t \mid (x_{i})_{i=1}^{n} \} - \Pr \{ W_{0} \leq t \mid (x_{i})_{i=1}^{n} \} | = o(1).
\]
\end{theorem}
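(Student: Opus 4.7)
My plan is to interpose an intermediate Gaussian target. Define, conditionally on the data $(x_i)_{i=1}^n$, the random variable
\[
Z_0^*:=\max_{1\leq j\leq p}\frac{1}{\sqrt{n}}\sum_{i=1}^n y_{ij}^*,
\]
where $y_1^*,\dots,y_n^*$ are i.i.d.\ $N(0,\hat\Sigma)$ with $\hat\Sigma:=\En[(x_i-\En[x_i])(x_i-\En[x_i])']$, so that $Z_0^*$ is the Gaussian analog of $T_0^*$ in the sense of sharing the same conditional covariance structure. Then I will bound the conditional Kolmogorov distance between $T_0^*$ and $W_0$ by the triangle inequality through $Z_0^*$: the first leg by the conditional Gaussian approximation (Theorem~\ref{cor: Gaussian to nonGaussian KS 2} / Corollary~\ref{cor: central limit theorem}) applied to the i.i.d.\ bounded vectors $x_i^*-\En[x_i]$, and the second leg by the Gaussian-to-Gaussian comparison (Lemma~\ref{lemma: distances Gaussian to Gaussian}) applied to the two conditionally Gaussian vectors with covariances $\hat\Sigma$ and $\tilde\Sigma:=\En[x_ix_i']$.

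For the first leg, conditionally on $(x_i)_{i=1}^n$ the summands $x_i^*-\En[x_i]$ are i.i.d.\ with mean zero and satisfy $|x_{ij}^*-\En[x_{ij}]|\leq 2B_n$, so condition (E.1) holds conditionally with constant $2B_n$ in place of $B_n$. Thus, provided the conditional variances $\hat\Sigma_{jj}$ are bounded above and below by positive constants uniformly in $j$ on an event of probability $1-o(1)$, Corollary~\ref{cor: central limit theorem} applies conditionally under $B_n^2(\log(pn))^7=o(n)$ and yields
\[
\sup_{t\in\RR}|\Pr(T_0^*\leq t\mid(x_i)_{i=1}^n)-\Pr(Z_0^*\leq t\mid(x_i)_{i=1}^n)|=o_\Pp(1).
\]
The uniform two-sided bound on $\hat\Sigma_{jj}=\En[x_{ij}^2]-(\En[x_{ij}])^2$ follows from Lemma~\ref{lem: symmetrization inequality 1}: under the hypotheses $c_1\leq\barEp[x_{ij}^2]\leq C_1$ and $|x_{ij}|\leq B_n$ with $B_n^2\log p=o(n)$, both $\max_j|\En[x_{ij}^2]-\barEp[x_{ij}^2]|$ and $\max_j|\En[x_{ij}]|^2$ are $o_\Pp(1)$, so with probability $1-o(1)$ one has $c_1/2\leq\hat\Sigma_{jj}\leq 2C_1$ uniformly in $j$.

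For the second leg, note that $W_0$ given $(x_i)_{i=1}^n$ is a maximum of centered jointly Gaussian coordinates with covariance matrix $\tilde\Sigma$, while $Z_0^*$ given $(x_i)_{i=1}^n$ is likewise Gaussian with covariance $\hat\Sigma$. Their coordinate-wise discrepancy is
\[
\tilde\Sigma_{jk}-\hat\Sigma_{jk}=\En[x_{ij}]\En[x_{ik}],
\]
so $\max_{j,k}|\tilde\Sigma_{jk}-\hat\Sigma_{jk}|\leq(\max_j|\En[x_{ij}]|)^2=o_\Pp(1)$ by Lemma~\ref{lem: symmetrization inequality 1}. Since the diagonal entries of $\tilde\Sigma$ are uniformly bounded between positive constants with probability $1-o(1)$ (same argument as above), Lemma~\ref{lemma: distances Gaussian to Gaussian} applies conditionally and gives
\[
\sup_{t\in\RR}|\Pr(Z_0^*\leq t\mid(x_i)_{i=1}^n)-\Pr(W_0\leq t\mid(x_i)_{i=1}^n)|=o_\Pp(1).
\]
Combining the two legs via the triangle inequality completes the proof.

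The main conceptual point is that the entire argument is carried out conditionally on the data, treating $(x_i)_{i=1}^n$ as fixed so that the machinery of Sections~\ref{sec: Gaus vs NonGaus} and~\ref{sec: Gaussian vs Gaussian} applies verbatim to the bootstrap distribution. The only nontrivial technical obstacle is showing that the regularity conditions on the (now random) covariance and moment quantities hold with probability $1-o(1)$, which is handled uniformly via Lemma~\ref{lem: symmetrization inequality 1}.
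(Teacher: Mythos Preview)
Your proof is correct and follows the same overall structure as the paper's: both interpose the same intermediate object (your $Z_0^*$ has, conditionally on the data, the same law as the paper's $\check{W}_0 := \max_{1\leq j\leq p} n^{-1/2}\sum_{i=1}^n e_i(x_{ij}-\En[x_{ij}])$, since both are maxima of centered Gaussians with covariance $\hat\Sigma$), and both apply Corollary~\ref{cor: central limit theorem} conditionally for the first leg, after verifying via Lemma~\ref{lem: symmetrization inequality 1} that $c_1/2\leq\hat\Sigma_{jj}\leq 2C_1$ uniformly in $j$ with probability $1-o(1)$.

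The only substantive difference is in the second leg. The paper notices that $\check{W}_0$ and $W_0$ are already defined on the same probability space and satisfy the pathwise bound
\[
|\check{W}_0 - W_0| \leq \max_{1\leq j\leq p}|\En[x_{ij}]|\cdot\Big|\frac{1}{\sqrt{n}}\sum_{i=1}^n e_i\Big| = o_\Pr\big((\log p)^{-1/2}\big),
\]
and then converts this into a Kolmogorov-distance bound via the anti-concentration Lemma~\ref{lem: anticoncentration}. You instead compare the two Gaussian laws through their covariances using Lemma~\ref{lemma: distances Gaussian to Gaussian}, with $\Delta_0=\max_{j,k}|\En[x_{ij}]\En[x_{ik}]|$. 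Both routes are valid: the paper's coupling is slightly more elementary and yields a sharper rate (linear in $\max_j|\En[x_{ij}]|$ rather than to the $2/3$ power), while your route avoids constructing an explicit coupling and simply reuses Lemma~\ref{lemma: distances Gaussian to Gaussian} off the shelf. Since the statement only asks for $o(1)$, the rate difference is immaterial here.
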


This theorem shows the asymptotic equivalence of the empirical and Gaussian multiplier bootstraps.
The validity of the empirical bootstrap (in the form similar to that in Theorem \ref{thm: multiplier bootrstrap I}) follows relatively directly from the validity of the Gaussian multiplier bootstrap.

\begin{proof}[Proof of Theorem \ref{prop: efron}]
The proof consists of three steps.

\textbf{Step 1}. We first show that with probability $1-o(1)$, $c_{1}/2 \leq \En[( x_{ij}-\En[x_{ij}])^{2} ] \leq 2 C_{1}$ for all $1 \leq j \leq p$.
By Lemma \ref{lem: symmetrization inequality 1},
\begin{align*}
&\Ep \left [ \max_{1 \leq j \leq p} |\En [x_{ij}] | \right ] \lesssim \sqrt{C_{1} (\log p)/n} + B_{n}(\log p)/n = o((\log p)^{-1/2}), \\
&\Ep \left [ \max_{1 \leq j \leq p} |\En [x_{ij}^{2}] - \barEp[ x_{ij}^{2} ]| \right ] \lesssim \sqrt{C_{1}B_{n}^{2} (\log p)/n} + B_{n}^{2}(\log p)/n = o(1),
\end{align*}
so that uniformly in $1 \leq j \leq p$, $| \En[( x_{ij}-\En[x_{ij}])^{2} ] - \barEp [ x_{ij}^{2} ] | = o_{\Pr}(1)$, which implies the desired assertion.

\textbf{Step 2}. Define
\[
\check{W}_{0} = \max_{1 \leq j \leq p} \frac{1}{\sqrt{n}} \sum_{i=1}^{n}e_{i} (x_{ij} - \En [x_{ij}]).
\]
We show that with probability $1-o(1)$,
\begin{equation}
\sup_{t \in \RR} | \Pr \{ T_{0}^{*} \leq t \mid (x_{i})_{i=1}^{n} \} - \Pr \{ \check{W}_{0} \leq t \mid (x_{i})_{i=1}^{n} \} | = o(1). \label{conditionalGAR}
\end{equation}
Conditional on $(x_{i})_{i=1}^{n}$, $x_{i}^{*}-\En[x_{i}]$ are independent centered random vector in $\RR^{p}$ with covariance matrix $\En [ (x_{i} - \En [x_{i}])(x_{i} - \En [x_{i}])' ]$.
Hence conditional on $(x_{i})_{i=1}^{n}$, we can apply Corollary \ref{cor: central limit theorem} to $T_{0}^{*}$ to deduce (\ref{conditionalGAR}).

\textbf{Step 3}. We show that
with probability $1-o(1)$,
\[
\sup_{t \in \RR} | \Pr \{ \check{W}_{0} \leq t \mid (x_{i})_{i=1}^{n} \} - \Pr \{ W_{0} \leq t \mid (x_{i})_{i=1}^{n} \} | = o(1).
\]
By definition, we have
\[
| \check{W}_{0} - W_{0} | \leq \max_{1 \leq j \leq p} | \En[ x_{ij} ] | \times  \left | \frac{1}{\sqrt{n}} \sum_{i=1}^{n} e_{i}  \right | = o_{\Pr}((\log p)^{-1/2}).
\]
Hence using the anti-concentration inequality together with Step 1, we deduce the desired assertion.

The conclusion of Theorem \ref{prop: efron} follows from combining Steps 1-3.
\end{proof}

\section{Comparison of our Gaussian approximation results to other ones}\label{sec: literature review}

We first point out that our Gaussian approximation result (\ref{eq: main result}) can be viewed as a version of multivariate central limit theorem, which is concerned with conditions under which
\begin{equation}\label{eq: MCLT}
\left | \Pr\left( X \in A \right) - \Pr\left( Y  \in A \right) \right | \to 0,
 \end{equation}
uniformly in a collection of sets $A$, typically \textit{all} convex sets.
 Recall that $$X=\frac{1}{\sqrt{n}} \sum_{i=1}^{n} x_{i},$$ where $x_{1},\dots,x_{n}$ are independent centered random vectors in $\RR^{p}$ with possibly $p=p_{n} \to \infty$, and
 $$
 Y  =\frac{1}{\sqrt{n}} \sum_{i=1}^{n} y_{i},
 $$
 $y_{1},\dots, y_{n}$ independent random vectors with $y_{i} \sim N(0, \Ep [x_{i}x_{i}'])$.  In fact, the result (\ref{eq: main result}) in the main text can be rewritten as
\begin{equation}\label{eq: MCLT2}
\sup_{t \in \RR} \left | \Pr\left \{ X \in A_{\max}(t)  \right \}  - \Pr\left \{ Y  \in A_{\max}(t) \right \} \right | \to 0,
\end{equation}
where $A_{\max}(t)=\{ a \in \RR^p: \max_{1 \leq j \leq p} a_{j} \leq t\}$.

Hence, our paper contributes to the literature on multivariate central limit theorems with growing number of dimensions (see, among others, \cite{Nagaev1976,Portnoy1986,AsrievRotar89,Gotze1991,Bentkus2003}).
These papers are concerned with results of the form (\ref{eq: MCLT}), but either explicitly or implicitly require the condition that $p^c/n \to 0$ for some $c > 0$ (when specialized to a setting like our setup). Results in these papers rely on the anti-concentration results for Gaussian random vectors on the $\delta$-expansions of boundaries of
arbitrary convex sets $A$ (see \cite{Ball1993}). We restrict our attention to the class of sets of the form $A_{\max}(t)$ in (\ref{eq: MCLT2}). These sets have a special structure that allows us to deal with the case where $p \gg n$: in particular, concentration of measure on the $\delta$-expansion of boundary of $A_{\max}(t)$ is at most of order $$\delta \Ep[\max_{j \leq p} Y_j]$$ for Gaussian random vectors with unit variance (and separable Gaussian processes more generally), as shown in \cite{ChernozhukovChetverikovKato2012c} (see also Lemma \ref{lem: anticoncentration}).

There is  large literature on bounding the difference:
\begin{equation}\label{eq: MCLT3}
| \Ep[ H(X) ] - \Ep[ H(Y) ] |,
\end{equation}
for various smooth functions $H(\cdot) : \Bbb{R}^p \to \Bbb{R}$, in particular the recent work
includes \cite{GoldsteinRinott1996,ChatterjeeMeckes2008,ReinertRollin2009,ChenFang2011}. Any such bounds lead to Gaussian approximations, though the structure of $H$'s plays an important role in limiting the scope of this approximation.
Two methods in the literature that turned out most fruitful for deriving gaussian approximation results in high dimensional settings ($p\rightarrow\infty$ as $n\rightarrow\infty$ in our context) are those of Lindeberg and Stein. 
The history of the Lindeberg method dates back to Lindeberg's original proof of the central limit theorem (\cite{Lindeberg1922}), which has been revived in the recent literature.
 We refer to the introduction of \cite{Chatterjee2006} for a brief history on the Lindeberg's method; see also \cite{Chatterjee2005a}. 
The recent development on Stein's method when $x_{i}$'s are multivariate can be found in \cite{GoldsteinRinott1996,ChatterjeeMeckes2008,ReinertRollin2009,ChenFang2011}. See also \cite{BentkusReview} for a comprehensive overview of different methods.  In contrast to these papers, our paper analyzes a rather particular, yet important case $H(\cdot)= g(F_{\beta}(\cdot))$, with $g: \RR \to \RR$ (progressively less) smooth function and $0< \beta \to \infty$, and in our case,
self-normalized truncation, some fine properties of the smooth potential $F_\beta$, maximal fourth order moments
 of variables and of their envelops, play a critical role (as we comment further below), and so our main results can not be (immediately) deduced from these prior results (nor do we attempt to follow this route).

Using the Lindeberg's method and the smoothing technique of Bentkus \cite{Bentkus90}, \cite{NorvaisaPaulauskas91} derived in their Theorem 5 a Gaussian approximation result that is of similar form to our {\em simple} (non-main) Gaussian approximation result  presented in Section \ref{sec: elementary GAR} of the SM. However, in \cite{NorvaisaPaulauskas91}, the anti-concentration property is {\em assumed} (see equation (1.4) in their paper); in our notation, their assumption on anti-concentration says that there exists a constant $C$ independent of $n$ and $p$ such that
$$
\Pr(r < \max_{1\leq j\leq p} | Y_{j} | \leq r + \epsilon) \leq C \epsilon (1+r)^{-3}.
$$
This assumption is useful for the analysis of the Donsker case, but does not apply in our (non-Donsker) cases. In fact, it rules out the simple case where $Y_{1},\dots,Y_{p}$ are independent (i.e., the coordinates in $x_{i}$ are uncorrelated) or $Y_{1},\dots,Y_{p}$ are weakly dependent (i.e., the coordinates in $x_{i}$ are weakly correlated).  In addition, it is worth pointing out that the use of Bentkus's \cite{Bentkus90}  smoothing\footnote{Bentkus gave a proof of existence of a smooth function that approximates a supremum norm, with certain properties. The use of these properties alone does not lead to sharp results in our case. We rely instead on smoothing by potentials from spin glasses, and their detailed properties, most importantly the stability property, noted in Lemma \ref{lemma: switching property}, which is readily established for this smoothing method.} instead of the smoothing by potentials from spin glasses used here, does not lead to optimal results in our case, since very subtle properties (stability property noted in Lemma \ref{lemma: switching property}) of potentials play an important role in our proofs, and in particular, is crucial for getting a reasonable exponent in the dependence on $\log p$.

Chatterjee \cite{Chatterjee2005a}, who also used the Lindeberg method, analyzed a spin-glass example like  our example (E.4) and also derived a result similar to our {\em simple} (non-main) Gaussian approximation result presented in Section \ref{sec: elementary GAR} of the SM, where  $x_i = z_{ij} \epsilon_i$, where $(z_{ij})_{i=1}^p$ are fixed and $(\epsilon_i)_{i=1}^n$ are i.i.d
$\Bbb{R}$-valued and centered with bounded third moment.  We note \cite{Chatterjee2005a} only provided a result for smooth functionals, but the extension to non-smooth cases follows from our Lemma \ref{lem: anticoncentration} along with standard kernel smoothing. In fact, all of our paper is inspired by Chatterjee's work, and an early version employed (combinatorial) Lindeberg's method.    We discuss the limitations of Lindeberg's approach (in the canonical form) in the main text, where we motivate the use of a combination of Slepian-Stein method in conjunction with self-normalized truncation and subtle properties of the potential function that approximates the maximum function. Generalization of results of Chatterjee to the case where $\Bbb{R}$-valued $(\epsilon_i)_{i=1}^n$ are locally dependent are given in \cite{Rollin2011}, who uses Slepian-Stein methods for proofs and
gave a result similar to our {\em simple} (non-main) Gaussian approximation result presented in Section \ref{sec: elementary GAR} of the SM. Like \cite{Rollin2011}, we also use a version of Slepian-Stein methods, but the proof details (as well as results and applications) are quite  different, since we instead analyze the case where the data $(x_i)_{i=1}^n$ are $\RR^p$-valued (instead of $\RR$-valued) independent vectors, and since we have to perform truncation (to get good dependencies on the size of the envelopes, $\max_{1 \leq j \leq p}|x_{ij}|$)
and use subtle properties of the potential function to get our main results (to get good dependencies on  $\log p$).

Using an interesting modification of the Lindeberg method, \cite{HKM2012} obtained an invariance principle for a sequence of sub-Gaussian $\Bbb{R}$-valued random variables $(\epsilon_i)_{i=1}^n$ (instead of $\Bbb{R}^p$-valued case consider here). Specifically, they looked at the large-sample probability of $(\epsilon_i)_{i=1}^n$ hitting a polytope formed by $p$ half-spaces, which is on the whole a different problem than studied here (though tools are insightful, e.g. the novel use of results developed by Nazarov \cite{Nazarov03}). These results have no intersection with our results,  except for a special case of ``sub-exponential regression/spin-glass example" (E.3), if we further require in that example, that $\varepsilon_{ij}=\varepsilon_{i}$ for all $1\leq j\leq p$, that $\varepsilon_i$'s are sub-Gaussian, that $\Ep[\varepsilon_i^3]=0$,  and that $B_n^2(\log p)^{16}/n\rightarrow 0$. All of these conditions and especially the last one are substantively more restrictive than what is obtained for the example (E.3) in our Corollary \ref{cor: central limit theorem}.


Finally, we note that when $x_i$'s are identically distributed in addition to being independent, the theory of strong approximations and, in particular, Hungarian coupling can also be used to obtain results like that in (\ref{eq: main result}) in the main text under conditions permitting $p \gg n$; see, for example, Theorem 3.1 in Koltchinskii \cite{Koltchinskii1994} and Rio \cite{Rio1994}. However, in order for this theory to work, $x_i$ have to be well approximable in a Haar basis when considered as functions on the underlying probability space -- e.g., $x_{ij} = f_{j,n}(u_i)$, where $f_{j,n}$ should have a total variation norm with respect to $u_i \sim U(0,1)^d$ (where $d$ is fixed) that does not grow too quickly to enable the expansion in the Haar basis.  This technique has been proven fruitful in many applications,  but this requires a radically different structure than what our leading applications impose; instead our results, based upon Slepian-Stein methods, are more readily applicable in these settings (instead of controlling total variation bounds, they rely on control of maxima moments and moments of envelopes of $\{x_{ij}, j \leq p\}$).  For further theoretical comparisons of the two methods in the context of strong approximations of suprema of non-Donsker emprical processes by those of Gaussian processes,  in the classical kernel and series smoothing examples,  we refer to our companion work \cite{ChernozhukovChetverikovKato2012b} (there is no winner in terms of guaranteed rates of approximation, though side conditions seem to be weaker for the Slepian-Stein type methods; in particular Hungarian couplings often impose the boundedness conditions, e.g. $\|x_i\|_\infty \leq B_n$).


\newpage

\begin{center}
{\Large  Supplementary Material III} \\
\text{} \\
{\textbf{Additional Application}} \\
\end{center}


\section{Adaptive Specification Testing}
\label{sub: AST}

In this section, we study the problem of adaptive specification
testing. Let $(v_{i},y_{i})_{i=1}^{n}$ be a sample of independent
random pairs where $y_{i}$ is a scalar dependent random variable,
and $v_{i} \in \RR^{d}$ is a vector of non-stochastic covariates.
The null hypothesis, $H_{0}$, is that there exists $\beta\in\RR^{d}$
such that
\begin{equation}
\Ep[y_{i}]=v_{i}'\beta;\, i=1,\dots,n.\label{eq: null}
\end{equation}
The alternative hypothesis, $H_{a}$, is that there is no $\beta$
satisfying (\ref{eq: null}). We allow for triangular array asymptotics
so that everything in the model may depend on $n$. For brevity, however,
we omit index $n$.

Let $\eps_{i}=y_{i}-\Ep[y_{i}]$, $i=1,\dots,n$.
Then $\Ep[\eps_{i}]=0$, and under $H_{0}$, $y_{i}=v_{i}'\beta+\eps_{i}$.
To test $H_{0}$, consider a set of test functions $P_{j}(v_{i})$,
$j=1,\dots,p$. Let $z_{ij}=P_{j}(v_{i})$. We choose test functions
so that $\En[z_{ij}v_{i}]=0$ and $\En[z_{ij}^{2}]=1$ for all
$j=1,\dots,p$. In our analysis, $p$ may be higher or even much higher
than $n$. Let $\hat{\beta}=(\En[v_{i}v_{i}^{\prime}])^{-1}(\En[v_{i}y_{i}])$
be an OLS estimator of $\beta$, and let $\hat{\eps}_{i}=y_{i}-z_{i}^{\prime}\hat{\beta};\, i=1,\dots,n$
be corresponding residuals. Our test statistic is
\begin{equation*}
T:=\max_{1\leq j\leq p}\frac{\left|\sum_{i=1}^{n}z_{ij}\hat{\eps}_{i}/\sqrt{n}\right|}{\sqrt{\En[z_{ij}^{2}\hat{\eps}_{i}^{2}]}}.
\end{equation*}
The test rejects $H_{0}$ if $T$ is significantly large.

Note that since $\En[z_{ij}v_{i}]=0$,
we have
\begin{equation*}
\sum_{i=1}^{n}z_{ij}\hat{\eps}_{i}/\sqrt{n}=\sum_{i=1}^{n}z_{ij}(\eps_{i}+v_{i}^{\prime}(\beta-\hat{\beta}))/\sqrt{n}=\sum_{i=1}^{n}z_{ij}\eps_{i}/\sqrt{n}.
\end{equation*}
Therefore, under $H_{0}$,
\begin{equation*}
T=\max_{1\leq j\leq p}\frac{\left|\sum_{i=1}^{n}z_{ij}\eps_{i}/\sqrt{n}\right|}{\sqrt{\En[z_{ij}^{2}\hat{\eps}_{i}^{2}]}}.
\end{equation*}
This suggests that we can use the multiplier bootstrap to obtain a critical value for the test. More precisely, let $(e_{i})_{i=1}^{n}$ be a sequence of  independent $N(0,1)$ random variables that are independent of the data, and let
\begin{equation*}
W:=\max_{1\leq j\leq p}\frac{\left|\sum_{i=1}^{n}z_{ij}\hat\eps_{i}e_{i}/\sqrt{n}\right|}{\sqrt{\En[z_{ij}^{2}\hat{\eps}_{i}^{2}]}}.
\end{equation*}
The multiplier bootstrap critical value $c_W(1-\alpha)$ is the conditional $(1-\alpha)$-quantile of $W$ given the data.
To prove the validity of multiplier bootstrap, we will impose the following condition:

\begin{itemize}
\item[(S)] There are some  constants $c_{1} > 0, C_{1} > 0,\bar \sigma^2 > 0, \underline{\sigma}^2>0$, and a sequence $B_{n} \geq 1$ of constants such that for
all $1 \leq i \leq n$, $1 \leq j \leq p$, $1\leq k\leq d$:
(i) $|z_{ij}|\leq B_n$;
(ii) $\En[z_{ij}^2]= 1$;
(iii) $ \underline \sigma^2 \leq \Ep[\eps_{i}^2] \leq \bar \sigma^2$;
(iv) $|v_{ik}|\leq C_{1}$;
(v) $d\leq C_{1}$;
 and (vi) the minimum eigenvalue of $\En[v_{i}v_{i}^\prime]$ is bounded from below by $c_{1}$.
\end{itemize}

\begin{theorem}[Size Control of Adaptive Specification Test]\label{thm: AST}
Let $c_{2} > 0$ be some constant.
Suppose that condition (S) is satisfied.  Moreover, suppose that either
\begin{enumerate}
\item[(a)] $\Ep[\eps_{i}^4]\leq C_{1}$ for all $1 \leq i \leq n$ and $B_n^4 (\log (pn))^7/n\leq C_{1} n^{-c_{2}}$; or
\item[(b)] $\Ep [ \exp ( | \eps_{i} | /C_{1} ) ]  \leq 2$ for all $1 \leq i \leq n$ and $B_n^2 (\log (pn))^7/n\leq C_{1} n^{-c_{2}}$.
\end{enumerate}
Then there exist constants $c >0$ and $C>0$, depending only on $c_{1},c_{2},C_{1},\underline{\sigma}^2$ and $\bar{\sigma}^2$, such that
under $H_0$, $| \Pr(T\leq c_W(1-\alpha)) - (1-\alpha) | \leq C n^{-c}$.
\end{theorem}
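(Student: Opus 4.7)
The plan is to reduce the theorem to Corollary \ref{cor: multiplier bootstrap examples} by constructing suitable exact-average approximations to $T$ and $W$. Set $\sigma_j^2 := \barEp[z_{ij}^2\eps_i^2]$ and define, for $j = 1,\dots,p$, $x_{ij} := z_{ij}\eps_i/\sigma_j$; to handle the absolute values in $T$, extend the index set to $2p$ by setting $x_{i,p+j} := -z_{ij}\eps_i/\sigma_j$. Then put
\[
T_0 := \max_{1\le k\le 2p} n^{-1/2}\sum_{i=1}^n x_{ik}, \qquad W_0 := \max_{1\le k\le 2p} n^{-1/2}\sum_{i=1}^n x_{ik}e_i,
\]
which are of the form required by Corollary \ref{cor: multiplier bootstrap examples}. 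Condition (S) forces $\underline\sigma^2 \le \sigma_j^2 \le \bar\sigma^2$ (since the $z_{ij}$ are nonstochastic and $\En[z_{ij}^2]=1$), and hence $\barEp[x_{ik}^2] = 1$; boundedness of $z_{ij}$ together with assumption (a) or (b) on $\eps_i$ yields condition (E.2) or (E.1) (with $B_n$ replaced by a constant multiple of itself) and the rate conditions match those in the theorem. Therefore Corollary \ref{cor: multiplier bootstrap examples} applies once the two statistic-approximation inequalities (\ref{eq: statistic approximation}) and (\ref{eq: conditional quantiles}) are verified at rate $\zeta_1\sqrt{\log p} + \zeta_2 \le Cn^{-c}$ for some $c>0$.

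Under $H_0$, the orthogonality $\En[z_{ij}v_i]=0$ makes the numerator of $T$ equal to $T_j := n^{-1/2}\sum_i z_{ij}\eps_i$ exactly, so
\[
|T - T_0| \le \max_{1\le j\le p}|T_j/\sigma_j|\cdot \max_{1\le j\le p}\left|\frac{\sigma_j}{\sqrt{\En[z_{ij}^2\hat\eps_i^2]}} - 1\right|.
\]
A union bound combined with Theorem \ref{cor: Gaussian to nonGaussian KS 2} (or a direct sub-Gaussian tail bound) yields $\max_j|T_j/\sigma_j| = O_P(\sqrt{\log p})$. The main task is to show $\max_j|\En[z_{ij}^2\hat\eps_i^2] - \sigma_j^2| = O_P(n^{-c})$ for some $c>0$. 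Expanding $\hat\eps_i = \eps_i - v_i'(\hat\beta-\beta)$ splits this quantity into: (i) $\max_j|\En[z_{ij}^2\eps_i^2] - \barEp[z_{ij}^2\eps_i^2]|$, controlled by Lemma \ref{lem: symmetrization inequality 1} applied to $z_{ij}^2\eps_i^2$ whose envelope and second moment are bounded by $O(B_n^2)$ times the relevant moments of $\eps_i$; (ii) linear cross terms, of order $\|\hat\beta-\beta\|$ times $\max_j\|\En[z_{ij}^2\eps_iv_i]\|$; and (iii) a quadratic term bounded by $\|\hat\beta-\beta\|^2\cdot O(1)$. Conditions (S)(iv)--(vi) deliver $\|\hat\beta-\beta\| = O_P(n^{-1/2})$ by standard OLS analysis, so all three contributions combine to the desired polynomial rate under the growth condition on $B_n$.

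For (\ref{eq: conditional quantiles}), conditional on the data,
\[
|W - W_0| \le \max_j|W_j/\sigma_j|\cdot \max_j\left|\frac{\sigma_j}{\sqrt{\En[z_{ij}^2\hat\eps_i^2]}} - 1\right| + \max_j\left|\frac{n^{-1/2}\sum_i z_{ij}(\hat\eps_i-\eps_i)e_i}{\sigma_j}\right|,
\]
where $W_j := n^{-1/2}\sum_i z_{ij}\eps_i e_i$. Conditionally on the sample, $W_j/\sigma_j$ is Gaussian with variance $\En[z_{ij}^2\eps_i^2]/\sigma_j^2 = 1 + o_P(1)$ uniformly in $j$ by the preceding step, so $\max_j|W_j/\sigma_j| = O_P(\sqrt{\log p})$. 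The second term factors as $(\hat\beta-\beta)'n^{-1/2}\sum_i z_{ij}v_ie_i/\sigma_j$ after using $\hat\eps_i - \eps_i = -v_i'(\hat\beta-\beta)$; since each inner sum is mean-zero Gaussian conditional on the sample with bounded variance and $d$ is fixed, its uniform-in-$j$ magnitude is $O_P(\sqrt{\log p})$, giving an overall bound $O_P(n^{-1/2}\sqrt{\log p})$. Combining these yields $\Pr_e(|W-W_0|>\zeta_1)\le \zeta_2$ with $\zeta_1,\zeta_2 = Cn^{-c}$ on an event of probability $1-Cn^{-c}$. Applying Corollary \ref{cor: multiplier bootstrap examples} then delivers the stated bound on $|\Pr(T\le c_W(1-\alpha)) - (1-\alpha)|$ under $H_0$.

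The hard part is the uniform-in-$j$ control of the self-normalizer $\En[z_{ij}^2\hat\eps_i^2]$ at a polynomial rate: pointwise concentration is routine, but a maximal inequality must absorb the growth of the envelope of $z_{ij}^2\eps_i^2$ (of order $B_n^2$, or $B_n^2$ times a sub-exponential envelope under (b)), and it is precisely the balance between this envelope and $\log p$ that dictates the rate conditions $B_n^4(\log(pn))^7/n\le C_1n^{-c_2}$ and $B_n^2(\log(pn))^7/n\le C_1n^{-c_2}$ stated in (a) and (b) of the theorem.
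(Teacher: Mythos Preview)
Your proposal is correct and follows essentially the same route as the paper. Both reduce to Corollary~\ref{cor: multiplier bootstrap examples} by setting $x_{ij}=z_{ij}\eps_i/\sigma_j$ (the paper writes the normalizer as $\En[z_{ij}^2\sigma_i^2]$, which coincides with your $\sigma_j^2=\barEp[z_{ij}^2\eps_i^2]$ since $z_{ij}$ is nonstochastic), verify (E.1)/(E.2), and then check (\ref{eq: statistic approximation})--(\ref{eq: conditional quantiles}) via the same three-term decomposition of $\En[z_{ij}^2(\hat\eps_i^2-\sigma_i^2)]$ and the same Gaussian-concentration arguments for the bootstrap side.

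Two small points worth tightening. First, your parenthetical ``direct sub-Gaussian tail bound'' for $\max_j|T_j/\sigma_j|$ is not available in case~(a), where $\eps_i$ has only four moments; the paper obtains the needed quantitative tail bound by first applying the GAR (Corollary~\ref{cor: central limit theorem}) and then Gaussian concentration, which is exactly your Theorem~\ref{cor: Gaussian to nonGaussian KS 2} route, so just drop the sub-Gaussian alternative. Second, your $O_P$ statements should be read as the corresponding quantitative tail bounds $\Pr(\cdot>C\sqrt{\log(pn)})\le Cn^{-c}$, since Corollary~\ref{cor: multiplier bootstrap examples} requires explicit $\zeta_1,\zeta_2$ with $\zeta_1\sqrt{\log p}+\zeta_2\le Cn^{-c}$; all of the tools you invoke (Lemma~\ref{lem: symmetrization inequality 1}, Gaussian concentration, Markov) deliver such bounds, so this is only a matter of presentation.
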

\begin{remark}The literature on specification testing is large. In particular, \cite{HorowitzSpokoiny2001} and \cite{GuerreLavergne2005} developed adaptive tests that are suitable for inference in $L_2$-norm. In contrast, our test is most suitable for inference in $\sup$-norm. An advantage of our procedure is that selecting a wide class of test functions leads to a test that can effectively adapt to a wide range of alternatives, including those that can not be well-approximated by H\"{o}lder-continuous functions. \qed
\end{remark}

\begin{proof}[Proof of Theorem \ref{thm: AST}]
We only consider case (a). The proof for case (b) is similar and hence omitted.
In this proof, let $c,c',C,C'$ denote generic positive constants depending only on $c_{1},c_2,C_{1},\underline{\sigma}^{2},\bar{\sigma}^{2}$ and their values may change from place to place.
Let
\begin{equation*}
T_0:=\max_{1\leq j\leq p}\frac{|\sum_{i=1}^nz_{ij}\eps_{i}/\sqrt{n}|}{\sqrt{\En[z_{ij}^2\sigma^2_{i}]}}  \ \text{and} \ W_0:=\max_{1\leq j\leq p}\frac{|\sum_{i=1}^nz_{ij}\eps_{i}e_{i}/\sqrt{n}|}{\sqrt{\En[z_{ij}^2\sigma^2_{i}]}}.
\end{equation*}
We make use of Corollary \ref{cor: multiplier bootstrap examples}-(ii). To this end, we shall verify conditions (\ref{eq: statistic approximation}) and (\ref{eq: conditional quantiles}) in Section \ref{sec: multiplier bootstrap} of the main text, which will be separately done in Steps 1 and 2, respectively.

\textbf{Step 1.} We show that $\Pr(|T-T_0|>\zeta_1)<\zeta_2$ for some $\zeta_1$ and $\zeta_2$ satisfying $\zeta_1\sqrt{\log p}+\zeta_2 \leq C n^{-c}$.

By Corollary \ref{cor: central limit theorem}-(ii), we have
\begin{align*}
&\Pr\left(\max_{1\leq j\leq p} | {\textstyle \sum}_{i=1}^nz_{ij}\eps_{i}/\sqrt{n} |> t \right) \\
&\quad \leq \Pr\left(\max_{1\leq j\leq p} | {\textstyle \sum}_{i=1}^nz_{ij}\sigma_ie_{i}/\sqrt{n} | >  t \right) + C n^{-c},
\end{align*}
uniformly in $t \in \RR$.  By the Gaussian concentration inequality \citep[Proposition A.2.1][]{VW96}, for every $t > 0$, we have
\begin{equation*}
\Pr\left(\max_{1\leq j\leq p} | {\textstyle \sum}_{i=1}^nz_{ij}\sigma_ie_{i}/\sqrt{n}  | >  \Ep [\max_{1\leq j\leq p} | {\textstyle \sum}_{i=1}^nz_{ij}\sigma_ie_{i}/\sqrt{n} | ] + C t  \right) \leq e^{-t^{2}}.
\end{equation*}
Since $\Ep [\max_{1\leq j\leq p}|  {\textstyle \sum}_{i=1}^nz_{ij}\sigma_ie_{i}/\sqrt{n} |] \leq C \sqrt{\log p}$, we conclude that
\begin{equation}
\Pr\left( \max_{1\leq j\leq p} | {\textstyle \sum}_{i=1}^nz_{ij}\eps_{i}/\sqrt{n} | > C \sqrt{\log (pn)} \right) \leq C' n^{-c}. \label{eq: step1-1}
\end{equation}
Moreover,
\begin{align*}
\En[z_{ij}^2(\hat{\eps}_{i}^2-\sigma_{i}^2)] &= \En[z_{ij}^2(\hat{\eps}_{i}-\eps_{i})^2]+\En[z_{ij}^2(\eps_{i}^2-\sigma_{i}^2)]+2\En[z_{ij}^2\eps_{i}(\hat{\eps}_{i}-\eps_{i})] \\
&=: I_{j} + II_{j} + III_{j}.
\end{align*}

Consider $I_{j}$. We have
\begin{equation*}
I_{j} \leq_{(1)} \max_{1\leq i\leq n}(\hat{\eps}_{i}-\eps_{i})^2 \leq_{(2)} C \Vert\hat{\beta}-\beta \Vert^2 \leq_{(3)}  C' \Vert\En[v_{i}\eps_{i}]\Vert^2,
\end{equation*}
where (1) follows from assumption S-(ii), (2) from S-(iv) and S-(v), and (3) from S-(vi).
Since $\Ep[\Vert\En[v_{i}\eps_{i}]\Vert^2] \leq C/n$, by Markov's inequality, for every $t > 0$,
\begin{equation}\label{eq: step1-2}
\Pr\left(\max_{1\leq j\leq p}\En[z_{ij}^2(\hat{\eps}_{i}-\eps_{i})^2]> t \right) \leq C/(nt).
\end{equation}

Consider $II_{j}$. By Lemma \ref{lem: symmetrization inequality 1} and Markov's inequality, we have
\begin{equation}\label{eq: step1-3}
\Pr\left(\max_{1\leq j\leq p}|\En[z_{ij}^2(\eps_{i}^2-\sigma_{i}^2)]|> t \right) \leq C B_{n}^2(\log p)/(\sqrt{n}t).
\end{equation}

Consider $III_{j}$.  We have $| III_{j} | \leq2|\En[z_{ij}^2 v_{i}^\prime(\beta-\hat{\beta})\eps_{i}]|\leq 2\Vert\En[z_{ij}^2\eps_{i}v_{i}]\Vert\Vert\hat{\beta}-\beta\Vert$.
Hence
\begin{align}
&\Pr\left(\max_{1\leq j\leq p}|\En[z_{ij}^2\eps_{i}(\hat\eps_{i}-\eps_{i})]|> t \right) \notag \\
&\leq \Pr\left(\max_{1\leq j\leq p}\Vert \En[z_{ij}^2\eps_{i}v_{i}]\Vert> t \right)+ \Pr ( \| \hat{\beta} - \beta \| > 1 ) \notag \\
&\leq C [ B_{n}^2(\log p)/(\sqrt{n}t) + 1/n ].\label{eq: step1-4}
\end{align}

By (\ref{eq: step1-2})-(\ref{eq: step1-4}), we have
\begin{equation}
\Pr \left ( \max_{1 \leq j \leq p} | \En[z_{ij}^2(\hat{\eps}_{i}^2-\sigma_{i}^2)] | > t \right ) \leq C[  B_{n}^2(\log p)/(\sqrt{n}t) + 1/(nt) + 1/n ]. \label{eq: step1-5}
\end{equation}
In particular,
\begin{equation*}
\Pr \left ( \max_{1 \leq j \leq p} | \En[z_{ij}^2(\hat{\eps}_{i}^2-\sigma_{i}^2)] | > \underline{\sigma}^{2}/2 \right ) \leq Cn^{-c}.
\end{equation*}
Since $\En [ z_{ij}^{2} \sigma_{i}^{2} ] \geq \underline{\sigma}^{2} > 0$  (which is guaranteed by S-(iii) and S-(ii)),  on the event $\max_{1 \leq j \leq p} | \En[z_{ij}^2(\hat{\eps}_{i}^2-\sigma_{i}^2)] | \leq \underline{\sigma}^{2}/2$, we have
\begin{equation*}
\min_{1 \leq j \leq p} \En [ z_{ij}^{2} \hat{\eps}_{i}^{2} ] \geq \min_{1 \leq j \leq p} \En [ z_{ij}^{2} \sigma_{i}^{2} ] - \underline{\sigma}^{2}/2 \geq \underline{\sigma}^{2}/2,
\end{equation*}
and hence
\begin{align*}
| T - T_{0} | &= \max_{1 \leq j \leq p} \left | \frac{\sqrt{\En [ z_{ij}^{2} \sigma_{i}^{2} ]}-\sqrt{\En [ z_{ij}^{2} \hat{\eps}_{i}^{2} ]}}{\sqrt{\En [ z_{ij}^{2} \hat{\eps}_{i}^{2} ]}} \right | \times T_{0} \\
&\leq C \max_{1 \leq j \leq p}  \left | \sqrt{\En [ z_{ij}^{2} \sigma_{i}^{2} ]}-\sqrt{\En [ z_{ij}^{2} \hat{\eps}_{i}^{2} ]} \right | \times T_{0} \\
&\leq C \max_{1 \leq j \leq p}  | \En [ z_{ij}^{2} \sigma_{i}^{2} ]-\En [ z_{ij}^{2} \hat{\eps}_{i}^{2} ] | \times T_{0},
\end{align*}
where the last step uses the simple fact that
\begin{align*}
| \sqrt{a} - \sqrt{b} | &= \frac{| a-b | }{\sqrt{a} + \sqrt{b}} \leq \frac{|a-b|}{\sqrt{a}}.
\end{align*}
By (\ref{eq: step1-1}) and (\ref{eq: step1-5}), for every $t > 0$,
\begin{equation*}
\Pr\left( |T-T_0|> Ct \sqrt{\log (pn)} \right) \leq C' [n^{-c}+B_{n}^2(\log p)/(\sqrt{n}t) + 1/(nt)].
\end{equation*}
By choosing $t=(\log (pn))^{-1}n^{-c'}$ with sufficiently small $c' >0$, we obtain the claim of this step.

\textbf{Step 2.}
We show that $\Pr(\Pr_e(|W-W_0|>\zeta_1)>\zeta_2)<\zeta_2$ for some $\zeta_1$ and $\zeta_2$ satisfying $\zeta_1\sqrt{\log p}+\zeta_2 \leq C n^{-c}$.

For $0 < t \leq \underline{\sigma}^{2}/2$, consider the event
\begin{equation*}
\mathcal{E} = \left \{ (\eps_{i})_{i=1}^{n} : \max_{1\leq j\leq p}|\En[z_{ij}^2(\hat{\eps}_{i}^2-\sigma_{i}^2)]|\leq t, \max_{1\leq i\leq p}(\hat{\eps}_{i}-\eps_{i})^2\leq t^{2} \right \}.
\end{equation*}
By calculations in Step 1, $\Pr (\mathcal{E}) \geq 1-C[B_{n}^2(\log p)/(\sqrt{n}t)+1/(nt^2)+1/n]$. We shall show that, on this event,
\begin{align}
&\Pr_e\left(\max_{1\leq j\leq p}| {\textstyle \sum}_{i=1}^nz_{ij}\hat{\eps}_{i}e_{i}/\sqrt{n}|>C\sqrt{\log (pn)}\right)\leq n^{-1}, \label{step2-1} \\
&\Pr_e\left( \max_{1\leq j\leq p} | {\textstyle \sum}_{i=1}^nz_{ij}(\hat{\eps}_{i}-\eps_{i}) e_{i}/\sqrt{n} | >Ct\sqrt{\log (pn)}\right) \leq n^{-1}. \label{step2-2}
\end{align}
For (\ref{step2-1}), by the Gaussian concentration inequality, for every $s > 0$,
\begin{equation*}
\Pr_e\left(\max_{1\leq j\leq p}| {\textstyle \sum}_{i=1}^nz_{ij}\hat{\eps}_{i}e_{i}/\sqrt{n}| > \Ep_{e} [\max_{1\leq j\leq p}| {\textstyle \sum}_{i=1}^nz_{ij}\hat{\eps}_{i}e_{i}/\sqrt{n}|] + Cs  \right) \leq e^{-s^{2}}.
\end{equation*}
where we have used the fact $\En[z_{ij}^2\hat{\eps}_{i}^2]=\En[z_{ij}^2\sigma_{i}^2]+\En[z_{ij}^2(\hat{\eps}_{i}^2-\sigma_{i}^2)]\leq \bar{\sigma}^{2}+t \leq \bar{\sigma}^{2} + \underline{\sigma}^{2}/2$ on the event $\mathcal{E}$.
Here $\Ep_{e} [\cdot]$ means the expectation with respect to $(e_{i})_{i=1}^{n}$ conditional on $(\eps_{i})_{i=1}^{n}$. Moreover, on the event $\mathcal{E}$,
\begin{equation*}
\Ep_{e} [\max_{1\leq j\leq p}| {\textstyle \sum}_{i=1}^nz_{ij}\hat{\eps}_{i}e_{i}/\sqrt{n}|] \leq C \sqrt{\log p}.
\end{equation*}
Hence by choosing $s=\sqrt{\log n}$, we obtain (\ref{step2-1}).
Inequality (\ref{step2-2}) follows similarly, by noting that  $(\En[z_{ij}^2(\hat{\eps}_{i}-\eps_{i})^2])^{1/2}\leq \max_{1\leq i\leq n}|\hat{\eps}_{i}-\eps_{i}|\leq t$ on the event $\mathcal{E}$.

Define
\begin{equation*}
W_{1} := \max_{1 \leq j \leq p} \frac{| \sum_{i=1}^{n} z_{ij} \hat{\eps}_{i} e_{i}/\sqrt{n}|}{\sqrt{\En[ z_{ij}^{2} \sigma_{i}^{2} ]}}.
\end{equation*}
Note that $\En[ z_{ij}^{2} \sigma_{i}^{2} ] \geq \underline{\sigma}^{2}$.
Since on the event $\mathcal{E}$, $\max_{1 \leq j \leq p} | \En [ z_{ij}^{2} (\hat{\eps}_{i}^{2} - \sigma_{i}^{2}) ]| \leq t \leq \underline{\sigma}^{2}/2$, in view of Step 1,
on this event, we have
\begin{align*}
| W - W_{0} | &\leq | W - W_{1} | + | W_{1} - W_{0} | \\
&\leq C t W_{1} + | W_{1} - W_{0} | \\
&\leq C t \max_{1\leq j\leq p}| {\textstyle \sum}_{i=1}^nz_{ij}\hat{\eps}_{i}e_{i}/\sqrt{n}| + C \max_{1\leq j\leq p} | {\textstyle \sum}_{i=1}^nz_{ij}(\hat{\eps}_{i}-\eps_{i}) e_{i}/\sqrt{n} |.
\end{align*}
Therefore, by (\ref{step2-1}) and (\ref{step2-2}), on the event $\mathcal{E}$, we have
\begin{equation*}
\Pr_e\left(|W-W_0|>Ct \sqrt{\log (pn)}\right) \leq  2n^{-1}.
\end{equation*}
By choosing $t=(\log (pn))^{-1}n^{-c}$ with sufficiently small $c > 0$, we obtain the claim of this step.

\textbf{Step 3.} Steps 1 and 2 verified conditions (\ref{eq: statistic approximation}) and (\ref{eq: conditional quantiles}) in Section \ref{sec: multiplier bootstrap} of the main text.
Theorem \ref{thm: AST} case (a) follows from Corollary \ref{cor: multiplier bootstrap examples}-(ii).
\end{proof}

\end{document}